\newcommand\lk[1]{\textcolor{magenta}{#1}}
\newcommand\mk[1]{\textcolor{cyan}{#1}}
\renewcommand\lk[1]{#1}
\renewcommand\mk[1]{#1}
\numberwithin{equation}{section}
\newcommand{\ism}{\cong}
	\newcommand{\pastim}{\fF_{t_i-1}}
	\newcommand{\pasthm}{\fF_{t_h-1}}
\newcommand{\tox}{\cC}
\newcommand{\toxl}{\tox}
\newcommand{\everrti}{\cR_{t_i}}
\newcommand{\everrth}{\cR_{t_h}}
\newcommand{\RTS}{Ricci-Tersenghi and Semerjian}
\renewcommand{\ln}{\log}
\renewcommand{\vec}[1]{\boldsymbol{#1}}
\renewcommand{\subset}{\subseteq}
\newcommand\dmin{d_{\mathrm{min}}}
\newcommand\dcore{d_{\mathrm{core}}}
\newcommand\dsat{d_{\mathrm{sat}}}
\newcommand{\FBP}[1]{\PHI_{\mathrm{BP},{#1}}}
\newcommand{\FUC}[1]{\PHI_{\mathrm{UC},{#1}}}
\newcommand{\FDC}[1]{\PHI_{\mathrm{DC},{#1}}}
\newcommand{\ADC}[1]{\vA_{#1}}
\newcommand{\amax}{\alpha_{\max}}
\newcommand{\lcond}{\lambda_{\mathrm{cond}}}
\newcommand{\tcond}{\theta_{\mathrm{cond}}}
\newcommand{\frz}{V_0}
\newcommand{\frozen}{\mathtt{f}}
\newcommand{\unfrozen}{\mathtt{u}}
\newcommand{\nll}{\mathtt{n}}
\newcommand{\fzn}{\frozen}
\newcommand{\uzn}{\unfrozen}
\newcommand\disteq{\stacksign{dist}{=}}
\newcommand{\BPGD}{\ensuremath{\mathtt{BPGD}}}
\newcommand{\UCP}{\ensuremath{\mathtt{UCP}}}
\newcommand{\TT}{\mathbb T}
\newcommand\G{\vec G}
\newcommand\PHI{\vec F}
\newcommand\nix{\,\cdot\,}
\newcommand\dd{\partial}
\newcommand\SIGMA{\vec\sigma}
\newcommand\SIGBP{\vec\sigma_{\mathrm{BP}}}
\newcommand\SIGUC{\vec\sigma_{\mathrm{UC}}}
\newcommand\SIGDEC{\vec\sigma_{\mathrm{DC}}}
\newcommand\SIGDC{\SIGDEC}
\newcommand\TAU{\vec\tau}
\newcommand\cA{\mathcal A}
\newcommand\cC{\mathcal C}
\newcommand\cD{\mathcal D}
\newcommand\cE{\mathcal E}
\newcommand\cG{\mathcal G}
\newcommand\cQ{\mathcal Q}
\newcommand\cR{\mathcal R}
\newcommand\fA{\mathfrak A}
\newcommand\fC{\mathfrak C}
\newcommand\fD{\mathfrak D}
\newcommand\fF{\mathfrak F}
\newcommand\fG{\mathfrak G}
\newcommand\fK{\mathfrak K}
\newcommand\fS{\mathfrak S}
\newcommand\fV{\mathfrak V}
\newcommand\fc{\mathfrak c}
\newcommand\fd{\mathfrak d}
\newcommand\fk{\mathfrak k}
\newcommand\fl{\mathfrak l}
\newcommand\fm{\mathfrak m}
\newcommand\fn{\mathfrak n}
\newcommand\fx{\mathfrak x}
\newcommand\vA{\vec A}
\newcommand\vC{\vec C}
\newcommand\vG{\vec G}
\newcommand\vN{\vec N}
\newcommand\vR{\vec R}
\newcommand\vT{\vec T}
\newcommand\vU{\vec U}
\newcommand\vV{\vec V}
\newcommand\vX{\vec X}
\newcommand\vY{\vec Y}
\newcommand\vd{\vec d}
\newcommand\vh{\vec h}
\newcommand\vj{\vec j}
\newcommand\vl{\vec l}
\newcommand\vm{\vec m}
\newcommand\vn{\vec n}
\newcommand\vu{\vec u}
\newcommand\vx{\vec x}
\newcommand\eps{\varepsilon}
\newcommand\FF{\mathbb{F}}
\newcommand\Erw{\mathbb{E}}
\newcommand\ex{\Erw}
\newcommand{\vecone}{\mathbb{1}}
\newcommand{\Po}{{\rm Po}}
\newcommand{\Bin}{{\rm Bin}}
\newcommand\bc[1]{\left({#1}\right)}
\newcommand\cbc[1]{\left\{{#1}\right\}}
\newcommand\bcfr[2]{\bc{\frac{#1}{#2}}}
\newcommand\brk[1]{\left\lbrack{#1}\right\rbrack}
\newcommand\abs[1]{\left|{#1}\right|}
\newcommand\RR{\mathbb{R}}
\newcommand{\Whp}{W.h.p.}
\newcommand{\whp}{w.h.p.}
\newcommand{\stacksign}[2]{{\stackrel{\mbox{\scriptsize #1}}{#2}}}
\newcommand\pr{\mathbb{P}} 
\renewcommand\Pr{\pr} 
\newcommand\Lem{Lemma}
\newcommand\Prop{Proposition}
\newcommand\Thm{Theorem}
\newcommand\Cor{Corollary}
\newcommand\Sec{Section}
\newtheorem{definition}{Definition}[section]
\newtheorem{theorem}[definition]{Theorem}
\newtheorem{lemma}[definition]{Lemma}
\newtheorem{proposition}[definition]{Proposition}
\newtheorem{corollary}[definition]{Corollary}
\newtheorem{fact}[definition]{Fact}
\DeclareMathOperator{\nul}{nul}
\def\pr{{\mathbb P}}
\newcommand{\la}{\lambda}
\newcommand{\ph}{\phi_{d,k,\lambda}}
\newcommand{\Ph}{\Phi_{d,k,\lambda}}
\newcommand{\zt}{\zeta_\lambda}
\newcommand{\expld}{\exp(-\lambda-d z^{k-1})}
\newcommand{\definetitlefootnote}[1]{%
	\newcommand\addtitlefootnote{%
		\makebox[0pt][l]{$^{*}$}%
		\footnote{\protect\@titlefootnotetext}
	}%
	\newcommand\@titlefootnotetext{\spaceskip=\z@skip $^{*}$#1}%
}
\begin{document}
\bibliographystyle{plainurl}
\definetitlefootnote{An Extended abstract appeared in the proceedings of ICALP 2025}
\title{Belief Propagation guided decimation on random $k$-XORSAT\addtitlefootnote}
\author{Arnab Chatterjee, Amin Coja-Oghlan, Mihyun Kang, Lena Krieg, Maurice Rolvien, Gregory B. Sorkin}

\address{Arnab Chatterjee, {\tt arnab.chatterjee@tu-dortmund.de}, TU Dortmund, Faculty of Computer Science, 12 Otto-Hahn-St, 44227 Dortmund, Germany.}
\address{Amin Coja-Oghlan, {\tt amin.coja-oghlan@tu-dortmund.de}, TU Dortmund, Faculty of Computer Science and Faculty of Mathematics, 12 Otto-Hahn-St, 44227 Dortmund, Germany.}
\address{Mihyun Kang, {\tt kang@math.tugraz.at}, TU Graz, Institute of Discrete Mathematics, Steyrergasse 30, 8010 Graz, Austria.}
\address{Lena Krieg, {\tt lena.krieg@tu-dortmund.de}, TU Dortmund, Faculty of Computer Science, 12 Otto-Hahn-St, 44227 Dortmund, Germany.}
\address{Maurice Rolvien, {\tt maurice.rolvien@tu-dortmund.de}, University of Hamburg, Department of Informatics, Vogt-Kölln-Str. 30, 22527 Hamburg, Germany.}
\address{Gregory B. Sorkin, {\tt g.b.sorkin@lse.ac.uk}, The London School of Economics and Political Science, Department of Mathematics, Columbia House, Houghton St, London WC2A 2AE, United Kingdom}
\maketitle
\begin{abstract}
	We analyse the performance of {\em Belief Propagation Guided Decimation}, a physics-inspired message passing algorithm, on the random $k$-XORSAT problem.
	Specifically, we derive an explicit threshold up to which the algorithm succeeds with a strictly positive probability $\Omega(1)$ that we compute explicitly, but beyond which the algorithm with high probability fails to find a satisfying assignment.
	In addition, we analyse a thought experiment called the {\em decimation process} for which we identify a (non-)reconstruction and a condensation phase transition.
	The main results of the present work confirm physics predictions from [\RTS: J.\ Stat.\ Mech.\ 2009] that link the phase transitions of the decimation process with the performance of the algorithm, and improve over partial results from a recent article [Yung: Proc.\ ICALP 2024].
\hfill MSc: 60B20, 68W20
\end{abstract}
\section{Introduction and results}\label{Sec_not_intro}

\subsection{Background and motivation}\label{sec_background}
The random $k$-XORSAT problem shares many characteristics of other intensely studied random constraint satisfaction problems (`CSPs') such as random $k$-SAT.
For instance, as the clause/variable density increases, random $k$-XORSAT possesses a sharp satisfiability threshold preceded by a reconstruction or `shattering' phase transition that affects the geometry of the set of solutions~\cite{AchlioptasMolloy,DuboisMandler,pnas,PittelSorkin}.
As in random $k$-SAT, these transitions appear to significantly impact the performance of certain classes of algorithms~\cite{BetterAlg,Ibrahimi}.
At the same time, random $k$-XORSAT is more amenable to mathematical analysis than, say, random $k$-SAT.
This is because the XOR operation is equivalent to addition modulo two, which is why a $k$-XORSAT instance translates into a linear system over $\FF_2$.
In effect, $k$-XORSAT can be solved in polynomial time by means of Gaussian elimination.
In addition, the algebraic nature of the problem induces strong symmetry properties that simplify its study~\cite{Ayre}.

Because of its similarities with other random CSPs combined with said relative amenability, random $k$-XORSAT provides an instructive benchmark.
This was noticed not only in combinatorics, but also in the statistical physics community, which has been contributing intriguing `predictions' on random CSPs since the early 2000s~\cite{MM,MRTZ}.
Among other things, physicists have proposed a message passing algorithm called {\em Belief Propagation Guided Decimation} (`\BPGD') that, according to computer experiments, performs impressively on various random CSPs~\cite{MPZ}.
Furthermore, \RTS~\cite{RTS} put forward a heuristic analysis of \BPGD\ on random $k$-SAT and $k$-XORSAT.
Their heuristic analysis proceeds by way of a thought experiment based on an idealised version of the algorithm.
We call this thought experiment the {\em decimation process}.
Based on physics methods \RTS\ surmise that the decimation process undergoes two phase transitions, specifically a reconstruction and a condensation transition.
A key prediction of \RTS\ is that these phase transitions are directly linked to the performance of the \BPGD\ algorithm.
Due to the linear algebra-induced symmetry properties, in the case of random $k$-XORSAT all of these conjectures come as elegant analytical expressions.

The aim of this paper is to verify the predictions from~\cite{RTS} on random $k$-XORSAT mathematically.
Specifically, our aim is to rigorously analyse the \BPGD\ algorithm on random $k$-XORSAT, and to establish the link between its performance and the phase transitions of the decimation process.
A first step towards a rigorous analysis of \BPGD\ on random $k$-XORSAT was undertaken in a recent contribution by Yung~\cite{Yung}.
However, Yung's analysis turns out to be not tight.
Specifically, apart from requiring spurious lower bounds on the clause length $k$, Yung's results do not quite establish the precise connection between the decimation process and the performance of \BPGD.
One reason for this is that~\cite{Yung} relies on `annealed' techniques, i.e., essentially moment computations.
Here we instead harness `quenched' arguments that were partly developed in prior work on the rank of random matrices over finite fields~\cite{Ayre,Maurice}.

Throughout we let $k\geq3$ and $n\geq k$ be integers and $d>0$ a positive real.
Let $\vm\disteq\Po(dn/k)$ and let $\PHI=\PHI(n,d,k)$ be a random $k$-XORSAT formula\footnote{
		Two random variables $\vX, \vY$ are equal in distribution $\vX\disteq \vY$ if they have the same distribution functions. 
		Thus, $\vm$ follows a Poisson distribution with mean $dn/k$.
		}
	with variables $x_1,\ldots,x_n$ and $\vm$ random clauses of length $k$.
	
To be precise, every clause of $\PHI$ is an XOR of precisely $k$ distinct variables, each of which may or may not come with a negation sign.
The $\vm$ clauses are drawn uniformly and independently out of the set of all $2^k\binom nk$ possibilities.
Thus, $d$ equals the average number of clauses that a given variable $x_i$ appears in.
An event $\cE$ occurs \emph{with high probability} (`\whp') if $\lim_{n\to\infty}\pr\brk{\cE}=1$.
We always keep $d,k$ fixed as $n\to\infty$.

\subsection{Belief Propagation Guided Decimation}\label{sec_bpgd}
The first result vindicates the predictions from~\cite{RTS} concerning the success probability of \BPGD\ algorithm.
\BPGD\ sets its ambitions higher than merely finding a solution to the $k$-XORSAT instance $\PHI$: the algorithm attempts to sample a solution uniformly at random.
To this end \BPGD\ assigns values to the variables $x_1,\ldots,x_n$ of $\PHI$ one after the other.
In order to assign the next variable the algorithm attempts to compute the marginal probability that the variable is set to `true' under a random solution to the $k$-XORSAT instance, given all previous assignments.
More precisely, suppose \BPGD\ has assigned values to the variables $x_1,\ldots,x_t$ already.
Write $\SIGBP(x_1),\ldots,\SIGBP(x_t)\in\{0,1\}$ for their values, with $1$ representing `true' and $0$ `false'.
Further, let $\FBP{t}$ be the simplified formula obtained by substituting $\SIGBP(x_1),\ldots,\SIGBP(x_t)$ for $x_1,\ldots,x_t$.
We drop any clauses from $\FBP{t}$ that contain variables from $\{x_1,\ldots,x_t\}$ only, deeming any such clauses satisfied.
Thus, $\FBP{t}$ is a XORSAT formula with variables $x_{t+1},\ldots,x_n$.
Its clauses contain at least one and at most $k$ variables, as well as possibly a constant (the XOR of the values substituted in for $x_1,\ldots,x_t$).

Let $\SIGMA_{\FBP{t}}$ be a uniformly random solution of the XORSAT formula $\FBP{t}$, assuming that $\FBP{t}$ remains satisfiable.
Then \BPGD\ aims to compute the marginal probability $\pr\brk{\SIGMA_{\FBP{t}}(x_{t+1})=1\mid\FBP{t}}$ that a random satisfying assignment of $\FBP{t}$ sets $x_{t+1}$  to true. 
This is where Belief Propagation (`BP') comes in.
An efficient message passing heuristic for computing precisely such marginals, BP returns an `approximation' $\mu_{\FBP{t}}$ of $\pr\brk{\SIGMA_{\FBP{t}}(x_{t+1})=1\mid\FBP{t}}$.
We will recap the mechanics of BP in \Sec~\ref{sec_bp} (the value $\mu_{\FBP{t}}$ is defined precisely in \eqref{eqmuBPGD}).
Having computed the BP `approximation', \BPGD\ proceeds to assign $x_{t+1}$ the value `true' with probability $\mu_{\FBP{t}}$, otherwise sets $x_{t+1}$ to `false', then moves on to the next variable.
The pseudocode is displayed as Algorithm~\ref{alg_bpgd}.

\begin{algorithm}[h!]
 \KwData{a random $k$-XORSAT formula $\PHI$ with variables $x_1,\ldots,x_n$ conditioned on being satisfiable} 
 \For{$t=0,\ldots,n-1$}{compute the BP approximation $\mu_{\FBP{t}}$\;
		set
		$\SIGBP(x_{t+1})=\begin{cases}
			1&\mbox{with probability }\mu_{\FBP{t}}\\
			0&\mbox{with probability }1-\mu_{\FBP{t}}
					\end{cases}$\;
		}
	\Return $\SIGBP$\;
 \caption{The \BPGD\ algorithm.}\label{alg_bpgd}
 \label{Alg_SC}
\end{algorithm}

Let us pause for a few remarks.
First, if the BP approximations are exact, i.e., if $\FBP{t}$ is satisfiable and $\mu_{\FBP{t}}=\pr\brk{\SIGMA_{\FBP{t}}(x_{t+1})=1\mid\FBP{t}}$ for all $t$, then Bayes' formula shows that \BPGD\ outputs a uniformly random solution of $\PHI$.
However, there is no universal guarantee that BP returns the correct marginals.
Accordingly, the crux of analysing \BPGD\ is precisely to figure out whether this is the case.
Indeed, the heuristic work of~\cite{RTS} ties the accuracy of BP to a phase transition of the decimation process thought experiment, to be reviewed momentarily.

Second, the strategy behind the \BPGD\ algorithm, particularly the message passing heuristic for `approximating' the marginals, generalises well beyond $k$-XORSAT.
For instance, the approach applies to $k$-SAT verbatim.
That said, due to the algebraic nature of the XOR operation, \BPGD\ is {\em far} easier to analyse on $k$-XORSAT.
In fact, in XORSAT the marginal probabilities are guaranteed to be half-integral as seen in Fact \ref{fact_halfint}, i.e., 
	\begin{align}\label{eqHalfInt}
		\pr\brk{\SIGMA_{\FBP{t}}(x_{t+1})=1\mid\FBP{t}}\in\{0,1/2,1\}.
	\end{align}
As a consequence, on XORSAT the \BPGD\ algorithm effectively reduces to a purely combinatorial algorithm called Unit Clause Propagation~\cite{MM,RTS} as per \Prop\ \ref{prop_UCP}, a fact that we will exploit extensively \mk{(see \Sec\ \ref{sec_alg})}.

\subsection{A tight analysis of $\BPGD$}\label{sec_results}
In order to state the main results we need to introduce a few threshold values.
To this end, given $d,k$ and an additional real parameter $\lambda\geq0$ that depends on the time $t$, consider the functions
\footnote{The function $\Ph$ is known in physics parlance as the ``Bethe free entropy'' \cite{Maurice, MM}. The stationary points of $\Ph$ coincide with the fixed points of $\ph$, as we will verify in \Sec\ \ref{sec_calc}.}
\begin{align} \label{eqphi}
	\ph:&[0,1]\to[0,1],& z&\mapsto 1 - \exp\bc{-\lambda-dz^{k-1}},\\
	\Ph:&[0,1]\to\RR,&z &\mapsto \exp\bc{-\lambda-dz^{k-1}}-\frac{d(k-1)}kz^k+dz^{k-1}-\frac dk.\label{eqPhi}
\end{align}
Let $\alpha_*(\lambda)=\alpha_*(d,k,\lambda)\in[0,1]$ be the smallest and $\alpha^*(\lambda)=\alpha^*(d,k,\lambda)\geq\alpha_*(d,k,\lambda)\in[0,1]$ the largest fixed point of $\ph$.
Figure \ref{fig_rainbow} visualises $\Phi(z)$ for different values of $\theta \sim t/n$.
Further, define
\begin{align}\label{eqds} 
	\dmin(k)&=\bcfr{k-1}{k-2}^{k-2},&
	\dcore(k)&=\sup\cbc{d>0:\alpha^*(0)=0},&
	\dsat(k)&=\sup\cbc{d>0:\Phi_{d,k,0}(\alpha^*(0))\leq\Phi_{d,k,0}(0)}.
\end{align}

The value $\dsat(k)$ is the random $k$-XORSAT satisfiability threshold~\cite{Ayre,DuboisMandler,PittelSorkin}.
Thus, for $d<\dsat(k)$ the random $k$-XORSAT formula $\PHI$ possesses satisfying assignments \whp, while $\PHI$ is unsatisfiable for $d>\dsat(k)$ \whp\
Furthermore, $\dcore(k)$ equals the threshold for the emergence of a giant 2-core within the $k$-uniform hypergraph induced by $\PHI$~\cite{Ayre,Molloy}.
This implies that for $d<\dcore(k)$ the set of solutions of $\PHI$ is contiguous in a certain well-defined way, while for $\dcore(k)<d<\dsat(k)$ the set of solutions shatters into an exponential number of well-separated clusters~\cite{Ibrahimi,MM}.
Moreover, a simple linear time algorithm is known to find a solution \whp\ for $d<\dcore(k)$~\cite{Ibrahimi}.
The relevance of $\dmin(k)$ will emerge in \Thm\ \ref{thm_bpgd}. 
A bit of calculus reveals that
\begin{align}\label{eqdrel}
	0<\dmin(k)<\dcore(k)<\dsat(k)<k.
\end{align}

The following theorem determines the precise clause-to-variable densities where \BPGD\ succeeds/fails.
To be precise, in the `successful' regime \BPGD\ does not actually succeed with {\em high} probability, but with an explicit probability strictly between zero and one, which is displayed in Figure~\ref{Psuccess} for $k=3,4,5$.

\noindent
\begin{minipage}{0.5\linewidth}
	\begin{figure}[H]
		\includegraphics[height=40mm]{./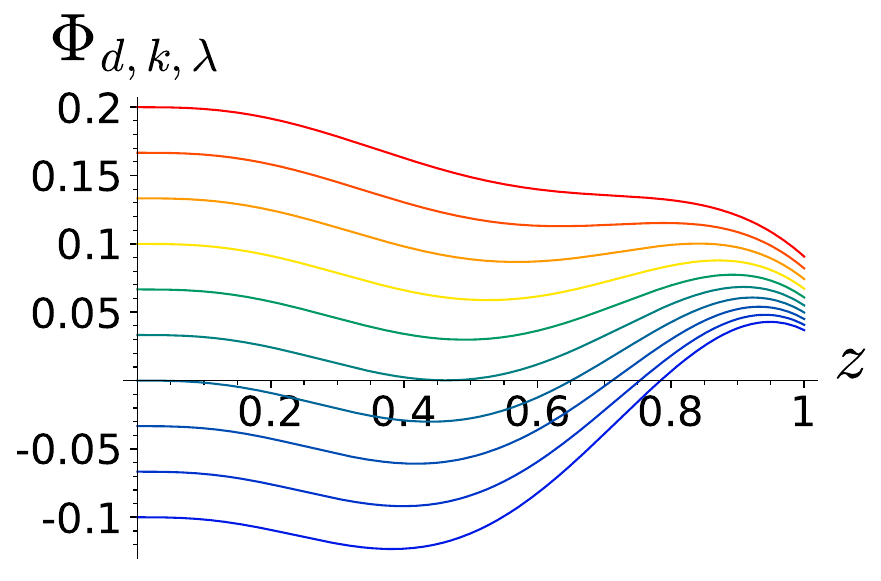}
		\caption{$\Phi_{d,k,\lambda}$ for $k = 3$ and $d = 2.4$, for $\lambda$ from $\textcolor{red}{0}$ to $\textcolor{Dandelion}{0.3}$ (maximum at $z=0$) and from $\color{teal}{0.4}$ to $\textcolor{blue}{0.9}$   }\label{fig_rainbow}
	\end{figure}
\end{minipage}
\begin{minipage}{0.5\linewidth}
	\begin{figure}[H]
		\includegraphics[height=40mm]{./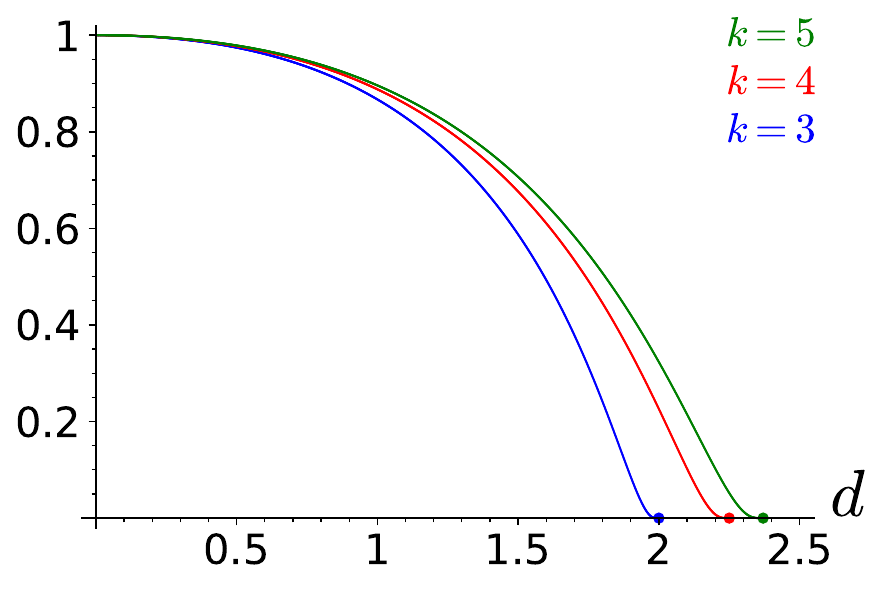}
		\caption{Success probability of \BPGD\ for $0<d<\dmin(k)$ and various $k$.}\label{Psuccess}
	\end{figure}
\end{minipage}

%
%
%

\begin{theorem}\label{thm_bpgd}
	Let $k\geq3$.
	\begin{enumerate}[(i)]
		\item If $d<\dmin(k)$, then 
			\begin{align}\label{eqthm_bpgd}
		\lim_{n\to\infty}\pr\brk{\BPGD(\PHI)\mbox{ finds a satisfying assignment}}&=
		\exp\bc{-\frac{d^2(k-1)^2}4\int_0^1\frac{z^{2k-4}(1-z)}{1-d(k-1)z^{k-2}(1-z)}\,\mathrm d z}.
	\end{align}
\item If $\dmin(k)<d<\dsat(k)$, then  
	$$
	\pr\brk{\BPGD(\PHI)\mbox{ finds a satisfying assignment}}=o(1).
	$$
\end{enumerate}
\end{theorem}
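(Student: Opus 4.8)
The plan is to leverage the reduction of $\BPGD$ to Unit Clause Propagation afforded by \Prop~\ref{prop_UCP}: because the true XORSAT marginals are half-integral (Fact~\ref{fact_halfint}), running $\BPGD$ on $\PHI$ amounts to running $\UCP$, and $\BPGD$ outputs a satisfying assignment if and only if the substituted formula $\FBP{t}$ stays satisfiable for all $t\le n$ — equivalently, if the $\UCP$ cascades never force two conflicting values onto a variable. So the whole analysis reduces to counting conflicts as $\UCP$ sweeps through $x_1,\ldots,x_n$. The first ingredient is the clause-length profile of $\FBP{t}$, obtained by the differential-equations method: exploiting the exchangeability of $\PHI$ one argues that, conditioned on no conflict so far, the variables fixed by time $t=\theta n$ look like a uniformly random $\theta n$-subset, so $\FBP{t}$ is contiguous to a random XORSAT instance with $\approx\frac dk\binom{k}{j}\theta^{k-j}(1-\theta)^j n$ clauses of length $j$. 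In particular $\FBP{\theta n}$ carries $\approx\frac{d(k-1)}2\theta^{k-2}(1-\theta)^2 n$ two-clauses, so fixing a variable at time $\theta n$ spawns $\approx\Po(\mu(\theta))$ fresh unit clauses with $\mu(\theta)=d(k-1)\theta^{k-2}(1-\theta)$, and a unit-clause cascade is well approximated by a Galton--Watson process with offspring law $\Po(\mu(\theta))$. An elementary optimisation gives $\max_{\theta\in[0,1]}\mu(\theta)=d\bigl(\tfrac{k-2}{k-1}\bigr)^{k-2}=d/\dmin(k)$ — exactly the dichotomy of the theorem, the cascades staying subcritical throughout iff $d<\dmin(k)$.

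For part~(i) I would run the conflict count in the subcritical regime. Each cascade is then a finite subcritical tree; while exploring it, a freshly created unit clause lands on an already-fixed variable only with probability $\Theta(1/n)$, and such a collision is a genuine conflict with probability $\tfrac12$ (independent random XOR-constants). A Chen--Stein/Poisson-approximation argument shows that the total number of conflicts over the run converges in law to $\Po(\Lambda)$, whence $\BPGD$ succeeds iff this count is $0$, i.e.\ with limiting probability $e^{-\Lambda}$. Computing $\Lambda$ is an integration against the profile above — the expected cascade size $(1-\mu(z))^{-1}$, the collision probability of order $1/((1-z)n)$, the rate $(1-\mu(z))n\,\mathrm dz$ of ``free'' steps, and the factor $\tfrac12$ — which after the bookkeeping yields
\[
\Lambda=\int_0^1\frac{\mu(z)^2}{4(1-z)(1-\mu(z))}\,\mathrm dz=\frac{d^2(k-1)^2}4\int_0^1\frac{z^{2k-4}(1-z)}{1-d(k-1)z^{k-2}(1-z)}\,\mathrm dz,
\]
finite precisely because $d<\dmin(k)$ keeps $1-\mu(z)$ bounded away from $0$; note $\Lambda\to\infty$ as $d\uparrow\dmin(k)$, which continuously matches the $o(1)$ of part~(ii).

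For part~(ii), when $\dmin(k)<d<\dsat(k)$ we have $\mu(\theta)>1$ on an interval around $\theta^*=\tfrac{k-2}{k-1}$. Running $\UCP$: either a conflict occurs before the process reaches this window, so $\BPGD$ already fails, or it reaches time $\theta^*n$ with $\FBP{\theta^*n}$ still satisfiable, and then (by the same contiguity) the two-clauses of $\FBP{\theta^*n}$ form essentially a uniformly random graph of average degree $\mu(\theta^*)>1$ on $(1-\theta^*)n$ vertices; its giant component therefore carries $\Omega(n)$ independent cycles, and since the XOR-constants are uniform this makes $\FBP{\theta^*n}$ — hence $\BPGD$ — unsatisfiable \whp. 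Either way $\BPGD$ fails \whp. (The restriction $d<\dsat(k)$ only ensures the statement is non-vacuous, $\PHI$ being satisfiable \whp.)

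The crux, and the point where the ``quenched'' techniques from \cite{Ayre,Maurice} are indispensable, is the contiguity claim: controlling the law of $\FBP{t}$ given that no conflict has yet occurred, which underpins both the sharp Poisson constant in (i) and the unsatisfiability argument in (ii). A secondary difficulty is carrying out the Poisson bookkeeping in (i) precisely enough to recover the exact integrand — in particular handling multi-edges in the residual two-clause multigraph and the correlations between cascades started at nearby times.
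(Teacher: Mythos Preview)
Your reduction to \UCP\ via \Prop~\ref{prop_UCP} and the overall architecture for part~(i) --- track the clause-length profile by the differential-equations method, model each cascade as a subcritical branching process, and show the conflict count is asymptotically Poisson --- match the paper's. There is, however, a genuine gap in your parametrisation. You assert that ``the variables fixed by time $t=\theta n$ look like a uniformly random $\theta n$-subset'', but by iteration $t$ the set of fixed variables comprises $x_1,\ldots,x_t$ \emph{together with} everything forced by unit-clause propagation, so its size is $\alpha_*(d,k,\lambda(\theta))\,n$ with $\lambda(\theta)=-\log(1-\theta)$, not $\theta n$; this is precisely what the paper's \Prop~\ref{prop_uc_alpha} establishes. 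Consequently the branching ratio at iteration $t$ is $f_n(t)=d(k-1)\alpha_*^{k-2}(1-\alpha_*)$, not $d(k-1)\theta^{k-2}(1-\theta)$. Your integral for $\Lambda$ is nonetheless correct --- but only because it is the result one obtains \emph{after} the change of variables $\theta\mapsto z=\alpha_*$ (using $\partial\alpha_*/\partial\lambda$ from \eqref{eq_alphas}), a step your sketch omits. In effect your formulas are right if your variable is read as ``fraction of variables fixed'' rather than $t/n$. The paper also replaces your ``collision with probability $\tfrac12$'' heuristic by an explicit combinatorial characterisation: the only obstructions are \emph{toxic cycles} (\Lem~\ref{lem_tox}), and \Prop~\ref{prop_uc_error} computes the conflict probability at step $t$ as the chance that the cascade meets one.

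For part~(ii) your route is genuinely different from the paper's, and the parametrisation issue resurfaces. Your $\theta^\star=(k-2)/(k-1)$ is where $\mu(z)$ is maximised over fraction-fixed $z$, but the cascades go critical already when the fraction fixed reaches $z_*$, the smaller root of $d(k-1)z^{k-2}(1-z)=1$, which corresponds to decimation time $\theta_*$ in the paper's notation; one cannot simply ``reach time $\theta^\star n$'' with the profile intact. A cleaner version of your idea is to note that the same Poisson computation as in (i), run up to $\theta_*-\eps$, gives an expected conflict count $\int_0^{z_*-\eps'}\frac{\mu(z)^2}{4(1-z)(1-\mu(z))}\,\mathrm dz$, which diverges logarithmically as $\eps\to0$ since $1-\mu(z)$ vanishes linearly at $z_*$; this already forces failure \whp\ and avoids the giant-component analysis altogether. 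The paper instead couples \UCP\ with the decimation process and invokes the quenched count of null variables (\Prop~\ref{cor_frz}): for $\tcond<\theta<\theta_*$ one has $|V_0(\FDC t)|=\alpha^*n+o(n)$ while \UCP\ has only fixed $\alpha_*n<\alpha^*n$ variables (\Prop~\ref{prop_bpgd_cond}), so \UCP\ makes a random choice on a frozen variable and fails. This is where the results of \cite{Ayre,Maurice} actually enter --- in the nullity formula \Prop~\ref{prop_nul} feeding into \Prop~\ref{cor_frz} --- not in any ``contiguity'' claim; the randomness of $\FUC t$ given its clause profile (Fact~\ref{fact_deferred}) is just the principle of deferred decisions and is elementary.
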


\Thm~\ref{thm_bpgd} vindicates the predictions from \RTS~\cite[\Sec~4]{RTS} as to the performance of \BPGD, and improves over the results from Yung~\cite{Yung}.
Specifically, \Thm~\ref{thm_bpgd}~(i) verifies the formula for the success probability from~\cite[Eq.~(38)]{RTS}. 
Combinatorially, the formula \eqref{eqthm_bpgd} results from the possible presence of bounded length cycles (so-called toxic cycles) that may cause the algorithm to run into contradictions.
This complements Yung's prior work, that has no positive result on the performance of \BPGD. 
Moreover, Yung's negative results~\cite[\Thm s~2--3]{Yung} only apply to $k\geq9$ and to $d>\dcore(k)$, while \Thm~\ref{thm_bpgd}~(ii) covers all $k\geq3$ and kicks in at the correct threshold $\dmin(k)<\dcore(k)$ predicted in~\cite{RTS}.

\subsection{The decimation process}\label{sec_dc}
In addition to the \BPGD\ algorithm itself, the heuristic work~\cite{RTS} considers an idealised version of the algorithm, the {\em decimation process}.
This thought experiment highlights the conceptual reasons behind the success/failure of \BPGD.
Just like \BPGD, the decimation process assigns values to variables one after the other for good.
But instead of the BP `approximations' the decimation process uses the {\em actual} marginals given its previous decisions.
To be precise, suppose that the input formula $\PHI$ is satisfiable and that variables $x_1,\ldots,x_t$ have already been assigned values $\SIGDC(x_1),\ldots,\SIGDC(x_t)$ in the previous iterations.
Obtain $\FDC{t}$ by substituting the values $\SIGDC(x_1),\ldots,\SIGDC(x_t)$ for $x_1,\ldots,x_t$ and dropping any clauses that do not contain any of $x_{t+1},\ldots,x_n$.
Thus, $\FDC t$ is a XORSAT formula with variables $x_{t+1},\ldots,x_n$.
Let $\SIGMA_{\FDC{t}}$ be a random satisfying assignment of $\FDC{t}$.
Then the decimation process sets $x_{t+1}$ according to the true marginal $\pr\brk{\SIGMA_{\FDC{t}}(x_{t+1})=1\mid\FDC t}$, thus ultimately returning a uniformly random satisfying assignment of $\PHI$.

\begin{algorithm}[h!]
 \KwData{a random $k$-XORSAT formula $\PHI$, conditioned on being satisfiable}
 \For{$t=0,\ldots,n-1$}{compute $\pi_{\FDC{t}}=\pr\brk{\SIGMA_{\FDC{t}}(x_{t+1})=1\mid\FDC t}$\;
		set
		$\SIGDC(x_{t})=\begin{cases}
			1&\mbox{with probability }\pi_{\FDC{t}}\\
			0&\mbox{with probability }1-\pi_{\FDC{t}}
					\end{cases}$\;
		}
	\Return $\SIGDC$\;
 \caption{The decimation process.}\label{alg_dec}
 \label{decimationprocess}
\end{algorithm}

Clearly, if indeed the BP `approximations' are correct, then the decimation process and \BPGD\ are identical.
Thus, a key question is for what parameter regimes the two process coincide or diverge, respectively.
As it turns out, this question is best answered by parametrize not only in terms of the average variable degree $d$, but also in terms of the `time' parameter $t$ of the decimation process.

\subsection{Phase transitions of the decimation process}\label{sec_results_dc}
\RTS\ heuristically identify several phase transitions in terms of $d$ and $t$ that the decimation process undergoes.
We will confirm these predictions mathematically and investigate how they relate to the performance of \BPGD.

The first set of relevant phase transitions concerns the so-called non-reconstruction property.
Roughly speaking, non-reconstruction means that the marginal $\pi_{\FDC{t}}=\pr\brk{\SIGMA_{\FDC{t}}(x_{t+1})=1\mid\FDC t}$ is determined by short-range rather than long-range effects.
Since Belief Propagation is essentially a local algorithm, one might expect that the (non-)reconstruction phase transition coincides with the threshold up to which \BPGD\ succeeds; cf.\ the discussions in~\cite{Braunstein,pnas}.

To define (non-)reconstruction precisely, we associate a bipartite graph $G(\FDC{t})$ with the formula $\FDC{t}$.
The vertices of this graph are the variables and clauses of $\FDC{t}$.
Each variable is adjacent to the clauses in which it appears.
For a (variable or clause) vertex $v$ of $G(\FDC{t})$ let $\partial v$ be the set of neighbours of $v$ in $G(\FDC{t})$.
More generally, for an integer $\ell\geq1$ let $\partial^\ell v$ be the set of vertices of $G(\FDC{t})$ at shortest path distance precisely $\ell$ from $v$.
Following~\cite{pnas}, we say that $\FDC{t}$ has the {\em non-reconstruction property} if
\begin{align}\label{eqnonrec}
	\lim_{\ell\to\infty}\limsup_{n\to\infty}\ex\brk{\abs{\pr\brk{\SIGMA_{\FDC{t}}(x_{t+1})=1\,\Big|\,\FDC{t},\,\cbc{\SIGMA_{\FDC{t}}(y)}_{y\in\partial^{2\ell}x_{t+1}}}-\pr\brk{\SIGMA_{\FDC{t}}(x_{t+1})=1\mid\FDC{t}}}\,\big|\,\PHI\mbox{ satisfiable}}&=0.
\end{align}
Conversely, $\FDC t$ has the {\em reconstruction property} if 
\begin{align}\label{eqrec}
	\liminf_{\ell\to\infty}\,\liminf_{n\to\infty}\,\ex\brk{\abs{\pr\brk{\SIGMA_{\FDC{t}}(x_{t+1})=1\,\Big|\,\FDC{t},\,\cbc{\SIGMA_{\FDC{t}}(y)}_{y\in\partial^{2\ell}x_{t+1}}}-\pr\brk{\SIGMA_{\FDC{t}}(x_{t+1})=1\mid\FDC{t}}}\,\big|\,\PHI\mbox{ sat.}}&>0.
\end{align}

To parse \eqref{eqnonrec}, notice that in the left probability term we condition on both the outcome $\FDC{t}$ of the first $t$ steps of the decimation process and on the values $\SIGMA_{\FDC{t}}(y)$ that the random solution $\SIGMA_{\FDC t}$ assigns to the variables $y$ at distance exactly $2\ell$ from $x_{t+1}$.
By contrast, in the right probability term we only condition on $\FDC t$.
Thus, the second probability term matches the probability $\pi_{\FDC{t}}$ from the decimation process.
Hence, \eqref{eqnonrec} compares the probability that a random solution sets $x_{t+1}$ to one given the values $\SIGMA_{\FDC{t}}(y)$ of {\em all} variables $y$ at distance $2\ell$ from $x_{t+1}$ with plain marginal probability that $x_{t+1}$ is set to one.
What~\eqref{eqnonrec} asks is that these two probabilities be asymptotically equal in the limit of large $\ell$, with high probability over the choice of $\PHI$ and the prior steps of the decimation process.
Thus, so long as non-reconstruction holds `long-range effects', meaning anything beyond distance $2\ell$ for large enough but fixed $\ell$, are negligible.

Confirming the predictions from~\cite{RTS}, the following theorem identifies the precise regimes of $d,t$ where (non-)reconstruction holds.
To state the theorem, we need to know that for $\dmin(k)<d<\dsat(k)$ the polynomial $d(k-1)z^{k-2}(1-z)-1$ has precisely two roots $0<z_*=z_*(d,k)<z^*=z^*(d,k)<1$; we are going to prove this as part of \Prop~\ref{prop_greg} below.
Let
\begin{align}\label{eqlambdas}
	\lambda_*&=\lambda_*(d,k)=-\log(1-z_*)-\frac{z_*}{(k-1)(1-z_*)}>\lambda^*=\lambda^*(d,k)=\max\cbc{0,-\log(1-z^*)-\frac{z^*}{(k-1)(1-z^*)}}\geq0,\\
	\	\theta_*&=\theta_*(d,k)=1-\exp(-\lambda_*)>\theta^*=\theta^*(d,k)=1-\exp(-\lambda^*).
\label{eqthetas}
\end{align}
Additionally, let $\lcond(d,k)$ be the solution to the ODE
\begin{align}\label{eqLena}
	\frac{\partial\lcond(d,k)}{\partial d}&=-\frac{\alpha^*(\lcond(d,k))^k-\alpha_*(\lcond(d,k))^k}{k(\alpha^*(\lcond(d,k))-\alpha_*(\lcond(d,k)))},& \lcond(\dsat(k),k)&=0
\end{align}
on the interval $(\dmin,\dsat]$ and set $\tcond=\tcond(d,k)=1-\exp(-\lcond(d,k))$.
Note that $$\theta^*<\tcond < \theta_*.$$

\begin{theorem}\label{thm_recnonrec}
	Let $k\geq3$ and let $0\leq t=t(n)\leq n$ be a sequence such that $\lim_{n\to\infty}t/n=\theta\in(0,1)$.
	\begin{enumerate}[(i)]
		\item If $d<\dmin(k)$, then $\FDC{t}$ has the non-reconstruction property \whp
		\item If $\dmin(k)<d<\dsat(k)$ and $\theta<\theta^*$ or $\theta>\tcond$, then $\FDC{t}$ has the non-reconstruction property \whp\ 
		\item If $\dmin(k)<d<\dsat(k)$ and $\theta^*<\theta<\tcond$, then $\FDC{t}$ has the reconstruction property \whp
	\end{enumerate}
\end{theorem}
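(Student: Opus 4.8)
The plan is to reduce the reconstruction question on the decimated formula $\FDC t$ to a reconstruction question on a random factor graph with a known local-weak-limit description, and then to analyse Belief Propagation on that limiting tree. First I would establish a structural description of $\FDC t$: after assigning the first $t\approx\theta n$ variables according to the true marginals, the residual formula is (in distribution, conditioned on satisfiability and on the degree profile) close to a random XORSAT instance whose clause-length distribution and variable-degree distribution are explicit functions of $\theta$ and $d$. This is where the parameter $\lambda$ enters: a clause of original length $k$ survives with each substituted variable contributing, and the effective `field' $\lambda=\lambda(\theta)=-\log(1-\theta)$ plays the role of an external-field term, so that the relevant BP fixed-point equation for the surviving-variable marginals is exactly $z=\ph(z)$ with this $\lambda$. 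I would lean on the quenched machinery from~\cite{Ayre,Maurice} (rank of random matrices over $\FF_2$, and the attendant description of the solution-space geometry via the 2-core) to make the `in distribution' statement precise, including that the marginal $\pi_{\FDC t}$ concentrates and that the Aizenman--Sims--Starr / interpolation bounds pin down the free energy so that $\Ph$ at its relevant stationary point governs the cluster structure.

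Next I would set up the tree recursion. On the Galton--Watson tree that is the local weak limit of $G(\FDC t)$, the reconstruction problem for the $\FF_2$-linear model is a broadcast problem: the `hard field' from the boundary at depth $2\ell$ propagates to the root iff, along the recursion, the probability that a variable is `frozen' (marginal in $\{0,1\}$ rather than $1/2$) stays bounded away from its fixed-point value. By the half-integrality~\eqref{eqHalfInt}, a variable's conditional marginal given the boundary is either fully determined or uniform, so the quantity in~\eqref{eqnonrec} is exactly the probability (over the tree and the boundary assignment) that the root is determined minus the unconditional frozen-probability. The frozen-probability recursion on the tree is governed by the map $z\mapsto\ph(z)$: writing $\zeta_\ell$ for the probability that the root is unfrozen given a random depth-$2\ell$ boundary, one gets $\zeta_{\ell+1}=\ph(\zeta_\ell)$ up to lower-order terms, started from $\zeta_0$ determined by the boundary conditioning. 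Non-reconstruction then corresponds to $\zeta_\ell\to\alpha_*(\lambda)$ (the smallest fixed point, which equals the unconditional frozen density), while reconstruction corresponds to $\zeta_\ell$ converging to the larger fixed point $\alpha^*(\lambda)>\alpha_*(\lambda)$ — i.e. the boundary information survives.

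The three cases of the theorem then fall out of a calculus analysis of when $\ph$ has a unique fixed point versus a `gap' between $\alpha_*(\lambda)$ and $\alpha^*(\lambda)$, which is precisely \Prop~\ref{prop_greg}: for $d<\dmin(k)$ the map $\ph$ is a contraction for every relevant $\lambda$, so (i) holds; for $\dmin(k)<d<\dsat(k)$ the derivative condition $d(k-1)z^{k-2}(1-z)=1$ has the two roots $z_*<z^*$, and translating back through $\lambda=\lambda(\theta)$, the unstable window for the recursion is exactly $\lambda^*<\lambda<\lcond$, equivalently $\theta^*<\theta<\tcond$, giving (ii) and (iii) once one checks on which side of the window the iteration $\zeta_\ell$ lands (below the window and above $\tcond$ it contracts to $\alpha_*$; inside it is repelled toward $\alpha^*$). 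The role of $\lcond$ rather than $\lambda_*$ as the upper endpoint is the subtle point: above $\lcond$ the condensation transition has occurred, so although the naive BP recursion still has two fixed points, the \emph{correct} (Gibbs) marginal is governed by the free-energy-dominant branch, and the free-energy comparison built into the ODE~\eqref{eqLena} (which tracks where $\Ph(\alpha^*)$ overtakes $\Ph(\alpha_*)$ along the decimation trajectory) is what certifies non-reconstruction there; this is exactly the place where the quenched second-moment / interpolation input is indispensable and cannot be replaced by the local tree analysis alone.

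I expect the main obstacle to be the last point: rigorously showing that for $\theta>\tcond$ the true decimated marginal follows the $\alpha_*$-branch (non-reconstruction) even though a spurious BP fixed point at $\alpha^*$ persists. This requires controlling the partition function of $\FDC t$ to subexponential precision — essentially establishing that the decimation trajectory stays on the `replica-symmetric' sheet of $\Ph$ past the condensation point — and coupling the conditional-on-boundary measure to the unconditional one via a contiguity/planting argument. Everything else (the local weak limit of $\FDC t$, the tree recursion, and the calculus of $\ph,\Ph$) is either standard or already packaged in \Prop~\ref{prop_greg} and the cited rank results.
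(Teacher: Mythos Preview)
Your overall architecture---local weak convergence to a Galton--Watson tree, analysis of the WP/BP recursion $z\mapsto\ph(z)$, and the calculus of \Prop~\ref{prop_greg}---matches the paper's, and your reading of cases (i) and (iii) is correct. But your explanation of case (ii) for $\theta>\tcond$ is backwards, and this is not a minor slip: it inverts the actual mechanism.

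You write that for $\theta>\tcond$ ``the true decimated marginal follows the $\alpha_*$-branch'' and that the obstacle is to show the system ``stays on the replica-symmetric sheet past the condensation point.'' In fact the opposite happens. The conditional-on-boundary recursion (starting from fully frozen boundary, i.e.\ $\zeta_0=1$) converges to $\alpha^*(\lambda)$ throughout the window $\lambda^*<\lambda<\lambda_*$; it does not suddenly contract to $\alpha_*$ once $\lambda$ exceeds $\lcond$. What changes at $\lcond$ is the \emph{unconditional} frozen fraction $|V_0(\FDC t)|/n$: by \Prop~\ref{cor_frz} this equals $\amax$, which jumps from $\alpha_*$ to $\alpha^*$ precisely at $\lambda=\lcond$. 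So non-reconstruction for $\theta>\tcond$ holds because \emph{both} the conditional and the unconditional frozen densities sit at $\alpha^*$---the true marginal has condensed up to meet the boundary-conditioned one, not the other way around. Your proposed argument (showing the true marginal remains at $\alpha_*$) would, if it succeeded, actually prove reconstruction in this regime, which is false.

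Concretely, the paper's \Lem~\ref{prop_recon} reduces the reconstruction quantity to comparing $\vecone\{x_{t+1}\in V_{\fzn,\ell}\cup V_{\nll,\ell}\}$ (conditional side, size $\sim\alpha^*n$ by \Prop~\ref{prop_WP}) with $\vecone\{x_{t+1}\in V_0\}$ (unconditional side, size $\sim\amax n$ by \Prop~\ref{cor_frz}). The quenched nullity input from~\cite{Maurice} is indeed essential, but its role is to certify $|V_0|\sim\alpha^*n$ above $\tcond$, not to keep $|V_0|$ pinned at $\alpha_*n$.
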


\Thm~\ref{thm_recnonrec} shows that $\dmin(k)$ marks the precise threshold of $d$ up to which the decimation process $\FDC{t}$ exhibits non-reconstruction for {\em all} $0\leq t\leq n$ \whp\
By contrast, for $\dmin(k)<d<\dsat(k)$ there is a regime of $t$ where reconstruction occurs.
In fact, as \Prop~\ref{prop_greg} shows, for $d>\dcore(k)$ we have $\theta^*=0$ and thus reconstruction holds even at $t=0$, i.e., for the original, undecimated random formula $\PHI$.
Prior to the contribution~\cite{RTS}, it had been suggested that this precise scenario (reconstruction on the original problem instance) is the stone on which \BPGD\ stumbles~\cite{Braunstein}.
In fact, Yung's negative result kicks in at this precise threshold $\dcore(k)$.
However, \Thm s~\ref{thm_bpgd} and~\ref{thm_recnonrec} show that matters are more subtle.
Specifically, for $\dmin(k)<d<\dcore(k)$ reconstruction, even though absent in the initial formula $\PHI$, occurs at a later `time' $t>0$ as decimation proceeds, which suffices to trip \BPGD\ up.
Also, remarkably, \Thm~\ref{thm_recnonrec} shows that non-reconstruction is not `monotone'.
The property holds for $\theta<\theta^*$ and then again for $\theta>\tcond$, but not on the interval $(\theta^*,\tcond)$ as visualised in Figure \ref{fig_lena}.

But there is one more surprise.
Namely, \Thm~\ref{thm_recnonrec}~(ii) might suggest that for $\dmin(k)<d<\dsat(k)$ Belief Propagation manages to compute the correct marginals for $t/n\sim \theta >\tcond$, as non-reconstruction kicks back in.
But remarkably, this is not quite true.
Despite the fact that non-reconstruction holds, \BPGD\ goes astray because the algorithm starts its message passing process from a mistaken, oblivious initialisation.
As a consequence, for $t/n\sim\theta\in(\tcond,\theta_*)$ the BP `approximations' remain prone to error.
To be precise, the following result identifies the precise `times' where BP succeeds/fails.
To state the result let $\mu_{\FDC{t}}$ denote the BP `approximation' of the true marginal $\pi_{\FDC{t}}$ of variable $x_{t+1}$ in the formula $\FDC t$ created by the decimation process (see \Sec~\ref{sec_bp} for a reminder of the definition).
Also recall that $\pi_{\FDC t}$ denotes the correct marginal as used by the decimation process.

\begin{theorem}\label{thm_cond}
	Let $k\geq3$ and let $0\leq t=t(n)\leq n$ be a sequence such that $\lim_{n\to\infty}t/n=\theta\in(0,1)$.
	\begin{enumerate}[(i)]
		\item If $0<d<\dmin(k)$ then $\mu_{\FDC{t}}=\pi_{\FDC{t}}$ \whp\
		\item If $\dmin(k)<d<\dsat(k)$ and $\theta<\tcond$ or $\theta>\theta_*$, then $\mu_{\FDC{t}}=\pi_{\FDC{t}}$ \whp
		\item If $\dmin(k)<d<\dsat(k)$ and $\tcond<\theta<\theta_*$, then $\ex\abs{\mu_{\FDC{t}}-\pi_{\FDC{t}}}=\Omega(1).$
	\end{enumerate}
\end{theorem}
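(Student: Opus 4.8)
The plan is to reduce both $\mu_{\FDC t}$ and $\pi_{\FDC t}$ to statistics of the two extremal fixed points $\alpha_*(\lambda)$ and $\alpha^*(\lambda)$ of $\ph$, where $\lambda=\lambda(\theta)=-\log(1-\theta)$ is the ``effective field'' left behind by $\theta n$ steps of the decimation process (so that $\theta=1-\exp(-\lambda)$, consistently with \eqref{eqthetas}). The first step is structural: because the decimation process returns a uniformly random satisfying assignment, $\FDC t$ has the same distribution as $\PHI$ with a uniformly random $\theta$-fraction of variables pinned to the values of a random solution; keeping the pinned variables in place rather than deleting them turns each pinning into a unit-clause ``field'', and the pinning lemma / local-weak-limit analysis of the decimation process identifies the resulting ensemble, for the purposes of the marginal of a typical surviving variable, with the random $k$-XORSAT model of density $d$ and external field $\lambda(\theta)$ whose cavity equation is $z=\ph(z)$ and whose Bethe free entropy is $\Ph$.

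Next I would pin down the BP side. By half-integrality \eqref{eqHalfInt} and \Prop~\ref{prop_UCP}, running BP on $\FDC t$ from the oblivious (uniform) initialisation is the same as iterating unit clause propagation: $\mu_{\FDC t}(x_{t+1})\in\{0,1\}$ precisely when UCP forces $x_{t+1}$ (and then it forces the value common to all solutions), and $\mu_{\FDC t}(x_{t+1})=1/2$ otherwise. Since the warning/UCP recursion is monotone and started from ``no forced messages'', it converges to the \emph{least} fixed point of $\ph$; monotonicity of $\ph$ is exactly what prevents it from being caught at the unstable intermediate fixed point when there are three. Consequently $\Pr[\mu_{\FDC t}(x_{t+1})\neq1/2]\to g(\alpha_*(\lambda))$ for the explicit, strictly increasing ``frozen-fraction'' function $g$ attached to the field-$\lambda$ ensemble.

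On the truth side, $\pi_{\FDC t}(x_{t+1})\in\{0,1\}$ iff $x_{t+1}$ lies in the backbone of $\FDC t$, i.e.\ iff $e_{x_{t+1}}$ lies in the $\FF_2$-row space of the coefficient matrix. Here I would invoke the quenched rank / Bethe-free-entropy machinery of \cite{Ayre,Maurice}: the backbone fraction of the field-$\lambda$ ensemble converges to $g(\hat\alpha(\lambda))$ with the \emph{same} $g$, where $\hat\alpha(\lambda)\in\{\alpha_*(\lambda),\alpha^*(\lambda)\}$ is the global maximiser of $\Ph$. By \Prop~\ref{prop_greg} and the definition \eqref{eqLena} of $\lcond$: for $\lambda<\lcond$ (i.e.\ $\theta<\tcond$) the maximiser is $\alpha_*(\lambda)$; for $\lcond<\lambda<\lambda_*$ (i.e.\ $\tcond<\theta<\theta_*$) it is $\alpha^*(\lambda)>\alpha_*(\lambda)$; and for $\lambda<\lambda^*$ or $\lambda>\lambda_*$ — in particular for every $\lambda\geq0$ when $d<\dmin(k)$ — there is a unique fixed point, so $\alpha_*(\lambda)=\alpha^*(\lambda)$. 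Since UCP is sound, $\{\mu_{\FDC t}\neq1/2\}\subseteq\{\pi_{\FDC t}\neq1/2\}$ and the two agree on that event, so $|\mu_{\FDC t}-\pi_{\FDC t}|=\tfrac12\ind{\mu_{\FDC t}=1/2,\ \pi_{\FDC t}\neq1/2}$ and
\[
\Pr[\mu_{\FDC t}\neq\pi_{\FDC t}]=\Pr[\pi_{\FDC t}\neq1/2]-\Pr[\mu_{\FDC t}\neq1/2]\;\longrightarrow\;g(\hat\alpha(\lambda))-g(\alpha_*(\lambda)).
\]
In cases (i) and (ii) the right-hand side vanishes, so $\mu_{\FDC t}=\pi_{\FDC t}$ \whp\ (to pass from agreeing frozen-fractions to the single variable $x_{t+1}$ one uses that $x_{t+1}$ is exchangeable with $x_{t+2},\dots,x_n$ in $\FDC t$, so an $o(1)$ misclassification rate gives $\Pr[\mu_{\FDC t}\neq\pi_{\FDC t}]=o(1)$). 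In case (iii), $\hat\alpha(\lambda)=\alpha^*(\lambda)>\alpha_*(\lambda)$ and $g$ is strictly increasing, so the right-hand side is a constant $c=c(d,k,\theta)>0$, whence $\ex\abs{\mu_{\FDC t}-\pi_{\FDC t}}=\tfrac12\Pr[\mu_{\FDC t}=1/2,\ \pi_{\FDC t}\neq1/2]\to c/2$, i.e.\ $\Omega(1)$.

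The main obstacle is the combination of the structural input of the first step with the quenched analysis of the third: one must show that the adaptive decimation process genuinely produces the clean field-$\lambda$ ensemble with $\lambda=-\log(1-\theta)$ and not some less tractable perturbation, and then evaluate the $\FF_2$-row-space membership probability of a typical variable in that ensemble without moment computations, identifying it with $g$ evaluated at the $\Ph$-maximiser. Once both the ``BP'' frozen fraction $g(\alpha_*(\lambda))$ and the ``true'' frozen fraction $g(\hat\alpha(\lambda))$ are in hand, the comparison above and the case split driven by \Prop~\ref{prop_greg} and \eqref{eqLena} are routine.
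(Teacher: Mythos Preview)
Your proposal is correct and follows essentially the same route as the paper: identify the BP-forced fraction with the least fixed point $\alpha_*(\lambda)$ via the WP/local-weak-convergence analysis (Fact~\ref{fact_wp}, \Prop~\ref{prop_WP}), identify the true backbone fraction with the maximiser $\amax(\lambda)$ of $\Ph$ via the quenched nullity formula of \cite{Maurice} plus interpolation in $\lambda$ (\Prop s~\ref{prop_nul} and~\ref{cor_frz}), and conclude from the containment $V_{\nll}\subset V_0$ and the case split of \Prop~\ref{prop_greg}. Two cosmetic points: the relevant reduction of BP to unit-clause propagation is Fact~\ref{fact_wp} rather than \Prop~\ref{prop_UCP}, and your abstract ``frozen-fraction function $g$'' is in the paper simply the identity up to the affine shift by $\theta$.
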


The upshot of \Thm s~\ref{thm_recnonrec}--\ref{thm_cond} is that the relation between the accuracy of BP and reconstruction is subtle.
Everything goes well so long as $d<\dmin$ as non-reconstruction holds throughout and the BP approximations are correct.
But if $\dmin<d<\dsat$ and $\theta^*<\theta<\tcond$, then \Thm~\ref{thm_recnonrec}~(iii) shows that reconstruction occurs.
Nonetheless, \Thm~\ref{thm_cond}~(ii) demonstrates that the BP approximations remain valid in this regime.
By contrast, for $\tcond<\theta<\theta_*$ we have non-reconstruction by \Thm~\ref{thm_recnonrec}~(iii), but \Thm~\ref{thm_cond}~(iii) shows that BP misses its mark with a non-vanishing probability.
Finally, for $\theta>\theta_*$ everything is in order once again as BP regains its footing and non-reconstruction holds.
Unfortunately \BPGD\ is unlikely to reach this happy state because the algorithm is bound to make numerous mistakes at times $t/n \sim \theta \in(\tcond,\theta_*)$.

\begin{figure} 
	\begin{subfigure}[b]{0.3\textwidth}
	\includegraphics[height=41mm]{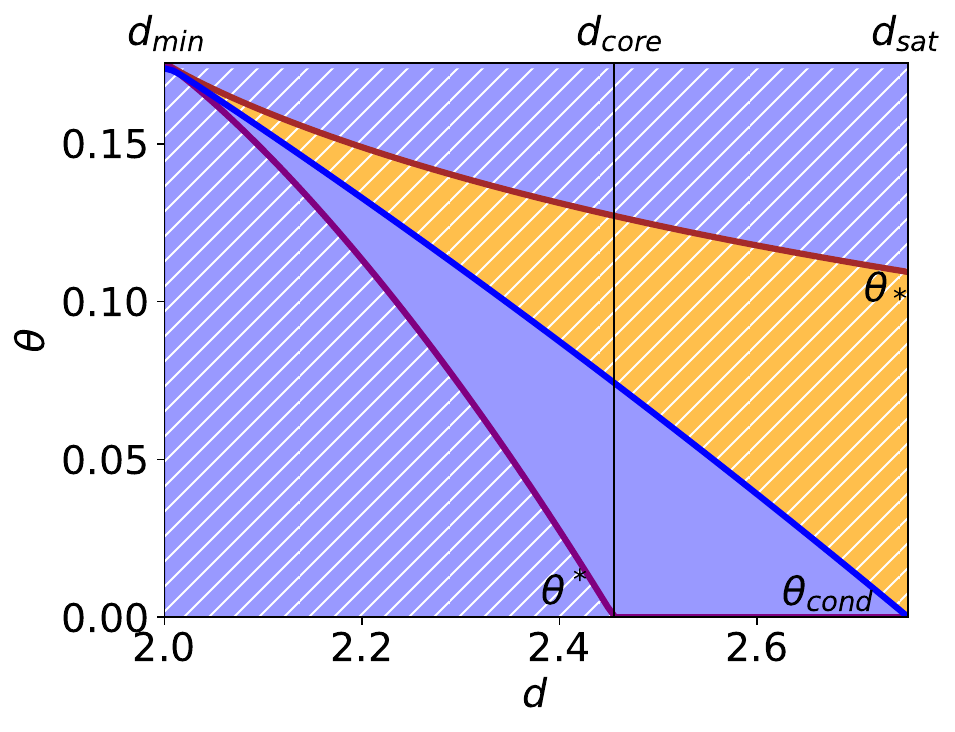}
	\caption{$k=3$}
	\end{subfigure}
	\hspace{5mm}
	\begin{subfigure}[b]{0.3\textwidth}
	\includegraphics[height=41mm]{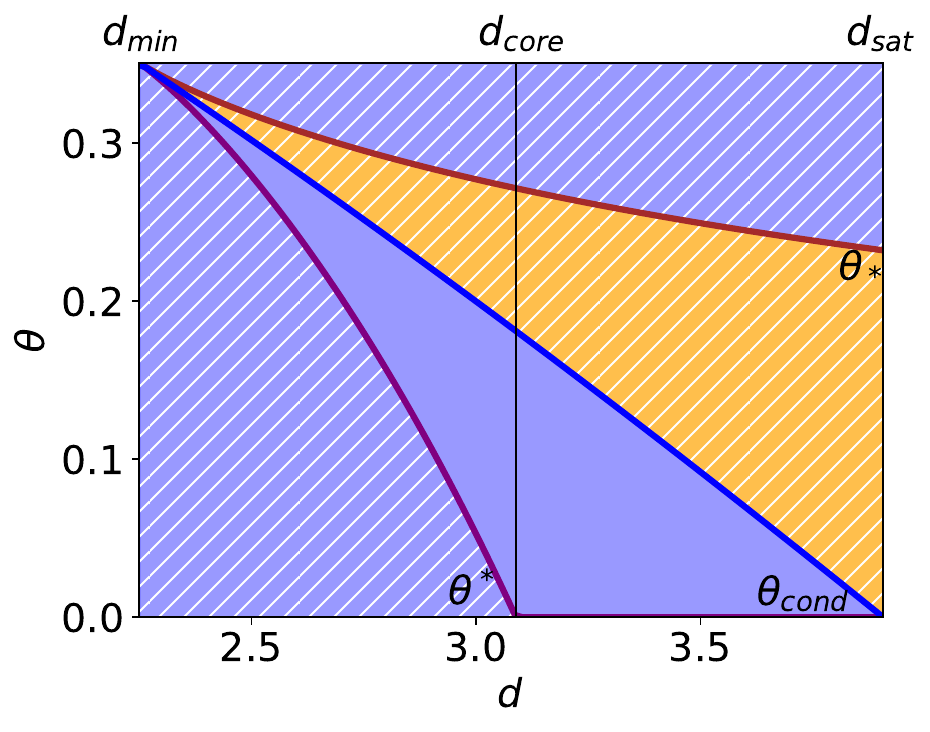}
	\caption{$k=4$}
	\end{subfigure}	
	\hspace{5mm}
	\begin{subfigure}[b]{0.3\textwidth}
	\includegraphics[height=41mm]{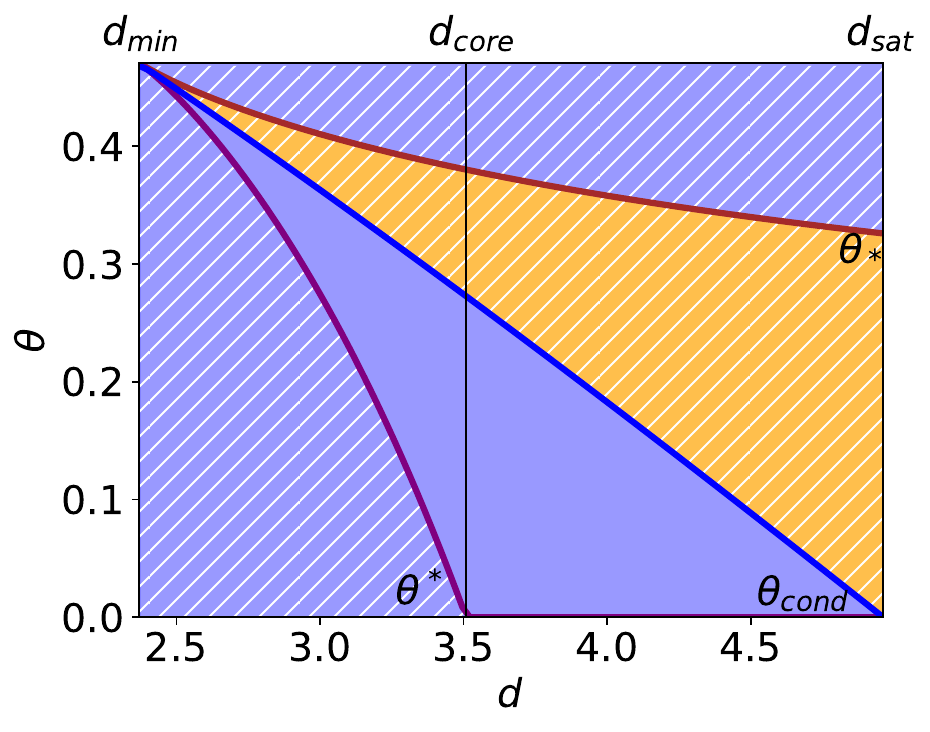}
	\caption{$k=5$}
	\end{subfigure}
	\caption{
		The phase diagrams for $k=3,4,5$ with $d\in(\dmin,\dsat)$ on the horizontal and $\theta$ on the vertical axis.
			The hatched area displays the regime $\theta<\theta^*$ and $\tcond<\theta$ where non-reconstruction holds.
			In the non-hatched area, where $\theta^*<\theta<\tcond$, we have reconstruction.
			Similarly, the blue area displays $\theta<\tcond$ and $\theta>\theta_*$ where BP is correct whereas in the orange area, BP is inaccurate. 
}\label{fig_lena}
\end{figure}

\Thm s~\ref{thm_recnonrec} and~\ref{thm_cond} confirm the predictions from~\cite[\Sec~4]{RTS}.
To be precise, while $\tcond$ matches the predictions of \RTS, the ODE formula~\eqref{eqLena} for the threshold, which is easy to evaluate numerically, does not appear in~\cite{RTS}.
Instead of the ODE formulation, \RTS\ define $\lcond$ as the (unique) $\lambda\geq0$ such that $\Ph(\alpha_*)=\Ph(\alpha^*)$; \Prop~\ref{prop_greg} below shows that both are equivalent.
Illustrating \Thm s~\ref{thm_recnonrec}--\ref{thm_cond}, Figure~\ref{fig_lena} displays the phase diagram in terms of $d$ and $\theta \sim t/n$ for $k=3,4,5$.

\section{Overview}\label{sec_over}

\noindent
This section provides an overview of the proofs of \Thm s~\ref{thm_bpgd}--\ref{thm_cond}.
In the final paragraph we conclude with a discussion of further related work.
{\em We assume throughout that $k\geq3$ is an integer and that $0<d<\dsat(k)$. Moreover, $t=t(n)$ denotes an integer sequence $0\leq t(n)\leq n$ such that $\lim_{n\to\infty}t(n)/n=\theta\in(0,1)$.}

\subsection{Fixed points and thresholds}\label{sec_calc}
The first item on our agenda is to study the functions $\ph,\Ph$ from~\eqref{eqphi}--\eqref{eqPhi}.
Specifically, we are concerned with the maxima of $\Ph$ and the fixed points of $\ph$, the combinatorial relevance of which will emerge as we analyse \BPGD\ and the decimation process.
We begin by observing that the fixed points of $\ph$ are precisely the stationary points of $\Ph$.

\begin{fact}\label{lem_max}
	For any $d>0,\lambda\geq0$ the stationary points $z\in(0,1)$ of $\Phi_{d,k,\lambda}$ coincide with the fixed points of $\phi_{d,k,\lambda}$ in $(0,1)$.
	Furthermore, for a fixed point $z\in(0,1)$ of $\phi_{d,k,\lambda}$ we have
	\begin{align}\label{eq_lem_max}
		\Phi''_{d,k,\lambda}(z)&\begin{cases}
			<0&\mbox{ if }\phi'_{d,k,\lambda}(z)<1,\\
			=0&\mbox{ if }\phi'_{d,k,\lambda}(z)=1,\\
			>0&\mbox{ if }\phi'_{d,k,\lambda}(z)>1.
			\end{cases}
	\end{align}
\end{fact}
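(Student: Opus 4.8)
The plan is a direct computation resting on one algebraic identity: that $\Ph'$ factors through $\ph$. First I would differentiate $\Ph$ from \eqref{eqPhi} term by term. The chain rule applied to $\exp(-\lambda-dz^{k-1})$ produces a factor $-d(k-1)z^{k-2}$; the polynomial terms $-\tfrac{d(k-1)}{k}z^k$ and $dz^{k-1}$ contribute $-d(k-1)z^{k-1}$ and $d(k-1)z^{k-2}$ respectively; and the constant $-d/k$ drops out. Collecting everything and pulling out the common factor gives
\begin{align*}
\Ph'(z)=d(k-1)z^{k-2}\bc{1-\exp(-\lambda-dz^{k-1})-z}=d(k-1)z^{k-2}\bc{\ph(z)-z},
\end{align*}
where the last step uses the definition \eqref{eqphi} of $\ph$ to recognise the bracket as $\ph(z)-z$.

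Since $k\geq3$, the prefactor $d(k-1)z^{k-2}$ is strictly positive for every $z\in(0,1)$. Hence for such $z$ we have $\Ph'(z)=0$ if and only if $\ph(z)=z$, which is exactly the assertion that the stationary points of $\Ph$ in $(0,1)$ coincide with the fixed points of $\ph$ in $(0,1)$.

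For the second part I would differentiate the factored expression once more via the product rule:
\begin{align*}
\Ph''(z)=d(k-1)(k-2)z^{k-3}\bc{\ph(z)-z}+d(k-1)z^{k-2}\bc{\ph'(z)-1}.
\end{align*}
Evaluating at a fixed point $z\in(0,1)$ of $\ph$, the first summand carries the factor $\ph(z)-z=0$ and therefore vanishes, leaving $\Ph''(z)=d(k-1)z^{k-2}\bc{\ph'(z)-1}$. As the prefactor is again strictly positive on $(0,1)$, the sign of $\Ph''(z)$ equals the sign of $\ph'(z)-1$, which is precisely the three-case statement \eqref{eq_lem_max}.

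There is no genuine obstacle here: once the factorisation of $\Ph'$ is spotted, the whole claim is a two-line differentiation. The only points needing a little care are that $k\geq3$ guarantees $z^{k-2}\neq0$ on $(0,1)$, so that dividing out the prefactor is legitimate, and that the equivalence is claimed only for interior points — the endpoints $z=0$ and $z=1$ behave differently (for instance $z=0$ is always a stationary point of $\Ph$). If convenient one can also record the companion identity $\ph'(z)=d(k-1)z^{k-2}\exp(-\lambda-dz^{k-1})=d(k-1)z^{k-2}\bc{1-\ph(z)}$, though it is not needed for the statement as formulated.
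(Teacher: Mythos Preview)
Your proof is correct and follows essentially the same approach as the paper: both compute $\Ph'(z)=d(k-1)z^{k-2}(\ph(z)-z)$, read off the equivalence of stationary points and fixed points from the positivity of the prefactor on $(0,1)$, then differentiate once more and observe that at a fixed point only the term $d(k-1)z^{k-2}(\ph'(z)-1)$ survives. Your presentation via the product rule is marginally more explicit than the paper's, but the argument is the same.
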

\begin{proof}
	Differentiating $\Phi_{d,k,\lambda}$, we obtain
	\begin{align}\label{eq_lem_max_1}
		\Phi_{d,k,\lambda}'(z)&=d(k-1)z^{k-2}\bc{\phi_{d,k,\lambda}(z)-z}.
	\end{align}
	Hence, a point $z\in(0,1)$ is a fixed point of $\phi_{d,k,\lambda}$ iff $\Phi'_{d,k,\lambda}(z)=0$.
	Differentiating \eqref{eq_lem_max_1} once more, we obtain
	\begin{align}\label{eq_lem_max_2}
		\Phi_{d,k,\lambda}''(z)&=d(k-1)z^{k-3}\brk{(k-2)\bc{\phi_{d,k,\lambda}(z)-z}
		+z\bc{\phi_{d,k,\lambda}'(z)-1}}.
	\end{align}
	Clearly, if $\phi_{d,k,\lambda}(z)=z$, then \eqref{eq_lem_max_2} simplifies to $\Phi_{d,k,\lambda}''(z)=d(k-1)z^{k-2}(\phi_{d,k,\lambda}'(z)-1)$, whence \eqref{eq_lem_max} follows.
\end{proof}

We recall that $0\leq\alpha_*=\alpha_*(d,k,\lambda)\leq\alpha^*=\alpha^*(d,k,\lambda)\leq1$ are the smallest and the largest fixed point of $\ph$ in $[0,1]$, respectively.
Fact~\ref{lem_max} shows that $\Ph$ attains its global maximum in $[0,1]$ at $\alpha_*$ or $\alpha^*$.
Let $$\amax=\amax(d,k,\lambda)\in\{\alpha_*,\alpha^*\}$$ be the maximiser of $\Ph$; if $\Ph(\alpha_*)=\Ph(\alpha^*)$, set $\amax=\alpha_*$.
The following proposition characterises the fixed points of $\ph$ and the maximiser $\amax$.

\begin{proposition}\label{prop_greg}
~
	\begin{enumerate}[(i)]
		\item If $d<\dmin(k)$, then for all $\lambda>0$ we have $\alpha_*(d,k,\lambda)=\alpha^*(d,k,\lambda)$, the function $\lambda\in(0,\infty)\mapsto\alpha_*(d,k,\lambda)\in(0,1)$ is analytic, and $\alpha_*(d,k,\lambda)$ is the unique stable fixed point of $\ph$.
		\item If $\dmin(k)<d<\dsat(k)$, then the polynomial $d(k-1)z^{k-2}(1-z)-1$ has precisely two roots $0<z_*<z^*<1$, the numbers $\lambda_*,\lambda^*$ from~\eqref{eqlambdas} satisfy $0\leq \lambda^*<\lambda_*$ and the following is true.
			\begin{enumerate}[(a)]
				\item If $\lambda<\lambda^*$ or $\lambda>\lambda_*$, then $\alpha_*(d,k,\lambda)=\alpha^*(d,k,\lambda)\in(0,1)$ is the unique stable fixed point of $\ph$.
				\item If $\lambda^*<\lambda<\lambda_*$, then $0<\alpha_*(d,k,\lambda)<\alpha^*(d,k,\lambda)<1$ are the only stable fixed points of $\ph$.
				\item  The functions $\lambda\in(0,\lambda_*)\mapsto\alpha_*(d,k,\lambda)$ and $\lambda\in(\lambda^*,\infty)\mapsto\alpha^*(d,k,\lambda)$ are analytic.
		\item If $\dmin(k)<d<\dsat(k)$, then the solution $\lcond$ of~\eqref{eqLena} satisfies $\lambda^*<\lcond=\lcond(d)<\lambda_*$ and
			\begin{align*}
				\amax(d,k,\lambda)&=\begin{cases}
					\alpha_*(d,k,\lambda)&\mbox{ if }\lambda<\lcond,\\
					\alpha^*(d,k,\lambda)&\mbox{ if }\lambda>\lcond.
				\end{cases}
			\end{align*}
			Furthermore, $\Ph(\alpha^*(d,k,\lambda))\neq\Ph(\alpha_*(d,k,\lambda))$ unless $\lambda=\lcond$.
			Thus, the function $\lambda\mapsto\amax(d,k,\lambda)$ is analytic on $(0,\lcond)$ and on $(\lcond,\infty)$, but discontinuous at $\lambda=\lcond$.
		\end{enumerate}
		\end{enumerate}
	\end{proposition}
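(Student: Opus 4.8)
The plan is to reduce the whole proposition to the study of the single function that parametrises the fixed points of $\ph$. Rewriting $z=\ph(z)$ as $-\log(1-z)=\lambda+dz^{k-1}$ shows that for $z\in(0,1)$ the point $z$ is a fixed point of $\ph$ precisely when $\lambda=\Lambda(z):=-\log(1-z)-dz^{k-1}$; note $\Lambda(z)\to0$ as $z\to0^+$, $\Lambda(z)\to\infty$ as $z\to1^-$, and for $\lambda>0$ there are no fixed points at $z\in\{0,1\}$. A short computation gives $\Lambda'(z)=(1-g(z))/(1-z)$ with $g(z):=d(k-1)z^{k-2}(1-z)$, while differentiating $\ph$ and using $\exp(-\lambda-dz^{k-1})=1-z$ at a fixed point shows $\ph'(z)=g(z)$ there; hence by Fact~\ref{lem_max} a fixed point of $\ph$ is a local maximum of $\Ph$ when $g(z)<1$ and a local minimum when $g(z)>1$. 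Finally $g$ vanishes at $0$ and $1$, is strictly unimodal on $(0,1)$ with maximum value $d/\dmin(k)$ at $z=(k-2)/(k-1)$, and when $g(z_0)=1$ one finds $\Lambda(z_0)=-\log(1-z_0)-z_0/((k-1)(1-z_0))$, matching \eqref{eqlambdas}.

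Items~(i) and (a)--(c) of~(ii) then follow by reading the preimages $\Lambda^{-1}(\lambda)$ off the graph of $\Lambda$. If $d<\dmin(k)$ then $g<1$ on $(0,1)$, so $\Lambda$ is a real-analytic increasing bijection $(0,1)\to(0,\infty)$; by the real-analytic inverse function theorem its inverse is analytic, is the unique fixed point, and is stable since $g<1$. If $d>\dmin(k)$ then $g=1$ has exactly two roots $0<z_*<(k-2)/(k-1)<z^*<1$, so $\Lambda$ increases on $(0,z_*)$, decreases on $(z_*,z^*)$, increases on $(z^*,1)$, with local maximum $\lambda_*=\Lambda(z_*)>0$ and local minimum $\Lambda(z^*)$, whence $\lambda^*=\max\{0,\Lambda(z^*)\}$ satisfies $0\le\lambda^*<\lambda_*$. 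From this shape there is one fixed point, in $(0,z_*)$, when $\lambda<\lambda^*$; three fixed points, one in each of $(0,z_*),(z_*,z^*),(z^*,1)$, when $\lambda^*<\lambda<\lambda_*$; and one, in $(z^*,1)$, when $\lambda>\lambda_*$. As $g<1$ on $(0,z_*)\cup(z^*,1)$ and $g>1$ on $(z_*,z^*)$, the outer fixed points are stable and the middle one unstable, which is (a) and (b); and on $(0,\lambda_*)$, resp.\ on $(\lambda^*,\infty)$, the map $\alpha_*$, resp.\ $\alpha^*$, coincides with the analytic inverse of $\Lambda$ restricted to $(0,z_*)$, resp.\ to $(z^*,1)$, giving (c).

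For the part on $\lcond$ and $\amax$ I would work on $\lambda\in(\lambda^*,\lambda_*)$, where $0<\alpha_*<\alpha^*<1$ are the two stable fixed points, and study $F(\lambda):=\Ph(\alpha^*(\lambda))-\Ph(\alpha_*(\lambda))$. Since $\alpha_*$ and $\alpha^*$ are stationary points of $\Ph$, the envelope principle with $\partial_\lambda\Ph=-\exp(-\lambda-dz^{k-1})$ and $\exp(-\lambda-dz^{k-1})=1-z$ at a fixed point gives $\tfrac{\mathrm d}{\mathrm d\lambda}\Ph(\alpha_*(\lambda))=\alpha_*-1$ and $\tfrac{\mathrm d}{\mathrm d\lambda}\Ph(\alpha^*(\lambda))=\alpha^*-1$, hence $F'(\lambda)=\alpha^*-\alpha_*>0$: $F$ is strictly increasing. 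I then check the endpoint signs. As $\lambda\to\lambda_*^-$ we have $\alpha_*\to z_*$ and $\alpha^*\to\beta_*\in(z^*,1)$; at $\lambda=\lambda_*$ the point $z_*$ is a tangency point of $\ph-\id$, which therefore does not change sign there, so (using $\Ph'(z)=d(k-1)z^{k-2}(\ph(z)-z)$) $\Ph_{d,k,\lambda_*}$ strictly increases on $[0,\beta_*]$ and decreases on $[\beta_*,1]$, whence $F(\lambda_*^-)=\Ph_{d,k,\lambda_*}(\beta_*)-\Ph_{d,k,\lambda_*}(z_*)>0$. Symmetrically, if $\lambda^*>0$ the analogous argument at $\lambda=\lambda^*$ (now $z^*$ is the tangency point and the stable fixed point $\gamma^*\in(0,z_*)$ is the global maximiser of $\Ph_{d,k,\lambda^*}$) gives $F(\lambda)<0$ for $\lambda$ just above $\lambda^*$; while if $\lambda^*=0$ — forcing $d\ge\dcore(k)$ and $\alpha_*(d,k,0)=0$ — then $F(0)=\Ph_{d,k,0}(\alpha^*(0))-\Ph_{d,k,0}(0)<0$ for all $d\in(\dmin,\dsat)$, since this quantity is $\le0$ by the characterisation of $\dsat(k)$ in \eqref{eqds}, equals $0$ at $d=\dsat(k)$, and has $d$-derivative $\tfrac1k(\alpha^*(0))^k>0$ on $(\dcore,\dsat)$ by the envelope identity. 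Hence $F$ has a unique zero $\lcond=\lcond(d)\in(\lambda^*,\lambda_*)$, with $\Ph(\alpha_*)>\Ph(\alpha^*)$ for $\lambda<\lcond$ and the reverse for $\lambda>\lcond$; recalling from the discussion after Fact~\ref{lem_max} that $\amax\in\{\alpha_*,\alpha^*\}$, this yields $\amax=\alpha_*$ for $\lambda\le\lcond$ and $\amax=\alpha^*$ for $\lambda\ge\lcond$ (the ranges $\lambda\le\lambda^*$, $\lambda\ge\lambda_*$ being trivial as $\alpha_*=\alpha^*$). Together with the analyticity above, $\lambda\mapsto\amax$ is then analytic on $(0,\lcond)$ and on $(\lcond,\infty)$ and discontinuous at $\lcond$ because $\alpha_*(\lcond)<\alpha^*(\lcond)$.

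To finish I would identify $\lcond$ with the solution of \eqref{eqLena} by differentiating $F_d(\lcond(d))=0$ in $d$: the envelope identity gives $\partial_d\Ph(z)=\tfrac1k(z^k-1)$ at a fixed point, so $\tfrac{\mathrm d}{\mathrm dd}\Ph(\alpha_*)=\tfrac1k((\alpha_*)^k-1)$ and likewise for $\alpha^*$, which yields $0=\tfrac1k\big((\alpha^*)^k-(\alpha_*)^k\big)+(\alpha^*-\alpha_*)\,\lcond'(d)$, i.e.\ exactly \eqref{eqLena}; the boundary condition $\lcond(\dsat(k))=0$ holds because $\alpha_*(\dsat,k,0)=0$ and $\Ph_{\dsat,k,0}(\alpha^*(0))=\Ph_{\dsat,k,0}(0)$, the defining property of $\dsat(k)$. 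I expect the endpoint sign analysis of $F$ to be the main obstacle — proving that $F$ genuinely changes sign strictly inside $(\lambda^*,\lambda_*)$, and in particular extracting strictness of $F(0)<0$ from the definition of $\dsat(k)$ when $\lambda^*=0$, which involves a mild square-root singularity of $\alpha^*(0)$ at $d=\dcore(k)$; everything else is the real-analytic inverse function theorem applied to the explicit function $\Lambda$, together with bookkeeping about which fixed point is which.
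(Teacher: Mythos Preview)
Your proof is correct and follows essentially the same ``routine calculus'' route as the paper, but with one organisational improvement worth noting: you invert the fixed-point equation to $\lambda=\Lambda(z)=-\log(1-z)-dz^{k-1}$ and read the whole fixed-point structure off the graph of $\Lambda$, whereas the paper works directly with $\zeta_\lambda(z)=\ph(z)-z$ and counts its zeros via Rolle's theorem and case analysis (Lemmas~\ref{Claim13}--\ref{lem_zero}). Your parametrisation makes several steps immediate that cost the paper some work; for instance, $\lambda^*<\lambda_*$ is just ``local minimum of $\Lambda$ $<$ local maximum'', while the paper proves it by computing $\partial_d\fl(z_*)$ and $\partial_d\fl(z^*)$ and integrating (\Lem~\ref{lem_abovedmin}). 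The analyticity claims likewise drop out of the analytic inverse function theorem applied to $\Lambda$, as you say, rather than via the Jacobian computation in \Lem~\ref{lem_alphas}.

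For part~(ii)(d) both arguments are the same envelope computation $F'(\lambda)=\alpha^*-\alpha_*>0$, and the identification with the ODE~\eqref{eqLena} is identical. The one substantive difference is at the lower endpoint when $\lambda^*=0$: the paper simply cites the characterisation of the satisfiability threshold from~\cite{Ayre} to get $\Phi_{d,k,0}(0)>\Phi_{d,k,0}(z)$ for all $z>0$ and hence $F(0^+)<0$, whereas you derive the strict inequality $F(0)<0$ self-containedly from $\partial_d F(0)=\tfrac1k(\alpha^*(0))^k>0$ together with $F(0)=0$ at $d=\dsat$. Your route is cleaner here; the square-root singularity of $\alpha^*(0)$ at $d=\dcore$ that you flag is not an obstacle, since you only need $\alpha^*(0)$ to be continuous and positive on $[\dcore,\dsat)$ for the monotonicity argument to go through.
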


\begin{figure}
	\begin{subfigure}[b]{0.45\textwidth}
		\includegraphics[height=50mm]{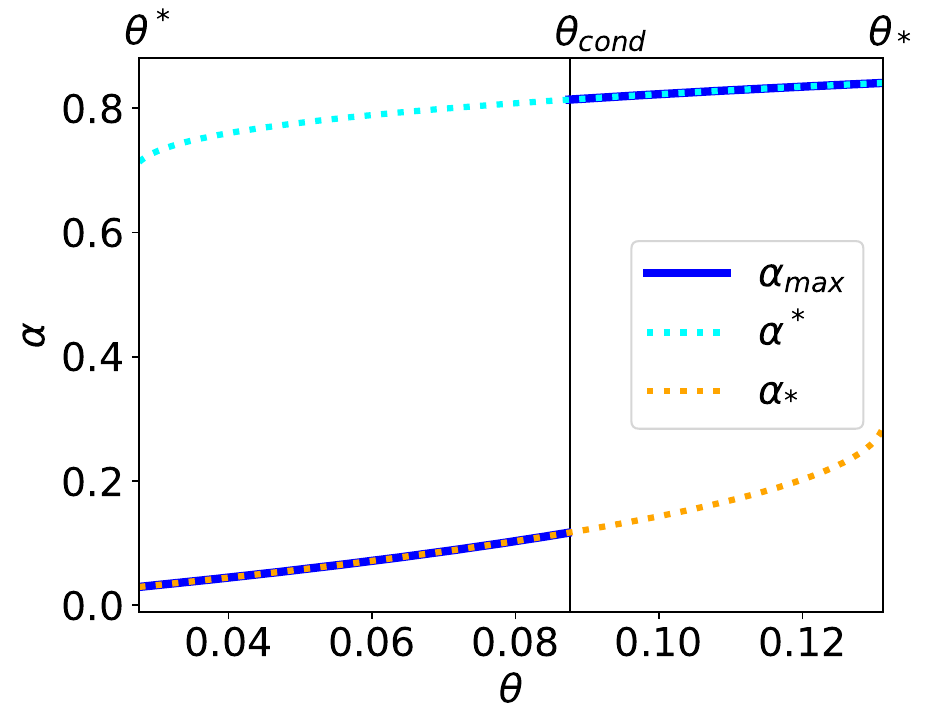}
		\caption{$\amax$ }
	\end{subfigure}
	\begin{subfigure}[b]{0.45\textwidth}		
		\includegraphics[height=50mm]{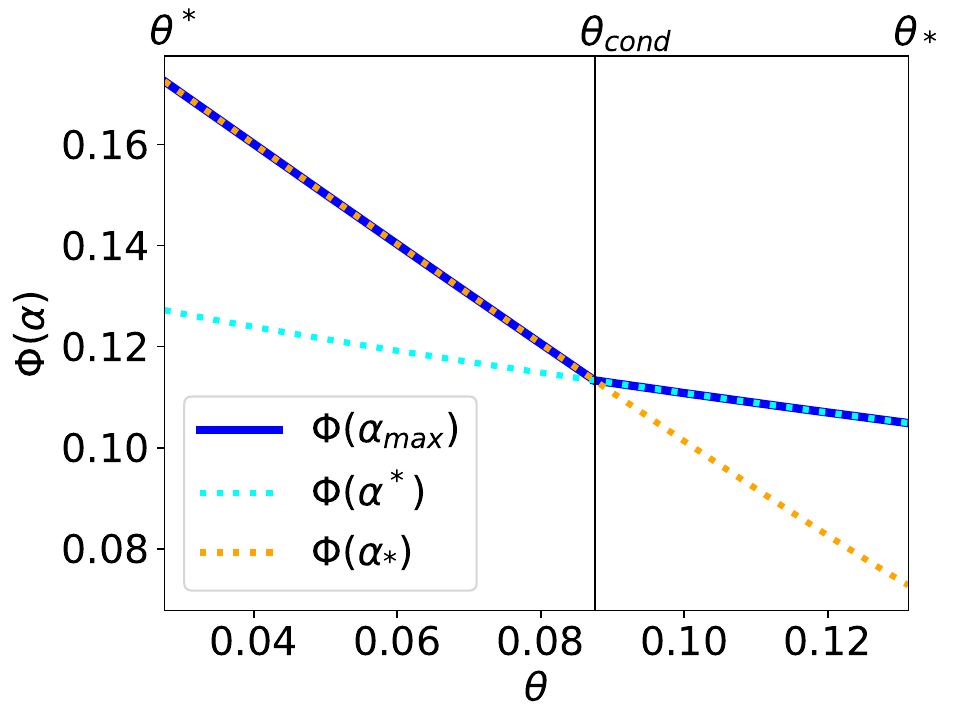}
		\caption{$\Phi(\amax)$}
	\end{subfigure}
	\caption{$\amax$ and $\Phi(\amax) = \Phi_{d,k,\lambda}(\amax)$ for $d=2.4$ and $k=3$ from $\theta^*$ to $\theta_*$.}
	\label{fig_amax}
\end{figure}

\subsection{Belief Propagation}\label{sec_bp}
Having done our analytic homework, we proceed to
recall how Belief Propagation computes the `approximations' $\mu_{\FBP{t}}$ that the \BPGD\ algorithm relies upon.
We will see that due to the inherent symmetries of XORSAT the Belief Propagation computations simplify and boil down to a simpler message passing process called Warning Propagation.
Subsequently we will explain the connection between Warning Propagation and the fixed points $\alpha_*,\alpha^*$ of $\ph$.

It is probably easiest to explain BP on a general XORSAT instance $F$ with a set $V(F)$ of variables and a set $C(F)$ of clauses of lengths between one and $k$.
As in \Sec~\ref{sec_results_dc} we consider the graph $G(F)$ induced by $F$, with vertex set $V(F)\cup  C(F)$ and an edge $xa$ between $x\in V(F)$ and $a\in C(F)$ iff $a$ contains $x$.
Let $\partial v=\partial_Fv$ be the set of neighbours of $v\in V(F)\cup C(F)$.
Additionally, given an assignment $\tau\in\{0,1\}^{\partial a}$ of the variables that appear in $a$, we write $\tau\models a$ iff $\tau$ satisfies $a$.

With each variable/clause pair $x,a$ such that $x\in\partial a$ Belief Propagation associates two sequences of `messages' $(\mu_{F,x\to a,\ell})_{\ell\geq0}$, $(\mu_{F,a\to x,\ell})_{\ell\geq0}$ directed from $x$ to $a$ and from $a$ to $x$, respectively.
These messages are probability distributions on $\{0,1\}$, i.e., 
\begin{align}\label{eqBPmsg1}
	\mu_{F,x\to a,\ell}=(\mu_{F,x\to a,\ell}(0),\mu_{F,x\to a,\ell}(1)),\,\mu_{F,a\to x,\ell}=(\mu_{F,a\to x,\ell}(0),\mu_{F,a\to x,\ell}(1)),\\
	\mu_{F,x\to a,\ell}(0)+\mu_{F,x\to a,\ell}(1)=\mu_{F,a\to x,\ell}(0)+\mu_{F,a\to x,\ell}(1)&=1.\label{eqBPmsg2}
	\end{align}
The initial messages are uniform, i.e.,
\begin{align}\label{eqBPupdate0}
	\mu_{F,x\to a,0}(s)=\mu_{F,a\to x,0}(s)&=1/2&&(s\in\{0,1\}).
\end{align}
Further, the messages at step $\ell+1$ are obtained from the messages at step $\ell$ via the {\em Belief Propagation equations}
\begin{align}\label{eqBPupdate1}
	\mu_{F,a\to x,\ell+1}(s)&\propto\sum_{\tau\in\{0,1\}^{\partial a}}\vecone\{\tau_{x}=s,\,\tau\models a\}\prod_{y\in\partial a\setminus\{x\}}\mu_{F,y\to a,\ell}(\tau_y),\\
	\mu_{F,x\to a,\ell+1}(s)&\propto\prod_{b\in\partial x\setminus\{a\}}\mu_{F,b\to x,\ell}(s).\label{eqBPupdate2}
\end{align}
In \eqref{eqBPupdate1}--\eqref{eqBPupdate2} the $\propto$-symbol represents the normalisation required to ensure that the updated messages satisfy~\eqref{eqBPmsg2}.
In the case of~\eqref{eqBPupdate2} such a normalisation may be impossible because the expressions on the r.h.s.\ could vanish for both $s=0$ and $s=1$.
In this event we agree that
	\begin{align*}
		\mu_{F,x\to a,\ell+1}(s)=\begin{cases}
			\mu_{F,x\to a,\ell}(s)&\mbox{ if }\mu_{F,x\to a,\ell}(s)\neq1/2\\
			\vecone\{s=0\}&\mbox{ otherwise}
		\end{cases}&&(s\in\{0,1\});
	\end{align*}
in other words, we retain the messages from the previous iteration unless its value was $1/2$, in which case we set $\mu_{F,x\to a,\ell+1}(0)=1$. 
The same convention applies to $\mu_{F,a\to x,\ell+1}(s)$. 
Further, at any time $t$ the BP messages render a heuristic `approximation' of the marginal probability that a random solution to the formula $F$ sets a variable $x$ to $s\in\{0,1\}$:
\begin{align}\label{eqBPmarg}
	\mu_{F,x,\ell}(s)&\propto\prod_{b\in\partial x}\mu_{F,b\to x,\ell}(s).
\end{align}
We set $\mu_{F,x,\ell}(0)=1-\mu_{F,x,\ell}(1)=1$ if the normalisation in~\eqref{eqBPmarg} fails, i.e., if $\sum_{s\in\{0,1\}}\prod_{b\in\partial x}\mu_{F,b\to x,\ell}(s)=0$.

\begin{fact}\label{fact_halfint}
	The BP messages and marginals are half-integral for all $t$, i.e., for all $t\geq0$ and $s\in\{0,1\}$ we have
	\begin{align}\label{eqfact_halfint}
			\mu_{F,x\to a,\ell}(s),\mu_{F,a\to x,\ell}(s),\mu_{F,x,\ell}(s)\in\{0,1/2,1\}.
		\end{align}
		Furthermore, for all $\ell>2\sum_{a\in C(F)}|\partial a|$ we have $\mu_{F,x,\ell}(s)=\mu_{F,x,\ell+1}(s)$.
\end{fact}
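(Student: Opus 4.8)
The plan is to establish both assertions by induction on $\ell$, exploiting the fact that an XORSAT clause is a linear (parity) constraint over $\FF_2$. The half-integrality claim \eqref{eqfact_halfint} holds trivially at $\ell=0$ by~\eqref{eqBPupdate0}. For the inductive step, consider first a clause-to-variable update~\eqref{eqBPupdate1}: let $j_a\in\{0,1\}$ be the constant with $\tau\models a$ if and only if $\bigoplus_{y\in\partial a}\tau_y=j_a$, and let $Y_*$ be the set of $y\in\partial a\setminus\{x\}$ whose incoming message $\mu_{F,y\to a,\ell}$ equals $(1/2,1/2)$. By induction every $y\in(\partial a\setminus\{x\})\setminus Y_*$ has a point-mass incoming message, so in the sum in~\eqref{eqBPupdate1} the coordinate $\tau_y$ is forced for all such $y$; summing the remaining coordinates $(\tau_y)_{y\in Y_*}$ subject to the single parity constraint imposed by $\tau\models a$ and $\tau_x=s$ yields, after normalisation, a point mass on one value of $s$ when $Y_*=\emptyset$, and $(1/2,1/2)$ when $Y_*\neq\emptyset$, since then exactly $2^{|Y_*|-1}$ of the completions on $Y_*$ satisfy any prescribed parity. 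Either way the updated message lies in $\{0,1/2,1\}$. The variable-to-clause update~\eqref{eqBPupdate2} and the marginal~\eqref{eqBPmarg} follow at once: a renormalised product of numbers from $\{0,1/2,1\}$ lies in $\{0,1/2,1\}$ unless it vanishes identically, in which case the conventions stated after~\eqref{eqBPupdate2} and after~\eqref{eqBPmarg} again return a half-integral value.

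For the stabilisation claim the key structural fact is a monotonicity property of the dynamics: once a message equals $(1,0)$ or $(0,1)$ it never changes again. Granting half-integrality, every message at step $\ell$ is either \emph{frozen} (a point mass) or \emph{free} (equal to $(1/2,1/2)$), so the whole message configuration at step $\ell$ is encoded by the partial map $D_\ell$ sending each frozen directed edge of $G(F)$ to its value. One proves $D_\ell\subseteq D_{\ell+1}$ by induction on $\ell$: by~\eqref{eqBPupdate1}, $a\to x$ is frozen at step $\ell+1$ exactly when all $y\to a$ with $y\in\partial a\setminus\{x\}$ are frozen at step $\ell$, with value read off from those values and $j_a$; by~\eqref{eqBPupdate2}, $x\to a$ is frozen at step $\ell+1$ as soon as some $b\to x$ with $b\neq a$ is frozen at step $\ell$, with value the common value of the frozen incoming messages when these agree, and --- via the normalisation-failure convention --- the (already frozen) value $\mu_{F,x\to a,\ell}$ when they disagree. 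In each case the inclusion $D_{\ell-1}\subseteq D_\ell$ pushes forward to $D_\ell\subseteq D_{\ell+1}$. Being a non-decreasing partial map on the set of $2\sum_{a\in C(F)}|\partial a|$ directed edges of $G(F)$, $D_\ell$ is constant from some step $L\leq 2\sum_{a\in C(F)}|\partial a|$ onward; and since the whole configuration, and hence also every marginal~\eqref{eqBPmarg}, is a deterministic function of $D_\ell$, all messages and marginals are constant for $\ell\geq L$, which is the claim.

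The one point that needs genuine care is the bookkeeping around the normalisation-failure convention in~\eqref{eqBPupdate2}, which makes $\mu_{F,x\to a,\ell+1}$ depend on $\mu_{F,x\to a,\ell}$ rather than only on the incoming messages. To handle this one carries the statement ``a frozen message stays frozen at the same value'' through the induction simultaneously for both message directions, and uses $D_{\ell-1}\subseteq D_\ell$ to ensure that a conflict between two incoming frozen messages of opposite value that is present at step $\ell-1$ is still present at step $\ell$, so that the convention keeps returning the same value. The remaining verifications are routine.
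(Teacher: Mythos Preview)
Your proof is correct and follows essentially the same approach as the paper's: half-integrality by induction on $\ell$, and stabilisation via the observation that once a message is frozen (i.e., not equal to $1/2$) it never changes, so the set of frozen messages is monotone on a set of $2\sum_{a\in C(F)}|\partial a|$ directed edges. You supply more detail than the paper does---in particular the explicit parity count for the clause update and the careful handling of the normalisation-failure convention---but the underlying argument is the same.
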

\begin{proof}
	The half-integrality~\eqref{eqfact_halfint} follows from a straightforward induction on $\ell$.
	Furthermore, another induction on $\ell$ and inspection of \eqref{eqBPupdate1}--\eqref{eqBPupdate2} shows that for any $x,a,\ell$ such that $\mu_{F,x\to a,\ell}(1)\neq1/2$ we have $\mu_{F,x\to a,\ell+1}(s)=\mu_{F,x\to a,\ell}(s)$ ($s\in\{0,1\}$).
	A similar statement holds for $\mu_{F,a\to x,\ell+1}(s)$.
	In particular, the number of messages that take the value $1/2$ is monotonically decreasing in $\ell$.
	Since the total number of messages is bounded by $2\sum_{a\in C(F)}|\partial a|$, we conclude that the messages will have converged pointwise after this number of iterations.
\end{proof}

\noindent
Finally, in light of Fact~\ref{fact_halfint} it makes sense to define the approximations for \BPGD\ by letting
\begin{align}\label{eqmuBPGD}
	\mu_{\FBP t}&=\lim_{\ell\to\infty}\mu_{\FBP t,x_{t+1},\ell}(1),&
	\mu_{\FDC t}&=\lim_{\ell\to\infty}\mu_{\FDC t,x_{t+1},\ell}(1).
\end{align}

\subsection{Warning Propagation}\label{sec_wp}
Thanks to the half-integrality~\eqref{eqfact_halfint} of the messages, Belief Propagation is equivalent to a purely combinatorial message passing procedure called {\em Warning Propagation} (`WP')~\cite{MM}.
Similar as BP, WP also associates two message sequences $(\omega_{F,x\to a,\ell},\omega_{F,a\to x,\ell})_{\ell\geq0}$ with every adjacent variable/clause pair.
The messages take one of three possible discrete values $\{\fzn,\uzn,\nll\}$ (`frozen', `uniform', `null').
Essentially, $\nll$ indicates that the value of a variable is determined by unit clause propagation.
Moreover, $\fzn$ indicates that a variable is forced to take the value $0$ once all variables in the $2$-core of the hypergraph representation of the formula are set to $0$.
The remaining label $\uzn$ indicates that neither of the above applies.
To trace the BP messages from \Sec~\ref{sec_bp} actually only the two values $\{\nll,\uzn\}$ would be necessary.
However, the third value $\fzn$ will prove useful in order to compare the BP approximations with the actual marginals.
Perhaps unexpectedly given the all-uniform initialisation~\eqref{eqBPupdate0}, we launch WP from all-frozen start values:
\begin{align}\label{eqWPupdate0}
	\omega_{F,x\to a,0}=\omega_{F,a\to x,0}=\fzn&&\mbox{for all }a,x.
\end{align}
Subsequently the messages get updated according to the rules
\begin{align}\label{eqWP1}
	\omega_{F,a\to x,\ell+1}&=\begin{cases}
		\nll&\mbox{ if }\omega_{F,y\to a,\ell}=\nll\mbox{ for all }y\in\partial a\setminus\{x\},\\
		\fzn&\mbox{ if }\omega_{F,y\to a,\ell}\neq\uzn\mbox{ for all }y\in\partial a\setminus\{x\}
		\mbox{ and }\omega_{F,y\to a,\ell}\neq\nll\mbox{ for at least one }y\in\partial a\setminus\{x\},\\
		\uzn&\mbox{ otherwise},
		\end{cases}\\
	\omega_{F,x\to a,\ell+1}&=\begin{cases}
		\nll&\mbox{ if }\omega_{F,b\to x,\ell}=\nll\mbox{ for at least one }b\in\partial x\setminus\{a\},\\
		\fzn&\mbox{ if }\omega_{F,b\to x,\ell}\neq\nll\mbox{ for all }b\in\partial x\setminus\{a\}
		\mbox{ and }\omega_{F,b\to x,\ell}=\fzn\mbox{ for at least one }b\in\partial x\setminus\{a\},\\
		\uzn&\mbox{ otherwise}.
		\end{cases}\label{eqWP2}
\end{align}
In addition to the messages we also define the {\em mark} of variable node $x$ by letting
\begin{align}\label{eqmarks}
	\omega_{F,x,\ell}&=\begin{cases}
		\nll&\mbox{ if }\omega_{F,b\to x,\ell}=\nll\mbox{ for at least one }b\in\partial x,\\
		\fzn&\mbox{ if }\omega_{F,b\to x,\ell}\neq\nll\mbox{ for all }b\in\partial x
		\mbox{ and }\omega_{F,b\to x,\ell}=\fzn\mbox{ for at least one }b\in\partial x,\\
		\uzn&\mbox{ otherwise}.
		\end{cases}
\end{align}
The following statement summarises the relationship between BP and WP.

\begin{fact}\label{fact_wp}
	For all $t\geq0$ and all $x,a$ we have
	\begin{align}\label{eqfact_wp1}
		\mu_{x\to a,\ell}(1)&=1/2&&\Leftrightarrow&\omega_{F,x\to a,\ell}&\neq\nll,\\
		\mu_{a\to x,\ell}(1)&=1/2&&\Leftrightarrow&\omega_{F,a\to x,\ell}&\neq\nll,\label{eqfact_wp2}\\
		\mu_{x,\ell}(1)&=1/2&&\Leftrightarrow&\omega_{F,x,\ell}&\neq\nll.\label{eqfact_wp3}
		\end{align}
	Moreover, for all $\ell>2|C(F)|$ we have $\omega_{F,x\to a,\ell}=\omega_{F,x\to a,\ell+1}$ and $\omega_{F,a\to x,\ell}=\omega_{F,a\to x,\ell+1}$.
\end{fact}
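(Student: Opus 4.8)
The statement is a bookkeeping lemma, and I would prove the three equivalences \eqref{eqfact_wp1}--\eqref{eqfact_wp3} by a joint induction on $\ell$, then deduce the stabilisation claim from a monotonicity argument analogous to the one behind Fact~\ref{fact_halfint}. The base case $\ell=0$ is immediate: by \eqref{eqBPupdate0} every BP message equals $1/2$, while by \eqref{eqWPupdate0} every WP message equals $\fzn\neq\nll$, so both sides of \eqref{eqfact_wp1}--\eqref{eqfact_wp2} hold; the mark equivalence will be handled separately below.

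\textbf{Inductive step.} The key arithmetic observation is that, by half-integrality (Fact~\ref{fact_halfint}), a normalised product $\prod_{b}\mu_{F,b\to x,\ell}(\nix)$ equals the uniform vector $(1/2,1/2)$ if and only if \emph{every} factor is uniform, while it fails to normalise precisely when two distinct factors are concentrated on opposite values; likewise the clause update \eqref{eqBPupdate1}, which sums the incoming messages against the parity constraint imposed by $a$, is always normalisable and returns the uniform vector if and only if \emph{at least one} of the messages $\mu_{F,y\to a,\ell}$ with $y\in\partial a\sm\{x\}$ is uniform. Comparing these ``all factors uniform'' and ``some factor uniform'' conditions with the quantifier structure of the WP rules --- by \eqref{eqWP1} a clause-to-variable message is $\nll$ iff \emph{all} incoming messages are $\nll$, and by \eqref{eqWP2} a variable-to-clause message is $\nll$ iff \emph{at least one} incoming message is $\nll$ --- and invoking the induction hypothesis at step $\ell$, one gets \eqref{eqfact_wp1}--\eqref{eqfact_wp2} at step $\ell+1$. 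In the single case where \eqref{eqBPupdate2} fails to normalise, one checks directly that the prescribed convention leaves $\mu_{F,x\to a,\ell+1}(1)\neq1/2$, which is consistent because in that case two incoming messages are concentrated, hence (by the hypothesis) two incoming WP messages equal $\nll$, hence $\omega_{F,x\to a,\ell+1}=\nll$. The mark equivalence \eqref{eqfact_wp3} is not part of the induction: it follows at every $\ell$ from \eqref{eqfact_wp2} at the same $\ell$ together with \eqref{eqBPmarg}, \eqref{eqmarks} and the same ``all factors uniform'' computation.

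\textbf{Stabilisation.} From \eqref{eqWP1}--\eqref{eqWP2} (or, equivalently, from \eqref{eqfact_wp1}--\eqref{eqfact_wp2} and the monotonicity already established in Fact~\ref{fact_halfint}) one sees that once a message takes the value $\nll$ it keeps it, so the set of $\nll$-messages is nondecreasing in $\ell$ and stabilises. Once the $\nll$-status of all messages is frozen, the residual dynamics on the $\{\fzn,\uzn\}$-valued messages is monotone: by \eqref{eqWP1} a clause-to-variable message turns $\uzn$ as soon as one of its non-$\nll$ inputs is $\uzn$, and by \eqref{eqWP2} a variable-to-clause message turns $\uzn$ once all of its non-$\nll$ inputs are $\uzn$, so the set of $\uzn$-messages only grows and also stabilises. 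A round-by-round count --- each further round of the first process resolves at least one more clause via unit propagation, and each further round of the second removes at least one more clause via leaf removal --- shows that everything has settled after at most $2\abs{C(F)}$ rounds.

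\textbf{Main obstacle.} Nothing here is conceptually deep; the part that needs care is the inductive step, namely verifying that the boundary conventions attached to failed normalisations in \eqref{eqBPupdate2} and \eqref{eqBPmarg} are matched \emph{exactly} by the ``otherwise'' branches of \eqref{eqWP1}--\eqref{eqmarks}, and, for the quantitative claim, pinning down the constant $2\abs{C(F)}$ rather than the cruder $2\sum_{a\in C(F)}\abs{\partial a}$ that one would inherit directly from Fact~\ref{fact_halfint}. Both amount to a disciplined case analysis.
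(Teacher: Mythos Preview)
Your proposal is correct and follows essentially the same route as the paper: a joint induction on $\ell$ for the equivalences \eqref{eqfact_wp1}--\eqref{eqfact_wp3}, together with a monotonicity argument (the $\nll$-set can only grow, the $\fzn$-set can only shrink) for stabilisation. If anything you are slightly more explicit than the paper about the quantifier matching between the BP and WP updates and about the failed-normalisation conventions, and you correctly flag that pinning down the constant $2|C(F)|$ (rather than the cruder $2\sum_{a}|\partial a|$ that monotonicity alone yields) needs its own short combinatorial argument---the paper's proof is equally terse on that point.
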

\begin{proof}
	The fact that $\omega_{F,x\to a,\ell}=\omega_{F,x\to a,\ell+1}$ and $\omega_{F,a\to x,\ell}=\omega_{F,a\to x,\ell+1}$ for all $\ell>2|C(F)|$ follows from the observation that the number of $\fzn$-messages is monotonically decreasing, while the number of $\nll$-messages is monotonically increasing.
	The equations \eqref{eqfact_wp1}--\eqref{eqfact_wp3} follow by induction on $\ell$.
	Initially all the messages are uniform in BP, i.e., $\mu_{x\to a,0}(1)=\mu_{a\to x,0}(1)=1/2$. By contrast, in WP, we start with all frozen values to both variables and clauses as given by \eqref{eqWPupdate0}. 
	Then from \eqref{eqWP1},\eqref{eqWP2} and \eqref{eqmarks}, for $\ell=0$,\eqref{eqfact_wp1}--\eqref{eqfact_wp3} hold true.
		For $\ell=1$, we get the messages and marginals in BP obtained from the messages at initial step. From \eqref{eqBPupdate1} it follows that if the marginals are uniform then from WP arguments \eqref{eqWP1}, it is sure that $\omega_{F,a\to x,1}\neq \nll$ because $\omega_{F,y\to a,0}=\fzn$.
		The same argument is valid for the other way round. If the WP message at step $\ell=1$ is not null, then the BP message from \eqref{eqBPupdate1} after normalization become $1/2$. So for $\ell=1$, \eqref{eqfact_wp1} holds true.\\
		Let us assume the \eqref{eqfact_wp1} is true for any step $\ell$.Then for step $\ell+1$ the messages in BP is obtained from step $\ell$ as in \eqref{eqBPupdate1} is $\frac{1}{2}$ implies in WP message $\omega_{F,a\to x,\ell+1}\neq \nll$ because $\omega_{F,y\to a,\ell}=\uzn \mbox{ for at least one }y\in\partial a\setminus\{x\}$. Similarly, if the WP message $\omega_{F,a\to x,\ell+1}\neq \nll$ implies this can be either "uniform" or "frozen". Now, if there will be at least one uniform incoming message then $\mu_{a\to x,\ell+1}(1)=1/2$ and for all frozen incoming messages it is straightforward from the initialization of WP \eqref{eqWPupdate0} which corresponds to $\mu_{a\to x,\ell+1}(1)=1/2$. So at step $\ell+1$, \eqref{eqfact_wp1} holds true.
		We conclude that \eqref{eqfact_wp1} holds true for every $\ell$. 
		Similarly, by induction on $\ell$ we can conclude that \eqref{eqfact_wp2}--\eqref{eqfact_wp3} also hold true for every $\ell$.
\end{proof}

Fact~\ref{fact_wp} implies that the WP messages and marks `converge' in the limit of large $\ell$, in the sense that eventually they do not change any more.
Let $\omega_{F,x\to a},\omega_{F,a\to x},\omega_{F,x}\in\{\fzn,\uzn,\nll\}$ be these limits.
Furthermore, let $V_{\fzn,\ell}(F)$, $V_{\uzn,\ell}(F)$, $V_{\nll,\ell}(F)$ be the sets of variables with the respective mark after $\ell\geq0$ iterations. 
Also let $V_{\fzn}(F),V_{\uzn}(F),V_{\nll}(F)$ be the sets of variables where the limit $\omega_{F,x}$ takes the respective value.
The following statement traces WP on the random formula $\FDC{t}$ produced by the decimation process.

\begin{proposition}\label{prop_WP}
	Let $\eps>0$ and assume that $d>0$, $t=t(n)\lk{\sim \theta n}$ satisfy one of the following conditions:
	\begin{enumerate}[(i)]
		\item $d<\dmin$, or
		\item $d>\dmin$ and $\theta\not\in\{\theta_*,\theta^*\}$.
	\end{enumerate}
	Then there exists $\ell_0=\ell_0(d,\theta,\eps)>0$ such that for any fixed $\ell\geq\ell_0$ with $\lambda=-\log(1-\theta)$ \whp\ we have 
	\begin{align}\label{eqprop_WP}
		\abs{t+|V_{\nll,\ell}(\FDC{t})|-\alpha_*n}&<\eps n,&\abs{t+|V_{\fzn,\ell}(\FDC{t})|-(\alpha^*-\alpha_*)n}&<\eps n,&
		\abs{V_{\nll}(\FDC{t})\triangle V_{\nll,\ell}(\FDC{t}) }&<\eps n. 
	\end{align}
\end{proposition}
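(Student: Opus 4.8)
The plan is to analyse Warning Propagation on $\FDC t$ via the local weak limit of the decimation-reduced formula, combined with a contraction/monotonicity argument for the WP fixed point on the limiting tree. First I would identify the local structure of $\FDC t$. The decimation process substitutes values for $x_1,\dots,x_t$ and discards clauses fully assigned; since the $t$ decimated variables are (by exchangeability) a uniformly random $t$-subset and since the order in which the decimation process picks its marginals does not affect the \emph{distribution} of $\FDC t$ (the marginals only reshuffle which Boolean values get plugged in, not which clauses survive or how many variables each surviving clause retains), the bipartite graph $G(\FDC t)$ is distributed as follows: start from $\G=G(\PHI)$, delete a uniformly random set of $t\sim\theta n$ variable-nodes, delete every clause all of whose variables were deleted, and keep the rest with their reduced lengths. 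A standard computation (as in \cite{Ayre,Molloy}) shows that the Galton--Watson local weak limit of this graph is the following two-type tree $\T_\lambda$ with $\lambda=-\log(1-\theta)$: variable-nodes have $\Po(d)$ clause-neighbours in general (and the root has $\Po(d)$ as well), each clause independently retains each of its other $k-1$ slots as a genuine variable with probability $1-\theta=\eul^{-\lambda}$ and otherwise that slot is ``dead''; equivalently each surviving clause has $\mathrm{Bin}(k-1,\eul^{-\lambda})$ further variable-children. Here the factor $\eul^{-\lambda}=1-\theta$ is exactly the probability a given variable survives decimation, and the Poisson thinning bookkeeping turns $t\sim\theta n$ into the clean parameter $\lambda$.

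Next I would solve WP on this limiting tree. Run the WP recursion \eqref{eqWP1}--\eqref{eqWP2} from the all-$\fzn$ initialisation on $\T_\lambda$. Track the probability $z_\ell$ that a variable-to-clause message at generation depth corresponding to $\ell$ iterations equals $\nll$ \emph{or} $\uzn$ (i.e.\ is ``not frozen-only'', the event that it is forced by a short unit-clause chain is more delicate, so more precisely: let $z_\ell$ be the probability that a clause-to-variable message is $\nll$). Working through the update rules on the tree one gets a scalar recursion of the shape $z_{\ell+1}=\phi_{d,k,\lambda}(z_\ell)=1-\exp(-\lambda-dz_\ell^{k-1})$: a clause sends $\nll$ iff all its $\mathrm{Bin}(k-1,\eul^{-\lambda})$ surviving variable-children send $\nll$, and a variable sends a non-$\nll$ message to its parent clause unless it has a $\Po(d)$ child clause sending $\nll$ — generating-function bookkeeping for $\Po(d)$ thinned by the per-child $\nll$-probability $z_\ell^{k-1}$ yields exactly the $\exp(-dz^{k-1})$ factor, and the extra $\eul^{-\lambda}$ accounts for the ``dead slot'' possibility inside a clause. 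Started from $z_0$ equal to the all-$\fzn$ value (which makes $z_1=1-\eul^{-\lambda}>0$), monotonicity of $\phi_{d,k,\lambda}$ together with \Prop~\ref{prop_greg} (which identifies $\alpha_*$ as the smallest fixed point and, under hypotheses (i)/(ii) with $\theta\notin\{\theta_*,\theta^*\}$, guarantees $\phi'_{d,k,\lambda}(\alpha_*)<1$, i.e.\ no other fixed point is sandwiched in a way that blocks convergence) forces $z_\ell\to\alpha_*$. The exclusion of $\theta\in\{\theta_*,\theta^*\}$, equivalently $\lambda\in\{\lambda_*,\lambda^*\}$, is precisely what rules out the tangential (derivative $=1$) fixed points where the recursion would converge only polynomially; for all other $\theta$ convergence is exponential, so there is a finite $\ell_0(d,\theta,\eps)$ after which $|z_\ell-\alpha_*|$ and the analogous tree-probabilities are within $\eps/C$ of their limits, and one similarly controls the $\uzn$/frozen fraction so that on the tree $\Pr[\text{root marked }\nll]\to\alpha_*$ (after accounting that the $t$ already-decimated variables contribute to the ``effectively $\nll$'' count, which is the reason the left-hand sides of \eqref{eqprop_WP} read $t+|V_{\nll,\ell}|$), $\Pr[\text{root marked }\fzn]\to\alpha^*-\alpha_*$, and the probability that the mark changes after step $\ell_0$ is $<\eps/C$.

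Finally I would transfer the tree statement to $\FDC t$. WP messages and marks after $\ell$ rounds are determined by the radius-$2\ell$ neighbourhood of a vertex in $G(\FDC t)$; by the local weak convergence above, for fixed $\ell$ the empirical distribution of radius-$2\ell$ neighbourhoods converges to that of $\T_\lambda$, so $\frac1n|V_{\nll,\ell}(\FDC t)|$, $\frac1n|V_{\fzn,\ell}(\FDC t)|$ concentrate (first moment plus a two-point/ Efron--Stein or Azuma argument on the clause count, which changes each relevant fraction by $O(1/n)$ when one clause is resampled) around the tree probabilities, giving the first two inequalities of \eqref{eqprop_WP} with $\ell_0$ as above. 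The third inequality, $|V_{\nll}(\FDC t)\triangle V_{\nll,\ell}(\FDC t)|<\eps n$, follows because a vertex has a different mark at time $\ell$ than in the limit only if its local neighbourhood carries a $\fzn$-to-$\nll$ conversion happening after round $\ell$; on $\T_\lambda$ that event has probability $<\eps/C$ for $\ell\geq\ell_0$ by the exponential convergence, and local convergence again upgrades this to a statement about the $n$-vertex graph.

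\textbf{Main obstacle.} The genuinely delicate point is the convergence rate of the WP recursion near the ``bad'' values $\theta\in\{\theta_*,\theta^*\}$: away from these, \Prop~\ref{prop_greg}'s statement that $\phi'_{d,k,\lambda}(\alpha_*)<1$ (and that $\alpha_*$ is the unique stable fixed point, resp.\ that $\alpha^*$ is isolated from below on $(\lambda^*,\infty)$) gives a clean geometric contraction and hence a \emph{uniform-in-$n$}, finite $\ell_0$; but I must be careful that the all-$\fzn$ initialisation of WP really does feed the recursion the \emph{smallest} relevant starting value (so that monotone convergence to the \emph{smallest} fixed point $\alpha_*$ holds, rather than overshooting to $\alpha^*$), and that the $\fzn$ bookkeeping — tracking which variables end up $\fzn$ vs.\ $\nll$ as opposed to merely ``not $\nll$'' — is done consistently between the tree and the graph, since this is exactly the three-valued refinement the paper flagged as the subtle ingredient for later comparing BP with true marginals. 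Establishing that the $\fzn$/$\nll$ split converges (not just the coarser $\nll$/not-$\nll$ split) is where the argument needs the most care.
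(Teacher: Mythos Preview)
Your approach to the first two inequalities of \eqref{eqprop_WP} is essentially the same as the paper's: local weak convergence to a two-type Galton--Watson tree, then analyse the WP recursion on the tree. The paper makes one point cleaner than your sketch: it runs the \emph{same} scalar recursion $z\mapsto\ph(z)$ from \emph{two different initial values}. From the all-$\fzn$ start, the probability that a message is $\nll$ begins at $0$ and increases monotonically to the smallest fixed point $\alpha_*$ (this gives $|V_{\nll,\ell}|$). From the same all-$\fzn$ start, the probability that a message is \emph{not} $\uzn$ (i.e.\ is $\nll$ or $\fzn$) begins at $1$ and decreases monotonically to the largest fixed point $\alpha^*$ (this gives $|V_{\nll,\ell}\cup V_{\fzn,\ell}|$). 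Subtracting yields the $\fzn$ count. Your description conflates these two trajectories somewhat, but you flag the issue in your ``main obstacle'' paragraph, so this is more a matter of write-up than a real gap.

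There is, however, a genuine gap in your argument for the third inequality $|V_\nll(\FDC t)\triangle V_{\nll,\ell}(\FDC t)|<\eps n$. You write that ``on $\T_\lambda$ that event has probability $<\eps/C$ \dots\ and local convergence again upgrades this to a statement about the $n$-vertex graph.'' This does not work: $V_\nll(\FDC t)$ is the \emph{limit} of WP on the finite graph, which may require $\Omega(n)$ rounds and hence is \emph{not} determined by any bounded-radius neighbourhood. Local weak convergence only transfers tree statements about a \emph{fixed} number $\ell$ of WP rounds. Monotonicity of $V_{\nll,\ell}$ in $\ell$ plus your tree bound gives $|V_{\nll,\ell'}\setminus V_{\nll,\ell_0}|<\eps n$ w.h.p.\ for each fixed $\ell'>\ell_0$, but the implicit $n_0(\ell')$ may diverge with $\ell'$, so you cannot pass to $\ell'\to\infty$. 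The paper closes this gap with a separate argument (\Lem~4.4): it reformulates WP for $\nll$-marks as a peeling process on $G(\FDC t)$, shows that after $\ell_0$ rounds the residual degree statistics are close to their limiting values, and then proves \emph{directly on the finite graph} that the remaining peeling is subcritical (the expected number of new degree-one checks created per removal is $\ph'(\alpha_*)<1$), so at most $\eps n$ further variables are removed. You need an argument of this flavour; the tree alone is not enough here.
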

	
\subsection{The check matrix}\label{sec_check}
Since the XOR operation is equivalent to addition modulo two, a XORSAT formula $F$ with variables $x_1,\ldots,x_n$ and clauses $a_1,\ldots,a_m$ translates into a linear system over $\FF_2$, as follows.
Let $A_F$ be the $m\times n$-matrix over $\FF_2$ whose $(i,j)$-entry equals one iff variable $x_j$ appears in clause $a_i$.
Adopting coding parlance, we refer to $A_F$ as the {\em check matrix} of $F$.
Furthermore, let $y_F\in\FF_2^m$ be the vector whose $i$th entry is one plus the sum of any constant term and the number of negation signs of clause $a_i$ mod two.
Then the solutions $\sigma\in\FF_n^n$ of the linear system $A_F\sigma=y_F$ are precisely the satisfying assignments of $F$.

The algebraic properties of $A_F$ therefore have a direct impact on the satisfiability of $F$.
For example, if $A_F$ has rank $m$, we may conclude immediately that $F$ is satisfiable.
Furthermore, the set of solutions of $F$ is an affine subspace of $\FF_2^n$ (if non-empty).
In effect, if $F$ is satisfiable, then the number of satisfying assignments equals the size of the kernel of $A_F$.
Hence the nullity $\nul A_F=\dim\ker A_F$ of the check matrix is a key quantity.

Indeed, the single most significant ingredient towards turning the heuristic arguments from~\cite{RTS} into rigorous proofs is a formula for the nullity of the check matrix of the XORSAT instance $\FDC{t}$ from the decimation process.
To unclutter the notation set $\ADC{t}=A_{\FDC{t}}$.
We derive the following proposition from a recent general result about the nullity of random matrices over finite fields~\cite[\Thm~1.1]{Maurice}.
The proposition clarifies the semantics of the function $\Ph$ and its maximiser $\amax$.
In physics jargon $\Ph$ is known as the Bethe free entropy.

\begin{proposition}\label{prop_nul}
	Let $d>0$ and $\lambda=-\log(1-\theta)$.
	Then
	\begin{align*}
		\lim_{n\to\infty}\nul\ADC{t}&=\Phi_{d,k,\lambda}(\amax)&&\mbox{in probability}.
	\end{align*}
\end{proposition}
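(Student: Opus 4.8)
The plan is to reduce the statement to the general nullity formula for random matrices over $\FF_2$ from~\cite[\Thm~1.1]{Maurice} and then identify the resulting variational expression with $\Phi_{d,k,\lambda}(\amax)$. First I would describe the distribution of the check matrix $\ADC{t}=A_{\FDC t}$ explicitly. After $t=t(n)\sim\theta n$ steps of the decimation process, the surviving variables are $x_{t+1},\dots,x_n$, so there are $(1-\theta+o(1))n$ of them \whp; a clause of the original formula $\PHI$ survives (is not dropped) iff it still contains at least one surviving variable, and its surviving length is its number of surviving variables. Since $\PHI$ has $\vm\disteq\Po(dn/k)$ clauses each picking $k$ variables uniformly without replacement, a short computation shows that in the limit the row of $\ADC t$ corresponding to a surviving clause has a well-defined limiting length distribution, and the number of rows of each length concentrates. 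Crucially, because the decimation process only ever substitutes \emph{values} for $x_1,\dots,x_t$, conditioning on $\FDC t$ does not bias \emph{which} variables sit in which clause beyond this length truncation: the incidence structure of $\ADC t$ is that of a random bipartite (multi-)graph with the prescribed degree/length statistics. The natural parametrisation is via $\lambda=-\log(1-\theta)$, which is exactly the value for which an independent $\Po(\lambda)$ number of ``unit-clause'' rows would be generated per variable; indeed one checks that the probability a given variable has been assigned by time $t$ is $\theta=1-\ee^{-\lambda}+o(1)$, matching~\eqref{eqthetas}.

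Next I would invoke~\cite[\Thm~1.1]{Maurice}. That theorem expresses $\lim_{n\to\infty}\nul(\mathbf M)/n$ for a random $\FF_2$-matrix $\mathbf M$ with given row- and column-weight distributions as the value of an explicit function — a Bethe-type functional — evaluated at the maximiser of a one-dimensional (or low-dimensional) variational problem whose stationarity condition is a fixed-point equation of density-evolution/Warning-Propagation type. Concretely, for the Poisson-type ensemble at hand the relevant density-evolution fixed-point equation is precisely $z=\phi_{d,k,\lambda}(z)=1-\exp(-\lambda-dz^{k-1})$ from~\eqref{eqphi}, and the associated ``free entropy'' evaluated at a fixed point $z$ is exactly $\Phi_{d,k,\lambda}(z)$ from~\eqref{eqPhi}. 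Thus the theorem yields $\lim_n\nul\ADC t/n=\max\{\Phi_{d,k,\lambda}(\alpha_*),\Phi_{d,k,\lambda}(\alpha^*)\}=\Phi_{d,k,\lambda}(\amax)$ in probability, where I have used Fact~\ref{lem_max} to conclude that the global maximum of $\Phi_{d,k,\lambda}$ on $[0,1]$ is attained at one of the two extreme fixed points $\alpha_*,\alpha^*$, together with the definition of $\amax$ as the maximiser. (Note also that since $\nul\ADC t/n\le 1$ deterministically and $\nul$ is automatically nonnegative, the convergence in probability upgrades from the subsequential/liminf form in which such theorems are sometimes stated.)

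The two steps that require genuine care are, first, verifying that the ensemble $\ADC t$ actually falls within the hypotheses of~\cite[\Thm~1.1]{Maurice} — in particular that the row-weight and column-weight distributions converge, have the required (bounded or light-tailed) moments, and that conditioning on $\PHI$ being satisfiable and on the decimation transcript $\FDC t$ does not distort the ensemble. Here I would argue that satisfiability holds \whp\ for $d<\dsat(k)$ (so the conditioning is asymptotically vacuous), and that by the tower/exchangeability structure of the decimation process the law of the bipartite incidence graph of $\ADC t$ given $\FDC t$ is, up to the length-truncation already accounted for, that of a fresh random graph with the computed statistics. The second step is the purely analytic identification of the abstract functional from~\cite{Maurice} with the concrete $\Phi_{d,k,\lambda}$: one plugs the Poisson row/column weights into the general Bethe formula, carries out the resulting sums and expectations, and simplifies — the term $\ee^{-\lambda-dz^{k-1}}$ comes from the variable-side entropy, $-\tfrac{d(k-1)}{k}z^k$ from the check-side entropy, and $dz^{k-1}-\tfrac dk$ from the edge correction — and one checks via~\eqref{eq_lem_max_1} that its stationary points are exactly the fixed points of $\phi_{d,k,\lambda}$. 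I expect the main obstacle to be the first of these: setting up the conditioning on the decimation transcript cleanly and checking that $\ADC t$ is contiguous to (or exactly distributed as) a tractable random-matrix ensemble, since the decimation process is defined through the marginals $\pi_{\FDC t}$ and one must confirm that this adaptive choice of assignments does not secretly correlate with the combinatorial structure that governs the nullity.
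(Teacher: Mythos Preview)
Your high-level plan is right and matches the paper's: reduce to the nullity formula from \cite[\Thm~1.1]{Maurice} and identify the resulting functional with $\Phi_{d,k,\lambda}$. However, the obstacle you flag at the end---that the adaptive assignments of the decimation process might correlate with the combinatorial structure governing the nullity---is not a real obstacle. The check matrix $\ADC t=A_{\FDC t}$ is obtained from $A(\PHI)$ by deleting columns $1,\dots,t$ and dropping the resulting all-zero rows; it depends only on the incidence structure of $\PHI$ and on \emph{which} variables are decimated (deterministically $x_1,\dots,x_t$), not on the values $\SIGDC(x_i)$. Those values affect only the right-hand side $y_{\FDC t}$, which is irrelevant for the nullity. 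So once you note, as you do, that conditioning on satisfiability is asymptotically vacuous for $d<\dsat$, there is nothing left to verify on this front; you have already stated the correct reason earlier in your sketch and should simply commit to it.

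Where the paper departs from your sketch is in the ensemble it feeds into \cite{Maurice}. Rather than analysing $\ADC t$ directly---with $n-t$ columns and row lengths distributed as $\Bin(k,1-\theta)$ conditioned to be positive, which forces you to compute $\fF$ for that mixture and then rescale by $1-\theta$---the paper uses a cleaner equivalent model. Deleting column $j$ has the same effect on the kernel as \emph{adding} a unit row with its sole non-zero entry in position $j$; hence $\nul\ADC t\disteq\nul\vA_t'$, where $\vA_t'$ is $A(\PHI)$ augmented by $t$ unit rows at distinct uniformly random positions (the distributional equality coming from column-permutation invariance of $\PHI$). A further Poissonisation step replaces the $t$ unit rows by $\Po(\lambda n)$ independent unit rows at uniform positions, changing the nullity by $o(n)$. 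This lands exactly in the framework of \cite[\Thm~1.1]{Maurice} with $n$ columns, $\fd\disteq\Po(d+\lambda)$, and $\fk\in\{1,k\}$ with $\pr[\fk=1]=k\lambda/(d+k\lambda)$. For this ensemble the functional $\fF$ of that theorem simplifies \emph{directly} to $\Phi_{d,k,\lambda}$, after which $\max_{z\in[0,1]}\Phi_{d,k,\lambda}(z)=\Phi_{d,k,\lambda}(\amax)$ finishes the proof. Your direct route should also go through, but the unit-row trick is precisely what makes the parametrisation by $\lambda=-\log(1-\theta)$ natural and the analytic identification a one-line calculation rather than a generating-function exercise.
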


\subsection{Null variables}\label{sec_frozen}
\Prop~\ref{prop_nul} enables us to derive crucial information about the set of satisfying assignments of $\FDC{t}$.
Specifically, for any XORSAT instance $F$ with variables $x_1,\ldots,x_n$ let $V_0(F)$ be the set of variables $x_i$ such that $\sigma_i=0$ for all $\sigma\in\ker A_F$.
We call the variables $x_i\in V_0(F)$ {\em null variables}.
Since the set of solutions of $F$, if non-empty, is a translation of $\ker A_F$, any two solutions $\sigma,\sigma'$ of $F$ set the variables in $V_0(F)$ to exactly the same values.
The following proposition shows that WP identifies certain variables as null.

\begin{proposition}\label{prop_nlluzn}
	\Whp\ the following two statements are true for any fixed integer $\ell>0$.
	\begin{enumerate}[(i)]
		\item We have $V_{\nll,\ell}(\FDC t)\subset\frz(\FDC t)$.
		\item We have $|V_{\uzn,\ell}(\FDC{t})\cap\frz(\FDC{t})|=o(n)$.
	\end{enumerate}
\end{proposition}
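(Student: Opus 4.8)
The plan is to prove both statements by relating the WP marks to algebraic properties of the check matrix $\ADC t$, using \Prop~\ref{prop_nul} to pin down the dimension of the kernel. For part~(i), I would argue that $\nll$-marks certify null-ness directly: if $\omega_{\FDC t,x,\ell}=\nll$ for some fixed $\ell$, then by the WP update rules~\eqref{eqWP1}--\eqref{eqmarks} there is a unit-clause-propagation derivation of bounded depth $\ell$ that forces the value of $x$ in \emph{every} satisfying assignment. Concretely, $\omega_{a\to x}=\nll$ means all other variables of $a$ have incoming $\nll$-messages, so $a$ becomes a unit clause once those are substituted; unwinding the recursion over $\ell$ levels shows $x_i$ is determined, i.e.\ $\sigma_i$ is constant over the affine solution space, hence $\sigma_i=0$ for all $\sigma\in\ker\ADC t$. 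This is a deterministic (non-probabilistic) implication that holds for every XORSAT instance, so part~(i) needs no randomness at all beyond invoking it for $F=\FDC t$; the ``\whp'' is only there because $\FDC t$ is a random object. I would phrase it as: UCP-forced variables are null, and $V_{\nll,\ell}(\FDC t)$ consists of UCP-forced variables.

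For part~(ii), the strategy is a dimension count. By \Prop~\ref{prop_WP}, \whp\ the number of $\uzn$-marked variables after $\ell\geq\ell_0$ iterations is $\approx(\alpha^*-\alpha_*)n$ (more precisely $|V_{\uzn,\ell}(\FDC t)|=n-t-|V_{\nll,\ell}(\FDC t)|-|V_{\fzn,\ell}(\FDC t)|\approx(1-\alpha^*)n+\ldots$; I'd extract the exact constant from~\eqref{eqprop_WP}). On the other hand, the null variables $\frz(\FDC t)$ number at least $n - (\text{something}) - \nul\ADC t$ by rank-nullity: the variables \emph{not} in $\frz$ carry the $\nul\ADC t=\Phi_{d,k,\lambda}(\amax)\cdot n + o(n)$ free coordinates plus the $\nll$-forced ones, so $|\frz(\FDC t)\setminus V_{\nll,\ell}(\FDC t)|$ is controlled. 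The key point is to show that the $\uzn$-variables, apart from $o(n)$ of them, are \emph{not} null, which amounts to showing that the $\fzn$- and $\uzn$-marked part of the formula behaves like the $2$-core and contributes the full $\Phi_{d,k,\lambda}(\amax)$-dimensional kernel: each $\uzn$-variable genuinely takes both values across $\ker\ADC t$ except for a vanishing fraction. I would make this quantitative by combining the upper bound $|\frz(\FDC t)| \leq n - \nul\ADC t$ — which holds because null variables contribute nothing to $\dim\ker\ADC t$ beyond… wait, that inequality is the wrong direction; rather $|\supp$-variables$|\geq\nul\ADC t$ — with the WP-count of $\uzn$-variables and the observation that the non-null, non-$\nll$ variables number exactly $\approx(\alpha^*-\alpha_*)n + (\text{kernel part})$, matched against $\Phi_{d,k,\lambda}(\amax)$. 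The clean way: show $|V_{\uzn,\ell}(\FDC t)| - |V_{\uzn,\ell}(\FDC t)\cap\frz(\FDC t)| \geq \nul\ADC t - o(n)$ and $|V_{\uzn,\ell}(\FDC t)|\leq \nul\ADC t + o(n)$, forcing the intersection to be $o(n)$.

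The main obstacle is the second inequality in that chain — bounding $|V_{\uzn,\ell}(\FDC t)|$ from above by $\nul\ADC t+o(n)$, or equivalently showing the $\uzn$-variables don't overflow the kernel dimension. This is where the ``quenched'' input is essential: one needs that the submatrix of $\ADC t$ on the $\fzn\cup\uzn$ variables (essentially the $2$-core) has nullity \emph{at least} $|V_{\uzn,\ell}|-o(n)$, i.e.\ that WP's $\uzn$-classification is not overly optimistic. I expect this to follow from \Prop~\ref{prop_nul} together with the local-weak-convergence / Warning-Propagation analysis underlying \Prop~\ref{prop_WP}: the fixed point $\amax$ encodes precisely the density of the $2$-core, and $\Phi_{d,k,\lambda}(\amax)$ its nullity, so the counts are forced to agree up to $o(n)$. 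Assembling these pieces — part~(i) deterministic, part~(ii) a rank-nullity sandwich fed by Propositions~\ref{prop_WP} and~\ref{prop_nul} — completes the proof.
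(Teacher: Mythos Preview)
Your treatment of part~(i) is correct and matches the paper: the inclusion $V_{\nll,\ell}(F)\subset V_0(F)$ is a deterministic fact for any XORSAT instance, proved by unwinding the WP/unit-clause recursion (the paper states this as a separate lemma and proves it by induction on $\ell$).

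Your approach to part~(ii), however, has a genuine gap. The rank--nullity sandwich you propose does not close, because the inequality $|V_{\uzn,\ell}(\FDC t)|\leq\nul\ADC t+o(n)$ is simply false. For a concrete counterexample take $\theta=0$ and $d<\dcore$: then $\alpha_*=\alpha^*=0$, so $|V_{\uzn,\ell}(\FDC 0)|\sim n$ by \Prop~\ref{prop_WP}, whereas $\nul\ADC 0\sim\Phi_{d,k,0}(0)\,n=(1-d/k)n<n$ by \Prop~\ref{prop_nul}. More generally, at a fixed point $z$ one computes $\Phi_{d,k,\lambda}(z)=(1-z)+d(z^{k-1}-\tfrac{k-1}{k}z^k-\tfrac1k)$, which differs from $1-\alpha^*$ unless $\alpha^*=1$. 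The deeper issue is that the number of \emph{non-null} variables can far exceed the nullity---the support of an $r$-dimensional kernel can have size much larger than $r$---so no purely dimensional count can force $V_{\uzn,\ell}$ to avoid $V_0$. Your fallback of computing $|V_0(\FDC t)|$ directly would need \Prop~\ref{cor_frz}, but that proposition is proved \emph{using} \Prop~\ref{prop_nlluzn}, so invoking it here is circular.

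The paper's argument for part~(ii) is entirely local and avoids any global counting. By \Lem~\ref{lem_lwc} the depth-$2\ell$ neighbourhood of a uniformly chosen variable $x$ is acyclic \whp. On that event, if $\omega_{\FDC t,x,\ell}=\uzn$ then one can explicitly exhibit a vector $\sigma\in\ker\ADC t$ with $\sigma_x=1$: trace the $\uzn$-messages back through the tree, at each clause selecting one $\uzn$-child to flip, terminating at degree-one variables (which is exactly where $\uzn$-messages originate under the all-$\fzn$ initialisation). Equivalently, this is the statement that BP is exact on acyclic factor graphs (Fact~\ref{fact_BP_acyclic}). Hence $\pr[x_n\in V_{\uzn,\ell}(\FDC t)\cap V_0(\FDC t)]=o(1)$, and linearity of expectation plus Markov give $|V_{\uzn,\ell}(\FDC t)\cap V_0(\FDC t)|=o(n)$ \whp. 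The key idea you are missing is this constructive local witness for non-nullness; no appeal to $\nul\ADC t$ is needed.
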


\Prop s~\ref{prop_nul} and~\ref{prop_nlluzn} enable us to calculate the number of null variables of $\FDC{t}$, so long as we remain clear of the point $\tcond$ where $\amax$ is discontinuous.

\begin{proposition}\label{cor_frz}
	If $\theta\neq\tcond$ then $|\frz(\FDC{t})|=\amax n+o(n)$ \whp
\end{proposition}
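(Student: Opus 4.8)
The plan is to combine the three preceding propositions. By \Prop~\ref{prop_nul} we know that $\nul\ADC{t}\to\Phi_{d,k,\lambda}(\amax)$ in probability, where $\lambda=-\log(1-\theta)$; since $\theta\neq\tcond$, \Prop~\ref{prop_greg} guarantees that $\amax\in\{\alpha_*,\alpha^*\}$ is the \emph{strict} maximiser of $\Ph$, so $\amax$ is well-defined and the convergence is unambiguous. The point of the argument is to show that the number of null variables $|\frz(\FDC t)|$ is sandwiched, up to $o(n)$, between $t+|V_{\nll,\ell}(\FDC t)|$ (a lower bound coming from WP) and something that is itself within $o(n)$ of $\amax n$.

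First I would establish the lower bound. The $t$ variables $x_1,\dots,x_t$ already assigned by the decimation process are trivially null in $\FDC t$ (they no longer appear, so every kernel vector has a zero there — more precisely one treats them as null by convention, or equivalently works with the surviving variables and adds $t$). Among the surviving variables $x_{t+1},\dots,x_n$, \Prop~\ref{prop_nlluzn}(i) gives $V_{\nll,\ell}(\FDC t)\subseteq\frz(\FDC t)$ \whp\ for every fixed $\ell$. Hence $|\frz(\FDC t)|\ge t+|V_{\nll,\ell}(\FDC t)|$, and by \Prop~\ref{prop_WP} the right-hand side is $\alpha_* n+o(n)$ \whp\ once $\ell\ge\ell_0(d,\theta,\eps)$. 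This already yields $|\frz(\FDC t)|\ge\alpha_* n-o(n)$.

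For the matching upper bound I would count dimensions. The non-null surviving variables are exactly those $x_i$ with $i>t$ for which some kernel vector of $\ADC t$ is nonzero in coordinate $i$. Writing $\bar V$ for the set of surviving variables and $V_0'=\frz(\FDC t)\cap\bar V$, restricting kernel vectors to the coordinates in $\bar V\setminus V_0'$ gives an injective linear map, so $\nul\ADC t\le |\bar V\setminus V_0'| = (n-t)-|V_0'|$; equivalently $|\frz(\FDC t)|=t+|V_0'|\le n-\nul\ADC t$. Combining with \Prop~\ref{prop_nul}, $|\frz(\FDC t)|\le n-\Phi_{d,k,\lambda}(\amax)n+o(n)$. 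To close the gap I would invoke the second part of \Prop~\ref{prop_WP} together with \Prop~\ref{prop_nlluzn}(ii): the variables that are \emph{not} null but are not caught by WP, i.e.\ $V_{\uzn,\ell}(\FDC t)\cap\frz(\FDC t)$ vanishes in number — wait, that is the wrong direction; rather, I would show that the set $\bar V\setminus(V_{\nll}\cup\frz(\FDC t))$ together with the free part of the kernel accounts exactly for $\nul\ADC t$. Concretely, the clean way is: the fraction of null variables must match what the Bethe formula predicts, because both $n-t-|\frz(\FDC t)|$ and $\nul\ADC t$ measure the ``free'' degrees of freedom up to the contribution of variables with mark $\uzn$, which is $o(n)$ by \Prop~\ref{prop_nlluzn}(ii) and \Prop~\ref{prop_WP}. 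A short computation — using that $\Phi_{d,k,\lambda}(\amax)$ equals, by definition \eqref{eqPhi}, exactly $1-\amax$ \emph{minus} the appropriate clause/variable correction, so that $n-\Phi_{d,k,\lambda}(\amax)n = \amax n + o(n)$ after accounting for the $t=(1-e^{-\lambda})n+o(n)=\theta n+o(n)$ already-assigned variables — then pins both bounds to $\amax n+o(n)$.

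The main obstacle is the upper bound: the dimension count $|\frz(\FDC t)|\le n-\nul\ADC t$ is crude, and one must argue that there is essentially no slack, i.e.\ that almost every surviving non-null variable genuinely contributes an independent direction to the kernel \emph{and} that the $\uzn$-marked variables do not secretly hide $\Omega(n)$ null variables beyond those flagged by WP. This is exactly what \Prop~\ref{prop_nlluzn}(ii) is designed to rule out, so the real work is bookkeeping: reconciling the three quantities $t+|V_{\nll,\ell}|$, $|\frz(\FDC t)|$, and $n-\nul\ADC t$ and checking via \eqref{eqPhi} and \eqref{eqthetas} that $1-\Phi_{d,k,\lambda}(\amax)$ indeed evaluates to $\amax$ (here one uses that $\amax$ is a fixed point of $\ph$, i.e.\ $\amax=1-\exp(-\lambda-d\amax^{k-1})$, to simplify the exponential term in $\Ph$). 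Once that identity is verified, the sandwich closes and the proposition follows.
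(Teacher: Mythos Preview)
Your sandwich does not close. The identity you plan to ``verify'' at the end, namely $1-\Phi_{d,k,\lambda}(\amax)=\amax$, is false: plugging the fixed-point relation $\exp(-\lambda-d\amax^{k-1})=1-\amax$ into \eqref{eqPhi} gives
\[
\Phi_{d,k,\lambda}(\amax)=(1-\amax)+d\amax^{k-1}-\tfrac{d(k-1)}{k}\amax^{k}-\tfrac{d}{k},
\]
and the trailing terms do not vanish (e.g.\ $k=3$, $d=1$, $\lambda=0$ gives $\amax=0$, $\Phi=2/3\neq1$). So the upper bound $|\frz(\FDC t)|\le n-\nul\ADC t$ from your injectivity argument, while valid, lands at the wrong value. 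More conceptually, $n-t-|\frz(\FDC t)|$ and $\nul\ADC t$ do \emph{not} both measure ``free degrees of freedom'': the nullity also absorbs redundant clauses, and \Prop~\ref{prop_nlluzn}(ii) says nothing about that gap. Once the dimension count fails, your remaining WP-based bounds only give $\alpha_*\le|\frz(\FDC t)|/n+o(1)\le\alpha^*$ (lower bound from $V_{\nll,\ell}\subset V_0$, upper bound from $|V_{\uzn,\ell}\cap V_0|=o(n)$). When $\theta\in(\theta^*,\theta_*)$ these differ by $\Omega(1)$, and nothing in your outline selects the correct endpoint.

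The paper's proof supplies precisely the missing link: $|\frz|$ is tied not to $\nul$ itself but to its \emph{derivative} in $\lambda$. Adding one random unit row to $\vA[\lambda]$ drops the nullity by one iff it hits a non-null coordinate, so $\tfrac{\partial}{\partial\lambda}\ex[\nul\vA[\lambda]]$ equals $\ex|\frz(\vA[\lambda])|/n-1$ up to normalisation (\Lem~\ref{lem_diff_nul}); on the analytic side the chain rule gives $\tfrac{\partial}{\partial\lambda}\Phi_{d,k,\lambda}(\amax)=\amax-1$ for $\lambda\neq\lcond$ (\Lem~\ref{lem_nul}). Matching the two derivatives over a short $\lambda$-interval, together with the one-sided WP bounds you already have and monotonicity of $|\frz|$ in $\lambda$, forces $|\frz|/n\to\amax$ by contradiction (\Lem s~\ref{lem_nul_*}--\ref{lem_nul^*}). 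This interpolation step is the genuine content of the proposition; without it, WP alone cannot distinguish $\alpha_*$ from $\alpha^*$.
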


Let us briefly summarise what we have learned thus far.
First, because all Belief Propagation messages are half-integral, BP reduces to WP.
Second, \Prop~\ref{prop_WP} shows that the fixed points $\alpha_*,\alpha^*$ of $\ph$ determine the number of variables marked $\nll$ or $\fzn$ by WP.
Third, the function $\Ph$ and its maximiser $\amax$ govern the nullity of the check matrix and thereby the number of null variables of $\FDC{t}$.
Clearly, the null variables $x_i$ are precisely the ones whose actual marginals $\pr\brk{\SIGMA_{\FDC t}(x_i)=s\mid\FDC{t}}$ are {\em not} uniform.
As a next step, we investigate whether BP/WP identify these variables correctly.

In light of \Prop~\ref{prop_WP}, in order to investigate the accuracy of BP it suffices to compare the {\em numbers} of variables marked $\nll$ by WP with the true marginals.
The following corollary summarises the result.

\begin{corollary}\label{cor_nll}
	For any $d$, $\theta$ the following statements are true.
	\begin{enumerate}[(i)]
		\item If $d<\dmin$, \emph{or} $d>\dmin$ and $\theta<\tcond$, \emph{or} $d>\dmin$ and $\theta>\theta_*$, then $|\frz(\FDC{t})\triangle V_{\nll}(\FDC{t})|=o(n)$ \whp
		\item If $d>\dmin$ and $\tcond<\theta<\theta_*$, then $|\frz(\FDC{t})\triangle V_{\nll}(\FDC{t})|=\Omega(n)$ \whp
	\end{enumerate}
\end{corollary}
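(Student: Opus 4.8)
The plan is to express the symmetric difference $\frz(\FDC t)\triangle V_{\nll}(\FDC t)$ through a single scalar quantity and then to read off its order of magnitude from the fixed-point analysis of \Prop~\ref{prop_greg}. Throughout I set $\lambda=-\log(1-\theta)$, as in \Prop s~\ref{prop_WP} and~\ref{prop_nul}, so that the monotone substitution $\theta=1-\eul^{-\lambda}$ turns $\theta\lessgtr\theta^*,\tcond,\theta_*$ into $\lambda\lessgtr\lambda^*,\lcond,\lambda_*$, respectively.

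First I would dispose of one direction of the symmetric difference, which is negligible in \emph{every} regime. Fix $\eps>0$ and let $\ell\geq\ell_0(d,\theta,\eps)$ be as in \Prop~\ref{prop_WP}. Combining the inclusion $V_{\nll,\ell}(\FDC t)\subset\frz(\FDC t)$ from \Prop~\ref{prop_nlluzn}(i) with the bound $\abs{V_{\nll}(\FDC t)\triangle V_{\nll,\ell}(\FDC t)}<\eps n$ from \Prop~\ref{prop_WP} yields $\abs{V_{\nll}(\FDC t)\setminus\frz(\FDC t)}<\eps n$ \whp; since $\eps>0$ was arbitrary this gives $\abs{V_{\nll}(\FDC t)\setminus\frz(\FDC t)}=o(n)$ \whp\ in all cases, and hence $\abs{\frz(\FDC t)\triangle V_{\nll}(\FDC t)}=\abs{\frz(\FDC t)\setminus V_{\nll,\ell}(\FDC t)}+o(n)$ \whp\ (again using $\abs{V_{\nll}(\FDC t)\triangle V_{\nll,\ell}(\FDC t)}<\eps n$). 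Now every variable in $\frz(\FDC t)\setminus V_{\nll,\ell}(\FDC t)$ carries WP-mark $\fzn$ or $\uzn$ after $\ell$ rounds, and by \Prop~\ref{prop_nlluzn}(ii) only $o(n)$ null variables carry $\uzn$; together with $V_{\nll,\ell}(\FDC t)\subset\frz(\FDC t)$ this gives $\abs{\frz(\FDC t)\setminus V_{\nll,\ell}(\FDC t)}=\abs{\frz(\FDC t)}-\abs{V_{\nll,\ell}(\FDC t)}+o(n)$, so the whole problem reduces to comparing the two cardinalities.

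This is where the deeper inputs enter: \Prop~\ref{prop_WP} supplies the size of $V_{\nll,\ell}(\FDC t)$ in terms of $\alpha_*=\alpha_*(d,k,\lambda)$, and \Prop~\ref{cor_frz} (whose hypothesis $\theta\neq\tcond$ holds in every case of the corollary) supplies $\abs{\frz(\FDC t)}$ in terms of $\amax=\amax(d,k,\lambda)$, so that
\[
\abs{\frz(\FDC t)\triangle V_{\nll}(\FDC t)}=(\amax-\alpha_*)\,n+o(n)\qquad\text{\whp}
\]
It remains to invoke \Prop~\ref{prop_greg}. If $d<\dmin$ then $\alpha_*=\alpha^*$ for all $\lambda>0$, whence $\amax=\alpha_*$; if $d>\dmin$ and $\lambda>\lambda_*$ (that is, $\theta>\theta_*$), or $\lambda<\lambda^*$ (which is part of $\theta<\tcond$), the same equality $\alpha_*=\alpha^*=\amax$ holds; and if $d>\dmin$ and $\lambda^*<\lambda<\lcond$ (the remaining part of $\theta<\tcond$) then $\alpha_*<\alpha^*$, but the maximiser characterisation in \Prop~\ref{prop_greg} still gives $\amax=\alpha_*$. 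In all these cases the gap is $o(n)$, which is part~(i). Conversely, if $d>\dmin$ and $\tcond<\theta<\theta_*$, i.e.\ $\lcond<\lambda<\lambda_*$, then $\lambda>\lambda^*$, and \Prop~\ref{prop_greg} gives both $0<\alpha_*<\alpha^*<1$ \emph{strictly} and $\amax=\alpha^*$; hence the gap equals $(\alpha^*-\alpha_*)n+o(n)=\Omega(n)$, which is part~(ii).

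Since \Prop s~\ref{prop_WP},~\ref{prop_nlluzn},~\ref{cor_frz} and~\ref{prop_greg} carry the entire analytic and probabilistic load, the corollary itself presents no serious obstacle; the one point requiring care is the reduction of the symmetric difference to a cardinality comparison, which exploits the fact that the inclusion $V_{\nll,\ell}\subset\frz$ runs only one way—there is no reverse inclusion, which is precisely why the symmetric difference is large in part~(ii). A minor technical nuisance is the behaviour at the exceptional abscissae $\theta\in\{\theta^*,\tcond,\theta_*\}$, where \Prop s~\ref{prop_WP} and~\ref{cor_frz} are stated with exclusions; these lie on the boundaries of the regimes above and are not needed for either conclusion.
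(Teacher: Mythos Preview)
Your proposal is correct and takes essentially the same approach as the paper: both arguments combine \Prop~\ref{prop_nlluzn}(i) (the inclusion $V_{\nll,\ell}\subset\frz$) and~(ii) with \Prop~\ref{prop_WP} and \Prop~\ref{cor_frz} to reduce the symmetric difference to the scalar gap $(\amax-\alpha_*)n$, and then read off $\amax$ from \Prop~\ref{prop_greg}. The only difference is organisational---you perform the reduction to a cardinality comparison once and then split on $\amax$, whereas the paper runs four parallel cases; the ingredients and logic are identical.
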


Thus, so long as $d<\dmin$ or $d>\dmin$ and $\theta<\tcond$ or $\theta>\theta_*$, the BP/WP approximations are mostly correct.
By contrast, if $d>\dmin$ and $\tcond<\theta<\theta_*$, the BP/WP approximations are significantly at variance with the true marginals \whp\
Specifically, \whp\ BP deems $\Omega(n)$ frozen variables unfrozen, thereby setting itself up for failure.
Indeed, \Cor~\ref{cor_nll} easily implies \Thm~\ref{thm_cond}, which in turn implies \Thm~\ref{thm_bpgd}~(ii) without much ado.

In addition, to settle the (non-)reconstruction thresholds set out in \Thm~\ref{thm_recnonrec} we need to investigate the {\em conditional} marginals given the values of variables at a certain distances from $x_{t+1}$ as in~\eqref{eqnonrec}.
This is where the extra value $\fzn$ from the construction of WP enters.
Indeed, for a XORSAT instance $F$ with variables $x_1,\ldots,x_n$ and an integer $\ell$ let $V_{0,\ell}(F)$ be the set of variables $x_i$ such that $\sigma_i=0$ for all $\sigma\in\ker A_F$ for which $\sigma_h=0$ for all variables $x_h\in\partial^\ell x_i$.
Hence, $V_{0,\ell}(F) \subset V_0(F)$ is the set of variables whose $\ell$-neighbourhood is contained in $V_0(F)$. 

\begin{corollary}\label{lem_fzn}
	Assume that $d>\dmin$ and let $\eps>0$. 
	\begin{enumerate}[(i)]
		\item If $\theta<\tcond$, then for any fixed $\ell$ we have $|V_{\fzn,\ell}(\FDC{t})\cap V_{0,\ell}(\FDC{t})|<\eps n$ \whp\
		\item If $\theta>\tcond$, then there exists $\ell_0=\ell_0(d,\theta,\eps)$ such that for any fixed $\ell>\ell_0$ we have
			\begin{align*}
				|(V_{\nll,\ell}(\FDC{t})\cup V_{\fzn,\ell}(\FDC{t}))\triangle V_{0,\ell}(\FDC{t})|<\eps n\qquad\mbox\whp
			\end{align*}
	\end{enumerate}
\end{corollary}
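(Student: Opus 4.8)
The plan is to reduce both statements to the combinatorial content already captured by \Prop~\ref{prop_WP}, \Prop~\ref{prop_nlluzn} and \Prop~\ref{cor_frz}, by observing that the local sets $V_{\fzn,\ell}$, $V_{\nll,\ell}$, $V_{0,\ell}$ all stabilise (up to $o(n)$ vertices) once $\ell$ is large but fixed, and then comparing cardinalities. The key point is that WP run for $\ell$ rounds from the all-$\fzn$ initialisation computes exactly the ``locally forced'' structure: a variable $x_i$ receives mark $\nll$ after $\ell$ rounds iff unit clause propagation within its radius-$\ell$ neighbourhood pins it down, and it receives mark $\fzn$ after $\ell$ rounds iff, after zeroing out everything in the radius-$\ell$ $2$-core boundary, $x_i$ is forced to $0$. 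That is precisely the event defining membership in $V_{0,\ell}$ (with the proviso that $V_{\nll,\ell}\subset V_{0,\ell}$ automatically, since a UCP-forced variable is forced under any boundary condition). So the inclusion $V_{\nll,\ell}(\FDC t)\cup V_{\fzn,\ell}(\FDC t)\supseteq$ (something close to) $V_{0,\ell}(\FDC t)$ should follow by a direct structural argument on the bipartite graph $G(\FDC t)$, while the reverse near-inclusion is the substantive part.

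For part~(i), the claim is that when $\theta<\tcond$ the $\fzn$-marked variables essentially do \emph{not} lie in $V_{0,\ell}$: although WP tentatively freezes them from its pessimistic initialisation, these variables are in fact genuinely unconstrained once one looks at the whole formula. Here is where \Prop~\ref{prop_nul}/\Prop~\ref{cor_frz} do the work: for $\theta<\tcond$ we have $\amax=\alpha_*$, so $|\frz(\FDC t)|=\alpha_* n+o(n)$, whereas \Prop~\ref{prop_WP} gives $t+|V_{\nll,\ell}|=\alpha_* n+o(n)$ already. Since \Prop~\ref{prop_nlluzn}(i) gives $V_{\nll,\ell}\subset\frz$, a counting argument forces $|\frz\setminus V_{\nll,\ell}|=o(n)$, i.e.\ almost every null variable is already detected by UCP. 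A variable in $V_{0,\ell}\setminus V_{\nll,\ell}$ would have to be a null variable not caught by UCP within radius $\ell$; the above shows there are only $o(n)$ of those, and intersecting with the (linearly many) $\fzn$-marked vertices still leaves $<\eps n$. I would make this precise by noting $V_{\fzn,\ell}\cap V_{0,\ell}\subseteq\frz\setminus V_{\nll,\ell}$ up to the $o(n)$ slack in \Prop~\ref{prop_nlluzn}(i), which is $<\eps n$ whp.

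For part~(ii), when $\theta>\tcond$ we have $\amax=\alpha^*$, hence $|\frz(\FDC t)|=\alpha^* n+o(n)$ by \Prop~\ref{cor_frz}, and \Prop~\ref{prop_WP} gives $t+|V_{\nll,\ell}|+|V_{\fzn,\ell}|=\alpha^* n+o(n)$ for $\ell\ge\ell_0$. So $V_{\nll,\ell}\cup V_{\fzn,\ell}$ has the right cardinality to match $V_0(\FDC t)$ up to $o(n)$. The inclusion $V_{\nll,\ell}(\FDC t)\subseteq V_0(\FDC t)$ is \Prop~\ref{prop_nlluzn}(i); for the $\fzn$ messages one argues that a variable frozen by WP after $\ell$ rounds lies in $V_{0,\ell}$ by the structural interpretation of the $\fzn$ rule (the $2$-core-zeroing forcing is a valid deduction about $\ker A_{\FDC t}$ restricted to configurations vanishing on $\partial^\ell x_i$). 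Combined with $V_{0,\ell}\subseteq V_0$ and the matching counts, a one-line inclusion-exclusion yields the symmetric-difference bound $<\eps n$, provided we also rule out $V_{0,\ell}$ being much smaller than $V_0$ — which is exactly the content of \Prop~\ref{prop_nlluzn}(ii) (variables that are genuinely null but whose $\fzn$/$\nll$ status is only revealed by long-range effects are $o(n)$ in number), together with the observation that $|V_0\setminus V_{0,\ell}|\to0$ in density as $\ell\to\infty$ by a local-weak-convergence / Galton–Watson-tree computation on the neighbourhood of $x_{t+1}$.

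The main obstacle I anticipate is the structural lemma identifying ``$x_i$ is marked $\fzn$ by $\ell$-round WP'' with ``$x_i\in V_{0,\ell}$'', i.e.\ connecting the three-valued message-passing fixed point to the affine-subspace-with-boundary-condition definition of $V_{0,\ell}$. The $\nll$ case is the familiar UCP correspondence, but the $\fzn$ case requires showing that the $2$-core-based freezing rule is both \emph{sound} (it only deduces things that genuinely hold for all kernel vectors vanishing on the boundary) and, at the level of counts, \emph{essentially complete} whp — and the completeness direction is where one must invoke the nullity formula rather than a purely local argument, since a priori WP could miss linearly many forced variables. Getting the quantifiers right (the $\ell_0$ in (ii) depending on $\eps$ through the tail of $|V_0\setminus V_{0,\ell}|$, versus the ``any fixed $\ell$'' in (i)) is the delicate bookkeeping part.
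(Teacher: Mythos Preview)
Your overall strategy is the paper's: invoke \Prop s~\ref{prop_WP}, \ref{prop_nlluzn} and~\ref{cor_frz} (or their packaging as \Cor~\ref{cor_nll}) and compare cardinalities. The paper's proof is two sentences per part, using exactly those ingredients.

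There is one concrete slip. You write ``$V_{0,\ell}\subseteq V_0$'' and plan to bound $|V_0\setminus V_{0,\ell}|$, but the inclusion goes the other way: if $\sigma_i=0$ for every $\sigma\in\ker A_F$ then certainly $\sigma_i=0$ for every such $\sigma$ that also vanishes on $\partial^{2\ell}x_i$, so $V_0\subseteq V_{0,\ell}$. In fact the structural lemma you flag at the end---that on a tree neighbourhood $x_i\in V_{\fzn,\ell}\cup V_{\nll,\ell}$ iff $x_i\in V_{0,\ell}$---is the content of \Lem~\ref{prop_recon} later in the paper (BP exactness on trees combined with the Markov property of the kernel), and once you have it, $V_{0,\ell}$ coincides with $V_{\fzn,\ell}\cup V_{\nll,\ell}$ up to the $o(n)$ vertices with cyclic neighbourhoods, for \emph{every} $\theta$. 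That makes part~(ii) essentially immediate from local weak convergence, without the nullity count. It also shows that part~(i), read literally with $V_{0,\ell}$, would fail on $(\theta^*,\tcond)$, since then $V_{\fzn,\ell}\subseteq V_{0,\ell}$ up to $o(n)$ while $|V_{\fzn,\ell}|\approx(\alpha^*-\alpha_*)n>0$. The paper's own proof of part~(i) works entirely with $\frz=V_0$ (never with $V_{0,\ell}$), and this is also how the corollary is applied in the proof of \Thm~\ref{thm_recnonrec}~(iii). Your counting argument for~(i)---$V_{\nll,\ell}\subseteq V_0$, the two sets have matching sizes $\approx(\alpha_*-\theta)n$, and $V_{\fzn,\ell}\cap V_{\nll,\ell}=\emptyset$, hence $V_{\fzn,\ell}\cap V_0\subseteq V_0\setminus V_{\nll,\ell}$ is small---is correct for that $V_0$-reading, provided $\ell$ is large enough for the size estimate from \Prop~\ref{prop_WP} to apply.
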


\noindent
Comparing the number of actually frozen variables with the ones marked $\fzn$ by WP, we obtain \Thm~\ref{thm_recnonrec}.

\subsection{Proving \BPGD\ successful}\label{sec_bpgd_success}
We are left to prove \Thm~\ref{thm_bpgd}.
First, we need to compute the (strictly positive) success probability of \BPGD\ for $d<\dmin$.
At this point, the fact that \BPGD\ has a fair chance of succeeding for $d<\dmin$ should not come as a surprise.
Indeed, \Cor~\ref{cor_nll} implies that the BP approximations of the marginals are mostly correct for $d<\dmin$, at least on the formula $\FDC t$ created by the decimation process.
Furthermore, so long as the marginals are correct, the decimation process $\FDC{t}$ and the execution of the \BPGD\ algorithm $\FBP{t}$ move in lockstep.
The sole difficulty in analysing \BPGD\ lies in proving that the estimates of the algorithm are not just mostly correct, but correct up to only a {\em bounded} expected number of discrepancies over the entire execution of the algorithm.
To prove this fact we combine the method of differential equations with a subtle analysis of the sources of the remaining bounded number of discrepancies.
These discrepancies result from the presence of short (i.e., bounded-length) cycles in the graph $G(\PHI)$.
Finally, the proof of the second (negative) part of \Thm~\ref{thm_bpgd} follows by coupling the execution of \BPGD\ with the decimation process, and invoking \Thm~\ref{thm_cond}.
The details of both arguments can be found in \Sec~\ref{sec_alg}.

\subsection{Discussion}\label{sec_discussion}
The thrust of the present work is to verify the predictions from~\cite{RTS} on the \BPGD\ algorithm and the decimation process rigorously.
Concerning the decimation process, the main gap in the deliberations of \RTS\ \cite{RTS} that we needed to plug is the proof of \Prop~\ref{cor_frz} on the actual number of null variables in the decimation process.
The proof of \Prop~\ref{cor_frz}, in turn, hinges on the formula for the nullity from \Prop~\ref{prop_nul}, whereas \RTS\ state the (as it turns out, correct) formulas for the nullity and the number of null variables based on purely heuristic arguments.

Regarding the analysis of the \BPGD\ algorithm, 
\RTS\ state that they rely on the heuristic techniques from the insightful article~\cite{DeroulersMonasson} to predict the formula~\eqref{eqthm_bpgd}, but do not provide any further details; the article~\cite{DeroulersMonasson} principally employs heuristic arguments involving generating functions.
By contrast, the method that we use to prove~\eqref{eqthm_bpgd} is a bit more similar to that of Frieze and Suen~\cite{FriezeSuen} for the analysis of a variant of the unit clause algorithm on random $k$-SAT instances, for which they also obtain the asymptotic success probability.
Yet by comparison to the argument of Frieze and Suen, we pursue a more combinatorially explicit approach that demonstrates that certain small sub-formulas that we call `toxic cycles' are responsible for the failure of \BPGD.
Specifically, the proof of~\eqref{eqthm_bpgd} combines the method of differential equations with Poissonisation.
Finally, the proof of \Thm~\ref{thm_bpgd}~(ii) is an easy afterthought of the analysis of the decimation process.

Yung's work~\cite{Yung} on random $k$-XORSAT is motivated by the `overlap gap paradigm'~\cite{Gamarnik}, the basic idea behind which is to show that a peculiar clustered geometry of the set of solutions is an obstacle to certain types of algorithms.
Specifically, Yung only considers the Unit Clause Propagation algorithm and (a truncated version of) \BPGD.
Following the path beaten in~\cite{Mora}, Yung performs moment computations to establish the overlap gap property.
However, moment computations (also called `annealed computations' in physics jargon) only provide one-sided bounds.
Yung's results require spurious lower bounds on the clause length $k$ ($k\geq9$ for Unit Clause and $k\geq13$ for \BPGD).
By contrast, the present proof strategy pivots on the number of null variables rather than overlaps, and \Prop~\ref{cor_frz} provides the precise `quenched' count of null variables. 
A further improvement over~\cite{Yung} is that the present analysis pinpoints the {\em precise} threshold up to which \BPGD\ (as well as Unit Clause) succeeds for any $k\geq3$.
Specifically, Yung proves that these algorithms fail for $d>\dcore$, while \Thm~\ref{thm_bpgd} shows that failure occurs already for $d>\dmin$ with $\dmin<\dcore$.
Conversely, \Thm~\ref{thm_bpgd} shows that the algorithms succeed with a non-vanishing probability for $d<\dmin$. 
Thus, \Thm~\ref{thm_bpgd} identifies the correct threshold for the success of \BPGD, as well as the correct combinatorial phenomenon that determines this threshold, namely the onset of reconstruction in the decimation process (\Thm s~\ref{thm_recnonrec} and~\ref{thm_cond}).

The \BPGD\ algorithm as detailed in \Sec~\ref{sec_bp} applies to a wide variety of problems beyond random $k$-XORSAT.
Of course, the single most prominent example is random $k$-SAT.
Lacking the symmetries of XORSAT, random $k$-SAT does not allow for the simplification to discrete messages; in particular, the BP messages are not generally half-integral.
In effect, BP and WP are no longer equivalent.
In addition to random $k$-XORSAT, the article~\cite{RTS} also provides a heuristic study of \BPGD\ on random $k$-SAT.
But once again due to the lack of half-integrality, the formulas for the phase transitions no longer come as elegant finite-dimensional expressions.
Instead, they now come as infinite-dimensional variational problems.
Furthermore, the absence of half-integrality also entails that the present proof strategy does not extend to $k$-SAT.

The lack of inherent symmetry in random $k$-SAT can partly be compensated by assuming that the clause length $k$ is sufficiently large (viz.\ larger than some usually unspecified constant $k_0$).
Under this assumption the random $k$-SAT version of both the decimation process and the \BPGD\ algorithm have been analysed rigorously~\cite{BP,Angelica}.
The results are in qualitative agreement with the predictions from~\cite{RTS}.
In particular, the \BPGD\ algorithm provably fails to find satisfying assignments on random $k$-SAT instances even below the threshold where the set of satisfying assignments shatters into well-separated clusters~\cite{Barriers,pnas}.
Furthermore, on random $k$-SAT a more sophisticated message passing algorithm called Survey Propagation Guided Decimation has been suggested~\cite{MPZ,RTS}.
While on random XORSAT Survey Propagation and Belief Propagation are equivalent, the two algorithms are substantially different on random $k$-SAT.
One might therefore hope that Survey Propagation Guided Decimation outperforms \BPGD\ on random $k$-SAT and finds satisfying assignments up to the aforementioned shattering transition.
A negative result to the effect that Survey Propagation Guided Decimation fails asymptotically beyond the shattering transition point for large enough $k$ exists~\cite{Hetterich}.
Yet a complete analysis of Belief/Survey Propagation Guided Decimation on random $k$-SAT for any $k\geq3$ in analogy to the results obtained here for random $k$-XORSAT remains an outstanding challenge.

Finally, returning to random $k$-XORSAT, a question for future work may be to investigate the performance of various types of algorithms such as greedy, message passing or local search that aim to find an assignment that violates the least possible number of clauses.
Of course, this question is relevant even for $d>\dsat(k)$.
A first step based on the heuristic `dynamical cavity method' was recently undertaken by Maier, Behrens and Zdeborov\'a~\cite{MBZ}.



\subsection{Preliminaries and notation}\label{sec_notation}
Throughout we assume that $k\geq3$ and $0<d<\dmin$ and $\theta\in(0,1)$ are fixed independently of $n$.
We always let $t=t(n)\in\{0,1,\ldots,n\}$ be an integer sequence such that $\lim_{n\to\infty}t/n=\theta$.
Unless specified otherwise we tacitly assume that $n$ is sufficiently large for our various estimates to hold.
Asymptotic notation such as $O(\nix)$ refers to the limit of large $n$ by default, with $k,d,\theta$ fixed.
We continue to denote by $\alpha_*=\alpha_*(\lambda)=\alpha_*(d,k,\lambda)$ and $\alpha^*=\alpha^*(\lambda)=\alpha^*(d,k,\lambda)$ the smallest/largest fixed points of $\ph$ in $[0,1]$ and by $\lambda_*=\lambda_*(d,k)$, $\lambda^*=\lambda^*(d,k)$, $\theta_*=\theta_*(d,k)$, $\theta^*=\theta^*(d,k)$ the quantities defined in~\eqref{eqlambdas}--\eqref{eqthetas}.

For a formula $F$ and a partial assignment $\sigma:U\to\{0,1\}$ with $U\subset V(F)$ let $F[\sigma]$ be the simplified formula obtained by substituting constants for the variables in $U$.
The {\em length} of a clause of $F[\sigma]$ is defined as the number of variables from $V(F)\setminus U$ that the clause contains.

	The following fact provides the correctness of BP on formulas represented by acyclic graphs $G(F)$.
	\begin{fact}[{\cite[Chapter 14]{MM}}]\label{fact_BP_acyclic}
		For a XORSAT Formula $F$ with an acyclic bipartite graph $G(F)$ the BP marginals as defined in \eqref{eqBPmarg} are exact, i.e.
		\begin{align*}
			\lim_{\ell \to \infty}\mu_{F,x,\ell}(1) &=\pr\brk{\SIGMA_{F}(x)=1}.
		\end{align*}
	\end{fact}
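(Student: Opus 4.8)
The plan is to run the classical ``Belief Propagation is exact on trees'' argument (cf.\ \cite[Chapter~14]{MM}), adapted to the degenerate-normalisation conventions in \eqref{eqBPupdate1}--\eqref{eqBPmarg}. Throughout one assumes $F$ is satisfiable, since otherwise $\pr\brk{\SIGMA_F(x)=1}$ is undefined. First I would reduce to the connected case: because $G(F)$ is acyclic it is a forest, the uniform distribution on solutions of $F$ factorises over the connected components of $G(F)$ together with the isolated variables, and an isolated variable is free, which matches the empty-product value $\mu_{F,x,\ell}(1)=1/2$ in \eqref{eqBPmarg}. Hence I may assume $G(F)$ is a tree, which I root at the variable node $x$ whose marginal is sought. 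By Fact~\ref{fact_halfint} the messages stabilise after finitely many steps — and on a tree this happens once $\ell$ exceeds the diameter — so it suffices to identify the limiting messages and the limiting marginal.

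Next I would introduce, for each directed edge of the rooted tree, the sub-formula ``hanging below'' it: for a clause $a$ with parent variable $x$, let $F_{a\to x}$ be the formula whose clauses are $a$ together with all clauses in the branch below $a$, and whose variables are $x$ together with all variables in that branch; define $F_{y\to a}$ analogously for a variable $y$ with parent clause $a$. The heart of the argument is the claim that the stabilised messages compute the sub-formula marginals,
\[
  \mu_{F,a\to x}(s)=\pr\brk{\SIGMA_{F_{a\to x}}(x)=s},\qquad
  \mu_{F,y\to a}(s)=\pr\brk{\SIGMA_{F_{y\to a}}(y)=s}\qquad(s\in\{0,1\}),
\]
which I would prove by induction on the height of the node. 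The base cases are a leaf clause $a$ — necessarily a unit clause, which forces its parent variable, in agreement with \eqref{eqBPupdate1} — and a leaf variable $y$, which is free, so \eqref{eqBPupdate2} returns the uniform message. For the inductive step the key structural fact is that the branches hanging off distinct children of a node are vertex-disjoint, so, conditioned on the value of the node, the partial solutions in those branches are independent; the sum-of-products in \eqref{eqBPupdate1} and the product in \eqref{eqBPupdate2} then aggregate exactly these independent contributions and reproduce the correct sub-formula marginal.

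To finish, I would observe that at the root $F$ is the union of the sub-formulas $\{F_{a\to x}:a\in\partial x\}$, which pairwise share only the variable $x$, and the sets $V(F_{a\to x})\setminus\{x\}$ partition $V(F)\setminus\{x\}$. Hence the number of solutions of $F$ with $x=s$ equals $\prod_{a\in\partial x}$(number of solutions of $F_{a\to x}$ with $x=s$), so $\pr\brk{\SIGMA_F(x)=s}\propto\prod_{a\in\partial x}\pr\brk{\SIGMA_{F_{a\to x}}(x)=s}=\prod_{a\in\partial x}\mu_{F,a\to x}(s)$; comparing with \eqref{eqBPmarg} yields $\lim_{\ell\to\infty}\mu_{F,x,\ell}(1)=\pr\brk{\SIGMA_F(x)=1}$. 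I expect the only real obstacle to be bookkeeping rather than conceptual: pinning down the sub-formulas $F_{a\to x}$ and $F_{y\to a}$ precisely, and verifying that the exceptional conventions in \eqref{eqBPupdate1}--\eqref{eqBPupdate2} and \eqref{eqBPmarg} (invoked when a normalisation vanishes) stay consistent with the counting interpretation whenever $F$ is satisfiable — in particular, that a value forced by one branch is never contradicted by another.
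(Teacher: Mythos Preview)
The paper does not supply its own proof of this statement; it is stated as a fact with a citation to \cite[Chapter~14]{MM} and is used as a black box (in the proofs of \Lem~\ref{lem_uzn} and \Lem~\ref{prop_recon}). Your proposal is the standard ``BP is exact on trees'' argument one finds in that reference, carried out carefully for the XORSAT setting and the paper's specific conventions, so there is nothing to compare against beyond noting that your write-up is more detailed than what the paper provides.
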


\subsection{Organisation}\label{sec_org}
The rest of the paper is organised as follows.
\Sec~\ref{sec_greg} contains the proof of \Prop~\ref{prop_greg}.
Subsequently in \Sec~\ref{sec_prop_WP} we investigate Warning Propagation to prove \Prop s~\ref{prop_WP} and~\ref{prop_nlluzn}.
Furthermore, \Sec~\ref{sec_nul} deals with the study of the check matrix; here we prove \Prop s~\ref{prop_nul} and~\ref{cor_frz} as well as Corollaries~\ref{cor_nll} and~\ref{lem_fzn}.
Additionally, with all these preparations completed we put all the pieces together to complete the proofs of \Thm s~\ref{thm_recnonrec} and~\ref{thm_cond} in \Sec~\ref{sec_thm_recnonrec}.
Finally, \Sec~\ref{sec_alg} contains the proof of \Thm~\ref{thm_bpgd}.

\section{Proof of \Prop~\ref{prop_greg}}\label{sec_greg}

\noindent
Even though a few steps are mildly intricate, the proof of \Prop~\ref{prop_greg} mostly consists of `routine calculus'.
As a convenient shorthand we introduce $$\zeta_\lambda(z)=\zeta_{d,k,\lambda}(z)=\phi_{d,k,\lambda}(z)-z = \mk{1-\exp\bc{-\lambda -d z^{k-1}} - z}.$$
Its derivatives read
\begin{align}
\zt'(z)  
 &=d(k-1)  \; z^{k-2} \;  \expld
 \; -1   \;
 \label{zt'}
 \quad\text{and} \\
\zt''(z)
  &=
  d (k-1) \; z^{k-3} \; \expld \; 
  \left[  (k-2) - d(k-1) z^{k-1} \right] .
\label{zt''}
\end{align}
Also let
\begin{align}\label{inflection}
    z_0 &= z_0(d,k) = \left(\frac{k-2}{d(k-1)}\right)^{\frac{1}{{k-1}}}. 
\end{align}
We begin by investigating the zeros of $\zt$, obviously identical with fixed points of $\ph$.

\begin{lemma} \label{Claim13}
	Assume that $\lambda>0$.
	\begin{enumerate}[(i)]
		\item The function $\zt$ has either one or three zeros in $z \in [0,1]$, possibly including multiple zeros.
			If $\zt$ has three zeros, then at least one lies in the interval $[0,z_0]$ and at least one lies in the interval $[z_0,1]$.
		\item Also, $\zt$ has at most two stationary points, a minimum and a maximum, and if it has both, the minimum occurs left of the maximum.
		\item If $\zt$ has a unique zero, then $\alpha_*$ is a stable fixed point of $\ph$ and $\sup_{z\in[0,1]}\ph'(z)<1$.
		\item If $\zt$ has three zeros but no double zero, then $\alpha_*,\alpha^*$ are stable fixed points of $\ph$. Additionally, $\ph$ possesses an unstable fixed point $\alpha_\mathrm{u}\in(\alpha_*,\alpha^*)$.
			Furthermore, there exists $\eps=\eps(d,\lambda)>0$ such that
			\begin{align*}
				\sup_{z\in[0,\alpha_*+\eps]}\ph'(z)&<1,&\sup_{z\in[\alpha^*-\eps,1]}\ph'(z)&<1.
			\end{align*}
	\end{enumerate}
\end{lemma}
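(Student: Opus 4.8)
The plan is to reduce the whole statement to the behaviour of the single function
\[
g(z):=\phi_{d,k,\lambda}'(z)=d(k-1)z^{k-2}\exp(-\lambda-dz^{k-1}),
\]
so that $\zeta_\lambda'(z)=g(z)-1$ by \eqref{zt'} and $g'(z)=\zeta_\lambda''(z)$. First I would record the shape of $g$: by \eqref{zt''} the sign of $g'=\zeta_\lambda''$ on $(0,\infty)$ equals the sign of $(k-2)-d(k-1)z^{k-1}$, so $g$ is strictly increasing on $(0,z_0]$ and strictly decreasing on $[z_0,\infty)$ with $z_0$ as in \eqref{inflection}; since moreover $g(0)=0$, the function $g$ is unimodal with unique maximiser $z_0$. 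Hence $\zeta_\lambda'=g-1$ changes sign at most twice on $(0,\infty)$, once upward at some $z_1\le z_0$ and once downward at some $z_2\ge z_0$, both present precisely when $g(z_0)>1$ and then with $z_1<z_0<z_2$. Consequently on $[0,1]$ the function $\zeta_\lambda$ has at most one stationary minimum, at $z_1$, lying to the left of at most one stationary maximum, at $z_2$; this is part (ii).

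Part (i) then follows by a sign count. Since $\lambda>0$, we have $\zeta_\lambda(0)=1-\exp(-\lambda)>0$ and $\zeta_\lambda(1)=-\exp(-\lambda-d)<0$, so $\zeta_\lambda$ has at least one zero in $(0,1)$ and an odd number of zeros there counted with multiplicity; by (ii) (equivalently, since $g=1$ has at most two solutions) $\zeta_\lambda'$ has at most two zeros with multiplicity, hence $\zeta_\lambda$ has at most three zeros with multiplicity, so exactly one or exactly three. If there are three, the decreasing--increasing--decreasing pattern forced by (ii) together with $\zeta_\lambda(0)>0>\zeta_\lambda(1)$ leaves only the configuration $z_1,z_2\in(0,1)$ with $\zeta_\lambda(z_1)\le0\le\zeta_\lambda(z_2)$ (a short case check rules out the other sign patterns, and shows $\zeta_\lambda(z_1)=\zeta_\lambda(z_2)=0$ would force a fourth zero and is thus impossible); in that configuration $\zeta_\lambda$ has a zero in $[0,z_1]\subseteq[0,z_0]$ and a zero in $[z_2,1]\subseteq[z_0,1]$, as claimed.

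For parts (iii) and (iv) the extra ingredient is that at any zero $z$ of $\zeta_\lambda$ one has $\exp(-\lambda-dz^{k-1})=1-z$, hence $\phi_{d,k,\lambda}'(z)=g(z)=d(k-1)z^{k-2}(1-z)$; by \eqref{eqds} this last expression is at most $d/\dmin(k)$ on $[0,1]$, so $\phi_{d,k,\lambda}'(z)<1$ at every fixed point once $d<\dmin(k)$, and in general $\phi_{d,k,\lambda}'<1$ there unless $z$ is a root of $d(k-1)z^{k-2}(1-z)=1$. In the unique-zero case $\zeta_\lambda>0$ on $[0,\alpha_*)$ and $\zeta_\lambda<0$ on $(\alpha_*,1]$, so $\alpha_*$ is neither a local minimum nor a local maximum of $\zeta_\lambda$; hence $\zeta_\lambda'(\alpha_*)<0$, i.e.\ $\phi_{d,k,\lambda}'(\alpha_*)<1$, so $\alpha_*$ is a stable fixed point, and the bound on $\phi'$ is read off from the identity $\phi_{d,k,\lambda}'(z)=d(k-1)z^{k-2}(1-\phi_{d,k,\lambda}(z))$ together with $\phi_{d,k,\lambda}(z)\ge z$ on $[0,\alpha_*]$. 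In the three-simple-zeros case the zeros satisfy $\alpha_*<z_1<z_0<z_2<\alpha^*$ and $\zeta_\lambda$ crosses from $+$ to $-$ at $\alpha_*$, from $-$ to $+$ at $\alpha_\mathrm{u}$, and from $+$ to $-$ at $\alpha^*$, so $\phi_{d,k,\lambda}'(\alpha_*)<1$, $\phi_{d,k,\lambda}'(\alpha_\mathrm{u})>1$ and $\phi_{d,k,\lambda}'(\alpha^*)<1$; finally, since $g$ is strictly increasing on $[0,z_0]$ with $g(z_1)=1$ and strictly decreasing on $[z_0,\infty)$ with $g(z_2)=1$, any $\eps>0$ with $\alpha_*+\eps<z_1$ and $\alpha^*-\eps>z_2$ gives $\sup_{[0,\alpha_*+\eps]}\phi_{d,k,\lambda}'=\phi_{d,k,\lambda}'(\alpha_*+\eps)<1$ and $\sup_{[\alpha^*-\eps,1]}\phi_{d,k,\lambda}'=\phi_{d,k,\lambda}'(\alpha^*-\eps)<1$.

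The routine part of all this is the sign bookkeeping; the two points I expect to require real care are: (a) the degenerate configurations — double zeros of $\zeta_\lambda$, and stationary points $z_1$ or $z_2$ that fall on an endpoint of $[0,1]$ or coincide with a zero of $\zeta_\lambda$ — which must be isolated and treated one at a time; and (b) making the derivative bound in part (iii) precise: what comes out cleanly is $\phi_{d,k,\lambda}'(\alpha_*)<1$ together with $\phi_{d,k,\lambda}'<1$ on $[0,\alpha_*]$ via the identity $\phi_{d,k,\lambda}'(z)=d(k-1)z^{k-2}(1-\phi_{d,k,\lambda}(z))$ and $\phi_{d,k,\lambda}(z)\ge z$, and one still has to verify separately — using $d<\dmin(k)$, equivalently $\max_{z\in[0,1]}d(k-1)z^{k-2}(1-z)<1$ — that $\phi'$ stays below $1$ on the rest of $[0,1]$.
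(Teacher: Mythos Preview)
Your approach is essentially the paper's: both rest on the unimodality of $g=\phi_{d,k,\lambda}'$ (equivalently, the single sign change of $\zeta_\lambda''$ at $z_0$) and then do a Rolle/sign count for (i)--(ii) and read off $\zeta_\lambda'<0$ at the outer fixed points for (iii)--(iv). You go slightly further than the paper in (iv) by using the monotonicity of $g$ on either side of $z_0$ to manufacture the $\eps$ explicitly, and your caution about the global bound $\sup_{[0,1]}\phi_{d,k,\lambda}'<1$ in (iii) is well placed --- the paper's own proof of (iii) only establishes $\phi_{d,k,\lambda}'(\alpha_*)<1$ and does not address the supremum, so the extra input you anticipate (e.g.\ $d<\dmin(k)$, or the finer information of Lemma~\ref{lem_zero}) is indeed what is used downstream.
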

\begin{proof}
Since $\zt(0)>0$ and $\zt(1)<0$, the number of zeros must be odd, so towards (i) it suffices to show that there cannot be more than three zeros. 
Indeed, by Rolle's theorem, between any two zeros of $\zt$ there is a zero of $\zt'$.
So, if $\zt$ had four or more zeros then $\zt'$ would have at least three zeros in $(0,1]$, and in turn $\zt''$ would have at least two.
From \eqref{zt''} it is clear that $\zt''$ has only two zeros, at $z=0$ (outside the relevant range) and at the inflection point where $k-2 = d(k-1) z^{k-1}$, namely for $z=z_0$.
So, $\zt''$ has at most two zeros, thus $\zt$ has at most three zeros, therefore either one or three.

The second assertion follows from $\zt''(z_0)=0$ and that by inspection of \eqref{zt''}, $\zt''(z)$ is decreasing in $z$, so a local minimum of $\zt$ at $z_1$ implies $\zt''(z_1) > 0$ thus $z_1<z_0$, and symmetrically a local maximum at $z_2$ implies that $z_2>z_0$.

Moving on to (iii), we observe that $\zt(\alpha_*)=0$.
Furthermore, since $\zt(0)>0$ while $\zt(1)<0$, we conclude that $\zt'(\alpha_*)<0$, which implies that $0<\ph'(\alpha_*)<1$.
Hence, $\alpha_*$ is a stable fixed point.

With respect to (iv), if $\zt$ has three zeros, then $\alpha_*<\alpha^*$ are the smallest and the largest zero, respectively.
Since we assume that $\zt$ does not have a double zero, the same reasoning as under (iii) shows that $\zt'(\alpha_*)<0$ and thus $0<\ph'(\alpha_*)<1$.
Further, if $\zt$ has three zeros, then by Rolle's theorem and (ii) the function has a local minimum followed by a local maximum, which is followed by the zero $\alpha^*$.
Hence, $\zt'(\alpha^*)<0$, and thus $0<\ph'(\alpha^*)<1$.
\end{proof}

The following statement implies that $\ph$ has only a single fixed point if $d<\dmin$.

\begin{lemma}\label{lem_belowdmin}
	Let $\lambda>0$.
	If $d<\dmin$, then $\zt$ has a unique zero and is strictly decreasing.
\end{lemma}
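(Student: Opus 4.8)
The plan is to establish the two claims in sequence: first that $\zt$ has a unique zero, and then to deduce strict monotonicity from that. Everything hinges on one elementary one‑variable optimisation, namely $\max_{z\in[0,1]}z^{k-2}(1-z)=(k-2)^{k-2}/(k-1)^{k-1}$, attained at $z=(k-2)/(k-1)$. This rewrites the hypothesis $d<\dmin=\bcfr{k-1}{k-2}^{k-2}$ as the pointwise bound
\[
  d(k-1)\,z^{k-2}(1-z)<1\qquad\text{for all }z\in[0,1].
\]

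Next I would bring in the fixed‑point relation. If $\alpha\in[0,1]$ is a zero of $\zt$, i.e.\ $\ph(\alpha)=\alpha$, then $\exp(-\lambda-d\alpha^{k-1})=1-\alpha$, and substituting this into \eqref{zt'} collapses the derivative to $\zt'(\alpha)=d(k-1)\alpha^{k-2}(1-\alpha)-1$. By the displayed bound this is strictly negative. In particular, any zero $\alpha$ is interior — it is neither $0$ nor $1$, since $\zt(0)=1-\exp(-\lambda)>0$ and $\zt(1)=-\exp(-\lambda-d)<0$ — it is simple, and $\zt$ crosses it strictly decreasingly.

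Uniqueness is then short sign‑chasing on top of Lemma~\ref{Claim13}. By Lemma~\ref{Claim13}(i), $\zt$ has either one or three zeros in $[0,1]$ counted with multiplicity. A multiple zero $\alpha$ would force $\zt'(\alpha)=0$, contradicting the previous paragraph, so every zero is simple, and it remains only to rule out three distinct zeros $\alpha_*<\alpha_\mathrm{u}<\alpha^*$. But at each of $\alpha_*$ and $\alpha_\mathrm{u}$ the function $\zt$ is strictly decreasing, so $\zt<0$ just to the right of $\alpha_*$ while $\zt>0$ just to the left of $\alpha_\mathrm{u}$; the intermediate value theorem then produces a zero strictly between $\alpha_*$ and $\alpha_\mathrm{u}$, a fourth distinct zero, contradicting Lemma~\ref{Claim13}(i). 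Hence $\zt$ has exactly one (simple) zero. Finally, since $\zt$ has a unique zero, Lemma~\ref{Claim13}(iii) gives $\sup_{z\in[0,1]}\ph'(z)<1$, and since $\zt'=\ph'-1$ this says $\zt'<0$ on $[0,1]$, so $\zt$ is strictly decreasing.

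The main obstacle is the pair of steps where $\dmin$ genuinely enters: converting $d<\dmin$ into the pointwise bound on $d(k-1)z^{k-2}(1-z)$, and then the observation that the fixed‑point identity turns that bound into strict negativity of $\zt'$ at \emph{every} zero (not just stability of the zero). Once this is in place the exclusion of the three‑zero configuration is routine, and strict monotonicity is immediate from Lemma~\ref{Claim13}(iii). (One could instead try to prove $\zt'<0$ directly, using \eqref{zt''} to see that $\zt'$ is unimodal on $[0,1]$ — increasing up to $z_0$, decreasing afterwards — so that its maximum sits at $z_0$, or at $1$ if $z_0\geq1$; but routing through Lemma~\ref{Claim13}(iii) avoids that bookkeeping.)
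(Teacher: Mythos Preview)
Your proof is correct and follows essentially the same approach as the paper: both compute $\max_{z\in[0,1]}z^{k-2}(1-z)$, use the fixed-point identity to show $\zt'<0$ at every zero, and then appeal to Lemma~\ref{Claim13} for uniqueness. The only cosmetic differences are that the paper routes uniqueness through Lemma~\ref{Claim13}(iv) (three simple zeros would force an unstable fixed point) rather than your direct IVT sign-chase, and that your explicit invocation of Lemma~\ref{Claim13}(iii) to obtain strict monotonicity on all of $[0,1]$ fills in a step the paper leaves implicit.
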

\begin{proof}
	Suppose that $z$ is a zero of $\zt$.
	Then $\exp(-\lambda-dz^{k-1})=1-z$ and thus
	\begin{align}\label{eq_lem_zero1}
		\ph'(z)&=d(k-1)z^{k-2}\exp(-\lambda-dz^{k-1})=d(k-1)(z^{k-2}-z^{k-1}).
	\end{align}
	The expression on the r.h.s.\ is positive for $z\in(0,1)$ and zero at $z\in\{0,1\}$.
	Moreover, its derivative works out to be
	\begin{align*}
		\frac{\dd}{\dd z}d(k-1)(z^{k-2}-z^{k-1})&=d(k-1)z^{k-3}(k-2-(k-1)z).
	\end{align*}
	Thus, the expression on the r.h.s.\ of~\eqref{eq_lem_zero1} takes its maximum value of	$d((k-2)/(k-1))^{k-2}$ at $z^\dagger=(k-2)/(k-1)$.
	Hence, \eqref{eq_lem_zero1} implies that $\ph'(z)<1$ and thus $\zt'(z)<0$.
	Consequently, the function $\ph$ only has stable fixed points and thus has only a single fixed point by \Lem~\ref{Claim13}.
\end{proof}

Proceeding to average degrees $d>\dmin$, we verify that the values $\lambda_*,\lambda^*$ from \Sec~\ref{sec_results_dc} are well defined and satisfy the inequality~\eqref{eqlambdas}.

\begin{lemma}\label{lem_abovedmin}
	If $d>\dmin$, then the polynomial $d(k-1)z^{k-2}(1-z)-1$ has precisely two roots $0<z_*<z^*<1$ and the values $\lambda_*,\lambda^*$ defined in~\eqref{eqlambdas} satisfy $\lambda_*>\lambda^*$.
	Furthermore, $\dcore>\dmin$ and $\lambda^*=0$ iff $d\geq\dcore$.
\end{lemma}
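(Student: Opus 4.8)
The plan is to analyse the polynomial $p(z) = d(k-1)z^{k-2}(1-z) - 1$ on $[0,1]$, then use its roots to verify that $\lambda_*, \lambda^*$ are well defined with $\lambda_* > \lambda^*$, and finally to treat the threshold $\dcore$ separately. First I would record that $z \mapsto d(k-1)z^{k-2}(1-z)$ vanishes at $z=0$ and $z=1$, is strictly positive in between, and (as computed in the proof of Lemma~\ref{lem_belowdmin}) attains its unique maximum at $z^\dagger = (k-2)/(k-1)$ with maximal value $d\,\dmin(k)^{-1}\cdot\dmin(k) = d((k-2)/(k-1))^{k-2} = d/\dmin(k)\cdot\dmin(k)$; more simply, the maximum equals $d\cdot(k-2)^{k-2}/(k-1)^{k-1}$, which exceeds $1$ precisely when $d > \dmin(k)$. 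Hence for $d > \dmin(k)$ the continuous function $z\mapsto d(k-1)z^{k-2}(1-z)$ rises from $0$, crosses the level $1$ exactly once on $(0,z^\dagger)$ and exactly once on $(z^\dagger,1)$ (strict monotonicity on each side of $z^\dagger$), giving precisely two roots $0 < z_* < z^\dagger < z^* < 1$ of $p$.

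Next I would turn to $\lambda_*, \lambda^*$. Define $g(z) = -\log(1-z) - \frac{z}{(k-1)(1-z)}$ on $[0,1)$, so that $\lambda_* = g(z_*)$ and $\lambda^* = \max\{0, g(z^*)\}$. A direct differentiation gives
\begin{align*}
	g'(z) &= \frac{1}{1-z} - \frac{1}{(k-1)(1-z)^2} = \frac{(k-1)(1-z) - 1}{(k-1)(1-z)^2},
\end{align*}
which is positive for $z < (k-2)/(k-1) = z^\dagger$ and negative for $z > z^\dagger$. Since $g(0) = 0$ and $z_* < z^\dagger$, we get $\lambda_* = g(z_*) > 0$. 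For the comparison $\lambda_* > \lambda^*$ I would argue that $g(z_*)$ and $g(z^*)$ are the two values of $g$ at points where the \emph{same} auxiliary relation holds; concretely, using $d(k-1)z_*^{k-2}(1-z_*) = d(k-1)z^{*\,k-2}(1-z^*) = 1$, one shows that $\lambda$ is exactly the value making $z$ a fixed point of $\ph$ with $\ph'(z) = 1$ (a double zero of $\zt$; compare \eqref{zt'}), and that of the two such points the smaller one $z_*$ carries the larger $\lambda$. Equivalently, and more robustly, one can observe that $g$ on $(0, z^\dagger)$ is increasing while on $(z^\dagger, 1)$ it decreases, and check the boundary behaviour $g(z) \to +\infty$ as $z \to 1^-$; combined with $g(z_*) > 0 = g(0)$ one still needs the quantitative fact that $g(z_*) > g(z^*)$, which I would extract from the monotonicity of the map sending $\lambda$ to the double-root location. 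I expect this comparison $\lambda_* > \lambda^*$ to be the main obstacle, since it is the one point that is not purely a sign computation: it requires relating the two roots $z_*, z^*$ through the common constraint rather than bounding $g$ crudely.

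Finally, for the statements about $\dcore$: by definition $\dcore(k) = \sup\{d : \alpha^*(0) = 0\}$, i.e. the threshold beyond which $\ph$ with $\lambda = 0$ acquires a positive fixed point. At $\lambda = 0$ we have $\ph(0) = 1 - e^{-0} = 0$, so $0$ is always a fixed point; a positive fixed point appears exactly when $\zt$ develops zeros in $(0,1)$, which by the analysis above (applied with the relevant $\lambda$) happens only once $d$ is large enough that the maximum of $d(k-1)z^{k-2}(1-z)$ exceeds $1$ near the correct location — and a short computation shows this first occurs strictly above $\dmin(k)$, giving $\dcore > \dmin$. For the equivalence $\lambda^* = 0 \iff d \geq \dcore$: by the formula $\lambda^* = \max\{0, g(z^*)\}$, we have $\lambda^* = 0$ iff $g(z^*) \leq 0$; I would show $g(z^*) \leq 0$ is equivalent to $z^* \geq \alpha^*(0)$ collapsing onto the branch where the large fixed point of $\ph$ at $\lambda=0$ is already positive, i.e. precisely the regime $d \geq \dcore$. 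Concretely, $g(z^*) = 0$ defines $d = \dcore$ (matching $\Phi_{d,k,0}$ having its stationary structure change), $g(z^*) < 0$ for $d > \dcore$, and $g(z^*) > 0$ for $\dmin < d < \dcore$; this is again a monotonicity check on $g$ composed with the $d$-dependence of $z^*$, routine once the sign of $g'$ is in hand.
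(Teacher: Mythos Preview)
Your outline matches the paper up to and including the analysis of $p(z)=d(k-1)z^{k-2}(1-z)-1$, the definition of $g(z)=-\log(1-z)-\frac{z}{(k-1)(1-z)}$, its derivative, and the conclusion $\lambda_*=g(z_*)>0$. The genuine gap is exactly where you flag it: the inequality $g(z_*)>g(z^*)$. Knowing that $g$ increases on $(0,z^\dagger)$ and decreases on $(z^\dagger,1)$ with $z_*<z^\dagger<z^*$ tells you nothing about which of $g(z_*),g(z^*)$ is larger, and your proposed fix via ``monotonicity of the map sending $\lambda$ to the double-root location'' does not close this, because that map is not a one-parameter family in $\lambda$: the double-root condition $d(k-1)z^{k-2}(1-z)=1$ is \emph{independent} of $\lambda$ and already pins $z$ to $\{z_*,z^*\}$, after which $\lambda=g(z)$ is determined. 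So there is no $\lambda$-monotonicity to exploit. (Also a side slip: $g(z)\to-\infty$ as $z\to1^-$, since $\frac{z}{(k-1)(1-z)}$ dominates $-\log(1-z)$.)

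The paper's device is to vary $d$ instead of $\lambda$. Writing $z_*=z_*(d)$ and $z^*=z^*(d)$ implicitly via $d=\bigl((k-1)z^{k-2}(1-z)\bigr)^{-1}$ and applying the inverse function theorem together with the formula for $g'$, one obtains the clean identities
\[
\frac{\partial}{\partial d}g(z_*(d))=-z_*^{\,k-1},\qquad
\frac{\partial}{\partial d}g(z^*(d))=-z^{*\,k-1}.
\]
Since $z_*(d)<z^*(d)$ for all $d>\dmin$ while $z_*(\dmin)=z^*(\dmin)=z^\dagger$, integrating these derivatives from $\dmin$ yields $g(z_*)>g(z^*)$ and hence $\lambda_*>\lambda^*$. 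This $d$-differentiation is the missing idea in your proposal.

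For the $\dcore$ claims your sketch is in the right direction but diverges from the paper. The paper does not rederive the identification of $\dcore$; it invokes \cite[\Thm~1.2]{Maurice} to assert that $g(z^*)=0$ precisely at $d=\dcore$, and then uses that $z^*$ is increasing in $d$ while $g$ is strictly decreasing on $(z^\dagger,1)$ to conclude $g(z^*)<0$ for $d>\dcore$ and $g(z^*)>0$ for $\dmin<d<\dcore$, which gives both $\lambda^*=0\iff d\geq\dcore$ and $\dcore>\dmin$. Your direct argument from the definition of $\dcore$ can be made to work (one shows that $h(z)=dz^{k-1}+\log(1-z)$ stays nonpositive on $(0,1)$ for $d$ at and slightly above $\dmin$, so a positive fixed point of $\phi_{d,k,0}$ only appears at some strictly larger $d$), but as written it is a promissory note rather than a proof.
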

\begin{proof}
	Let $z^\dagger=(k-2)/(k-1)$.
	The polynomial $z^{k-2}(1-z)$ is non-negative on $[0,1]$, strictly increasing on $[0,z^\dagger]$ and strictly decreasing on $[z^\dagger,1]$.
	Hence, at $z^\dagger$ the polynomial attains its maximum value of
	\begin{align}\label{eqmaxpoly}
		\max_{0\leq z\leq 1}z^{k-2}(1-z)&=\frac{(k-2)^{k-2}}{(k-1)^{k-1}}.
	\end{align}
	If $d>\dmin$, the equation 
	\begin{align} \label{dOfkz}
		z^{k-2}(1-z)&=\frac1{d(k-1)}
	\end{align}
	therefore has two distinct solutions $0<z_*<z^\dagger<z^*<1$.
	Letting
	\begin{align*}
		\fl(z)=-\ln(1-z)-\frac{z}{(1-z)(k-1)},
	\end{align*}
	we obtain $\lambda_*=\fl(z_*)$ and $\lambda^*=\max\{\fl(z^*),0\}$.

	The function $\fl(z)$ is positive and monotonically increasing on $(0,z^\dagger)$, and monotonically decreasing on $(z^\dagger,1)$.
	Indeed, the derivative works out to be
	\begin{align}\label{eqfunnyderiv}
		\fl'(z)&=\frac{k-2-(k-1)z}{(k-1)(1-z)^2},
	\end{align}
	which is positive for small $z>0$ and has its unique zero at $z^\dagger$.
	Since $z_*<z^\dagger$, we conclude that $\lambda_*>0$.

	Further, \cite[\Thm~1.2]{Maurice} shows that at $d=\dcore$ we have $\fl(z^*)=0$.
	Since $z^*$ is an increasing function of $d$ while $\fl(z)$ is strictly decreasing in $z>z^\dagger$, we conclude that $\fl(z^*)<0$ for $d>\dcore$, $\fl(z^*)=0$ for $d=\dcore$ and $\fl(z^*)=\lambda^*>0$ for $\dmin<d<\dcore$.

	Thus, we are left to verify that $\lambda_*>\lambda^*$, which amounts to showing that $\fl(z^*)<\fl(z_*)$.
	Rearranging \eqref{dOfkz} into $d=1/((k-1)(1-z_*)z_*^{k-2})$ and $d=1/((k-1)(1-z^*)z^{*\,k-2})$ and applying the inverse function theorem, we obtain
	\begin{align}\label{eqfunnyderiv2}
		\frac{\partial z_*}{\partial d}&=-\frac{(k-1)(1-z_*)^2z_*^{k-1}}{k-2-(k-1)z_*},&
		\frac{\partial z^*}{\partial d}&=-\frac{(k-1)(1-z^*)^2z^{*\,k-1}}{k-2-(k-1)z^*}.
	\end{align}
	Combining~\eqref{eqfunnyderiv} and~\eqref{eqfunnyderiv2} with the chain rule, we arrive at
	\begin{align}\label{eqfunnyderiv3}
		\frac\partial{\partial d}\fl(z_*)&=-z_*^{k-1},&	
		\frac\partial{\partial d}\fl(z^*)&=-z^{*\,k-1}.
	\end{align}
	Since $z^*>z_*$ for all $d>\dmin$, integrating~\eqref{eqfunnyderiv3} on $d$ shows that $\lambda_*>\lambda^*$, thereby completing the proof.
\end{proof}

We are ready to identify the zeros of $\zt$ for $d>\dmin$, depending on the regime of $\lambda$.

\begin{lemma}\label{lem_zero}
	Let $\lambda>0$ and assume that $d>\dmin$.
	\begin{enumerate}[(i)]
		\item If  $\lambda<\lambda^*$, then $\zt$ has a unique zero.
		\item If  $\lambda^*<\lambda<\lambda_*$, then $\zt$ has three distinct zeros.
		\item If  $\lambda>\lambda_*$, then $\zt$ has a unique zero.
	\end{enumerate}
\end{lemma}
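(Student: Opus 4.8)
The plan is to count the zeros of $\zt$ in $[0,1]$ by tracking the sign of the relative minimum and maximum of $\zt$ as $\lambda$ varies, exploiting the fact that $\zt$ has at most one local minimum (at some $z_1<z_0$) followed by at most one local maximum (at some $z_2>z_0$), as established in \Lem~\ref{Claim13}(ii). Since $\zt(0)=1-\eul^{-\lambda}>0$ and $\zt(1)=-\eul^{-\lambda-d}<0$, the function always has an odd number of zeros, and by \Lem~\ref{Claim13}(i) that number is one or three. The number is three precisely when $\zt$ genuinely dips below zero and comes back up, i.e.\ when $\zt$ has both a local minimum and a local maximum with $\zt(z_1)<0<\zt(z_2)$; otherwise it is one.

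First I would pin down the stationary points of $\zt$. From~\eqref{zt'}, $\zt'(z)=0$ is equivalent to $d(k-1)z^{k-2}\expld=1$, and substituting $\eul^{-\lambda-dz^{k-1}}$ this reads as a condition that, together with the zero condition $\eul^{-\lambda-dz^{k-1}}=1-z$, would force $d(k-1)z^{k-2}(1-z)=1$ — exactly the polynomial from \Lem~\ref{lem_abovedmin} with roots $z_*<z^*$. More usefully, I would parametrise: for each candidate stationary point $z$, the unique $\lambda$ making $z$ a stationary point of $\zt$ is obtained by solving $d(k-1)z^{k-2}\expld=1$ for $\lambda$, giving $\lambda=-\log(d(k-1)z^{k-2})-dz^{k-1}$; call this $g(z)$. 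One checks $g'(z)=0$ iff $k-2=d(k-1)z^{k-1}$, i.e.\ at $z=z_0$, so $g$ increases on $(0,z_0)$ and decreases on $(z_0,1)$, mirroring the shape of $\fl$. Then the local minimum $z_1(\lambda)$ is the smaller preimage and the local maximum $z_2(\lambda)$ the larger preimage of $\lambda$ under $g$, and both exist exactly when $\lambda<g(z_0)=\lambda_{\max}$.

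The heart of the argument is then to show that the extremal value $\zt(z_2(\lambda))$ at the local maximum is positive iff $\lambda<\lambda_*$ and that $\zt(z_1(\lambda))$ at the local minimum is negative iff $\lambda>\lambda^*$; combined with the window $\lambda<\lambda_{\max}$ for existence of the extrema, this yields that three zeros occur exactly on $(\lambda^*,\lambda_*)$ and a unique zero otherwise, which is (i)--(iii). To do this I would compute $\frac{\dd}{\dd\lambda}\zt(z_i(\lambda))$: by the envelope/chain rule, since $\zt'(z_i)=0$, this equals $\dd_\lambda\zt(z,\lambda)\big|_{z=z_i}=-\expld|_{z=z_i}<0$, so $\zt$ at \emph{each} extremum is strictly decreasing in $\lambda$. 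Hence each of $\zt(z_1(\lambda))$ and $\zt(z_2(\lambda))$ crosses zero at most once. At a crossing, $z_i$ is simultaneously a zero and a stationary point of $\zt$, hence a double zero, which (by the discussion above) forces $d(k-1)z_i^{k-2}(1-z_i)=1$, so $z_i\in\{z_*,z^*\}$; and the corresponding $\lambda$ equals $g(z_*)=\fl(z_*)=\lambda_*$ for the minimum branch ($z_1\le z_0$ corresponds to $z_*\le z^\dagger$) and $g(z^*)=\fl(z^*)$ for the maximum branch. Matching these against \Lem~\ref{lem_abovedmin} (and handling the case $\fl(z^*)\le 0$, i.e.\ $d\ge\dcore$, where $\lambda^*=0$ and the local minimum is already negative for all $\lambda>0$) gives the three regimes. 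I expect the main obstacle to be the bookkeeping at the boundary cases — ensuring that when $\lambda=\lambda^*$ or $\lambda=\lambda_*$ one genuinely has a double zero rather than the extremum merely grazing, and correctly treating the subcase $\dmin<d<\dcore$ versus $d\ge\dcore$ where $\lambda^*$ is a true positive threshold versus where it degenerates to $0$; the monotonicity facts from \Lem~\ref{lem_abovedmin} and the envelope computation should make these routine but they need to be spelled out carefully.
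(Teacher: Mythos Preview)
Your overall strategy is sound and close in spirit to the paper's: both arguments hinge on the fact that a transition between one and three zeros can only happen through a double zero, that double zeros force $z\in\{z_*,z^*\}$, and that monotonicity in $\lambda$ then pins down the regimes. However, your execution contains sign errors that scramble the branch identification.

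First, $\partial_\lambda\zt(z)=+\exp(-\lambda-dz^{k-1})>0$, not negative; hence by the envelope theorem the extremal values $\zt(z_i(\lambda))$ are strictly \emph{increasing} in $\lambda$. With the correct sign the picture is: the local-minimum value $\zt(z_1(\lambda))$ rises through zero at the $\lambda$ where $z_1$ becomes a double zero, namely at $\lambda_*$ (where $z_1=z_*$), so $\zt(z_1)<0$ iff $\lambda<\lambda_*$; the local-maximum value $\zt(z_2(\lambda))$ rises through zero at $\lambda^\star=\fl(z^*)$ (where $z_2=z^*$), so $\zt(z_2)>0$ iff $\lambda>\lambda^\star$. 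Your ``heart of the argument'' paragraph has these two conditions exchanged, and your formula for $g$ has the wrong sign on the logarithm (it should read $g(z)=\log\bigl(d(k-1)z^{k-2}\bigr)-dz^{k-1}$). The combined condition still yields three zeros exactly on $(\lambda^\star,\lambda_*)$, so your conclusion survives, but the intermediate statements as written are false.

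Second, your justification that the minimum branch goes with $z_*$ (``$z_1\le z_0$ corresponds to $z_*\le z^\dagger$'') is a non-sequitur: $z_0$ and $z^\dagger$ are different points and neither inequality implies the other. The clean way to match branches is to check the sign of $\zt''$ at the double zero, or to argue from the shape of $\zt$ at the critical $\lambda$: at $\lambda_*$ the double zero at $z_*$ must be a local minimum because $\zt(0)>0$ and only one simple zero remains to the right; symmetrically at $\lambda^\star$ the double zero at $z^*$ is a local maximum.

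By comparison, the paper's argument is slightly more direct: rather than tracking moving extrema via the envelope theorem, it evaluates $\zeta_\lambda$ at the \emph{fixed} points $z_*$ and $z^*$ and uses the pointwise monotonicity $\lambda\mapsto\zeta_\lambda(z)$ to locate a zero in $(0,z_*)$ whenever $\lambda<\lambda_*$ and one in $(z^*,1)$ whenever $\lambda>\lambda^*$, then invokes \Lem~\ref{Claim13} to count. Your envelope approach, once the signs are fixed, works just as well and arguably makes it clearer why exactly two critical values of $\lambda$ arise.
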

\begin{proof}
	Assume that $d>\dmin$.
	For fixed $k$ and $d$, the function $\zeta_\lambda$ varies continuously with $\la$, so there are contiguous regimes of $\la$ where it has one zero, regimes where it has three zeros, and these regimes are divided by critical values of $\la$ where $\zt$ has three zeros two of which consist of a double zero.
	In this case, the slope at the double zero is also~0.
	(By Rolle's theorem, the slope is 0 somewhere between the two zeros, and this is the limiting case.)

	Thus, the separation between the regimes with one and three zeros occurs at values of $\lambda$ such that $\zt(z)=\zt'(z)=0$.
	Recalling the definition of $\zt$ and the derivative $\zt'$ from~\eqref{zt'}, we obtain
	\begin{align} \label{zt0}
		1-z=&\expld&\mbox{and}&& d(k-1)z^{k-2}=\frac{1}{\expld} .
	\end{align}
	Substituting the left equation for the exponential in the right equation, we conclude that \eqref{zt0} holds only if $z$ is a solution to \eqref{dOfkz}.
	Further, substituting the two solutions $0<z_*<z^\dagger=(k-2)/(k-1)<z^*$ into either one of the equations from~\eqref{zt0} and solving for $\lambda$, we obtain
	\begin{align*}
		\lambda_*&=-\ln(1-z_*) -\frac{z_*}{(1-z_*)(k-1)},&
		\lambda^\star&=- \ln(1-z^*) -\frac{z^*}{(1-z^*)(k-1)}.
	\end{align*}
	Observe that $\lambda^*=\max\{\lambda^\star,0\}$.

	Suppose $0<\lambda<\lambda_*$.
	Since $\zeta_{\lambda_*}(z_*)=0$, the function $\lambda\mapsto\zeta_\lambda(z_*)$ is strictly increasing and $\zeta_\lambda(0)>0$, we conclude that $\zeta_\lambda$ has a zero in the interval $(0,z_*)$.
	Similarly, if $\lambda>\lambda^*$, then the function $\zeta_\lambda$ has a zero in the interval $(z^*,1)$.
	Hence, (ii) is an immediate consequence of \Lem~\ref{Claim13}.

	Now assume that $0<\lambda<\lambda^*$.
	Since $\lambda_*>\lambda^*$ by \Lem~\ref{lem_abovedmin}, \Lem~\ref{Claim13} implies that $\zeta_{\lambda^*}$ has precisely three zeros. 
	The largest one is $\alpha^*=z^*$, satisfies $\alpha^*>z^\dagger>z_0$, is a double zero and simultaneously a local maximum of $\zeta_{\lambda^*}$.
	Since $\alpha^*$ is a double zero and a local maximum, the smallest zero $\alpha_*$ satisfies $\alpha_*<z_0$ by Rolle's theorem.
	Hence, $\zeta_{\lambda^*}'(z)<0$ for all $0<z<\alpha_*$.
	Since the function $\lambda\mapsto\zeta_\lambda(z)$ is strictly increasing for all $z\in(0,1)$, \Lem~\ref{Claim13}(i) implies that for $\lambda<\lambda^*$ only a single zero remains, which is smaller than $z_0$.

	Finally, suppose that $\lambda>\lambda_*>\lambda^*$.
	\Lem~\ref{Claim13}(i) implies that $\zeta_{\lambda_*}$ has precisely three zeros, with a double zero occurring at $z_*$ and another zero at $\alpha^*(\lambda_*)>z^\dagger>z_0$.
	By \Lem~\ref{Claim13} and the choice of $z_*,\lambda_*$, the double zero at $z_*$ is a local minimum.
	Therefore, $\zeta_{\lambda_*}'(z)<0$ for all $z>\alpha^*$.
	Since the function $\lambda\mapsto\zeta_\lambda(z)$ is strictly increasing for all $z\in(0,1)$, we conclude that $\zeta_\lambda(z)>0$ for all $\lambda>\lambda_*$ and $z\in[0,z_0]$.
	Hence, by \Lem~\ref{Claim13}(i) for $\lambda>\lambda_*$ only a single zero remains, which lies in the interval $[z_0,1]$.
\end{proof}

Combining \Lem s~\ref{lem_abovedmin} and~\ref{lem_zero}, we can now verify the analytic properties of the function $\lambda\mapsto\alpha_*$ and $\lambda\mapsto\alpha^*$.

\begin{lemma}\label{lem_alphas}
	Let $0<d<\dsat$ and $\lambda>0$.
	\begin{enumerate}[(i)]
		\item If $d<\dmin$, then the function $\lambda\in(0,\infty)\mapsto\alpha_*=\alpha^*$ is analytic with derivative
		\begin{align}\label{eq_alphas}
	\frac{\partial\alpha_*}{\partial \lambda}&=\frac{1- \alpha_*}{1-d(k-1)\alpha_*^{k-2}(1-\alpha_*)}<1.
	\end{align}
		\item If $d>\dmin$, then $\lambda\in(0,\lambda_*)\mapsto\alpha_*$ is analytic with derivative \eqref{eq_alphas}.
\item If $d>\dmin$, then $\lambda\in(\lambda^*,\infty)\mapsto\alpha^*$ is analytic differentiable with derivative
\begin{align*}
	\frac{\partial\alpha^*}{\partial \lambda}&=\frac{1- \alpha^*}{1-d(k-1)\alpha^{*\,k-2}(1-\alpha^*)}.
	\end{align*}
	\end{enumerate}
\end{lemma}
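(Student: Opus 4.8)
The plan is to treat all three parts uniformly via the implicit function theorem applied to the equation $\zt(z)=0$, i.e. $\ph(z)=z$, which we write out as $F(\lambda,z):=1-\exp(-\lambda-dz^{k-1})-z=0$. First I would record that $F$ is real-analytic in $(\lambda,z)$ on $(0,\infty)\times(0,1)$, since it is built from exponentials and polynomials. The partial derivative in $z$ is exactly $\dd_z F(\lambda,z)=\zt'(z)=d(k-1)z^{k-2}\exp(-\lambda-dz^{k-1})-1$, and the partial derivative in $\lambda$ is $\dd_\lambda F(\lambda,z)=\exp(-\lambda-dz^{k-1})>0$. The analytic implicit function theorem then gives, at any point $(\lambda_0,z_0)$ with $F(\lambda_0,z_0)=0$ and $\dd_z F(\lambda_0,z_0)\neq 0$, a unique real-analytic branch $\lambda\mapsto z(\lambda)$ near $\lambda_0$ with $z(\lambda_0)=z_0$ solving $F(\lambda,z(\lambda))=0$, and
\begin{align*}
	\frac{\partial z}{\partial\lambda}=-\frac{\dd_\lambda F}{\dd_z F}=\frac{\exp(-\lambda-dz^{k-1})}{1-d(k-1)z^{k-2}\exp(-\lambda-dz^{k-1})}.
\end{align*}
Substituting the fixed-point relation $\exp(-\lambda-dz^{k-1})=1-z$ into numerator and denominator turns this into $(1-z)/(1-d(k-1)z^{k-2}(1-z))$, which is the claimed formula for both $\alpha_*$ and $\alpha^*$.

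For part~(i) ($d<\dmin$): by \Lem~\ref{lem_belowdmin} the function $\zt$ has a unique zero $\alpha_*=\alpha^*$ for every $\lambda>0$ and moreover $\zt$ is strictly decreasing, so $\zt'(\alpha_*)<0$, i.e. $\dd_z F\neq 0$ at the fixed point. Hence the implicit function theorem applies at every $\lambda>0$; the locally defined analytic branches must agree with $\lambda\mapsto\alpha_*(\lambda)$ because the zero is unique, so $\lambda\mapsto\alpha_*$ is analytic on all of $(0,\infty)$ with the stated derivative. For the strict inequality $\dd\alpha_*/\dd\lambda<1$: since $\zt'(\alpha_*)<0$ we have $1-d(k-1)\alpha_*^{k-2}(1-\alpha_*)>0$, so it suffices to check $1-\alpha_*<1-d(k-1)\alpha_*^{k-2}(1-\alpha_*)$, i.e. $d(k-1)\alpha_*^{k-2}(1-\alpha_*)<\alpha_*$, i.e. $d(k-1)\alpha_*^{k-3}(1-\alpha_*)<1$; this is exactly the statement proved inside \Lem~\ref{lem_belowdmin} that $\ph'(z)=d(k-1)(z^{k-2}-z^{k-1})<1$ at any zero when $d<\dmin$ (using $\ph'(\alpha_*)=d(k-1)\alpha_*^{k-2}(1-\alpha_*)$).

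For parts~(ii) and~(iii) ($d>\dmin$): by \Lem s~\ref{Claim13} and~\ref{lem_zero}, in the regime $\lambda\in(0,\lambda^*)\cup(\lambda_*,\infty)$ there is a single zero, and in $(\lambda^*,\lambda_*)$ there are three simple zeros with $\alpha_*$ the smallest and $\alpha^*$ the largest; in \Lem~\ref{Claim13}(iii)--(iv) it is shown that $\ph'(\alpha_*)<1$ and $\ph'(\alpha^*)<1$ whenever these zeros are simple, hence $\zt'(\alpha_*)<0$ and $\zt'(\alpha^*)<0$, so again $\dd_z F\neq0$ there. Thus the implicit function theorem produces analytic branches. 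The only delicacy is to check that the branch $\lambda\mapsto\alpha_*$ extends analytically across the threshold $\lambda^*$ (where the \emph{other} two zeros are born) and likewise $\lambda\mapsto\alpha^*$ across $\lambda_*$: but at $\lambda^*$ the smallest zero $\alpha_*$ is still a \emph{simple} zero (the double zero at $\lambda^*$ is the large one $z^*$, as established in the proof of \Lem~\ref{lem_zero}), so $\dd_z F(\lambda^*,\alpha_*(\lambda^*))\neq 0$ and the branch continues analytically through $\lambda^*$; symmetrically the branch $\alpha^*$ continues through $\lambda_*$ because there the double zero is the small one $z_*$. Patching the locally analytic branches — unique because on each open subinterval the relevant (smallest, resp.\ largest) zero is isolated and varies continuously in $\lambda$ by \Lem~\ref{lem_zero} — yields analyticity of $\lambda\in(0,\lambda_*)\mapsto\alpha_*$ and of $\lambda\in(\lambda^*,\infty)\mapsto\alpha^*$, with derivatives obtained by the same substitution as above.

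The main obstacle is not the differentiation itself, which is routine, but the bookkeeping that ties the algebraic non-degeneracy condition $\zt'(\alpha_*)<0$ (resp.\ $\zt'(\alpha^*)<0$) to the already-proved sign information in \Lem s~\ref{Claim13}, \ref{lem_belowdmin} and~\ref{lem_zero}, and in particular checking that the \emph{specific} branch we follow never hits the double-zero thresholds — so that the implicit function theorem stays applicable on the whole claimed interval and the global branch is well-defined and single-valued.
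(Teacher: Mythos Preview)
Your approach via the implicit function theorem is essentially the same as the paper's, and your derivation of the analyticity and the derivative formula is correct (arguably cleaner than the paper's Jacobian setup). Your bookkeeping around the thresholds $\lambda^*,\lambda_*$ is also sound and, if anything, more explicit than the paper's.

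There is, however, a genuine error in your argument for the strict inequality $\partial\alpha_*/\partial\lambda<1$ in part~(i). You correctly reduce the inequality to $d(k-1)\alpha_*^{k-3}(1-\alpha_*)<1$, but then claim this is ``exactly'' the statement $\ph'(\alpha_*)<1$ from \Lem~\ref{lem_belowdmin}. It is not: $\ph'(\alpha_*)=d(k-1)\alpha_*^{k-2}(1-\alpha_*)$, with exponent $k-2$ rather than $k-3$. Since $\alpha_*<1$, the quantity you need to bound is \emph{larger} than $\ph'(\alpha_*)$, so the known bound $\ph'(\alpha_*)<1$ does not imply what you want. In fact the inequality $\partial\alpha_*/\partial\lambda<1$ can fail: e.g.\ for $k=3$, $d=1.5<\dmin(3)=2$, and $\alpha_*=0.1$ one computes $\partial\alpha_*/\partial\lambda=(0.9)/(1-3\cdot0.1\cdot0.9)\approx1.23>1$. (The paper's own proof does not address the ``$<1$'' either; it appears to be a misstatement, and the only downstream use of \eqref{eq_alphas}, in \Lem~\ref{lem_ODE}, relies on the derivative \emph{formula} together with $\ph'(\alpha_*)<1$, not on $\partial\alpha_*/\partial\lambda<1$.) So your analyticity and derivative arguments stand, but the final inequality step is wrong as written and cannot be repaired to yield $<1$ in general.
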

\begin{proof}
	Assume that $d>\dmin$ and $\lambda\in(0,\lambda_*)$.
	We know from the proof of \Lem~\ref{lem_zero} that $z_*$ is a double root and local minimum of $\zeta_{\lambda_*}$.
	Furthermore, $z_*<z_0$ and the function $\lambda\mapsto\zeta_\lambda(z)$ is strictly increasing in $\lambda$. 
	Hence, \Lem~\ref{Claim13} implies that for any $0<\lambda<\lambda_*$ the function $\zt$ has a unique zero in $(0,z_*)$.
	Similarly, if $d<\dmin$ then \Lem~\ref{lem_belowdmin} shows that $\zt$ has a unique zero at $\alpha_*$.
	Therefore, the implicit function theorem implies that in cases (i) and (ii) the function $\lambda\mapsto\alpha_*$ is continuously differentiable.

	Thus, we are left to work out $\partial \alpha_*(d,k,\lambda)/\partial \lambda$.
	Consider the function $\cA:\binom z \lambda\mapsto\binom{\zt(z)}{\lambda},$ which is one-to-one in an open interval around $\alpha_*$.
	The Jacobi matrix reads
	\begin{align*}
		D\cA&=\begin{pmatrix} \partial(\phi_{d,k,\lambda})/\partial\alpha-1&\partial\phi_{d,k,\lambda}/\partial \lambda\\0&1 \end{pmatrix}.
	\end{align*}
	Furthermore,
	\begin{align*}
		\frac{\partial\phi_{d,k,\lambda}}{\partial\alpha}\big|_{\alpha=\alpha_*}&=d(k-1)\alpha^{k-2}\exp(-\lambda-d\alpha^{k-1})\big|_{\alpha=\alpha_*}=d(k-1)\alpha_*^{k-2}(1-\alpha_*),\\
		\frac{\partial\phi_{d,k,\lambda}}{\partial t}\big|_{\alpha=\alpha_*}&=\exp(-\lambda-d\alpha^{k-1})\big|_{\alpha=\alpha_*}=1-\alpha_*.
	\end{align*}
	Hence, by the inverse function theorem the derivative of $\cA^{-1}$ reads
	\begin{align*}
		(D\cA)^{-1}&=\begin{pmatrix} \brk{\partial\phi_{d,k,\lambda}/\partial\alpha-1}^{-1}&-\brk{\partial\phi_{d,k,\lambda}/\partial \lambda}/\brk{\partial\phi_{d,k,\lambda}/\partial\alpha-1}\\0&1 \end{pmatrix},&&\mbox{and thus}\\
	\frac{\partial\alpha_*}{\partial \lambda}&=-\frac{\partial\phi_{d,k,\lambda}/\partial \lambda}{\partial\phi_{d,k,\lambda}/\partial\alpha-1}=\frac{1- \alpha_*}{1-d(k-1)\alpha_*^{k-2}(1-\alpha_*)}.
	\end{align*}
	Thus, we obtain (i) and (ii).
	A similar argument applies to $\lambda\in(\lambda^*,\infty)\mapsto\alpha^*$ in the case $d>\dmin$ and yields (iii).
\end{proof}

As a final preparation towards the proof of \Prop~\ref{prop_greg} we investigate the solution $\lcond$ to the differential equation~\eqref{eqLena}; notice that \Lem~\ref{lem_alphas} shows that this ODE does indeed possess a unique solution on $(\dmin,\dsat]$.

\begin{lemma}\label{lem_maxPhi}
	For any $0<d<\dsat$ we have $0<\lcond<\lambda_*$.
	Furthermore, for all $0<\lambda<\lcond$ we have $\Phi_{d,k,\lambda}(\alpha_*)>\Phi_{d,k,\lambda}(\alpha^*)$, while $\Phi_{d,k,\lambda}(\alpha_*)<\Phi_{d,k,\lambda}(\alpha^*)$ for $\lambda_*>\lambda>\lcond$. 
\end{lemma}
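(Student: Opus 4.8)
Since the ODE~\eqref{eqLena} that defines $\lcond$ lives on $(\dmin,\dsat]$ (and $\alpha_*=\alpha^*$ identically for $d<\dmin$ by \Lem~\ref{lem_belowdmin}), we assume $\dmin<d<\dsat$ throughout. The plan is to track
\[
	\Delta(\lambda):=\Phi_{d,k,\lambda}(\alpha^*)-\Phi_{d,k,\lambda}(\alpha_*)
\]
as $\lambda$ runs over $[\lambda_{\mathrm{lo}},\lambda_*]$, where $\lambda_{\mathrm{lo}}:=\max\{0,\lambda^*\}$; on this range $\zeta_\lambda$ has three zeros by \Lem~\ref{lem_zero}, so the smallest and largest fixed points $\alpha_*=\alpha_*(d,k,\lambda)$, $\alpha^*=\alpha^*(d,k,\lambda)$ of $\phi_{d,k,\lambda}$ are well defined, continuous in $\lambda$, and analytic on the interior by \Lem~\ref{lem_alphas}. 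The first step is to differentiate $\Delta$. Since $\alpha_*,\alpha^*$ are stationary points of $\Phi_{d,k,\lambda}$ (Fact~\ref{lem_max}), the chain rule gives, for $\alpha\in\{\alpha_*,\alpha^*\}$, that $\tfrac{\mathrm d}{\mathrm d\lambda}\Phi_{d,k,\lambda}(\alpha)=\partial_\lambda\Phi_{d,k,\lambda}(z)\big|_{z=\alpha}=-\exp(-\lambda-d\alpha^{k-1})=-(1-\alpha)$, the last equality being the fixed-point equation. Hence
\[
	\Delta'(\lambda)=(\alpha^*-1)-(\alpha_*-1)=\alpha^*-\alpha_*>0,
\]
so $\Delta$ is strictly increasing on $(\lambda_{\mathrm{lo}},\lambda_*)$.

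The second step fixes the sign of $\Delta$ at the endpoints, using $\Phi_{d,k,\lambda}'(z)=d(k-1)z^{k-2}\zeta_\lambda(z)$ from~\eqref{eq_lem_max_1}. At $\lambda=\lambda_*$ the two smallest zeros of $\zeta_{\lambda_*}$ have merged into the double zero $z_*$, which by~\eqref{zt''} is a local minimum of $\zeta_{\lambda_*}$ (as $z_*<z_0$), so $\zeta_{\lambda_*}>0$ on $(z_*,\alpha^*(\lambda_*))$ and $\Phi_{d,k,\lambda_*}$ increases across that interval, giving $\Delta(\lambda_*)>0$. At the lower end there are two cases. If $\dmin<d<\dcore$ then $\lambda_{\mathrm{lo}}=\lambda^*>0$; the two largest zeros of $\zeta_{\lambda^*}$ have merged into the double zero $z^*$, a local maximum, so $\zeta_{\lambda^*}<0$ on $(\alpha_*(\lambda^*),z^*)$, $\Phi_{d,k,\lambda^*}$ decreases there, and $\Delta(\lambda^*)<0$. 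If $\dcore\le d<\dsat$ then $\lambda_{\mathrm{lo}}=0$ and $\alpha_*(0)=0$, so $\Delta(0)=\Phi_{d,k,0}(\alpha^*(0))-\Phi_{d,k,0}(0)$; the same stationarity computation in the $d$-variable (see next paragraph) gives $\partial_d\Delta(d,0)=(\alpha^*(0))^k/k>0$ for $d\ge\dcore$, while $\Delta(\dsat,0)=0$ by the definition of $\dsat$, whence $\Delta(d,0)<0$ for $\dcore\le d<\dsat$. By continuity and strict monotonicity, $\Delta$ has a unique zero $\hat\lambda=\hat\lambda(d)\in(\lambda_{\mathrm{lo}},\lambda_*)$, with $\Delta<0$ to its left and $\Delta>0$ to its right; in particular $0<\hat\lambda<\lambda_*$, and $\Phi_{d,k,\lambda}(\alpha_*)>\Phi_{d,k,\lambda}(\alpha^*)$ for $\lambda^*<\lambda<\hat\lambda$ while the reverse holds for $\hat\lambda<\lambda<\lambda_*$. (For $0<\lambda\le\lambda^*$, which can occur only if $\dmin<d<\dcore$, one has $\alpha_*=\alpha^*$ and $\Delta\equiv0$, so the strict inequalities of the lemma are really assertions on $(\lambda^*,\hat\lambda)$ and $(\hat\lambda,\lambda_*)$.)

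It remains to identify $\hat\lambda$ with $\lcond$. Viewing $\Delta$ as a jointly analytic function of $(d,\lambda)$ on the open three-zeros region — analyticity from the implicit function theorem applied to $\zeta_{d,k,\lambda}(z)=0$ at its simple zeros — the implicit function theorem applied to $\Delta(d,\hat\lambda(d))=0$ is legitimate since $\partial_\lambda\Delta=\alpha^*-\alpha_*\ne0$, and shows $\hat\lambda$ is $C^1$ on $(\dmin,\dsat)$ with $\hat\lambda'(d)=-\partial_d\Delta/\partial_\lambda\Delta$. Repeating the stationarity computation in $d$ gives $\partial_d\Phi_{d,k,\lambda}(z)\big|_{z=\alpha}=\tfrac1k(\alpha^k-1)$ at a fixed point $\alpha$, hence $\partial_d\Delta=\tfrac1k((\alpha^*)^k-(\alpha_*)^k)$ and
\[
	\hat\lambda'(d)=-\frac{(\alpha^*)^k-(\alpha_*)^k}{k(\alpha^*-\alpha_*)}
\]
with $\alpha_*,\alpha^*$ evaluated at $(d,\hat\lambda(d))$; this is exactly~\eqref{eqLena}, and $\hat\lambda(\dsat)=0$ supplies the terminal condition. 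As~\eqref{eqLena} has a unique solution on $(\dmin,\dsat]$, we conclude $\hat\lambda\equiv\lcond$, and all assertions of the lemma follow.

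The step I expect to be the main obstacle is the endpoint analysis of $\Delta$ in the second paragraph: one has to treat $\dmin<d<\dcore$ (lower endpoint $\lambda^*>0$, where the sign of $\Delta(\lambda^*)$ is read off from the local extremum structure of $\zeta_{\lambda^*}$ recorded in~\eqref{zt''} and \Lem~\ref{lem_zero}) and $\dcore\le d<\dsat$ (lower endpoint $0$, where negativity of $\Delta(d,0)$ must be squeezed out of the definition of $\dsat$ together with the sign of $\partial_d\Delta(d,0)$) separately, and one must check that the zero $\hat\lambda(d)$ persists on all of $(\dmin,\dsat)$, stays strictly inside the three-zeros region, and attaches to the correct terminal value at $d=\dsat$ so that the ODE-uniqueness argument applies.
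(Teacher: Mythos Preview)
Your proof is correct and follows essentially the same route as the paper's: both compute $\tfrac{\mathrm d}{\mathrm d\lambda}\Phi_{d,k,\lambda}(\alpha)=\alpha-1$ at a fixed point via stationarity, deduce that $\Delta(\lambda)=\Phi(\alpha^*)-\Phi(\alpha_*)$ is strictly increasing, pin down the sign of $\Delta$ at $\lambda_*$ from the double-zero structure of $\zeta_{\lambda_*}$, and then identify the unique zero of $\Delta$ with $\lcond$ by checking that it satisfies the ODE~\eqref{eqLena} together with the terminal condition at $\dsat$.

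The one substantive difference is at the lower endpoint. The paper simply cites the characterisation of the $k$-XORSAT threshold from~\cite[\Thm~1.1]{Ayre} to obtain $\Phi_{d,k,0}(0)>\Phi_{d,k,0}(z)$ for all $0<z\le1$ and all $d<\dsat$, which immediately gives $\lcond^*>0$. You instead split into the cases $\dmin<d<\dcore$ (where you read off $\Delta(\lambda^*)<0$ from the local-maximum structure of $\zeta_{\lambda^*}$ at $z^*$) and $\dcore\le d<\dsat$ (where you derive $\Delta(d,0)<0$ from $\partial_d\Delta(d,0)=(\alpha^*(0))^k/k>0$ and the terminal value $\Delta(\dsat,0)=0$). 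Your argument is self-contained and does not require the external citation; the paper's is shorter but leans on~\cite{Ayre}. Your parenthetical remark that the strict inequality $\Phi(\alpha_*)>\Phi(\alpha^*)$ is vacuous on $(0,\lambda^*]$ when $\dmin<d<\dcore$ (because $\alpha_*=\alpha^*$ there) is a correct observation that the paper glosses over.
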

\begin{proof}
	For $d<\dsat$ define 
	\begin{align}\label{eqlem_maxPhi0}
		\lcond^*=\inf\{\lambda\geq0:\Phi_{d,k,\lambda}(\alpha^*)>\Phi_{d,k,\lambda}(\alpha_*)\}.
	\end{align}
	For any $d<\dsat$ we have $\Phi_{d,k,0}(0)>\Phi_{d,k,0}(z)$ for all $0<z\leq1$; this follows from the characterisation of the $k$-XORSAT threshold from~\cite[\Thm~1.1]{Ayre}.
	Hence, $\lcond^*>0$ for all $d<\dsat$.

	Further, the function $\zeta_{\lambda_*}$ has a double zero and a local minimum at $\alpha_*=z_*$.
	Since the sign of $\zeta_{\lambda_*}(z)$ matches the sign of $\Phi_{d,k,\lambda_*}'(z)$, this means that $\Phi_{d,k,\lambda_*}(\alpha^*)>\Phi_{d,k,\lambda_*}(\alpha_*)$.
	Hence, there exists $\eps>0$ such that for $0<\lambda_*-\eps<\lambda<\lambda_*$ we have $\Phi_{d,k,\lambda}(\alpha^*)>\Phi_{d,k,\lambda}(\alpha_*)$.
	Therefore,
	\begin{align}\label{eqlem_maxPhi1}
		0<\lcond^*<\lambda_*.
	\end{align}

	As a next step we show that 
\begin{align}\label{eqlem_maxPhi2}
	\Phi_{d,k,\lambda}(\alpha_*)<\Phi_{d,k,\lambda}(\alpha^*)&&\mbox{ for $\lambda_*>\lambda>\lcond^*$}.
	\end{align}
	To this end, we compute the derivatives of $\Ph(\alpha_*)$, $\Ph(\alpha^*)$ with respect to $0<\lambda<\lambda_*$.
	Since $\alpha_*,\alpha^*$ are stationary points of $\Ph$, the chain rule yields
	\begin{align}\label{eq_lem_maxPhi_1}
		\frac\partial{\partial\lambda}\Ph(\alpha_*)&=\frac{\partial\Ph}{\partial\lambda}\big|_{\alpha_*}+\frac{\partial\Ph}{\partial\alpha}\big|_{\alpha_*}\frac{\partial\alpha_*}{\partial\lambda}=\frac{\partial\Ph}{\partial\lambda}\big|_{\alpha_*}=-\exp(-\lambda-d\alpha_*^{k-1})=\alpha_*-1,\\
		\frac\partial{\partial\lambda}\Ph(\alpha^*)&=\alpha^*-1.\label{eq_lem_maxPhi_2}
	\end{align}
	Since $\alpha_*<\alpha^*$ for all $\lambda^*<\lambda<\lambda_*$, \eqref{eqlem_maxPhi2} follows from~\eqref{eq_lem_maxPhi_1}--\eqref{eq_lem_maxPhi_2}.

	Finally, we verify that $\lcond^*$ equals the solution $\lcond$ to the differential equation~\eqref{eqLena}.
	Recalling the definition \eqref{eqlem_maxPhi0}, we see that it suffices to check that $\Phi_{d,k,\lcond}(\alpha_*)=\Phi_{d,k,\lcond}(\alpha^*)$ for all $\dmin<d<\dsat$.
	To this end, we notice that by definition of $\dsat$ we have $\Phi_{\dsat,k,0}(0)=\Phi_{\dsat,k,0}(\alpha^*)$, in line with the initial condition $\lcond(\dsat)=0$.
	Additionally, we claim that $\lcond(\dsat)$ satisfies
	\begin{align*}
		\frac{\partial\Phi_{d,k,\lcond}}{\partial d}\big|_{\alpha^*}&=\frac{\partial\Phi_{d,k,\lcond}}{\partial d}\big|_{\alpha_*}.
	\end{align*}
	Indeed, using the chain rule and the fact that $\alpha_*,\alpha^*$ are stationary points, with $\lambda=\lambda(d)$ we obtain
	\begin{align*}
		\frac{\partial\Phi_{d,k,\lcond}(\alpha_*)}{\partial d}&=\frac{\partial\Phi_{d,k,\lcond}}{\partial d}\big|_{\alpha_*,\lcond}+\frac{\partial\Phi_{d,k,\lcond}}{\partial \alpha}\big|_{\alpha_*,\lcond}\frac{\partial\alpha_*}{\partial d}+\frac{\partial\Phi_{d,k,\lambda}}{\partial\lambda}\big|_{\alpha_*,\lambda}\\
																	&=\frac{\partial\Phi_{d,k,\lcond}}{\partial d}\big|_{\alpha_*,\lcond}+\frac{\partial\Phi_{d,k,\lambda}}{\partial\lambda}\big|_{\alpha_*,\lambda}
																	=\alpha_*^{k-1}+(\alpha_*-1)\frac{\partial\lcond}{\partial d}.
	\end{align*}
	Analogously,
	\begin{align*}
		\frac{\partial\Phi_{d,k,\lcond}(\alpha^*)}{\partial d}&=\alpha^{*\,k-1}+(\alpha^*-1)\frac{\partial\lcond}{\partial d}.
	\end{align*}
	Hence, the solution $\lcond$ to~\eqref{eqLena} satisfies $\Phi_{d,k,\lcond}(\alpha_*)=\Phi_{d,k,\lcond}(\alpha^*)$, and thus $\lcond=\lcond^*$.
	Therefore, the assertion follows from \eqref{eq_lem_maxPhi_1} and~\eqref{eqlem_maxPhi2}.
\end{proof}

\begin{proof}[Proof of \Prop~\ref{prop_greg}]
	The first assertion is an immediate consequence of \Lem s~\ref{lem_belowdmin} and~\ref{lem_alphas}.
	Moreover, the second assertion follows from \Lem s~\ref{lem_abovedmin}, \ref{lem_zero} and~\ref{lem_alphas}.
	Finally, the last assertion follows from \Lem~\ref{lem_maxPhi}.
\end{proof}

\section{Warning Propagation and local weak convergence}\label{sec_prop_WP}

\noindent
In this section we prove \Prop s~\ref{prop_WP} and~\ref{prop_nlluzn}.
The proofs rely on the concept of local weak convergence.
Specifically, we are going to set up a Galton-Watson process that mimics the local topology of the graph $G(\FDC t)$ up to any fixed depth $\ell$.
Subsequently we will analyse WP on the Galton-Watson tree and argue that the result extends to $G(\FDC t)$.

\subsection{Local weak convergence}\label{sec_lwc}
The construction of the Galton-Watson process $\TT=\TT(d,k,t)$ is pretty straightforward.
The process has two types called {\em variable nodes} and {\em check nodes}.
The process starts with a single variable node $v_0$.
Furthermore, each variable node begets a $\Po(d)$ number of check nodes as offspring, while the offspring of a check node is a $\Bin(k-1,1-t/n)$ number of variable nodes.

Let $\TT$ be the Galton-Watson tree rooted at $v_0$ that this process generates; $\TT$ may be infinite.
Hence, for an integer $\ell$ obtain $\TT^{(\ell)}$ from $\TT$ by deleting all variable/check nodes at distance greater than $2\ell$ from $v_0$.
Thus, $\TT^{(\ell)}$ is a finite random tree rooted at $v_0$.
For any graphs $T,T'$ rooted at $v,v'$, respectively, we write $T\ism T'$ if there is a graph isomorphism $\iota:T\to T'$ such that $\iota(v)=v'$.
Furthermore, for a vertex $v$ of $G(\FDC{t})$ and an integer $\ell$ we let $\partial^{\leq\ell}_{\FDC t}v$ be the subgraph obtained from $G(\FDC t)$ by deleting all vertices at distance greater than $2\ell$ from $v$, rooted at $v$.
Finally, for a rooted graph $g$ and an integer $\ell$ we let $\vN_t^{(\ell)}(g)$ be the number of vertices $v$ of $G(\FDC{t})$ such that $\partial^{\leq\ell}_{\FDC t}v\ism g$.

\begin{lemma}\label{lem_lwc}
	For any rooted tree $g$ we have
	\begin{align}\label{eq_lem_lwc}
		\ex\abs{\vN_t^{(\ell)}(g)-(n-t)\pr\brk{\TT^{(\ell)}\ism g}}=o(n).
	\end{align}
\end{lemma}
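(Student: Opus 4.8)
The plan is to run the standard first‑moment/second‑moment argument that underpins local weak convergence results for sparse random (hyper)graphs. The key preliminary observation is that the bipartite graph $G(\FDC t)$ is a \emph{deterministic} function of $G(\PHI)$ and of $t$: since the decimation process fills in the variables $x_1,\dots,x_t$ in this fixed order, $G(\FDC t)$ is obtained from $G(\PHI)$ by deleting the variable nodes $x_1,\dots,x_t$ together with their incident edges and then deleting every clause node that has thereby become isolated. In particular $\vN_t^{(\ell)}(g)$ depends on $\PHI$ only through $G(\PHI)$, so neither the conditioning on satisfiability nor the actual values chosen by the decimation process plays any role here.

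The first step is the first‑moment computation. Fix an unassigned variable $x_i$ with $t<i\le n$ and explore $\partial^{\leq\ell}_{\FDC t}x_i$ by breadth‑first search. Whenever the search reaches a variable it reveals the clauses incident to it; because $\vm\disteq\Po(dn/k)$ and the clauses are uniformly random $k$‑sets, this number is $\Po(d)$ up to an $o(1)$ distortion stemming from the $O(1)$ clauses and variables already revealed. Whenever the search reaches a clause through one of its variables it reveals the remaining $k-1$ slots, each of which is a previously unseen unassigned variable -- hence a child of the clause in $G(\FDC t)$ -- with probability $1-t/n+O(1/n)$, so the number of such children is $\Bin(k-1,1-t/n)$ up to an $O(1/n)$ error; and the probability that the search closes a cycle within distance $2\ell$ of $x_i$ is $O_{d,k,\ell}(1/n)$. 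Matching these offspring laws with the definition of $\TT=\TT(d,k,t)$ gives $\pr\brk{\partial^{\leq\ell}_{\FDC t}x_i\ism g}=\pr\brk{\TT^{(\ell)}\ism g}+O_{g,d,k,\ell}(1/n)$ uniformly in $i$, and summing over the $n-t$ unassigned variables yields $\ex\vN_t^{(\ell)}(g)=(n-t)\pr\brk{\TT^{(\ell)}\ism g}+O(1)$, which is stronger than the $o(n)$ bound required.

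The second step is concentration: it suffices to show $\Var\bc{\vN_t^{(\ell)}(g)}=o(n^2)$, since then Cauchy--Schwarz gives $\ex\abs{\vN_t^{(\ell)}(g)-\ex\vN_t^{(\ell)}(g)}\le\Var\bc{\vN_t^{(\ell)}(g)}^{1/2}=o(n)$ and the lemma follows by the triangle inequality. Writing the variance as a sum of covariances $\Cov\bc{\ind{\partial^{\leq\ell}_{\FDC t}x_i\ism g},\ind{\partial^{\leq\ell}_{\FDC t}x_j\ism g}}$ over $t<i,j\le n$, the diagonal contributes at most $n-t=O(n)$. For $i\ne j$ one splits according to whether the depth‑$2\ell$ neighbourhoods of $x_i$ and $x_j$ in $G(\PHI)$ are vertex‑disjoint: since the expected size of the depth‑$4\ell$ neighbourhood of any fixed vertex is $O_{d,k,\ell}(1)$, the probability that the two neighbourhoods meet, summed over all pairs $i\ne j$, is $O(n)$, so these pairs contribute $O(n)$ to the variance; and for a pair with disjoint neighbourhoods, conditioning on $\{\partial^{\leq\ell}_{\FDC t}x_i\ism g\}$ fixes the status of only $O_g(1)$ potential clauses, all disjoint from the neighbourhood of $x_j$, which perturbs the conditional probability of $\{\partial^{\leq\ell}_{\FDC t}x_j\ism g\}$ by $o(1)$, so each such covariance is $o(1)$ and these pairs contribute $o(1)\cdot n^2=o(n^2)$.

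The only genuinely delicate point -- and it is essentially bookkeeping -- is to make the exploration error terms above \emph{uniform} in $i$ and in the sequence $t=t(n)$, and to make precise the near‑independence of the two neighbourhoods used in the last step. Both follow from the decoupling afforded by the Poissonisation $\vm\disteq\Po(dn/k)$, which turns the clause incidences into essentially independent events, exactly as in the classical local weak convergence analysis of \Erdos--\Renyi‑type models; no ideas beyond this standard machinery are needed.
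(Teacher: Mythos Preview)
Your proposal is correct and follows essentially the same first-moment/second-moment route as the paper: both arguments identify the offspring laws $\Po(d)$ and $\Bin(k-1,1-t/n)$ via a local exploration around a fixed variable, and both obtain concentration by showing that the depth-$2\ell$ neighbourhoods of two distinct variables are disjoint \whp\ and hence nearly independent. The only stylistic difference is that the paper organises the first-moment calculation as an explicit induction on $\ell$ (using the $(\ell-1)$-case to control the subtrees pending on the grandchildren of the root), whereas you phrase it as a single breadth-first exploration; your preliminary observation that $G(\FDC t)$ is a deterministic function of $G(\PHI)$ is a clean way to dispose of the conditioning on satisfiability, which the paper leaves implicit.
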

\begin{proof}
	The proof is based on a routine second moment argument; that is, we claim that
	\begin{align}\label{eq_lem_lwc_1}
		\ex\brk{\vN_t^{(\ell)}(g)}&=(n-t)\pr\brk{\TT^{(\ell)}\ism g}+o(n),&
		\ex\brk{\vN_t^{(\ell)}(g)^2}&=(n-t)^2\pr\brk{\TT^{(\ell)}\ism g}^2+o(n^2).
	\end{align}
	Combining~\eqref{eq_lem_lwc_1} with the Markov and Chebyshev inequalities then yields the assertion.

	We prove~\eqref{eq_lem_lwc_1} and thereby \eqref{eq_lem_lwc} by induction on $\ell$.
	Recall that $\FDC t$ is a XORSAT instance with variables $x_{t+1},\ldots,x_n$.
	Let us begin with the estimate of the first moment. 
	Due to the linearity of expectation, it suffices to show that
	\begin{align}\label{eq_lem_lwc_2}
		\pr\brk{\partial^{\leq\ell}(\FDC t,x_{t+1})\ism g}=\pr\brk{\TT^{(\ell)}\ism g}+o(1).
	\end{align}

	For $\ell=0$ there is nothing to show.
	Hence, suppose that \eqref{eq_lem_lwc} is true with $\ell$ replaced by $\ell-1$.
	Furthermore, let $\Delta$ be the degree of the root $r$ of $g$ and let $1\leq\kappa_1\leq\ldots\leq\kappa_\Delta\leq k$ be the degrees of the children of the root; thus, we order the children of $r$ so that their degrees are increasing.
	For an integer $1\leq i\leq k$ let $K_i$ be the number $j\in[\Delta]$ such that $\kappa_j=i$.
	Further, let $(g_{i,j})_{1\leq i\leq \Delta,1\leq j\leq\kappa_i}$ be the trees pending on the grandchildren of the root.
	In addition, let $\vec\Delta$ be the degree of $x_{t+1}$ in $G(\FDC t)$ and let $1\leq\vec\kappa_1\leq\ldots\leq\vec\kappa_{\vec\Delta}\leq k$ be the degrees of the neighbours of $x_{t+1}$.
	Then $\partial^{\leq\ell}(\FDC t,x_{t+1})\ism g$ is possible only if $\vec\Delta = \Delta$ and $\vec\kappa_i=\kappa_i$ for all $1\leq i\leq\Delta$.
	Since the clauses of the random formula $\PHI$ are drawn uniformly and independently and $G(\FDC{t})$ is obtained from $G(\PHI)$ by deleting the variable nodes $x_1,\ldots,x_t$ along with any ensuing isolated check nodes, we conclude that the event $\cD=\{\vec\Delta=\Delta,\,\bigwedge_{1\leq i\leq\Delta}\vec\kappa_i=\kappa_i\}$ has probability
	\begin{align}\label{eq_lem_lwc_3}
		\pr\brk{\cD}&=\pr\brk{\Po(d)=\Delta} \binom{\Delta}{\kappa_1,\ldots,\kappa_k} \prod_{i=1}^k\pr\brk{\Bin(k-1,1-t/n)=i}^{\kappa_i}.
	\end{align}

	Further, let $\cG=\{g_{i,j}:1\leq i\leq\Delta,\,1\leq j\leq\kappa_i\}$ and let $\cE$ be the event that $\vN_t^{(\ell-1)}(\gamma)=(n-t)\pr\brk{\TT^{(\ell-1)}\ism \gamma}+o(n)$ for all $\gamma\in\cG$.
	Then by induction we have
	\begin{align}\label{eq_lem_lwc_4}
		\pr\brk{\cE\mid\cD}&=1-o(1).
	\end{align}
	Now, obtain $G^-(\FDC t)$ from $G(\FDC t)$ by deleting $x_{t+1}$ along with its adjacent check nodes.
	Let $\vN_t^{(\ell),-}(g_{i,j})$ be the number of vertices $v$ of $G^{-}(\FDC{t})$ such that $\partial^{\leq\ell}(G^-(\FDC t),v)\ism g_{i,j}$.
	Moreover, let $\cE^-$ be the event that $\vN_t^{(\ell-1),-}(g_{i,j})=(n-t)\pr\brk{\TT^{(\ell-1)}\ism g_{i,j}}+o(n)$ for all $i,j$.
	Since $x_{t+1}$ has degree $\Delta=O(1)$ given $\cD$ and all adjacent check nodes have degree at most $k$, \eqref{eq_lem_lwc_4} implies that
	\begin{align}\label{eq_lem_lwc_5}
		\pr\brk{\cE^-\mid\cD}&=1-o(1).
	\end{align}
	Finally, since $\FDC{t}$ is uniformly random, given $\cD$ the checks $a$ of $\FDC t$ adjacent to $x_{t+1}$ simply choose their other neighbours uniformly at random from the variable nodes $x_{t+2},\ldots,x_n$ of $G^-(\FDC t)$.
	Therefore, \eqref{eq_lem_lwc_3} implies that
	\begin{align*}
		\pr\brk{\partial^{\leq\ell}(\FDC t,x_{t+1})\ism g}&=
		\pr\brk{\TT^{(\ell)}\ism g}+o(1),
	\end{align*}
	thereby proving \eqref{eq_lem_lwc_2} and thus the first part of \eqref{eq_lem_lwc_1}.

	The proof of the second part of \eqref{eq_lem_lwc_1} (the estimate of the second moment) proceeds along similar lines, except that we need to explore the depth-${2}\ell$ neighbourhoods of two variable nodes of $\FDC t$ simultaneously.
	Specifically, the proof of the second moment bound comes down to showing that
\begin{align}\label{eq_lem_lwc_10}
		\pr\brk{\partial^{\leq\ell}(\FDC t,x_{t+1})\ism g,\, \partial^{\leq\ell}(\FDC t,x_{t+2})\ism g}=\pr\brk{\TT^{(\ell)}\ism g}^2+o(1).
	\end{align}
	Exploiting that the variable nodes $x_{t+1},x_{t+2}$ are at distance greater than $4\ell$ \whp, we conduct a similar induction as above to verify~\eqref{eq_lem_lwc_10} and thus \eqref{eq_lem_lwc_1}.
\end{proof}

\subsection{Proof of \Prop~\ref{prop_WP}}\label{sec_proof_prop_WP}
To prove \Prop~\ref{prop_WP} we estimate the sizes $|V_{\nll,\ell}(\FDC{t})|,|V_{\fzn,\ell}(\FDC{t})|$ separately.
Recall that $\theta \sim t/n$.

\begin{lemma}\label{lem_WP_n}
	Let $\eps>0$ and assume that one of the following conditions is satisfied:
	\begin{enumerate}[(i)]
		\item $d<\dmin$, or
		\item $d>\dmin$ and $|\theta_*-\theta|>\eps$. 
	\end{enumerate}
	Then there exists $\ell_0=\ell_0(d,\eps)>0$ such that for any fixed $\ell\geq\ell_0$ with $\lambda=-\log(1-\theta)$ \whp\ we have
	$$\abs{t+|V_{\nll,\ell}(\FDC{t})|-\alpha_*n}<\eps n.$$
\end{lemma}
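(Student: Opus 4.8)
The plan is to combine the locality of Warning Propagation with the local weak convergence from \Lem~\ref{lem_lwc}, and then to identify the resulting limit with $\alpha_*$ by tracking WP along the Galton--Watson tree $\TT=\TT(d,k,t)$.

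\emph{Reduction to the tree.} For any variable $x$ of $\FDC t$ and any fixed $\ell\geq0$, the mark $\omega_{\FDC t,x,\ell}$ produced by the all-$\fzn$-initialised WP is a function of the rooted graph $\partial^{\leq\ell}_{\FDC t}x$ alone: the updates \eqref{eqWP1}--\eqref{eqWP2} propagate at unit speed, and the (variable) nodes at distance $2\ell$ from $x$ only ever feed their round-$0$, hence $\fzn$, messages into $\omega_{x,\ell}$. Since \whp\ all but $o(n)$ variables have a tree-like depth-$2\ell$ neighbourhood, $|V_{\nll,\ell}(\FDC t)|=\sum_g\vN_t^{(\ell)}(g)\,\vecone\{\text{$\ell$ WP rounds mark the root of $g$ as }\nll\}+o(n)$, where $g$ ranges over isomorphism types of finite rooted trees and the indicator is well defined because the root's mark depends only on the rooted-tree structure. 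Truncating the sum to types of maximum degree at most a large constant $D=D(\ell,\eps)$, applying \Lem~\ref{lem_lwc} to each of the finitely many surviving terms, and using the standard bound on the number of variables whose depth-$2\ell$ neighbourhood contains a vertex of degree exceeding $D$, a routine second-moment argument yields, for every fixed $\ell$,
\begin{align*}
	|V_{\nll,\ell}(\FDC t)|=(n-t)\,r_\ell+o(n)\quad\text{\whp},\qquad\text{where }r_\ell:=\Pr\brk{\text{$\ell$ WP rounds mark the root of }\TT^{(\ell)}\text{ as }\nll}.
\end{align*}

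\emph{Computing $r_\ell$.} Set $\theta_n=t/n$ and let $P_j$ (resp.\ $Q_j$) be the probability that a variable-to-check (resp.\ check-to-variable) WP message emitted by a generic node of $\TT$ equals $\nll$ after $j$ rounds. The offspring laws of $\TT$ ($\Po(d)$ for variable nodes, $\Bin(k-1,1-\theta_n)$ for check nodes) and \eqref{eqWP1}--\eqref{eqWP2} give $Q_j=(\theta_n+(1-\theta_n)P_{j-1})^{k-1}$ and $P_j=1-\exp(-dQ_{j-1})$ for $j\geq1$, with $P_0=P_1=0$ because WP starts all-$\fzn$. Moreover the root's mark at round $\ell$ equals $\nll$ iff one of its $\Po(d)$ check children sends $\nll$ at round $\ell$, which has probability $1-\exp(-dQ_\ell)$; since a non-root variable node also has $\Po(d)$ check children, $r_\ell=1-\exp(-dQ_\ell)=P_{\ell+1}$. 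Writing $z_j=\theta_n+(1-\theta_n)P_j$ and $\lambda_n=-\log(1-\theta_n)$, the recursion collapses to $z_j=\phi_{d,k,\lambda_n}(z_{j-2})$ for $j\geq2$ with $z_0=z_1=\theta_n=\phi_{d,k,\lambda_n}(0)$, so $z_j=\phi_{d,k,\lambda_n}^{(\lfloor j/2\rfloor+1)}(0)$ is the $(\lfloor j/2\rfloor+1)$-fold iterate of $\phi_{d,k,\lambda_n}$ at $0$, and therefore $t+|V_{\nll,\ell}(\FDC t)|=\theta_n n+(n-t)P_{\ell+1}+o(n)=n\,z_{\ell+1}+o(n)$ \whp

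\emph{Letting $\ell\to\infty$.} As $\ph(0)=1-e^{-\lambda}=\theta>0$ and $\ph$ is non-decreasing on $[0,1]$, the orbit $(\ph^{(m)}(0))_{m\geq0}$ is non-decreasing; it is bounded above by $\alpha_*$ (induction: $\ph^{(m)}(0)\leq\alpha_*$ implies $\ph^{(m+1)}(0)=\ph(\ph^{(m)}(0))\leq\ph(\alpha_*)=\alpha_*$), hence converges to a fixed point of $\ph$ in $(0,\alpha_*]$, which must be $\alpha_*=\alpha_*(d,k,\lambda)$ since $\alpha_*$ is the smallest fixed point. To interchange this with the limit $n\to\infty$ (along which $\lambda_n\to\lambda$), fix $\eps>0$: the hypothesis — $d<\dmin$, or $d>\dmin$ and $|\theta-\theta_*|>\eps$ — forces $\lambda\neq\lambda_*$, so by \Prop~\ref{prop_greg} the map $\lambda'\mapsto\alpha_*(d,k,\lambda')$ is continuous at $\lambda$. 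Choose $m_0$ with $\ph^{(m_0)}(0)>\alpha_*(\lambda)-\eps/4$; by joint continuity of $(\lambda',z)\mapsto\phi_{d,k,\lambda'}(z)$ and of $\alpha_*$ there is $\delta>0$ with $\phi_{d,k,\lambda'}^{(m_0)}(0)>\alpha_*(\lambda)-\eps/2$ and $\alpha_*(d,k,\lambda')<\alpha_*(\lambda)+\eps/4$ whenever $|\lambda'-\lambda|<\delta$. For $n$ large $|\lambda_n-\lambda|<\delta$, and then for every $\ell\geq\ell_0:=2m_0$ the monotonicity of the orbit together with $z_{\ell+1}=\phi_{d,k,\lambda_n}^{(\lfloor(\ell+1)/2\rfloor+1)}(0)\leq\alpha_*(d,k,\lambda_n)$ gives $\alpha_*(\lambda)-\eps/2<z_{\ell+1}<\alpha_*(\lambda)+\eps/4$, whence $\abs{t+|V_{\nll,\ell}(\FDC t)|-\alpha_* n}<\eps n$ \whp\ The one genuinely delicate point is this interchange of limits, and it is exactly where the exclusion of $\theta=\theta_*$ (equivalently $\lambda=\lambda_*$, the value at which $\alpha_*$ is discontinuous) enters; the locality reduction, the concentration and the tree recursion are otherwise routine, bar the customary care with boundary messages and unbounded variable degrees.
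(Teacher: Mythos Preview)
Your proof is correct and follows the same route as the paper: reduce via local weak convergence (\Lem~\ref{lem_lwc}) to the Galton--Watson tree, derive the one-parameter recursion $z\mapsto\ph(z)$ started at $0$ for the probability of an $\nll$-mark, and show the iterates converge to $\alpha_*$. The only cosmetic difference is that the paper establishes the last step via the contraction bound $\sup_{[0,\alpha_*+\eps]}\ph'<1$ from \Lem~\ref{Claim13}, whereas you use monotonicity of the orbit together with continuity of $\lambda'\mapsto\alpha_*(d,k,\lambda')$ at $\lambda\neq\lambda_*$ to handle $\lambda_n\to\lambda$; both arguments are valid.
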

\begin{proof}
	In light of \Lem~\ref{lem_lwc} it suffices to investigate WP on the random tree $\TT^{(\ell)}$ for large enough $\ell$.
	Specifically, let $p^{(\ell)}$ be the probability that WP marks the root of $\TT^{(\ell)}$ as $\nll$.
	In formulas, recalling~\eqref{eqmarks}, this means that
	\begin{align}\label{eqlem_WP_n0}
	p^{(\ell)}&=\pr\brk{\omega_{\TT^{(\ell)},r,\ell}=\nll}\mbox{ for $\ell\geq1$,}&\mbox{and }p^{(0)}&=0. 
	\end{align}
	Let $\vec\Delta$ be the degree of the root $r$ of $\TT^{(\ell)}$ and let $\vec\kappa_1,\ldots,\vec\kappa_{\vec\Delta}$ be the degrees of the children of $r$.
	Since the sub-trees of $\TT^{(\ell)}$ pending on the grandchildren of $r$ are independent copies of $\TT^{(\ell-1)}$, the WP update rules \eqref{eqWP1}--\eqref{eqWP2} yield the recurrence
	\begin{align}\label{eqlem_WP_n1}
		p^{(\ell)}&=1-\ex\brk{\prod_{i=1}^{\vec\Delta}\bc{1-\prod_{j=0}^{\vec\kappa_i-1}p^{(\ell-1)}}}&&(\ell>0).
	\end{align}
	By the construction of $\TT$ the degree $\vec\Delta$ of $r$ has distribution $\Po(d)$.
	Furthermore, each child of $r$ has $\Bin(k-1,1-t/n)$ children; thus, $\vec\kappa_i-1\disteq\Bin(k-1,1-t/n)$.
	Consequently, \eqref{eqlem_WP_n1} yields
	\begin{align}\nonumber
		p^{(\ell)}&=1-\exp(-d)
		\sum_{\Delta=0}^\infty\frac{d^\Delta}{\Delta!}\bc{1-\sum_{\kappa=0}^{k-1}\binom{k-1}\kappa \exp(-\lambda\kappa)(1-\exp(-\lambda))^{k-1-\kappa} p^{(\ell-1)\,\kappa}}^\Delta\\
				  &=1-\exp\bc{-d\bc{1-\exp(-\lambda)(1-p^{(\ell-1)})}^{k-1}}.\label{eqlem_WP_n2}
	\end{align}
	Letting $z^{(\ell)}=1-\exp(-\lambda)(1-p^{(\ell)})$ and recalling the definition~\eqref{eqphi} of $\ph$, we see that \eqref{eqlem_WP_n2} amounts to
	\begin{align}\label{eqlem_WP_n3}
		z^{(\ell)}&=\ph(z^{(\ell-1)}).
	\end{align}
	Moreover, \Lem~\ref{Claim13} (iii)--(iv), \Lem~\ref{lem_belowdmin} and \Lem~\ref{lem_zero} show that if (i) or (ii) above hold, then  $\ph$ is a contraction on $[0,\alpha_*]$.
	Therefore, \eqref{eqlem_WP_n3} shows that $\lim_{\ell\to\infty}p^{(\ell)}=\frac{\alpha_*-\theta}{1-\theta}$.
	Thus, the assertion follows from \Lem~\ref{lem_lwc}.
\end{proof}

\begin{lemma}\label{lem_WP_f}
	Let $\eps>0$ and assume that $d>0$, $t=t(n)$ are such that one of the following conditions is satisfied:
	\begin{enumerate}[(i)]
		\item $d<\dmin$, or
		\item $d>\dmin$, $|\theta_*-\theta|>\eps$ and $|\theta^*-\theta|>\eps$.
	\end{enumerate}
	Then there exists $\ell_0=\ell_0(d,\eps)>0$ such that for any fixed $\ell\geq\ell_0$ with $\lambda=-\log(1-\theta)$ \whp\ we have 
	$$
	\abs{|V_{\fzn,\ell}(\FDC{t})|-(\alpha^*-\alpha_*)n}<\eps n.
	$$
\end{lemma}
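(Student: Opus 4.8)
The plan is to follow the template of the proof of \Lem~\ref{lem_WP_n}, replacing the count of $\nll$‑marked variables by that of $\fzn$‑marked variables via the elementary identity $|V_{\fzn,\ell}(\FDC t)|=|V_{\neq\uzn,\ell}(\FDC t)|-|V_{\nll,\ell}(\FDC t)|$, where $V_{\neq\uzn,\ell}(F)=V(F)\setminus V_{\uzn,\ell}(F)$; this holds because \eqref{eqmarks} makes $\{\omega_{F,x,\ell}=\nll\}$ a subevent of $\{\omega_{F,x,\ell}\neq\uzn\}$. By \Lem~\ref{lem_lwc} (applied, after the usual Galton--Watson tail truncation to finitely many finite rooted trees $g$, exactly as in \Lem~\ref{lem_WP_n}) it suffices to compute, on the Galton--Watson tree $\TT^{(\ell)}$, the probabilities $p_\nll^{(\ell)}=\pr\brk{\omega_{\TT^{(\ell)},r,\ell}=\nll}$ and $p_{\neq\uzn}^{(\ell)}=\pr\brk{\omega_{\TT^{(\ell)},r,\ell}\neq\uzn}$ and to show that $(1-\theta)\bc{p_{\neq\uzn}^{(\ell)}-p_\nll^{(\ell)}}\to\alpha^*-\alpha_*$. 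The quantity $p_\nll^{(\ell)}$ is already handled in \Lem~\ref{lem_WP_n}, where it is shown that $p_\nll^{(\ell)}\to(\alpha_*-\theta)/(1-\theta)$, so the task reduces to analysing $p_{\neq\uzn}^{(\ell)}$.

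For that I read off from \eqref{eqWP1}--\eqref{eqmarks} that a check‑to‑variable message is $\neq\uzn$ iff \emph{every} incoming variable‑to‑check message is $\neq\uzn$, whereas a variable‑to‑check message, and likewise the mark of a variable, is $\neq\uzn$ iff \emph{at least one} incoming check‑to‑variable message is $\neq\uzn$. Since in $\TT$ a variable node has $\Po(d)$ children checks and a check node has $\Bin(k-1,1-t/n)$ children variables, and since at round $0$ every WP message equals $\fzn\neq\uzn$, the probability $q^{(\ell)}$ that a variable‑to‑check message is $\neq\uzn$ obeys
\begin{align*}
	q^{(0)}&=1,&q^{(\ell)}&=1-\exp\bc{-d\bc{1-(1-t/n)\bc{1-q^{(\ell-1)}}}^{k-1}}\qquad(\ell\geq1),
\end{align*}
and $p_{\neq\uzn}^{(\ell)}=q^{(\ell)}$ for $\ell\geq1$. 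This is the very recursion \eqref{eqlem_WP_n1}--\eqref{eqlem_WP_n3} from the proof of \Lem~\ref{lem_WP_n}, only with a different seed: putting $w^{(\ell)}=1-(1-t/n)(1-q^{(\ell)})$ turns it into $w^{(\ell)}=\ph(w^{(\ell-1)})$ in the limit $n\to\infty$, started from $w^{(0)}=1$ rather than from $w^{(0)}=\theta$.

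It then remains to determine $\lim_{\ell\to\infty}w^{(\ell)}$. Because $\ph$ is increasing on $[0,1]$ with $\ph(1)<1$ and its largest fixed point is $\alpha^*$, the sequence $w^{(\ell)}$ is non‑increasing and bounded below by $\alpha^*$, hence converges to a fixed point in $[\alpha^*,1]$, i.e.\ to $\alpha^*$; under hypothesis (i) or (ii) the contraction statements of \Lem s~\ref{Claim13}, \ref{lem_belowdmin} and~\ref{lem_zero} (which apply precisely because $\theta\notin\{\theta^*,\theta_*\}$, so $\zt$ has no double zero at $z_*$ or $z^*$ and $\alpha_*,\alpha^*$ depend analytically on $\lambda$ by \Lem~\ref{lem_alphas}) upgrade this to geometric convergence that is uniform as $n\to\infty$. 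Consequently $p_{\neq\uzn}^{(\ell)}=q^{(\ell)}\to(\alpha^*-\theta)/(1-\theta)$ by the same algebra as in \Lem~\ref{lem_WP_n}, and combining with $p_\nll^{(\ell)}\to(\alpha_*-\theta)/(1-\theta)$ gives $(1-\theta)\bc{p_{\neq\uzn}^{(\ell)}-p_\nll^{(\ell)}}\to\alpha^*-\alpha_*$. Feeding this into \Lem~\ref{lem_lwc} yields $\bigl||V_{\fzn,\ell}(\FDC t)|-(\alpha^*-\alpha_*)n\bigr|<\eps n$ \whp\ for all fixed $\ell\geq\ell_0(d,\eps)$; when $\zt$ has a unique zero (so $\alpha_*=\alpha^*$, which covers all of case (i) and the sub‑cases $\theta<\theta^*$, $\theta>\theta_*$ of case (ii)) this just says $|V_{\fzn,\ell}(\FDC t)|<\eps n$, consistent with $w^{(\ell)}$ and $z^{(\ell)}$ having the same limit.

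The step I expect to require the most care is pinning down the initial condition and the tree‑to‑graph transfer: one must check that the all‑$\fzn$ initialisation \eqref{eqWPupdate0} feeds the seed $q^{(0)}=1$ (equivalently $w^{(0)}=1$) into the recursion --- this is exactly what separates the $\fzn$‑count from the $\nll$‑count, since $z^{(\ell)}$ is seeded at $\theta$ --- and that running $\ell$ rounds of WP on the truncated tree $\TT^{(\ell)}$ faithfully reproduces $\ell$ rounds of WP on $G(\FDC t)$, which holds because after $\ell$ rounds the mark of a vertex depends only on its depth‑$2\ell$ neighbourhood and, at the boundary of that neighbourhood, only on the round‑$0$ initialisation, not on the truncation. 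Everything else --- the monotone/contraction convergence of the $\ph$‑iteration and the truncation needed to apply \Lem~\ref{lem_lwc} to a union of neighbourhood types --- is routine and parallels the proof of \Lem~\ref{lem_WP_n}.
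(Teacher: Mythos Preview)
Your proposal is correct and follows essentially the same approach as the paper: both arguments compute $p^{(\ell)}=\pr\brk{\omega_{\TT^{(\ell)},r,\ell}\neq\uzn}$ on the Galton--Watson tree, observe that it satisfies the very same recursion as in \Lem~\ref{lem_WP_n} but seeded at $p^{(0)}=1$ (equivalently $z^{(0)}=1$), identify the limit as $(\alpha^*-\theta)/(1-\theta)$ via monotone convergence to the largest fixed point $\alpha^*$, and then subtract the $\nll$-count from \Lem~\ref{lem_WP_n} before transferring via \Lem~\ref{lem_lwc}. Your monotonicity argument for $w^{(\ell)}\downarrow\alpha^*$ is in fact cleaner than the paper's slightly imprecise reference to contraction ``on $[0,\alpha^*]$''.
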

\begin{proof}
	Once again it suffices to trace WP on $\TT^{(\ell)}$ for large $\ell$.
	As in the proof of \Lem~\ref{lem_WP_n}, let
	\begin{align}\label{eqlem_WP_f00}
	p^{(\ell)}&=\pr\brk{\omega_{\TT^{(\ell)},r,\ell}\neq\uzn}\mbox{ for $\ell\geq1$,}&\mbox{and }p^{(0)}&=1. 
	\end{align}
	Then with $\vec\Delta$ the degree of $r$ and $\vec\kappa_1,\ldots,\vec\kappa_{\vec\Delta}$ the degrees of the children of $r$, the WP update rules \eqref{eqWP1}--\eqref{eqWP2} translate into
	\begin{align}\label{eqlem_WP_f0}
		p^{(\ell)}&=1-\ex\brk{\prod_{i=1}^{\vec\Delta}\bc{1-\prod_{j=0}^{\vec\kappa_i-1}p^{(\ell-1)}}}&&(\ell>0),
	\end{align}
	Thus, the recurrence is identical to~\eqref{eqlem_WP_n1}, but this time with the initial condition $p^{(0)}=1$.
	Hence, letting $z^{(\ell)}=1-\exp(-\lambda)(1-p^{(\ell)})$ and $z^{(0)}=1$ and retracing the steps towards \eqref{eqlem_WP_n3}, we obtain
	\begin{align}
		z^{(\ell)}=1-\exp(-\lambda)(1-p^{(\ell)}).\label{eqlem_WP_f3}
	\end{align}
	Invoking \Lem s~\ref{Claim13}, \ref{lem_belowdmin} and~\ref{lem_zero}, we conclude that (i) or (ii) ensure that $\ph$ contracts on $[0,\alpha^*]$.
	Consequently, \eqref{eqlem_WP_f3} implies that $\lim_{\ell\to\infty}p^{(\ell)}=\frac{\alpha^*-\theta}{1-\theta}$.
	Thus, the assertion follows from \Lem s~\ref{lem_lwc} and~\ref{lem_WP_n}.
\end{proof}

Finally, we compare the set $V_{\nll,\ell}(\FDC{t})$ obtained after a (large but) bounded number of iterations with the ultimate sets $V_{\nll}(\FDC{t})$ obtained upon convergence of WP.
The proof of the following lemma is an adaptation of the argument from~\cite{Molloy} for cores of random hypergraphs.

\begin{lemma}\label{lem_WP_n_infty}
	Assume that $\theta\in(0,1)\setminus\{\theta_*,\theta^*\}$.
	Then for any $\eps>0$ there exists $\ell_0=\ell_0(d,\theta, \eps)$ such that for all $\ell>\ell_0$ we have $|V_{\nll,\ell}(\FDC{t})\triangle V_{\nll}(\FDC{t})|<\eps n$ \whp
\end{lemma}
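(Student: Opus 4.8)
The plan is to combine the monotonicity of Warning Propagation with a sub-criticality argument in the spirit of Molloy's analysis of hypergraph cores~\cite{Molloy}. By Fact~\ref{fact_wp} the number of $\nll$-messages is non-decreasing along the iteration, so the marked sets are nested: $V_{\nll,\ell}(\FDC t)\subseteq V_{\nll,\ell+1}(\FDC t)\subseteq V_{\nll}(\FDC t)$, with equality once $\ell\geq 2|C(\FDC t)|$. Hence $|V_{\nll,\ell}(\FDC t)\triangle V_{\nll}(\FDC t)|=|V_{\nll}(\FDC t)|-|V_{\nll,\ell}(\FDC t)|$, and, writing $p^{(\ell)}$ for the tree-recursion from the proof of \Lem~\ref{lem_WP_n} and $p^{(\infty)}=\lim_\ell p^{(\ell)}=\tfrac{\alpha_*-\theta}{1-\theta}$, it suffices to prove the matching upper bound $\ex\,|V_{\nll}(\FDC t)|\leq(n-t)\,p^{(\infty)}+o(n)$; the reverse inequality is immediate from nestedness together with \Lem~\ref{lem_lwc}. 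It is worth recalling the combinatorial meaning of the $\nll$-marks: since WP starts from all-$\fzn$ values~\eqref{eqWPupdate0}, a $\nll$-message can only be generated by the unit-clause cascade originating at the unit clauses of $\FDC t$ (a clause forwards $\nll$ once all but one of its variables carry $\nll$); thus $V_{\nll}(\FDC t)$ is exactly the set of variables fixed by unit clause propagation on $\FDC t$, and $V_{\nll,\ell}(\FDC t)$ those fixed within $\ell$ rounds.

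The engine of the proof is sub-criticality. By \Lem s~\ref{Claim13}(iii)--(iv), \ref{lem_belowdmin} and~\ref{lem_zero}, the hypothesis $\theta\notin\{\theta_*,\theta^*\}$ (what is actually used is $\theta\neq\theta_*$, i.e.\ $\lambda\neq\lambda_*$) forces $\ph$ to be a contraction on $[0,\alpha_*]$, whence $\ph'(\alpha_*)=d(k-1)\alpha_*^{k-2}(1-\alpha_*)<1$ and the iteration $z^{(\ell)}=\ph(z^{(\ell-1)})$ converges to $\alpha_*$ geometrically from $z^{(0)}=\theta$; consequently $p^{(\infty)}-p^{(\ell)}\leq C\rho^\ell$ for some $C>0$ and $\rho=\rho(d,\theta)\in(0,1)$. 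To transfer this to $\FDC t$ I would follow~\cite{Molloy}: run $\ell$ rounds of unit clause propagation, revealing only the part of $G(\FDC t)$ within distance $O(\ell)$ of the unit clauses. By the bounded-degree-truncated form of \Lem~\ref{lem_lwc} (used exactly as in the proof of \Lem~\ref{lem_WP_n}), the number of variables of residual degree at most $1$ after these $\ell$ rounds --- the ``active'' variables that drive all further propagation --- is $\delta_\ell\,(n-t)+o(n)$ \whp, with $\delta_\ell\leq C\rho^\ell$. Every variable in $V_{\nll}(\FDC t)\setminus V_{\nll,\ell}(\FDC t)$ is fixed in a later round and hence lies in the active set or in the cascade it triggers; because $\ph'(\alpha_*)<1$, this cascade is stochastically dominated by a sub-critical Galton--Watson branching process with mean offspring $\ph'(\alpha_*)$, so its total size is at most $(1-\ph'(\alpha_*))^{-1}$ times the number of active variables, up to $o(n)$. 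Altogether $\ex\,|V_{\nll}(\FDC t)\setminus V_{\nll,\ell}(\FDC t)|\leq C'\rho^\ell n+o(n)$; combining the fixed-$\ell$ concentration of $|V_{\nll,\ell}(\FDC t)|$ (from \Lem~\ref{lem_lwc}) with the corresponding concentration of $|V_{\nll}(\FDC t)|$ around its mean and picking $\ell_0$ so that $C'\rho^{\ell_0}<\eps/2$ gives $|V_{\nll,\ell}(\FDC t)\triangle V_{\nll}(\FDC t)|<\eps n$ \whp\ for every $\ell>\ell_0$.

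The step I expect to be most delicate is the branching-process domination. The hypergraph left after $\ell$ rounds is not a fresh uniformly random hypergraph, so the dominating sub-critical process must be exhibited through a deferred-exposure (sprinkling) argument on the still-unrevealed clause endpoints of $\PHI$, and one has to rule out that the further peeling ``nucleates'' into a linear-size cascade. This non-nucleation is precisely what $\theta\neq\theta_*$ buys: at $\theta=\theta_*$ one has $\ph'(\alpha_*)=1$, the peeling becomes critical, a macroscopic cascade genuinely forms, and the conclusion of the lemma fails. A secondary technical point is the concentration of $|V_{\nll}(\FDC t)|$ itself: altering a single clause of $\PHI$ can change $|V_{\nll}(\FDC t)|$ by more than a constant, so a plain bounded-differences inequality is not available, and one must instead invoke the sub-criticality-based concentration estimate for peeling from~\cite{Molloy}.
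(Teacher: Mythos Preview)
Your proposal is correct and follows essentially the same route as the paper: translate the $\nll$-marking into a peeling/unit-clause process, use \Lem~\ref{lem_WP_n} to stabilise the state after a large fixed $\ell$, and then exploit the sub-criticality $\ph'(\alpha_*)=d(k-1)\alpha_*^{k-2}(1-\alpha_*)<1$ (valid precisely when $\theta\neq\theta_*$) \`a la Molloy to show that the remaining cascade removes only $o(n)$ further variables.

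The paper's implementation is cleaner in two places you flagged as delicate. First, you worry that the residual hypergraph after $\ell$ rounds is ``not a fresh uniformly random hypergraph'' and propose sprinkling; in fact, by the principle of deferred decisions, conditioned on its degree sequence the residual graph $\vG_\ell$ \emph{is} uniformly random, and \Lem~\ref{lem_lwc} pins down that degree sequence explicitly (Poisson$(d(1-\alpha_*^{k-1}))$ for variables, truncated-at-$2$ binomial for checks). No sprinkling is needed. Second, rather than dominating the cascade by a branching process and then separately arguing concentration of $|V_{\nll}(\FDC t)|$, the paper runs a slowed-down one-check-at-a-time peeling and shows that the number $\vU_\ell[\nu]$ of unit checks has strictly negative drift on the event $\{\vU_\ell[\nu]>0\}$; this supermartingale argument directly bounds $|V_{\nll}(\FDC t)\setminus V_{\nll,\ell}(\FDC t)|$ by $\eps n$ \whp, so no separate concentration of $|V_{\nll}(\FDC t)|$ is required at all.
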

\begin{proof}
	In place of the WP message passing process from \Sec~\ref{sec_wp} we consider the following simpler peeling process, which reproduces the same set $V_{\nll}(\FDC t)$.
	Let $\vG_0=G(\PHI)$ be the bipartite graph induced by $\FDC t$.
	For $h\geq0$ obtain $\vG_{h+1}$ from $\vG_h$ by performing the following peeling operation.
	\begin{align}\label{eq_lem_WP_n_infty_1}
		\parbox{0.8\textwidth}{Remove all check nodes of degree one along with their variable node neighbours.}
	\end{align}
	Clearly, this process will reach a fixed point (i.e., $\vG_{h+1}=\vG_h$) after at most $\vm$ iterations.
	Moreover, a straightforward induction on $\ell$ shows that $V(\vG_0)\setminus V(\vG_\ell)=V_{\nll,\ell}(\FDC t)$ and thus $V(\vG_0)\setminus V(\vG_{\vm})=V_{\nll}(\FDC t)$.
	Hence, it suffices to prove that for large enough $\ell=\ell(d,\theta, \eps)$ we have
	\begin{align}\label{eq_lem_WP_n_infty_2}
		|V(\vG_{\ell})\triangle V(\vG_{\vm})|&<\eps n&&\mbox\whp
	\end{align}

	Towards the proof of~\eqref{eq_lem_WP_n_infty_2} let $\vd_h=(\vd_h(u))_{u\in V(\vG_h)\cup C(\vG_h)}$ be the degree sequence of $\vG_h$.
	By the principle of deferred decisions $\vG_h$ is uniformly random given $\vd_h$.
	Further, let
	\begin{align*}
		\vec\Delta_h(j)&=\abs{\cbc{x\in V(\vG_h):\vd_h(x)=j}},&
		\vec\Delta_h'(j)&=\abs{\cbc{a\in C(\vG_h):\vd_h(a)=j}}
	\end{align*}
	be the number of variable/check nodes of degree $j\geq0$.
	Pick $\delta=\delta(d,\theta,\eps)$, $\delta'=\delta'(d,\theta,\delta)$, $\delta'' = \delta''(d,\theta,\delta')$,  small enough and $\ell\geq\ell_0(d,\theta,\delta'')$ large enough.
	Then \Lem~\ref{lem_WP_n} implies that \whp 
	\begin{align}\label{eq_lem_WP_n_infty_3}
		|V(\vG_{\ell})\setminus V(\vG_{\ell+1})|<\delta'' n.
	\end{align}
	
	Furthermore, we claim that 
	\begin{align}\label{eq_lem_WP_n_infty_4}
		\sum_{j\geq0}\abs{\frac{\vec\Delta_\ell(j)}{|V(\vG_\ell)|}-\pr\brk{\Po(d(1-\alpha_*^{k-1}))=j}}&<\delta',&		\sum_{j\geq2}\abs{\frac{\vec\Delta'_\ell(j)}{|C(\vG_\ell)|}-\frac{\pr\brk{\Bin(k,1-\alpha_*)=j}}{\pr\brk{\Bin(k,1-\alpha_*)\geq2}}}&<\delta'.
	\end{align}
	Indeed, \Lem~\ref{lem_lwc} shows that we just need to study WP on the random tree $\TT^{(\ell)}$, as in the proof of \Lem~\ref{lem_WP_n}.
	Thus, let $\vec\Delta$ be the degree of the root variable and let $\vec\kappa_1,\ldots,\vec\kappa_{\vec\Delta}$ be the degrees of the children of the root.
	Since the sub-trees pending on the children of the root are independent copies of $\TT^{(\ell-1)}$, \Lem~\ref{lem_WP_n} shows that the probability that any one of the $\vec\Delta$ children sends a $\nll$-message to $r$ falls into the interval $(1-\alpha_*^{k-1}-\delta'',1-\alpha_*^{k-1}+\delta'')$, provided that $\ell$ is large enough.
	Since $\vec\Delta\disteq\Po(d)$, the first part of~\eqref{eq_lem_WP_n_infty_4} follows from Poisson thinning.

	Similarly, to obtain the second part of~\eqref{eq_lem_WP_n_infty_4} consider a clause $a$ that is a child of the root $r$ of $\TT^{(\ell)}$.
	Then by the same token as in the previous paragraph the number of children of $a$ that do not send a $\nll$-message after $\ell$ iterations of WP lies in the interval $(1-\alpha_*^{k-1}-\delta'',1-\alpha_*^{k-1}+\delta'')$.
	Furthermore, the number of children $a'\neq a$ of $r$ has distribution $\Po(\vec\Delta)$.
	Hence, the probability that the WP-message from $r$ to $a$ equals $\nll$ comes to $\alpha_*\pm\delta''$, and this event is independent of the messages that the children of $a$ send to $a$.
	Finally, the probability that one of the messages that $a$ receives after $\ell$ iterations of WP differs from the message received after $\ell-1$ iterations is smaller than $\delta''$ for large enough $\ell$.
	Since the peeling process removes any checks $a$ with at least $k-1$ incoming $\nll$-messages, we obtain~\eqref{eq_lem_WP_n_infty_4}.

	To complete the proof we are going to deduce from~\eqref{eq_lem_WP_n_infty_3}--\eqref{eq_lem_WP_n_infty_4} that the peeling process~\eqref{eq_lem_WP_n_infty_1} will remove no more than $\eps n/2$ further nodes from $\vG_\ell$ before it stops.
	Following~\cite{Molloy}, we consider a slowed-down version of the process where no longer all checks of degree one get removed simultaneously, but rather one-at-a-time.
	Let $(\vG_\ell[\nu])_{\nu\geq0}$ be the sequences of graphs produced by this modified process, with $\vG_\ell[0]=\vG_\ell$ and $\vG_\ell[\nu+1]=\vG_\ell[\nu]$ if all checks of $\vG_\ell[\nu]$ have degree at least two.
	Further, let $\vU_\ell[\nu]$ be the number of unary checks of $\vG_\ell[\nu]$.
	Let $\cD$ be the event that the bounds~\eqref{eq_lem_WP_n_infty_3}--\eqref{eq_lem_WP_n_infty_4} hold.
	Then it suffices to prove that on the event $\{\vU_\ell[\nu]>0\}\cap\cD$ we have
	\begin{align}\label{eq_lem_WP_n_infty_10}
		\ex\brk{\vU_\ell[\nu+1]-\vU_\ell[\nu]\mid\vU_\ell[\nu]}<0&&\mbox{for all }0\leq \nu\leq\eps n/2.
	\end{align}

	Invoking the principle of deferred decisions, in order to verify~\eqref{eq_lem_WP_n_infty_10} we compute the expected number of new degree one checks produced by the removal of a single random variable node $\vx$.
	Due to~\eqref{eq_lem_WP_n_infty_4}, for $\nu\leq\eps n/2$ the expected number of neighbours $a$ of $\vx$ of degree precisely two is bounded by
	\begin{align*}
		d\pr\brk{\Bin(k-1,1-\alpha_*)=1}+\delta&=d(k-1)(1-\alpha_*)\alpha_*^{k-2}+\delta=\ph'(\alpha_*)+\delta<1,
	\end{align*}
	provided that $\delta>0$ is chosen sufficiently small.
	Hence, we obtain \eqref{eq_lem_WP_n_infty_10}.
\end{proof}

%

\begin{proof}[Proof of \Prop~\ref{prop_WP}]
	The proposition is an immediate consequence of \Lem s~\ref{lem_WP_n}--\ref{lem_WP_n_infty}.
\end{proof}

\subsection{Proof of \Prop~\ref{prop_nlluzn}}\label{sec_prop_nlluzn}
We deal with the two claims separately.
Towards the first claim we establish the following stronger, deterministic statement.

\begin{lemma}\label{lem_nll}
	For any XORSAT instance $F$ with variables $V_n=\{x_1,\ldots,x_n\}$ and any integer $\ell\geq0$ we have $V_{\nll,\ell}(F)\subset\frz(F)$.
\end{lemma}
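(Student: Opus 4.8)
The plan is to reduce the statement to linear algebra over $\FF_2$ and then run a strengthened induction on the number $\ell$ of WP rounds, carried out over the \emph{directed} messages rather than over the marks. Write $R\subset\FF_2^n$ for the $\FF_2$-span of the rows of the check matrix $A_F$, and for a variable node $x=x_i$ let $e_x=e_i$ denote the $i$-th standard basis vector of $\FF_2^n$. Since $\ker A_F$ is the orthogonal complement of $R$ with respect to the (non-degenerate) standard bilinear form on $\FF_2^n$, we have $x_i\in\frz(F)$ iff $\sigma_i=0$ for all $\sigma\in\ker A_F$ iff $e_x\in R$. Hence it suffices to show that $\omega_{F,x,\ell}=\nll$ implies $e_x\in R$.

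To this end I would establish, by induction on $\ell\ge0$, the following two assertions for every adjacent variable/clause pair $x,a$: if $\omega_{F,x\to a,\ell}=\nll$ then $e_x\in R$, and if $\omega_{F,a\to x,\ell}=\nll$ then $e_x\in R$. The base case $\ell=0$ is vacuous because by~\eqref{eqWPupdate0} no message equals $\nll$. For the inductive step assume both assertions hold at level $\ell$. If $\omega_{F,a\to x,\ell+1}=\nll$, then by~\eqref{eqWP1} every $y\in\partial a\setminus\{x\}$ has $\omega_{F,y\to a,\ell}=\nll$, hence $e_y\in R$ by the inductive hypothesis; since the row of $A_F$ indexed by $a$ equals $\sum_{y\in\partial a}e_y$ and lies in $R$, over $\FF_2$ we get $e_x=\bigl(\text{row }a\bigr)+\sum_{y\in\partial a\setminus\{x\}}e_y\in R$ (the degenerate case $\partial a=\{x\}$, i.e.\ $a$ a unit clause on $x$, is covered by an empty sum). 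If instead $\omega_{F,x\to a,\ell+1}=\nll$, then by~\eqref{eqWP2} some $b\in\partial x\setminus\{a\}$ has $\omega_{F,b\to x,\ell}=\nll$, and the inductive hypothesis gives $e_x\in R$ directly. This completes the induction.

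Finally, if $x=x_i\in V_{\nll,\ell}(F)$, then $\omega_{F,x,\ell}=\nll$, so by~\eqref{eqmarks} some $b\in\partial x$ satisfies $\omega_{F,b\to x,\ell}=\nll$; the inductive claim then yields $e_x\in R$, i.e.\ $x_i\in\frz(F)$, which is the desired inclusion $V_{\nll,\ell}(F)\subset\frz(F)$. I do not expect any genuine obstacle here: the only points that need a modicum of care are choosing the right \emph{bidirectional} invariant so that the WP update rules~\eqref{eqWP1}--\eqref{eqWP2} feed into one another cleanly, and recording the $\FF_2$-duality between $\ker A_F$ and the row space of $A_F$; both are elementary.
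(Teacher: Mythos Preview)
Your proof is correct and follows essentially the same inductive strategy as the paper's: both trace $\nll$-messages back through the WP updates~\eqref{eqWP1}--\eqref{eqWP2} and use that if every variable of a clause except $x$ lies in $\frz(F)$, then so does $x$. Your version is organised slightly more cleanly---inducting directly on the directed messages with a bidirectional invariant and casting $x\in\frz(F)$ as $e_x$ lying in the row space---whereas the paper inducts on the marks $V_{\nll,\ell}$ and argues directly with kernel vectors, but the underlying idea is identical.
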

\begin{proof}
	We proceed by induction on $\ell$.
	For $\ell\leq1$ there is nothing to show because $V_{\nll,\ell}(F)=\emptyset$ by construction.
	Hence, assume that $\ell>1$ and that $V_{\nll,\ell'}(F)\subset\frz(F)$ for all $1\leq\ell'<\ell$.
	If $x\in V_{\nll,\ell-1}$, then \eqref{eqmarks} shows that there exists a check node $b\in\partial x$ such that $\omega_{F,b\to x,\ell}=\nll$.
	Furthermore, \eqref{eqWP1} shows that if $\omega_{F,b\to x,\ell}=\nll$, then for all $y\in\partial b\setminus\{x\}$ we have $\omega_{F,y\to b,\ell-1}=\nll$.
	Additionally, \eqref{eqWP2} shows that if $\omega_{F,y\to b,\ell-1}=\nll$, then there exists $a\in\partial y\setminus\{b\}$ such that $\omega_{F,a\to y,\ell-2}=\nll$.
	Hence, \eqref{eqmarks} ensures that $\omega_{F,y,\ell-2}=\nll$ and thus 
	\begin{align}\label{eq_lem_nll1}
		y&\in V_0(F)&&\mbox{for all }y\in\partial b\setminus\{x\}
	\end{align}
	by induction.
	Now suppose that $\partial b=\{x_{j_1},\ldots,x_{j_h}\}$ with pairwise distinct indices $1\leq j_1,\ldots,j_h\leq n$ such that $x=x_{j_1}$.
	Consider $\sigma\in\ker A(F)$.
	Then \eqref{eq_lem_nll1} implies that $\sigma_{j_2}=\cdots=\sigma_{j_h}=0$.
	Consequently, $\sigma_{j_1}=0$ and thus $x\in V_0(F)$.
\end{proof}

The following lemma deals with the variables that WP marks $\uzn$.

\begin{lemma}\label{lem_uzn}
	For any fixed $\ell\geq0$ we have $|V_{\uzn,\ell}(\FDC{t})\cap\frz(\FDC{t})|=o(n)$ \whp 
\end{lemma}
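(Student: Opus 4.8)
The plan is to reduce the lemma to a deterministic structural fact about Warning Propagation and then invoke a routine local‑tree‑likeness estimate. The structural fact I would prove is: \emph{for any XORSAT instance $F$ and any variable $x$ with $\omega_{F,x,\ell}=\uzn$ whose ball $\partial^{\leq\ell}_Fx$ in $G(F)$ is acyclic, we have $x\notin\frz(F)$.} Granting this, its contrapositive gives the deterministic inclusion
\[
	V_{\uzn,\ell}(\FDC{t})\cap\frz(\FDC{t})\subseteq\bigl\{x\in V(\FDC{t}):\partial^{\leq\ell}_{\FDC{t}}x\text{ contains a cycle}\bigr\},
\]
so it suffices to bound the right‑hand side. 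But the Galton--Watson tree $\TT^{(\ell)}$ is a.s.\ acyclic, so \Lem~\ref{lem_lwc} yields $\pr[\partial^{\leq\ell}_{\FDC{t}}(x_{t+1})\text{ is acyclic}]=1-o(1)$; since $x_{t+1},\dots,x_n$ play symmetric roles this gives $\ex\,\bigl|\{x:\partial^{\leq\ell}_{\FDC{t}}x\text{ has a cycle}\}\bigr|=o(n)$, hence $o(n)$ \whp\ by Markov.

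To prove the structural fact I would unwind the WP recursion \eqref{eqWP1}--\eqref{eqmarks} starting from the mark $\omega_{F,x,\ell}=\uzn$, extracting a finite rooted subtree $W$ of $G(F)$. By \eqref{eqmarks}, $\omega_{F,x,\ell}=\uzn$ forces $\omega_{F,a\to x,\ell}=\uzn$ for \emph{every} $a\in\partial_Fx$; install all these checks as children of the root $x$. Then process $W$ top‑down: a check $a\in W$ reached from its parent variable $p$ with incoming message $\omega_{F,a\to p,s}=\uzn$ admits, by \eqref{eqWP1}, \emph{some} variable $y\in\partial_Fa\setminus\{p\}$ with $\omega_{F,y\to a,s-1}=\uzn$, which we install as the unique child of $a$; a variable $y\neq x$ reached from its parent check $a$ with $\omega_{F,y\to a,s}=\uzn$ has, by \eqref{eqWP2} and the fact that step‑$0$ messages equal $\fzn$ (see \eqref{eqWPupdate0}), that $s\ge1$ and $\omega_{F,b\to y,s-1}=\uzn$ for \emph{every} $b\in\partial_Fy\setminus\{a\}$; declare $y$ a leaf if $\deg_F y=1$, otherwise install all these $b$ as children of $y$. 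Since the step index drops by one along every edge and a variable‑to‑check message can equal $\uzn$ at step $1$ only for a degree‑$1$ variable, the recursion halts with every leaf of $W$ a degree‑$1$ variable of $F$ and all of $W$ contained in $\partial^{\leq\ell}_Fx$. The two properties that matter are: (a) every check of $F$ incident to a variable‑node of $W$ is itself a node of $W$ (built in at the root; for an interior variable $y$ all of $\partial_F y$ lies in $W$; for a leaf $v$ its unique check is its parent); and (b) because $\partial^{\leq\ell}_Fx$ is acyclic, $W$ is an induced subtree of $G(F)$, so each check‑node $a\in W$ has exactly two variable‑neighbours in $W$, namely its parent and its child.

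Writing $S\subseteq V(F)$ for the set of variables occurring as nodes of $W$, properties (a) and (b) say that every clause of $F$ contains an even number of variables of $S$ (namely $0$ if the clause is not a node of $W$, and $2$ otherwise); hence $A_F\vecone_S=0$, i.e.\ $\vecone_S\in\ker A_F$, and its $x$‑coordinate equals $1$ because $x\in S$, so $x\notin\frz(F)$. This proves the structural fact and hence the lemma. I expect the extraction of $W$ to be the only genuinely delicate step: one has to track the alternating ``pick one child at checks / keep all children at variables'' pattern forced by \eqref{eqWP1}--\eqref{eqWP2}, verify that it terminates at degree‑$1$ variables within $\partial^{\leq\ell}_Fx$, and deduce (a) — which is precisely what makes the flip vector $\vecone_S$ both supported near $x$ and parity‑neutral on every clause. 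The probabilistic ingredient is standard.
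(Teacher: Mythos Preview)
Your proof is correct and follows the same overall strategy as the paper: reduce to a single variable by symmetry and Markov, then establish the deterministic implication that an acyclic depth-$\ell$ neighbourhood together with $\omega_{F,x,\ell}=\uzn$ forces $x\notin\frz(F)$. The paper dispatches this deterministic step in one line by invoking exactness of BP on acyclic factor graphs (Fact~\ref{fact_BP_acyclic}); your explicit extraction of the witness tree $W$ and the kernel vector $\vecone_S$ is exactly what that appeal amounts to once one remembers that only the neighbourhood, not all of $\FDC t$, is acyclic, and it makes the passage from the local tree to a global kernel element of $A_{\FDC t}$ fully transparent.
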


\newcommand{\nb}{\partial^{\leq\ell}}
\begin{proof}
	We are going to show by induction on $\ell$ that $\ex|V_{\uzn,\ell}(\FDC{t})\cap\frz(\FDC{t})|=o(n)$.
	To this end, because the distribution of $\FDC t$ is invariant under permutations of the variables $x_{t+1},\ldots,x_n$, it suffices to show that
	\begin{align}\label{eq_lem_uzn_1}
		\pr\brk{x_n\in V_{\uzn,\ell}(\FDC{t})\cap\frz(\FDC{t})}&=o(1).
	\end{align}
	Indeed, let $\cA$ be the event that the depth-$2\ell$ neighbourhood $\nb x_n$ of $x_n$ in $\FDC t$ is acyclic.
	Since \Lem~\ref{lem_lwc} shows that $\pr\brk{\cA}=1-o(1)$, towards \eqref{eq_lem_uzn_1} it suffices to prove that on the event $\cA$ we have
	\begin{align}\label{eq_lem_uzn_2}
		x_n\not\in V_{\uzn,\ell}(\FDC{t})\cap\frz(\FDC{t}).
	\end{align}
	But~\eqref{eq_lem_uzn_2} follows from the well-known fact that BP is exact on acyclic factor graphs (see Fact \ref{fact_BP_acyclic}). 
\end{proof}

\begin{proof}[Proof of \Prop~\ref{prop_nlluzn}]
	The proposition is an immediate consequence of \Lem s~\ref{lem_nll} and~\ref{lem_uzn}.
\end{proof}

\section{Analysis of the check matrix}\label{sec_nul}

\noindent
In this section we prove \Prop s~\ref{prop_nul} and~\ref{cor_frz}.
\Prop~\ref{prop_nul} is an easy consequence of~\cite[\Thm~1.1]{Maurice}.
Furthermore, \Prop~\ref{cor_frz} follows from \Prop~\ref{prop_nul} by interpolating on the parameter $\lambda$; a related argument was recently used in~\cite{fullrank} to show that certain random combinatorial matrices have full rank \whp\
In addition, we prove Corollaries~\ref{cor_nll} and~\ref{lem_fzn} and subsequently complete the proofs of \Thm s~\ref{thm_recnonrec}--\ref{thm_cond}.

\subsection{Proof of \Prop~\ref{prop_nul}}\label{sec_prop_nul}
We use a general result \cite[\Thm~1.1]{Maurice} about the rank of sparse random matrices from a fairly universal class of distributions.
The definition of this general random matrix goes as follows.
Let $\fd,\fk\geq0$ be integer-values random variables such that $0<\ex[\fd^3]+\ex[\fk^3]<\infty$.
Moreover, let $(\fd_i,\fk_i)_{i\geq0}$ be families of mutually independent random variables such that $\fd_i\disteq\fd$ and $\fk_i\disteq\fk$.
Let $\bar\fd=\ex[\fd]$ and $\bar\fk=\ex[\fk]$ and for an integer $n>0$ let $\fm=\fm_n\disteq\Po(\bar\fd n/\bar\fk)$.
The sequence $(\fm_n)_n$ is independent of $(\fd_i,\fk_i)_{i\geq0}$.
Further, let $\fS_n$ be the event that
\begin{align}\label{eqDegreeMatch}
	\sum_{i=1}^n\fd_i&=\sum_{i=1}^{\fm_n}\fk_i.
\end{align}
It is a known fact that $\pr\brk{\fS_n}=\Omega(n^{-1/2})$~\cite[\Prop~1.10]{Maurice}.
Given that $\fS_n$ occurs, create a simple random bipartite graph $\fG_n$ with a set $\fV_n=\{\fx_1,\ldots,\fx_n\}$ of {\em variable nodes} and a set $\fC_n=\{\fc_1,\ldots,\fc_{\fm_n}\}$ of {\em check nodes} uniformly at random subject to the condition that $\fx_j$ has degree $\fd_j$ and $\fc_i$ has degree $\fk_i$ for all $1\leq j\leq n$ and $1\leq i\leq\fm_n$.
Finally, let $\fA_n$ be the biadjacency matrix of $\fG_n$.
Thus, $\fA_n$ has size $\fm_n\times n$ and its $(i,j)$-entry equals $1$ iff $\fx_j$ and $\fc_i$ are adjacent in $\fG_n$.

\begin{theorem}[{special case of \cite[\Thm~1.1]{Maurice}}]\label{thm_maurice}
	Let $\fD(z)=\sum_{h=0}^\infty\pr\brk{\fd=h}z^h$ and $\fK(z)=\sum_{h=0}^\infty\pr\brk{\fk=h}z^h$ be the probability generating functions of $\fd,\fk$, respectively.
	Furthermore, let 
	\begin{align}\label{eq_thm_maurice}
		\fF&:[0,1]\to\RR,&z\mapsto\fD(1-\fK'(z)/\fK'(1))-\frac{\fD'(1)}{\fK'(1)}(1-\fK(z)-(1-z)\fK'(z)).
	\end{align}
	Then 
		\begin{align*}
			\lim_{n\to\infty}\frac1n\nul\fA_n&=\max_{z\in[0,1]}\fF(z)&&\mbox{in probability.}
		\end{align*}
\end{theorem}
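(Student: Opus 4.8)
The plan is to deduce \Thm~\ref{thm_maurice} directly from the general rank formula of~\cite[\Thm~1.1]{Maurice}, so the proof splits into two parts: checking that $\fA_n$ belongs to the class of random matrices treated there, and specialising the general variational formula to the displayed function $\fF$. For the first part, recall that~\cite{Maurice} studies random matrices over a finite field $\FF_q$ whose rows encode random linear equations with prescribed column- and row-degree distributions and independent coefficients drawn from $\FF_q\setminus\{0\}$. We instantiate $q=2$: the coefficient distribution is then trivial, since the only admissible value is $1$, so the matrix is precisely the biadjacency matrix of a bipartite graph. The column degrees are distributed as $\fd$ and the row degrees as $\fk$, and the hypothesis $0<\ex[\fd^3]+\ex[\fk^3]<\infty$ is exactly the third-moment condition imposed in~\cite{Maurice}. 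Finally, the construction of $\fA_n$ --- conditioning on the degree-sum event $\fS_n$ from~\eqref{eqDegreeMatch} and then sampling a uniformly random simple bipartite graph with the prescribed degrees --- is the configuration model used in~\cite{Maurice}, and $\pr\brk{\fS_n}=\Omega(n^{-1/2})$ (see \cite[\Prop~1.10]{Maurice}) ensures that this conditioning does not interfere with concentration. Hence~\cite[\Thm~1.1]{Maurice} applies and yields the convergence of $\frac1n\nul\fA_n$ in probability to a variational expression.

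It then remains to check that this expression equals $\max_{z\in[0,1]}\fF(z)$ with $\fF$ as in~\eqref{eq_thm_maurice}. In~\cite{Maurice} the limit is an optimisation of a ``Bethe free entropy'' functional over probability distributions on $[0,1]$, equivalently over fixed points of a distributional Belief Propagation recursion. Over $\FF_2$ with the symmetric all-ones coefficients, every BP message is half-integral --- the phenomenon recorded here in Fact~\ref{fact_halfint} --- so the relevant distributions collapse to the one-parameter family indexed by the probability $z$ that a message is ``frozen to a point''. Substituting this one-parameter ansatz into the Bethe functional and simplifying the resulting sums via the generating functions $\fD,\fK$ produces exactly $\fF(z)$: the variable-node contribution becomes $\fD(1-\fK'(z)/\fK'(1))$, while the combined check-node and edge-correction terms become $\frac{\fD'(1)}{\fK'(1)}\bc{1-\fK(z)-(1-z)\fK'(z)}$. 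The optimisation over distributions thus reduces to $\max_{z\in[0,1]}\fF(z)$. This step is routine generating-function bookkeeping; as a consistency check, differentiating shows that the stationary points of $\fF$ in $(0,1)$ coincide with the fixed points of the scalar BP operator built from $\fD,\fK$, in parallel with Fact~\ref{lem_max}. This is also the point of contact with \Prop~\ref{prop_nul}, where $\fD,\fK$ are chosen so that $\fF$ becomes the Bethe free entropy $\Phi_{d,k,\lambda}$.

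Should one prefer a proof that does not invoke~\cite{Maurice} as a black box, the route would be the following. First, establish that $\fG_n$ converges locally weakly to the two-type Galton--Watson tree with variable-offspring law $\fd$ and the size-biased check structure derived from $\fk$, via a second-moment computation on subtree counts analogous to \Lem~\ref{lem_lwc}. Second, run Warning/Belief Propagation to its fixed point on that tree, obtaining the scalar fixed point $z$ and the candidate value $\fF(z)$. Third, prove the matching variational characterisation by the interpolation method, showing that $n\mapsto\ex\nul\fA_n$ is super-/sub-additive along the natural edge-by-edge interpolation, hence that the limit exists and equals the supremum of Bethe functionals over the one-parameter ansatz. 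Fourth, upgrade convergence in expectation to convergence in probability using Azuma's inequality --- adding or deleting one row changes $\nul$ by at most one --- together with a local central limit estimate to absorb the conditioning on $\fS_n$. The main obstacle is the third step: interpolation delivers one of the two inequalities essentially for free, but the matching bound requires either a quiet-planting argument (plant a uniformly random kernel vector, show the planted and null models are mutually contiguous, and read off the free-entropy identity) or a sharp second-moment computation controlling the overlap of pairs of kernel vectors. This is precisely the technical heart of~\cite{Maurice}, which carries it out in the stated generality; accordingly our proof simply quotes~\cite[\Thm~1.1]{Maurice} and performs the formula reduction described above.
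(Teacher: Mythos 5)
Your proposal is correct and matches the paper's treatment: the paper gives no independent proof of this statement, it simply quotes \cite[\Thm~1.1]{Maurice} and records the theorem as the special case obtained by taking the field $\FF_2$ with the prescribed degree distributions $\fd,\fk$, which is exactly the "check the hypotheses and specialise the formula" route you describe. Your additional sketch of a self-contained argument is fine as commentary but is not needed, since the cited theorem already delivers the scalar variational formula $\max_{z\in[0,1]}\fF(z)$ in the stated generality.
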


We now derive \Prop~\ref{prop_nul} from \Thm~\ref{thm_maurice} by identifying suitable distributions $\fd,\fk$ such that $\fA_n$ resembles $\vA_t$.

\begin{proof}[Proof of \Prop~\ref{prop_nul}]
	Recall that $0\leq t=t(n)\leq n$ satisfies $t=\theta n+o(n)$ for a fixed $0\leq\theta\leq 1$.
	We continue to set $\lambda=-\log(1-\theta)$.
	We are going to construct several random matrices that can be coupled such that their nullities differ by no more than $o(n)$ \whp\
	The first of these random matrices is the matrix $\vA_t$ from \Prop~\ref{prop_nul}, and the last is the matrix $\fA_n$ from \Thm~\ref{thm_maurice}, with suitably chosen $\fd,\fk$.

	For a start, consider the check matrix $\vA'=\vA_0$ of the original, `undecimated' $k$-XORSAT formula $\PHI=\FDC 0$.
	Obtain $\vA_{t}'$ from $\vA'$ by adding $t$ new rows to $\vA'$.
	Each of these rows contains precisely a single non-zero entry.
	The positions of the non-zero entries are chosen uniformly without replacement.
	Thus, the extra $t$ rows have the effect of fixing $t$ uniformly random coordinates to zero.
	Since the distribution of the random matrix $\vA'$ is invariant under column permutations, we conclude that
	\begin{align}\label{eq_prop_nul_1}
		\nul\vA_t&\disteq\nul\vA_t'.
	\end{align}

	Further, let $\vA[\lambda]$ be the matrix obtained from $\vA'$ by adding a random number of $\vl=\Po(\lambda n)$ of rows.
	Each of these rows contains a single non-zero entry, which is placed in a uniformly random position.
	The extra rows are chosen mutually independently (thus, `with replacement') and independently of $\vA'$.
	By Poisson thinning, for any column index $j\in[n]$ the probability that one of the new $\vl$ rows has a non-zero entry in the $j$th column equals $1-\exp(-\lambda)=\theta$.
	Since $t\sim\theta n$, the total number of such indices $j$ has distribution $\Bin(n,\theta)$.
	Since $\pr\brk{|\Bin(n,\theta)-t|\leq\sqrt n\log n}\geq 1-1/n$ by the Chernoff bound, we can couple $\vA_t'$ and $\vA\brk{\lambda}$ such that 
	\begin{align}\label{eq_prop_nul_2}
		\nul\vA_t'&=\nul\vA\brk{\lambda}+o(n)\mbox{ \whp}	
	\end{align}

	Finally, let $\vA'[\lambda]$ be the matrix obtained as follows.
	Let $\fd,\fk$ have probability generating functions
	\begin{align}\label{eq_prop_nul_3}
		\fD(z)&=\exp((\lambda+d)(z-1)),&\fK(z)&=\frac{dz^k+k\lambda z}{d+k\lambda}.
	\end{align}
	In other words, $\fd$ has distribution $\Po(d+\lambda)$ while $\fk$ equals one with probability $k\lambda/(d+k\lambda)$ and equals $k$ with probability $d/(d+k\lambda)$.
	The definition \eqref{eq_prop_nul_3} readily yields
	\begin{align}\label{eq_prop_nul_4}
		\bar\fd&=\fD'(1)=\lambda+d,&\bar\fk&=\fK'(1)=\frac{k(d+\lambda)}{d+k\lambda}.
	\end{align}
	Hence, the number $\fm=\fm_n\disteq\Po(n\bar\fd/\bar\fk)$ of rows of $\fA=\fA_n$ can be written as a sum of independent random variables $\fm=\fm'+\fm''$ with distributions
	\begin{align}\label{eq_prop_nul_5}
		\fm'&=\Po(dn/k),&\fm''&=\Po(\lambda n).
	\end{align}
	The first summand $\fm'$ prescribes the number of rows of $\fA$ with $k$ non-zero entries, while $\fm''$ details the number of rows with a single non-zero entry.
	Consequently, \eqref{eq_prop_nul_5}  shows that the numbers of rows with $k$ or with just a single non-zero entry have the same distributions in both $\fA$ and $\vA[\lambda]$.
	
	We are left to argue that in $\fA$ the positions of the non-zero entries in the different rows are nearly independent and uniform.
	To see this, let $(\vh_{i,j})_{1\leq i\geq \fm,1\leq j\leq k}$ be a family of mutually independent and uniform random variables with values in $[n]=\{1,\ldots,n\}$.
	Moreover, let $\vX$ be the number of indices $1\leq i\leq\fm'$ such that there exist $1\leq j_1<j_2\leq k$ such that $\vh_{i,j_1}=\vh_{i,j_2}$; in other words, $\vh_{i,1},\ldots,\vh_{i,k}$ fail to be pairwise distinct.
	A routine calculation shows that
\begin{align}\label{eq_prop_nul_6}
		\ex[\vX]&=O(1).
	\end{align}
	Now, let us think of $(\vh_{i,j})_{1\leq i\leq\fm',1\leq j\leq k}$ and $(\vh_{i,1})_{\fm'<i\leq\fm}$ as the `bins' where $k\fm'+\fm''$ randomly tossed `balls' land. 
	Then the standard Poissonisation of the balls-into-bins experiment shows that given the event~\eqref{eqDegreeMatch} the loads of the bins are distributed precisely as the vector $(\fd_1,\ldots,\fd_n)$.
	Therefore, \eqref{eq_prop_nul_6} shows that $\vA[\lambda],\fA$ can be coupled such that
	\begin{align}\label{eq_prop_nul_7}
		\nul\vA[\lambda]&=\nul\fA+o(n)\mbox{ \whp}	
	\end{align}

	Combining \eqref{eq_prop_nul_1}, \eqref{eq_prop_nul_2} and~\eqref{eq_prop_nul_7}, we see that $\vA_t$ and $\fA$ can be coupled such that 
	\begin{align}\label{eq_prop_nul_8}
		\nul\vA_t&=\nul\fA+o(n)&&\mbox\whp
	\end{align}
	Hence, \Thm~\ref{thm_maurice} implies that
	\begin{align}\label{eq_prop_nul_9}
		\lim_{n\to\infty}\frac1n\nul\vA_t&=\max_{z\in[0,1]}\fF(z)&&\mbox{in probability}.
	\end{align}
	Further, recalling the definitions~\eqref{eq_thm_maurice}, \eqref{eq_prop_nul_3} of $\fF,\fD,\fK$ and performing a bit of calculus, we verify that $\fF(z)$ coincides with the function $\Ph(z)$ from~\eqref{eqPhi}.
	Finally, the assertion follows from~\eqref{eq_prop_nul_9} and the fact that $\Ph(\amax)=\max_{z\in[0,1]}\Ph(z)$.
\end{proof}

\subsection{Proof of \Prop~\ref{cor_frz}}\label{sec_cor_frz}
We continue to work with the random matrix $\vA[\lambda]$ from the above proof of \Prop~\ref{prop_nul}.
As we recall, this matrix is obtained by adding $\vl=\Po(\lambda n)$ stochastically independent new rows to the matrix $A(\PHI)$ that each contain a single non-zero entry in a uniformly random position.
Combining~\eqref{eq_prop_nul_1}--\eqref{eq_prop_nul_2}, we see that
\begin{align}\label{eqAA}
	\abs{\ex[\nul\vA[\lambda]]-\ex[\nul\vA_t]}&=o(n)&&\mbox{ for }\lambda=-\log(1-\theta).
\end{align}
Towards the proof of \Prop~\ref{cor_frz} we observe that $\nul\vA[\lambda],\nul\vA_t$ concentrate about their expectations.

\begin{lemma}\label{cor_nul}
	We have 
	\begin{align}\label{eq_cor_nul}
		\pr\brk{|\nul\vA_t-\ex[\nul\vA_t]|>\sqrt n\log n}&=o(n^{-10}),&
		\pr\brk{|\nul\vA\brk\lambda-\ex[\nul\vA\brk\lambda]|>\sqrt n\log n}&=o(n^{-10}).
	\end{align}
\end{lemma}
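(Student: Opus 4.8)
The plan is to prove both concentration statements by the bounded-differences inequality (Azuma--McDiarmid) together with Poisson tail bounds. The only structural input needed is an elementary linear-algebra fact: over $\FF_2$, if a matrix $B'$ is obtained from $B$ by adding, deleting, or replacing a single row, then $|\nul B-\nul B'|\le 2$ (in fact $\le 1$, but $2$ is all that is required). Indeed, appending a row $v$ to $B$ replaces $\ker B$ by $\ker B\cap v^{\perp}$, which either equals $\ker B$ or has codimension one in it; deletion is the inverse operation; and a replacement is a deletion followed by an addition. Thus $\nul$ is $2$-Lipschitz with respect to single-row edits, which is exactly the stability needed to bound martingale increments.

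First I would recall the generative descriptions established in the proof of \Prop~\ref{prop_nul}. The matrix $\vA[\lambda]$ is built from $\vm\disteq\Po(dn/k)$ mutually independent rows of weight $k$ (the $i$th being the indicator of a uniformly random $k$-subset of $[n]$) together with $\vl\disteq\Po(\lambda n)$ mutually independent rows of weight one (each the indicator of a uniformly random coordinate), all these choices independent. Moreover $\nul\vA_t\disteq\nul\vA_t'$, where $\vA_t'$ is $A(\PHI)$ with $t$ additional weight-one rows appended whose support coordinates $\vec p_1,\ldots,\vec p_t$ are sampled from $[n]$ without replacement. I would carry out the argument for $\vA[\lambda]$ in detail and note that $\vA_t'$ is handled identically except for the without-replacement point addressed at the end.

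The steps, in order. (1) Poisson concentration of the row counts: by Bennett's inequality (or a Chernoff bound) for the Poisson law, $\vm$ and $\vl$ lie within $\tfrac1{30}\sqrt n\log n$ of their means $dn/k,\lambda n$ with probability $1-o(n^{-10})$; in particular $\pr\brk{\vm+\vl>M}=o(n^{-10})$ for a suitable deterministic $M=M(n)=O(n)$, and since $0\le\nul\vA[\lambda]\le n$ this yields $\ex[\nul\vA[\lambda]]=\ex[\nul\vA[\lambda]\mid\vm+\vl\le M]+o(1)$. (2) Conditional concentration: conditioned on $\vm=m,\vl=l$ with $m+l\le M$, I would run the Doob martingale that reveals the $m+l$ independent row-defining variables one at a time; by the structural fact each increment is bounded by $2$, so McDiarmid's inequality gives
\begin{align*}
\pr\brk{\,\bigl|\nul\vA[\lambda]-\ex[\nul\vA[\lambda]\mid\vm=m,\vl=l]\bigr|>\tfrac1{10}\sqrt n\log n\;\Big|\;\vm=m,\vl=l\,}&\le 2\exp\bc{-\frac{n\log^2 n}{200(m+l)}}=o(n^{-10}).
\end{align*}
(3) Lipschitz dependence of the conditional mean on the counts: coupling common rows and applying the structural fact $|m-m'|+|l-l'|$ times gives $\bigl|\ex[\nul\vA[\lambda]\mid\vm=m,\vl=l]-\ex[\nul\vA[\lambda]\mid\vm=m',\vl=l']\bigr|\le|m-m'|+|l-l'|$. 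Combining (1)--(3), and absorbing the constant $\tfrac1{10}$ (which the argument tolerates since the deviation bounds are $n^{-\omega(1)}$), yields $\pr\brk{|\nul\vA[\lambda]-\ex\nul\vA[\lambda]|>\sqrt n\log n}=o(n^{-10})$, and by \eqref{eqAA} the same for $\nul\vA_t$.

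The one genuinely delicate point, and the step I expect to need the most care, is the without-replacement sampling of $\vec p_1,\ldots,\vec p_t$ in $\vA_t'$: revealing $\vec p_j$ is not the revelation of an independent coordinate, so McDiarmid does not apply verbatim. I would resolve this with the standard coupling for sampling without replacement: two sequences $(\vec p_1,\ldots,\vec p_t)$ that agree up to position $j-1$ and differ at position $j$ can be coupled to differ in at most two coordinates overall, hence the associated matrices differ by at most two unit rows and the martingale increments stay bounded (by $4$); the rest of the computation is unchanged. (Alternatively, one may invoke the version of McDiarmid's inequality for sampling without replacement directly.) The other bookkeeping point — threading the random number of rows through steps (1)--(2) via the truncation at $M$ and the Lipschitz estimate (3) — is routine.
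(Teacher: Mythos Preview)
Your proposal is correct and takes essentially the same approach as the paper's proof: both combine the one-row Lipschitz property of the nullity with Azuma--Hoeffding conditioned on the Poisson row counts $\vm,\vl$, then remove the conditioning via Bennett's inequality and the same Lipschitz bound. The paper is simply terser about the without-replacement point (it applies Azuma--Hoeffding in its martingale form, which needs only bounded increments, not independence of the revealed coordinates); one small slip in your write-up is that the final transfer to $\vA_t$ should cite the distributional identity $\nul\vA_t\disteq\nul\vA_t'$ from~\eqref{eq_prop_nul_1} rather than~\eqref{eqAA}.
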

\begin{proof}
	We combine the Azuma--Hoeffding inequality with the simple observation that the nullity satisfies a Lipschitz condition.
	Specifically, adding or removing a single row to a matrix changes the nullity by at most one.
	We apply this observation to the matrix $\vA_t'$ from the proof of \Prop~\ref{prop_nul}, which consists of $\vm+t$ independent random rows.
	Indeed, Azuma-Hoeffding implies together with the Lipschitz property that
	\begin{align}\label{eq_cor_nul_1}
		\pr\brk{|\vA_t'-\ex[\vA_t'\mid\vm]|>u\mid\vm}&\leq2\exp\bc{-\frac{u^2}{2(\vm+t)}}&&\mbox{for any }u>0.
	\end{align}
	Furthermore, Bennett's concentration inequality for Poisson variables shows that
	\begin{align}\label{eq_cor_nul_2}
		\pr\brk{|\vm-dn/k|>\sqrt n\log^{2/3}n}&=o(n^{-10}).
	\end{align}
	Combining~\eqref{eq_cor_nul_1}--\eqref{eq_cor_nul_2} with the Lipschitz property and setting $u=\sqrt n\log^{2/3}n$, we obtain the first part of~\eqref{eq_cor_nul}.

	Similar reasoning applies to the second matrix $\vA[\lambda]$; for given $\vl$ and $\vm$ the Lipschitz property yields
	\begin{align}\label{eq_cor_nul_3}
		\pr\brk{|\vA_t'-\ex[\vA_t'\mid\vl,\vm]|>u\mid\vl,\vm}&\leq2\exp\bc{-\frac{u^2}{2(\vl+\vm)}}&&\mbox{for any }u>0.
	\end{align}
	Moreover, in analogy to~\eqref{eq_cor_nul_2} we have
\begin{align}\label{eq_cor_nul_4}
		\pr\brk{|\vl-\lambda n|>\sqrt n\log^{2/3}n}&=o(n^{-10}).
	\end{align}
	Thus, \eqref{eq_cor_nul_2}--\eqref{eq_cor_nul_4} and Azuma-Hoeffding imply the second part of \eqref{eq_cor_nul}.
\end{proof}


We are going to estimate $|V_0(\FDC t)|$ by way of estimating changes of $\nul\vA[\lambda]$ as $\lambda$ varies.
Since $\nul\vA[\lambda]/n$ converges to $\Ph(\amax)$ by \Prop~\ref{prop_nul}, we thus need to estimate the derivative $\frac\partial{\partial\lambda}\Ph(\amax)$.

\begin{lemma}\label{lem_nul}
	Let $d>0$ and assume that
	\begin{enumerate}[(i)]
		\item $d<\dmin$, or
		\item $d>\dmin$ and $\lambda\in(0,\infty)\setminus\{\lcond\}$.
	\end{enumerate}
Then
\begin{align}\label{eqlem_nul}
	\frac{\partial}{\partial\lambda}\Phi_{d,k,\lambda}(\amax)=\amax-1.
\end{align}
\end{lemma}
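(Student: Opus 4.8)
The plan is to derive \eqref{eqlem_nul} from a one-line chain-rule computation, having first checked that $\amax$ depends differentiably on $\lambda$ in the regimes under consideration. For the differentiability: under hypothesis (i), Proposition~\ref{prop_greg}(i) states that $\amax=\alpha_*=\alpha^*$ and that the map $\lambda\in(0,\infty)\mapsto\alpha_*$ is analytic, so there is nothing to check. Under hypothesis (ii), Proposition~\ref{prop_greg}(ii) gives $\lambda^*<\lcond<\lambda_*$ together with $\amax=\alpha_*$ for $\lambda<\lcond$ and $\amax=\alpha^*$ for $\lambda>\lcond$; since $\lambda\neq\lcond$, the point $\lambda$ lies either in $(0,\lcond)\subset(0,\lambda_*)$, where $\lambda\mapsto\alpha_*$ is analytic, or in $(\lcond,\infty)\subset(\lambda^*,\infty)$, where $\lambda\mapsto\alpha^*$ is analytic. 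Hence in all cases $\partial\amax/\partial\lambda$ exists near the point $\lambda$ in question.

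With this in hand, the computation is exactly the one already recorded in~\eqref{eq_lem_maxPhi_1}. Since $\amax\in\{\alpha_*,\alpha^*\}$ is a fixed point of $\phi_{d,k,\lambda}$, Fact~\ref{lem_max} shows that it is a stationary point of $z\mapsto\Phi_{d,k,\lambda}(z)$, so $\Phi'_{d,k,\lambda}(\amax)=0$. Differentiating $\lambda\mapsto\Phi_{d,k,\lambda}(\amax)$ by the chain rule therefore gives
\begin{align*}
\frac{\partial}{\partial\lambda}\Phi_{d,k,\lambda}(\amax)=\frac{\partial\Phi_{d,k,\lambda}}{\partial\lambda}\Big|_{\amax}+\Phi'_{d,k,\lambda}(\amax)\,\frac{\partial\amax}{\partial\lambda}=\frac{\partial\Phi_{d,k,\lambda}}{\partial\lambda}\Big|_{\amax}=-\exp\bc{-\lambda-d\amax^{k-1}},
\end{align*}
the last step using the explicit form~\eqref{eqPhi} of $\Phi_{d,k,\lambda}$. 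Finally, because $\amax$ is a fixed point of $\phi_{d,k,\lambda}$, definition~\eqref{eqphi} yields $\exp(-\lambda-d\amax^{k-1})=1-\amax$; substituting this into the display gives $\partial\Phi_{d,k,\lambda}(\amax)/\partial\lambda=\amax-1$, which is \eqref{eqlem_nul}.

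I expect no genuine obstacle here: once Proposition~\ref{prop_greg} is available the argument is routine. The only point worth flagging is that the exclusion of $\lambda=\lcond$ in case (ii) is necessary rather than cosmetic --- at $\lcond$ the maximiser $\amax$ jumps from $\alpha_*$ to $\alpha^*$, so the one-sided $\lambda$-derivatives of $\Phi_{d,k,\lambda}(\amax)$ are $\alpha_*-1$ and $\alpha^*-1$, which differ; away from $\lcond$ everything is analytic and the chain rule applies cleanly.
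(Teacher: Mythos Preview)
Your proof is correct and follows essentially the same approach as the paper: establish differentiability of $\lambda\mapsto\amax$ via \Prop~\ref{prop_greg}, invoke Fact~\ref{lem_max} for stationarity, apply the chain rule, and simplify using the fixed-point identity. Your treatment is in fact slightly more explicit than the paper's about which parts of \Prop~\ref{prop_greg} supply the analyticity in each case, and the closing remark on the necessity of excluding $\lambda=\lcond$ is a nice addition.
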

\begin{proof}
	The seeming difficulty is that $\amax=\amax(\lambda)$ varies with $\lambda$.
	Yet \Prop~\ref{prop_greg} (iii) ensures that the function $\lambda\mapsto\amax$ is continuously differentiable for $\lambda\neq\lcond$.
	Moreover, Fact~\ref{lem_max} shows that $\amax$ is a local maximum of $\Phi_{d,k,\lambda}$.
	Hence, applying the chain rule we obtain
	\begin{align}\label{eq_lem_nul_1}
		\frac\partial{\partial\lambda}\Phi_{d,k,\lambda}(\amax)&=\frac{\partial\Phi_{d,k,\lambda}}{\partial\lambda}\bigg|_{\lambda,\amax}+\frac{\partial\Phi_{d,k,\lambda}}{\partial\alpha}\bigg|_{\lambda,\amax}\frac{\partial\amax}{\partial\lambda}=\frac{\partial\Phi_{d,k,\lambda}}{\partial\lambda}\bigg|_{\lambda,\amax}=-\exp\bc{-\lambda-d\amax^{k-1}}.
	\end{align}
	In fact, since Fact~\ref{lem_max} shows that $\amax$ is a fixed point of $\phi_{d,k,\lambda}$, the r.h.s.\ of~\eqref{eq_lem_nul_1} simplifies to~\eqref{eqlem_nul}.
\end{proof}

Complementing the analytic formula~\eqref{eqlem_nul}, we now derive a combinatorial interpretation of the derivative of the nullity.
For a matrix $A$ of size $m\times n$ let $\frz(A)$ be the set of all indices $i\in[n]$ such that $\sigma_i=0$ for all $\sigma\in\ker A$.

\begin{lemma}\label{lem_diff_nul}
	For any $d,\lambda>0$ we have
	\begin{align*}
		\frac\partial{\partial\lambda}\ex[\nul \vA[\lambda]]&=\frac{\ex|\frz(\vA[\lambda])|}n-1.
	\end{align*}
\end{lemma}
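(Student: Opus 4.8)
The plan is to express $\ex[\nul\vA[\lambda]]$ as a discrete derivative in the number $\vl$ of added unary rows and to identify the expected change caused by one additional unary row with the expected number of non-frozen coordinates. Recall that $\vA[\lambda]$ is obtained from $A(\PHI)$ by appending $\vl\disteq\Po(\lambda n)$ independent rows, each having a single $1$ in a uniformly random position. Condition on $\vm$ (the number of length-$k$ clauses of $\PHI$) and think of the unary rows as arriving one at a time. Adding a unary row with its $1$ in column $j$ imposes the constraint $\sigma_j=0$. If $j\in\frz(A)$ for the current matrix $A$, this constraint is already implied by $\ker A$, so $\nul$ is unchanged; if $j\notin\frz(A)$, the constraint is linearly independent of the existing rows (there is a kernel vector with $\sigma_j\neq0$), so $\nul$ decreases by exactly $1$. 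Thus, writing $\vA^{(s)}$ for the matrix after $s$ unary rows have been added, $\ex[\nul\vA^{(s)}-\nul\vA^{(s+1)}\mid \vA^{(s)}]=\pr[\vec j_{s+1}\notin\frz(\vA^{(s)})\mid\vA^{(s)}]=1-|\frz(\vA^{(s)})|/n$, where $\vec j_{s+1}$ is the uniformly random column of the new row.

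Next I would differentiate in $\lambda$ using the Poisson structure. Since $\vl\disteq\Po(\lambda n)$, we have $\frac{\partial}{\partial\lambda}\pr[\vl=s]=n(\pr[\vl=s-1]-\pr[\vl=s])$, and hence
\begin{align*}
	\frac{\partial}{\partial\lambda}\ex[\nul\vA[\lambda]]
	&=n\sum_{s\geq0}\pr[\vl=s]\,\ex[\nul\vA^{(s+1)}-\nul\vA^{(s)}\mid\vl=s+1\text{-th row added}]\\
	&=-n\sum_{s\geq0}\pr[\vl=s]\,\ex\!\left[1-\frac{|\frz(\vA^{(s)})|}{n}\,\Big|\,\vl=s\right]
	=\frac{\ex|\frz(\vA[\lambda])|}{n}-1,
\end{align*}
where in the last line I use that $\vA^{(s)}$ conditioned on $\{\vl=s\}$ has exactly the law of $\vA[\lambda]$ conditioned on $\{\vl=s\}$, and that the extra uniform column of the $(s+1)$-th row is independent of $\vA^{(s)}$. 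Summing the telescoping Poisson weights against the bounded sequence $s\mapsto\ex[1-|\frz(\vA^{(s)})|/n]\in[0,1]$ is justified since $\nul\vA[\lambda]\leq n$ is bounded and the Poisson tail decays super-exponentially, so differentiation under the expectation (equivalently, under the infinite sum) is legitimate.

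The main obstacle is the bookkeeping for the interchange of differentiation and expectation and for the one-at-a-time coupling: one must verify that adding a single unary row in a uniformly random column, \emph{conditioned on the current matrix}, decreases the nullity by exactly $\ind{\vec j\notin\frz}$ — the ``only if'' direction (independence of the new row when $\vec j\notin\frz$) is the point that needs the characterisation of $\frz$ in terms of $\ker$, namely that $j\notin\frz(A)$ iff some $\sigma\in\ker A$ has $\sigma_j=1$, which makes $e_j^\top$ lie outside the row space. Everything else is routine: linearity of expectation, the Poisson derivative identity, and dominated convergence using the trivial bound $0\leq\nul\vA[\lambda]\leq n$.
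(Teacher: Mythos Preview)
Your proof is essentially identical to the paper's: both differentiate the Poisson pmf of the number $\vl$ of unary rows via $\frac{\partial}{\partial\lambda}\pr[\vl=s]=n(\pr[\vl=s-1]-\pr[\vl=s])$, telescope, and identify the one-step change $\nul\vA^{(s+1)}-\nul\vA^{(s)}$ with $-\vecone\{\vec j\notin\frz(\vA^{(s)})\}$. One small arithmetic slip you share with the paper: your second displayed line equals $-n\,\ex[1-|\frz(\vA[\lambda])|/n]=\ex|\frz(\vA[\lambda])|-n$, not $\ex|\frz(\vA[\lambda])|/n-1$; the stated formula is off by a factor of $n$ (as is visible from how the lemma is later applied after dividing by $n$).
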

\begin{proof}
	Recall that $\vA[\lambda]$ is obtained from $A(\PHI)$ by adding $\vm''\disteq\Po(\lambda n)$ stochastically independent rows with a single non-zero entry in a uniformly random position.
	Consequently,
	\begin{align}
	\frac\partial{\partial\lambda}\ex[\nul\vA[\lambda]]&=\frac\partial{\partial\lambda}\sum_{\ell=0}^\infty\pr\brk{\vm''=\ell}\ex[\nul\vA[\lambda]\mid\vm''=\ell]=\sum_{\ell=0}^\infty\ex[\nul\vA[\lambda]\mid\vm''=\ell]\frac\partial{\partial\lambda}\frac{(\lambda n)^\ell}{\ell!}\exp(-\lambda n)\nonumber\\
	&=\sum_{\ell=0}^\infty\ex[\nul\vA[\lambda]\mid\vm''=\ell]\bc{\vecone\{\ell\geq1\}\frac{(\lambda n)^{\ell-1}}{(\ell-1)!}-\frac{(\lambda n)^\ell}{\ell!}}\exp(-\lambda n)\nonumber\\
	&=\sum_{\ell=0}^\infty\ex[\nul\vA[\lambda]\mid\vm''=\ell]\bc{\pr\brk{\vm''=\ell}-\pr\brk{\vm''=\ell+1}}.
	\label{eq_lem_diff_nul_1}
\end{align}
	Hence, obtain $\vA[\lambda]^+$ from $\vA[\lambda]$ by adding one more row with a single non-zero entry in a uniformly random position $\vj^+\in[n]$.
	Then $\vA[\lambda]^+-\vA[\lambda]=-\vecone\{\vj^+\in V_0(\vA[\lambda])\}$.
	Hence, \eqref{eq_lem_diff_nul_1} yields
	\begin{align*}
		\frac\partial{\partial\lambda}\ex[\nul\vA[\lambda]]&=-\ex\brk{\nul(\vA[\lambda]^+)-\nul(\vA[\lambda])}=\pr\brk{\vec j^+\in V_0(\vA[\lambda])}-1=\frac{\ex|\frz(\vA[\lambda])|}n-1,
	\end{align*}
	as claimed.
\end{proof}

With these preparations in place we can now derive the desired formulas for $|V_0(\vA_t)|$.
We treat the cases $\amax=\alpha_*$ and $\amax=\alpha^*$ separately.

\begin{lemma}\label{lem_nul_*}
	Assume that $d,\lambda>0$ satisfy
	\begin{align}\label{eq_cor_frz_*}
		\Phi_{d,k,\lambda}(\alpha_*)>\Phi_{d,k,\lambda}(\alpha)\quad\mbox{for all $\alpha\in[0,1]\setminus\{\alpha_*\}$.}
	\end{align}
	Then $|\frz(\vA[\lambda])|=\alpha_*n+o(n)$ \whp
\end{lemma}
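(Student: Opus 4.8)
The plan is to transfer the computation to the matrix $\vA[\lambda]$ from the proof of \Prop~\ref{prop_nul}: by \eqref{eq_prop_nul_1}--\eqref{eq_prop_nul_2} it suffices to show $|\frz(\vA[\lambda])|=\alpha_* n+o(n)$ \whp\ Two preparatory remarks. First, assumption~\eqref{eq_cor_frz_*} forces $\amax=\alpha_*$, and it also forces $\lambda\neq\lcond$ (at $\lcond$ one has $\Ph(\alpha_*)=\Ph(\alpha^*)$); hence, by \Prop~\ref{prop_greg}, the map $\mu\mapsto\amax(\mu)$ is analytic on an open interval $I\ni\lambda$, so that $g(\mu):=\Phi_{d,k,\mu}(\amax(\mu))$ is $C^1$ on $I$ with $g'(\mu)=\amax(\mu)-1$ by \Lem~\ref{lem_nul}. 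Second, \Prop~\ref{prop_nul} together with \eqref{eq_prop_nul_1}--\eqref{eq_prop_nul_2} yields $n^{-1}\nul\vA[\mu]\to g(\mu)$ in probability for every fixed $\mu>0$. The argument below is essentially a concentrated, two-sided version of \Lem~\ref{lem_diff_nul}.

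For the lower bound I would run Warning Propagation on the bipartite graph underlying $\vA[\lambda]$. Its local weak limit is the Galton--Watson tree $\TT$ of \Sec~\ref{sec_lwc} in which, in addition, every variable node independently carries a $\Po(\lambda)$ number of pendant degree-one check nodes. Such a pendant check permanently emits a $\nll$-message, so for the purposes of WP a variable carrying one behaves exactly like a variable deleted in passing from $\PHI$ to $\FDC{t}$ (a clause forwards $\nll$ as soon as all but one of its variables are $\nll$). Thus the recursion of \Lem~\ref{lem_WP_n} applies essentially verbatim and gives $|V_{\nll,\ell}(\vA[\lambda])|=\alpha_* n+o(n)$ \whp\ for every large fixed $\ell$; this step only uses that $\ph$ contracts on $[0,\alpha_*]$, which holds throughout the regime~\eqref{eq_cor_frz_*} (it is controlled by $\alpha_*<z_0$; cf.\ \Lem s~\ref{Claim13}--\ref{lem_zero}), including at the boundary value $\lambda=\lambda^*$ where $\theta=\theta^*$. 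By \Lem~\ref{lem_nll} we have $V_{\nll,\ell}(\vA[\lambda])\subset\frz(\vA[\lambda])$, whence $|\frz(\vA[\lambda])|\geq\alpha_* n-o(n)$ \whp

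For the matching upper bound, fix a small constant $\delta>0$ with $[\lambda,\lambda+\delta]\subset I$ and couple $\vA[\lambda+\delta]$ to $\vA[\lambda]$ by appending $\vm_\delta\disteq\Po(\delta n)$ further rows, each having a single $1$ in an independent uniformly random coordinate $\vp_i$. A one-line linear algebra argument gives $\nul\vA[\lambda]-\nul\vA[\lambda+\delta]\leq\#\{i\leq\vm_\delta:\vp_i\notin\frz(\vA[\lambda])\}$, and conditionally on $(\vA[\lambda],\vm_\delta)$ the right-hand side has distribution $\Bin\!\bc{\vm_\delta,\,1-|\frz(\vA[\lambda])|/n}$, so by Chernoff bounds for the binomial and for $\vm_\delta$ it equals $\delta\bc{n-|\frz(\vA[\lambda])|}+O(\sqrt{\delta n}\log n)$ \whp Rearranging and invoking $n^{-1}\nul\vA[\lambda]\to g(\lambda)$, $n^{-1}\nul\vA[\lambda+\delta]\to g(\lambda+\delta)$ together with $g(\lambda)-g(\lambda+\delta)=-\delta\, g'(\lambda)+O(\delta^2)=\delta\bc{1-\alpha_*}+O(\delta^2)$ gives
\begin{align*}
	\frac{|\frz(\vA[\lambda])|}{n}\;\leq\;1-\frac{\nul\vA[\lambda]-\nul\vA[\lambda+\delta]}{\delta n}+O\!\bc{\frac{\log n}{\sqrt{\delta n}}}\;=\;\alpha_*+O(\delta)+o_n(1)\qquad\text{\whp}
\end{align*}
Combined with the lower bound, this shows $\big||\frz(\vA[\lambda])|/n-\alpha_*\big|<\eps$ \whp\ for any prescribed $\eps>0$, upon first choosing $\delta$ small and then sending $n\to\infty$; as $\eps>0$ is arbitrary, the claim follows. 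I expect the only genuinely delicate point to be this interchange of the two limits in the upper bound: the stochastic error $O(\log n/\sqrt{\delta n})$ vanishes for fixed $\delta$, the discretisation error $O(\delta)$ incurred in replacing the difference quotient of $g$ by $g'(\lambda)$ is absorbed by letting $\delta\to0$ afterwards, and one must check (via \Prop~\ref{prop_greg}) that $\amax(\cdot)$ does not switch branches anywhere on $[\lambda,\lambda+\delta]$, so that \Lem~\ref{lem_nul} indeed computes $g'$ on that whole interval.
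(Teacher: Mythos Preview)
Your argument is correct and follows essentially the same route as the paper: the lower bound comes from Warning Propagation plus \Lem~\ref{lem_nll}, and the upper bound comes from interpolating the nullity in $\lambda$ and comparing the drop against the derivative $\amax-1$ from \Lem~\ref{lem_nul}. The only cosmetic difference is in the packaging of the upper bound: the paper works with expectations via \Lem~\ref{lem_diff_nul} and \Lem~\ref{cor_nul} and derives a contradiction from $\pr[\cE]>\eps$, whereas you carry out the same comparison directly on the random variables through the explicit coupling $\vA[\lambda]\subset\vA[\lambda+\delta]$ and a Chernoff bound; your version avoids the detour through concentration of the nullity and is arguably a bit cleaner, but the underlying idea (the number of non-frozen coordinates controls the nullity drop, hence is pinned down by $g'(\lambda)$) is identical.
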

\begin{proof}
	\Prop~\ref{prop_WP} and \Lem~\ref{lem_nll} yield the lower bound
	\begin{align}\label{eq_lem_nul_*_0}
		|\frz(\vA[\lambda])|&\geq\alpha_*n+o(n)&&\mbox\whp
	\end{align}
	To derive the matching upper bound, fix $\eps>0$ and assume that the event $\cE=\{|\frz(\vA[\lambda])|>(\alpha_*+\eps)n\}$ has probability $\pr\brk\cE>\eps$.
	Then by \Prop~\ref{prop_greg} (iii) there exists $\lambda'>\lambda$ such that $\amax(\lambda'')=\alpha_*(\lambda'')$ and $\alpha_*(\lambda'')<\alpha_*(\lambda)+\eps^2/2$ for all $\lambda''\in[\lambda,\lambda']$.
	Hence, \Lem s~\ref{lem_nul} yields
	\begin{align}\label{eq_lem_nul_*_1}
		\Phi_{d,k,\lambda'}(\amax(\lambda'))-\Phi_{d,k,\lambda}(\amax(\lambda))&\leq\int_{\lambda}^{\lambda'}(\alpha_*(\lambda'')-1)\,\mathrm d\lambda''\leq(\lambda'-\lambda)(\alpha_*(\lambda)+\eps^2/2-1).
	\end{align}
	Combining \eqref{eq_lem_nul_*_1} with \Prop~\ref{prop_nul} and \Lem~\ref{cor_nul}, we obtain
	\begin{align}\label{eq_lem_nul_*_10}
		n^{-1}\brk{\ex\brk{\nul\vA[\lambda']}-\ex\brk{\nul\vA[\lambda]}}&\leq(\lambda'-\lambda)(\alpha_*(\lambda)+\eps^2/2-1+o(1)).
	\end{align}
	On the other hand, since adding checks can only increase the number of frozen variables, \Lem~\ref{lem_diff_nul} shows that
	\begin{align}\label{eq_lem_nul_*_2}
		n^{-1}\brk{\ex\brk{\nul\vA[\lambda']}-\ex\brk{\nul\vA[\lambda]}}&\geq(\lambda'-\lambda)(\alpha_*(\lambda)+\pr\brk{\cE}\eps-1+o(1))\geq(\lambda'-\lambda)(\alpha_*(\lambda)+\eps^2-1+o(1)).
	\end{align}
	Finally, since \eqref{eq_lem_nul_*_10} and \eqref{eq_lem_nul_*_2} contradict each other, we have refuted the assumption $\pr\brk{\cE}>\eps$.
\end{proof}

\begin{lemma}\label{lem_nul^*}
	Assume that $d,\lambda>0$ are such that
	\begin{align}\label{eq_cor_frz^*}
		\Phi_{d,k,\lambda}(\alpha^*)>\Phi_{d,k,\lambda}(\alpha)\quad\mbox{for all $\alpha\in[0,1]\setminus\{\alpha^*\}$.}
	\end{align}
	Then $|\frz(\vA[\lambda])|=\alpha^*n+o(n)$ \whp
\end{lemma}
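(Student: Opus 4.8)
The statement is the mirror image of \Lem~\ref{lem_nul_*}, with the largest fixed point $\alpha^*$ taking over the role played there by $\alpha_*$. First note that hypothesis \eqref{eq_cor_frz^*} forces $\amax=\alpha^*$ to be the \emph{strict} global maximiser of $\Phi_{d,k,\lambda}$ on $[0,1]$, so \Lem~\ref{lem_maxPhi} puts us in the regime $\lambda>\lcond$; consequently, by \Prop~\ref{prop_greg}(iii), $\amax(\lambda'')=\alpha^*(\lambda'')$ for all $\lambda''$ in a neighbourhood of $\lambda$ and $\lambda''\mapsto\alpha^*(\lambda'')$ is analytic there. The plan is to prove the lower bound $|\frz(\vA[\lambda])|\geq\alpha^*n+o(n)$ \whp\ first, and then obtain the matching upper bound by an interpolation argument in $\lambda$ copied almost verbatim from the proof of \Lem~\ref{lem_nul_*}.

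For the lower bound, recall that $\vA[\lambda]$ arises from $A(\PHI)$ by appending $\Po(\lambda n)$ independent unary rows, which pin a uniformly random set of $\approx(1-\exp(-\lambda))n=\theta n$ coordinates to zero; all of these lie in $\frz(\vA[\lambda])$. On the remaining coordinates, \Lem~\ref{lem_nll} together with \Prop~\ref{prop_WP} contributes a further $\approx(\alpha_*-\theta)n$ null variables, namely the $\nll$-marked ones, for a total of $\approx\alpha_*n$; if $\lambda\geq\lambda_*$ then $\alpha_*=\alpha^*$ and we are done. In the range $\lcond<\lambda<\lambda_*$, where $\alpha_*<\alpha^*$, one must show in addition that the $\approx(\alpha^*-\alpha_*)n$ variables marked $\fzn$ by Warning Propagation are, up to $o(n)$, genuinely null in $\FDC t$. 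This is precisely where the third WP label $\fzn$ earns its keep: running WP with all three labels on the Galton--Watson tree $\TT$ of \Sec~\ref{sec_prop_WP} and transporting the conclusion to $G(\FDC t)$ via \Lem~\ref{lem_lwc}, one verifies that once the remaining, $\uzn$-marked variables of the $2$-core (a set of density $1-\alpha^*$) are exposed, every $\fzn$-marked variable is forced to $0$ in $\ker A_{\FDC t}$ \whp\ Together with \Prop~\ref{prop_WP} this gives $|\frz(\vA[\lambda])|\geq\alpha^*n+o(n)$ \whp\ I expect this $\fzn$-versus-null step to be the main obstacle, as it must go strictly beyond the deterministic containment of \Lem~\ref{lem_nll} and genuinely exploit that $\alpha^*$, rather than $\alpha_*$, maximises $\Phi_{d,k,\lambda}$ when $\lambda>\lcond$.

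For the upper bound I would reproduce the argument of \Lem~\ref{lem_nul_*} line by line. Suppose towards a contradiction that $\eps>0$ is such that $\cE=\{|\frz(\vA[\lambda])|>(\alpha^*+\eps)n\}$ satisfies $\pr[\cE]>\eps$. Using the analyticity recorded above, choose $\lambda'>\lambda$ with $\alpha^*(\lambda'')<\alpha^*(\lambda)+\eps^2/2$ for all $\lambda''\in[\lambda,\lambda']$ (here $\amax(\lambda'')=\alpha^*(\lambda'')$ throughout, since $\lambda'>\lcond$). By \Lem~\ref{lem_nul} we have $\frac{\partial}{\partial\lambda}\Phi_{d,k,\lambda}(\amax)=\amax-1$, so integrating and then invoking \Prop~\ref{prop_nul} and \Lem~\ref{cor_nul},
\begin{align*}
	n^{-1}\bigl(\ex[\nul\vA[\lambda']]-\ex[\nul\vA[\lambda]]\bigr)&\leq(\lambda'-\lambda)\bigl(\alpha^*(\lambda)+\tfrac{\eps^2}{2}-1+o(1)\bigr).
\end{align*}
On the other hand, $\frz(\vA[\lambda''])\supseteq\frz(\vA[\lambda])$ for all $\lambda''\geq\lambda$ because adding rows can only enlarge the set of null coordinates, so on $\cE$ — and using the already-established lower bound on the complement event — one gets $\ex[|\frz(\vA[\lambda''])|]/n\geq\alpha^*(\lambda)+\pr[\cE]\eps+o(1)\geq\alpha^*(\lambda)+\eps^2+o(1)$ for every $\lambda''\in[\lambda,\lambda']$, whence \Lem~\ref{lem_diff_nul} yields $n^{-1}(\ex[\nul\vA[\lambda']]-\ex[\nul\vA[\lambda]])\geq(\lambda'-\lambda)(\alpha^*(\lambda)+\eps^2-1+o(1))$. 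Since $\eps^2>\eps^2/2$ these two estimates are incompatible, refuting $\pr[\cE]>\eps$. Combined with the lower bound, this proves $|\frz(\vA[\lambda])|=\alpha^*n+o(n)$ \whp
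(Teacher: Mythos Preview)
You have swapped the roles of the easy and the hard direction. In the paper's proof the \emph{upper} bound $|\frz(\vA[\lambda])|\leq\alpha^*n+o(n)$ is immediate: \Prop~\ref{prop_WP} gives $|V_{\nll,\ell}(\FDC t)\cup V_{\fzn,\ell}(\FDC t)|=\alpha^*n+o(n)$, and \Lem~\ref{lem_uzn} (equivalently \Prop~\ref{prop_nlluzn}(ii)) shows that $|V_{\uzn,\ell}(\FDC t)\cap\frz(\FDC t)|=o(n)$, so $\frz(\FDC t)$ lives, up to $o(n)$, inside $V_{\nll,\ell}\cup V_{\fzn,\ell}$. No interpolation is needed here at all. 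The interpolation argument is then used for the \emph{lower} bound, by assuming $\cE=\{|\frz(\vA[\lambda])|<(\alpha^*-\eps)n\}$ has probability $>\eps$ and interpolating \emph{downward} to some $\lambda'<\lambda$: monotonicity now propagates the deficit of frozen variables from $\lambda$ to all $\lambda''\in[\lambda',\lambda]$, and combined with the already-established upper bound this bounds $\ex|\frz(\vA[\lambda''])|/n$ from above, yielding (via \Lem~\ref{lem_diff_nul}) an upper bound on $n^{-1}(\ex[\nul\vA[\lambda]]-\ex[\nul\vA[\lambda']])$ that contradicts the lower bound obtained from \Lem~\ref{lem_nul} and \Prop~\ref{prop_nul}.

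Your proposed direct lower bound---showing that $\fzn$-marked variables lie in $\frz(\FDC t)$ when $\lambda>\lcond$---is exactly the difficulty the interpolation argument is designed to circumvent. The label $\fzn$ is a purely local statement (the variable is determined once the values at distance $2\ell$ are fixed), and to promote this to membership in $V_0(\FDC t)$ you would need to know that those distance-$2\ell$ variables are themselves in $V_0$, which is circular. Your sketch (``once the $\uzn$-marked variables are exposed, every $\fzn$-marked variable is forced to $0$'') does not break this circularity and is not a proof. So: keep your interpolation machinery, but flip it---use \Prop~\ref{prop_WP} and \Lem~\ref{lem_uzn} for the upper bound, and run the contradiction argument with $\lambda'<\lambda$ for the lower bound.
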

\begin{proof}
	We use a similar strategy as in the proof of \Lem~\ref{lem_nul_*}.
	Hence, assume that $d,\lambda>0$ satisfy \eqref{eq_cor_frz^*}.
	Combining \Prop~\ref{prop_WP} and \Lem~\ref{lem_uzn}, we see that $|\frz(\vA[\lambda])|\leq\alpha^*n+o(n)$ \whp\
	Now choose a small enough $\eps>0$ and assume that $\cE=\{|\frz(\vA[\lambda])|<(\alpha^*-\eps)n\}$ occurs with probability $\pr\brk\cE>\eps$.
	Then \Prop~\ref{prop_greg} shows that there exists $\lambda'<\lambda$ such that $\amax(\lambda'')=\alpha^*(\lambda'')$ and $\alpha^*(\lambda'')>\alpha^*(\lambda)-\eps^2/2$ for all $\lambda''\in[\lambda,\lambda']$.
	Hence, \Lem s~\ref{lem_nul} yields
	\begin{align}\label{eq_lem_nul^*_1}
		\Phi_{d,k,\lambda}(\amax(\lambda))-\Phi_{d,k,\lambda'}(\amax(\lambda'))&=\int_{\lambda}^{\lambda'}(\alpha^*(\lambda'')-1)\,\mathrm d\lambda''\geq(\lambda'-\lambda)(\alpha^*(\lambda)-\eps^2/2-1).
	\end{align}
	But once again because adding checks can only increase the number of frozen variables, \Lem~\ref{lem_diff_nul} yields
	\begin{align}\label{eq_lem_nul^*_2}
		n^{-1}\brk{\ex\brk{\nul\vA\brk{\lambda}}-\ex\brk{\nul\vA\brk{\lambda'}}}&\leq(\lambda'-\lambda)(\alpha^*(\lambda)-\pr\brk{\cE}\eps-1+o(1))\leq(\lambda'-\lambda)(\alpha_*(\lambda)-\eps^2-1+o(1)).
	\end{align}
	However,  \Prop~\ref{prop_nul} and \Lem~\ref{lem_nul} show that \eqref{eq_lem_nul^*_1}--\eqref{eq_lem_nul^*_2} are in contradiction.
\end{proof}

\begin{proof}[Proof of \Prop~\ref{cor_frz}]
	Since $\amax\in\{\alpha_*,\alpha^*\}$, the assertion is an immediate consequence of \Lem s~\ref{lem_nul_*}--\ref{lem_nul^*}.
\end{proof}


\subsection{Proof of \Cor~\ref{cor_nll}}\label{sec_cor_nll}

There are four cases to consider separately.
Let $\eps>0$.
\begin{description}
	\item[Case 1: $d<\dmin$]
		As \Prop~\ref{prop_greg} (i) shows, in this case we have $\alpha_*=\alpha^*$ for all $\lambda>0$; thus, the function $\ph$ has only the single fixed point $\alpha_*$, which is stable.
		Furthermore, \Prop~\ref{prop_WP} shows that $||V_{\nll,\ell}(\FDC t)|-\alpha_*n|<\eps n/2$ for large enough $\ell$ \whp\
		Moreover, \Prop~\ref{cor_frz} yields $|\frz(\FDC t)|=\alpha_*n+o(n)$ \whp\
		Therefore, \Prop~\ref{prop_nlluzn} implies that $|\frz(\FDC t)\triangle V_{\nll,\ell}(\FDC t)|<\eps n$ \whp\ for large enough $\ell$.
		Since $|V_{\nll,\ell}(\FDC t)|\subset\frz(\FDC t)$ \whp\ and $|V_{\nll,\ell}(\FDC t)\triangle V_{\nll}(\FDC t)|<\eps n$ by~\eqref{eqprop_WP}, the assertion follows.
	\item[Case 2] $\dmin<d<\dsat$ and $\theta>\theta_*$: 
		A similar argument as under Case~1 applies.
		Indeed, \Prop~\ref{prop_greg} (ii) shows that $\alpha_*=\alpha^*$ is the unique and stable fixed point of $\ph$.
		Since $||V_{\nll,\ell}(\FDC t)|-\alpha_*n|<\eps n/2$ for large $\ell$ \whp\ by \Prop~\ref{prop_WP} and $|\frz(\FDC t)|=\alpha_*n+o(n)$ \whp\ by \Prop~\ref{cor_frz}, \Prop~\ref{prop_nlluzn} yields $|\frz(\FDC t)\triangle V_{\nll,\ell}(\FDC t)|<\eps n$ \whp\
		Therefore, \eqref{eqprop_WP} implies the assertion.
	\item[Case 3] $\dmin<d<\dsat$ and $\theta<\tcond$:
		\Prop~\ref{prop_greg} (ii) shows that $\alpha_*<\alpha^*$ in this case.
		Moreover, \Prop~\ref{prop_WP} yields $||V_{\nll,\ell}(\FDC t)|-\alpha_*n|<\eps n/2$ for large $\ell$ \whp, while \Prop~\ref{cor_frz} and \Prop~\ref{prop_greg} (iii) imply that $|\frz(\FDC t)|=\alpha_*n+o(n)$ \whp\
		Thus, the same steps as in Cases 1--2 complete the proof.
	\item[Case 4] $\dmin<d<\dsat$ and $\tcond<\theta<\theta_*$:
		Once again \Prop~\ref{prop_greg} (ii) shows that $\alpha_*<\alpha^*$, \Prop~\ref{prop_WP} yields $||V_{\nll,\ell}(\FDC t)|-\alpha_*n|<\eps n/2$ for large $\ell$ \whp, and \Prop~\ref{cor_frz} and \Prop~\ref{prop_greg} (iii) show that $|\frz(\FDC t)|=\alpha^*n+o(n)$ \whp\
		Since $|V_{\nll,\ell}(\FDC t)|\subset\frz(\FDC t)$ \whp, the assertion follows from \eqref{eqprop_WP} and the fact that $\alpha_*<\alpha^*$.
\end{description}

\subsection{Proof of \Cor~\ref{lem_fzn}}\label{sec_lem_fzn}

Assume first that $\theta<\tcond$.
Then \Cor~\ref{cor_nll} shows that $|\frz(\FDC t)\triangle V_{\nll}(\FDC t)|=o(n)$ for large enough $\ell$.
Since $V_{\nll}(\FDC t)\cap V_{\fzn}(\FDC t)=\emptyset$ by construction, the first assertion follows. 

Now suppose $\theta>\tcond$.
Then \Prop~\ref{prop_WP} yields $||V_{\fzn,\ell}(\FDC t)|-\alpha^*n|<\eps n/2$ for large $\ell$ \whp, while \Prop~\ref{cor_frz} and \Prop~\ref{prop_greg} (iii) show that $|\frz(\FDC t)|=\alpha^*n+o(n)$ \whp\
Additionally, \Prop~\ref{prop_WP} shows that $|V_{\uzn,\ell}(\FDC t)\cap \frz(\FDC t)|<\eps n$ for large $\ell$, which implies the assertion.
	
\subsection{Proofs of \Thm s~\ref{thm_recnonrec} and~\ref{thm_cond}}\label{sec_thm_recnonrec}
We begin with the following observation.

\begin{lemma}\label{prop_recon}
	Let $\SIGMA\in\ker(\FDC t)$ be uniformly random.
	For any $\ell>0$ \whp\ we have
	\begin{align}\label{eq_prop_recon_mu}
		\pr\brk{\SIGMA_{x_{t+1}}=0\mid\FDC t,\SIGMA_{\partial^{2\ell} x_{t+1}}}&=\frac12\bc{1+\vecone\{x_{t+1}\in V_{\fzn,\ell}(\FDC t)\cup V_{\nll,\ell}(\FDC t)\}},\\
		\pi_{\FDC{t}}=\pr\brk{\SIGMA_{x_{t+1}}=0\mid\FDC t}&=\frac12\bc{1+\vecone\{x_{t+1}\in \frz(\FDC t)\}}.\label{eq_prop_recon_pi}
	\end{align}
\end{lemma}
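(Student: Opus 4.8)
The identity \eqref{eq_prop_recon_pi} is pure linear algebra and holds for every outcome of $\FDC t$. Since $\SIGMA$ is uniform on the $\FF_2$-vector space $\ker A_{\FDC t}$, the evaluation $w\mapsto w_{x_{t+1}}$ is a linear functional on $\ker A_{\FDC t}$, so its range is $\{0\}$ or $\FF_2$; it is $\{0\}$ exactly when every kernel vector vanishes at $x_{t+1}$, which by definition means $x_{t+1}\in\frz(\FDC t)$ and forces the marginal to be $1$, and otherwise the marginal is $1/2$. For \eqref{eq_prop_recon_mu} one runs the same argument conditionally. Given the sphere values $\SIGMA_{\partial^{2\ell}x_{t+1}}$, the conditional law of $\SIGMA$ is uniform on a coset of $W=\{w\in\ker A_{\FDC t}:w_{\partial^{2\ell}x_{t+1}}=0\}$; the restriction of $w\mapsto w_{x_{t+1}}$ to $W$ again has range $\{0\}$ or $\FF_2$, the conditional marginal equals $1/2$ precisely when the range is $\FF_2$, i.e.\ precisely when some kernel vector vanishes on $\partial^{2\ell}x_{t+1}$ but not at $x_{t+1}$, and is half-integral (in fact pinned to $0$, once we know $x_{t+1}\in\frz(\FDC t)$) otherwise. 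So \eqref{eq_prop_recon_mu} reduces to showing that \whp
\begin{align*}
	\text{some kernel vector vanishes on }\partial^{2\ell}x_{t+1}\text{ but not at }x_{t+1}\ \iff\ x_{t+1}\notin V_{\fzn,\ell}(\FDC t)\cup V_{\nll,\ell}(\FDC t),
\end{align*}
and that in the remaining case $x_{t+1}\in\frz(\FDC t)$.

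I would prove the equivalence by going local. By \Lem~\ref{lem_lwc}, and exactly as in the proof of \Lem~\ref{lem_uzn}, the depth-$2\ell$ neighbourhood $\partial^{\leq\ell}_{\FDC t}x_{t+1}$ of $x_{t+1}$ is acyclic \whp; condition on this tree event. On a tree the left-hand condition fails precisely when $x_{t+1}$ is ``pinned'' by bottom-up forcing of the homogeneous clauses inside the ball — a clause pins its parent variable once all its other children are pinned, a variable is pinned once one of its child clauses does, and the recursion terminates at the sphere (on a tree, a partial kernel vector supported inside the ball that vanishes on the sphere extends globally by $0$). This forcing relation is exactly what Warning Propagation tracks from the all-frozen start \eqref{eqWPupdate0}: by \eqref{eqWP1}--\eqref{eqWP2}, \eqref{eqmarks}, \Lem~\ref{lem_nll} and Fact~\ref{fact_wp}, $x_{t+1}$ receives the mark $\nll$ or $\fzn$ after $\ell$ rounds exactly when this ``pinned'' status reaches it inside the ball, the frozen initialisation playing the role of a zeroed fringe; and since the $\ell$-round range of WP in the factor graph sits within distance $2\ell$, the two descriptions of the fringe define the same forcing. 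I would make this precise by a short induction on $\ell$ carried out on the Galton-Watson tree $\TT^{(\ell)}$ from \Sec~\ref{sec_lwc}, in the spirit of the recursions in the proofs of \Lem s~\ref{lem_WP_n} and~\ref{lem_WP_f}. This gives the equivalence deterministically on the tree event; the remaining point, that $x_{t+1}$ being pinned by a bounded neighbourhood implies $x_{t+1}\in\frz(\FDC t)$ up to an $o(n)$ exceptional set, follows by combining \Prop~\ref{cor_frz} with Corollaries~\ref{cor_nll} and~\ref{lem_fzn}; and since $x_{t+1}$ is exchangeable with $x_{t+2},\ldots,x_n$ in $\FDC t$, it avoids every fixed $o(n)$ set \whp, so the global bounds transfer to $x_{t+1}$.

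The main obstacle is the middle step: identifying the ``pinned by the distance-$2\ell$ sphere'' relation with the marks produced by $\ell$ rounds of WP, which forces one to reconcile the all-frozen WP initialisation with conditioning on a zeroed fringe and to account for the mismatch between WP's effective range and the conditioning radius. The opening linear-algebra reductions are routine, and the passage from the tree picture to the \whp\ statement about $\FDC t$ is the standard local-weak-convergence-plus-exchangeability package already exploited throughout \Sec s~\ref{sec_prop_WP} and~\ref{sec_nul}.
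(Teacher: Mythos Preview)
Your approach is the paper's: both go local via the acyclic depth-$2\ell$ neighbourhood (\Lem~\ref{lem_lwc}) and identify ``pinned by the sphere'' with the WP mark being $\fzn$ or $\nll$. The paper dispatches that identification in one line by citing Fact~\ref{fact_BP_acyclic} (BP is exact on trees) together with Fact~\ref{fact_wp}; your induction on $\TT^{(\ell)}$ is re-deriving this and is not needed.

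The genuine gap is your ``remaining case''. You assert that whenever $x_{t+1}$ is pinned by its bounded neighbourhood one has $x_{t+1}\in\frz(\FDC t)$ up to an $o(n)$ exceptional set, invoking \Prop~\ref{cor_frz} and Corollaries~\ref{cor_nll} and~\ref{lem_fzn}. This is false precisely in the reconstruction regime $\theta^*<\theta<\tcond$: there \Prop~\ref{prop_WP} gives $|V_{\fzn,\ell}(\FDC t)|\sim(\alpha^*-\alpha_*)n=\Omega(n)$, while \Cor~\ref{cor_nll}(i) shows that $\frz(\FDC t)$ agrees with $V_{\nll}(\FDC t)$ up to $o(n)$, so $|V_{\fzn,\ell}(\FDC t)\setminus\frz(\FDC t)|=\Omega(n)$ and the implication you want fails on a positive fraction of variables. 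The results you cite do not help --- \Cor~\ref{lem_fzn}(i) in fact records exactly this disjointness. This is a subtlety the paper's own one-line argument also glosses over (its ``$=1$ iff $x_{t+1}\in V_0(\FDC t^{(\ell)})$'' should strictly read ``$\in\{0,1\}$ iff''); but the downstream proofs in \Sec~\ref{sec_thm_recnonrec} only ever use that the conditional equals $1/2$ iff $x_{t+1}\notin V_{\fzn,\ell}(\FDC t)\cup V_{\nll,\ell}(\FDC t)$, and that follows from your equivalence and the tree reduction without any appeal to the global results you propose.
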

\begin{proof}
	Notice that for $d<\dsat$ the random XORSAT instance $\PHI$ is satisfiable \whp; therefore, so is $\FDC t$.

	We begin with the proof of \eqref{eq_prop_recon_pi}.
	The first equality $\pi_{\FDC{t}}=\pr\brk{\SIGMA_{x_{t+1}}=0\mid\FDC t}$ follows from the fact that the set of solutions of $\FDC t$ is an affine translation of $\ker(A(\FDC t))$.
	Moreover, the second equality sign follows from the well known fact that the marginal $\pr\brk{\SIGMA_{x_{t+1}}=0\mid\FDC t}$ is equal to $1/2$ or to $1$.

	Moving on to~\eqref{eq_prop_recon_mu}, we recall from \Lem~\ref{lem_lwc} that the depth-$2\ell$ neighbourhood $\nb x_{t+1}$ of $x_{t+1}$ in $\FDC t$ is acyclic \whp\
	Furthermore, we can think of $\pr\brk{\SIGMA_{x_{t+1}}=0\mid\FDC t,\SIGMA_{\nb x_{t+1}}}$ as the marginal probability that $x_{t+1}$ receives the value zero under a random vector from the kernel of the check matrix of $\nb x_{t+1}$, subject to imposing the values $\SIGMA_{\nb x_{t+1}}$ upon the variable at distance exactly $2\ell$ from $x_{t+1}$.
	Let $\FDC t^{(\ell)}$ signify the XORSAT instance thus obtained.
	Then we conclude that $\pr\brk{\SIGMA_{x_{t+1}}=0\mid\FDC t,\SIGMA_{\nb x_{t+1}}}=1$ iff $x_{t+1}\in V_0(\FDC t^{(\ell)})$.
	Furthermore, because BP is exact on acyclic factor graphs, we have $x_{t+1}\in V_0(\FDC t^{(\ell)})$ iff $x_{t+1}\in V_{\frz,\ell}(\FDC t)\cup V_{\nll,\ell}(\FDC t)$.
	Thus, we obtain \eqref{eq_prop_recon_mu}.
\end{proof}

\begin{proof}[Proof of \Thm~\ref{thm_recnonrec}]
	We begin with claim (i) concerning $d<\dmin$.
	As \Prop~\ref{prop_greg} (i) shows, in this case we have $\alpha_*=\alpha^*$.
	Furthermore, \Prop~\ref{prop_WP} shows that $||V_{\nll,\ell}(\FDC t)|-\alpha_*n|<\eps n$ and $|V_{\frz,\ell}(\FDC t)|<\eps n$ for large enough $\ell$ \whp\
	Moreover, \Prop~\ref{cor_frz} yields $|\frz(\FDC t)|=\alpha_*n+o(n)$ \whp\
	Therefore, \Prop~\ref{prop_nlluzn} implies that $|\frz(\FDC t)\triangle V_{\nll,\ell}|<\eps n$ \whp\ for large enough $\ell$.
	Hence, \Lem~\ref{prop_recon} shows that the non-reconstruction property~\eqref{eqnonrec} holds \whp\

	Similarly, towards the proof of (ii) assume that $\dmin<d<\dsat$ and $\theta<\theta^*$.
	Then \Prop~\ref{prop_greg}~(ii) shows that $\alpha_*=\alpha^*$ is the unique (stable) fixed point of $\ph$.
	Therefore, the argument from the previous paragraph shows that~\eqref{eqnonrec} holds \whp
	Further, suppose that $\dmin<d<\dsat$ and $\theta>\tcond$.
	Then \Cor~\ref{lem_fzn}~(ii) shows that $|(V_{\nll,\ell}(\FDC{t})\cup V_{\fzn,\ell}(\FDC{t}))\triangle V_{0,\ell}(\FDC{t})|<\eps n$ \whp\
	Therefore, \Lem~\ref{prop_recon} implies non-reconstruction property, and thus the proof of (ii) is complete.

	Finally, suppose that $\dmin<d<\dsat$ and $\theta^*<\theta<\tcond$.
	Then \Prop~\ref{prop_WP} shows that $||V_{\nll,\ell}(\FDC t)-\alpha_*n|<\eps n$ and $|V_{\frz,\nll}|-(\alpha^*-\alpha_*)n|<\eps n$ for large enough $\ell$ \whp\
	Moreover, \Cor~\ref{lem_fzn} shows that $|V_{\fzn,\nll}\cap\frz(\FDC t)|<\eps n$ \whp\
	Consequently, \Lem~\ref{prop_recon} demonstrates that the reconstruction condition~\eqref{eqrec} holds \whp
\end{proof}

\begin{proof}[Proof of \Thm~\ref{thm_cond}]
Part (i) regarding the case $d<\dmin$ is an immediate consequence of Fact~\ref{fact_wp} (the equivalence of WP and BP), \Cor~\ref{cor_nll}~(i) and \Lem~\ref{prop_recon}.
The same is true of part~(ii) concerning $\dmin<d<\dsat$ and $\theta<\tcond$ or $\theta>\theta_*$.
Furthermore, (iii) follows from \Cor~\ref{cor_nll}~(ii) and \Lem~\ref{prop_recon}.
\end{proof}

\section{Belief Propagation Guided Decimation}\label{sec_alg}

\noindent
In this section we prove \Thm~\ref{thm_bpgd}.
We begin by arguing that \BPGD\ is actually equivalent to the simple combinatorial Unit Clause Propagation algorithm.
Then we prove the `positive' part, i.e., the formula \eqref{eqthm_bpgd} for the success probability for $d<\dmin$.
Subsequently we prove the second part of the theorem concerning $\dmin<d<\dsat$.

\subsection{Unit Clause Propagation redux}\label{sec_UCP}

The simple-minded Unit Clause Propagation algorithm attempts to assign random values to as yet unassigned variables one after the other.
After each such random assignment the algorithm pursues the `obvious' implications of its decisions.
Specifically, the algorithm substitutes its chosen truth values for all occurrences of the already assigned variables.
If this leaves a clause with only a single unassigned variable, a so-called `unit clause', the algorithm assigns that variable so as to satisfy the unit clause.
If a conflict occurs because two unit clauses impose opposing values on a variable, the algorithm declares that a conflict has occurred, sets the variable to false and continues; of course, in the event of a conflict the algorithm will ultimately fail to produce a satisfying assignment.
The pseudocode for the algorithm is displayed in Algorithm~\ref{Alg_UCP}.

\begin{algorithm}[h!]
	Let $U=\emptyset$ and let $\SIGUC:U\to\{0,1\}$ be the empty assignment\;
 \For{$t=0,\ldots,n-1$}{
	 \If{$x_{t+1}\not\in U$}{%
		 add $x_{t+1}$ to $U$\;
		 choose $\SIGUC(x_{t+1})\in\{0,1\}$ uniformly at random\;
			\While{$\PHI[\SIGUC]$ contains a unit clause $a$}{%
				let $x$ be the variable in $a$\;
				let $s\in\{0,1\}$ be the truth value that $x$ needs to take to satisfy $a$\;
				\If{another unit clause $a'$ exists that requires $x$ be set to $1-s$}{output `conflict' and let $\SIGUC(x)=0$\;}\Else{add $x$ to $U$ and let $\SIGUC(x)=s$\;}
				}
			}
		}
	\Return $\SIGUC$\;
 \caption{The \UCP\ algorithm.}\label{alg_ucp}
 \label{Alg_UCP}
\end{algorithm}

Let $\FUC t$ denote the simplified formula obtained after the first $t$ iterations (in which the truth values chosen for $x_1,\ldots,x_t$ and any values implied by Unit Clauses have been substituted).
We notice that the values assigned during Steps~6--12 are deterministic consequences of the choices in Step~5.
In particular, the order in which unit clauses are processed Steps~6--12 does not affect the output of the algorithm.

\begin{proposition}\label{prop_UCP}
	We have
	\begin{align*}
		\pr\brk{\BPGD\mbox{ outputs a satisfying assignment of }\PHI}&=\pr\brk{\UCP\mbox{ outputs a satisfying assignment of }\PHI}.
	\end{align*}
\end{proposition}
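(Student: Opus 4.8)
The plan is to couple the executions of $\BPGD$ and $\UCP$ on the same formula $\PHI$ via shared randomness and to show that under this coupling the two algorithms return the \emph{same} assignment $\SIGBP=\SIGUC$; the proposition then follows at once, since a given assignment either satisfies $\PHI$ or it does not. To set up the coupling, note that each algorithm consumes an independent fair coin exactly when it faces a genuinely free choice: $\BPGD$ when $\mu_{\FBP{t}}=1/2$, and $\UCP$ when $x_{t+1}\notin U$, i.e.\ when $x_{t+1}$ has not yet been forced by unit propagation. So feed both algorithms a single sequence $r_1,\dots,r_n\in\{0,1\}$ of independent fair coins, with $r_i$ used for $x_i$ whenever a free choice for $x_i$ is required.

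The crux is a dictionary between Warning/Belief Propagation on an arbitrary simplified XORSAT formula $F$ (with clauses of length $1,\dots,k$, possibly carrying constant terms) and unit clause propagation on $F$: a variable $x$ is marked $\nll$ by WP on $F$ if and only if unit propagation started from $F$ eventually assigns a value to $x$; and in that case $\lim_{\ell\to\infty}\mu_{F,x,\ell}(1)$ equals the value unit propagation assigns to $x$, with the convention that this value is $0$ if unit propagation reaches a conflict at $x$. The equivalence is immediate from the update rules~\eqref{eqWP1}--\eqref{eqmarks} by induction on the iteration index, matching one round of WP with one round of unit propagation. The statement about $\mu_{F,x,\ell}(1)$ follows from Fact~\ref{fact_wp} together with inspection of~\eqref{eqBPupdate1}--\eqref{eqBPmarg}: once every variable of a clause other than $x$ carries a deterministic message, that clause passes to $x$ exactly the value unit propagation would force; and two contradictory incoming deterministic messages make the normalisation in~\eqref{eqBPmarg} fail, which by convention yields $\mu_{F,x,\ell}(1)=0$, matching $\UCP$'s rule of setting a conflicted variable to $0$.

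Armed with the dictionary I would run an induction on $t$ for the invariant: after $t$ steps, $\SIGBP$ and $\SIGUC$ agree on $x_1,\dots,x_t$, the variables assigned by $\UCP$ are precisely $\{x_1,\dots,x_t\}$ together with the variables forced by unit propagation in $\FBP{t}$ (with the same values), and consequently $\FUC{t}$ is obtained from $\FBP{t}$ by substituting those forced values. In the inductive step there are two cases. If $x_{t+1}$ is already forced in $\FBP{t}$, then by the dictionary $\mu_{\FBP{t}}\in\{0,1\}$ equals that forced value, so $\BPGD$ assigns $x_{t+1}$ exactly the value $\UCP$ already holds, while $\UCP$ skips $x_{t+1}$; substituting an already-forced value changes neither the unit-propagation closure nor $\FUC{t}$, so the invariant persists. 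If $x_{t+1}$ is not yet forced, then $\mu_{\FBP{t}}=1/2$, both algorithms set $x_{t+1}=r_{t+1}$, and $\UCP$ then unit-propagates; since unit propagation is confluent (the processing order is irrelevant, as noted after Algorithm~\ref{alg_ucp}), the variables $\UCP$ newly forces, and the formula it leaves behind, coincide with the unit-propagation closure of $\FBP{t+1}$, which by the dictionary is exactly the set of variables $\FBP{t+1}$ marks $\nll$. This re-establishes the invariant, and taking $t=n$ yields $\SIGBP=\SIGUC$.

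I expect the main obstacle to be precisely this bookkeeping: $\BPGD$ assigns exactly one variable per round, always $x_{t+1}$, in index order, whereas $\UCP$ assigns variables in propagation order, possibly jumping ahead of the running index and leaving gaps, so one must verify that the \emph{sets} of assigned variables, their values, and the simplified formulas genuinely stay in lockstep --- and it is here that confluence of unit propagation and the WP/BP-to-unit-propagation dictionary carry the weight. A minor loose end is to confirm that the silent-failure modes of the two algorithms match: a fully substituted, violated clause is dropped by $\BPGD$ (it is ``deemed satisfied'') and is a harmless length-$0$ clause for $\UCP$, so in either case the algorithm runs to completion and returns an assignment that fails to satisfy $\PHI$ --- again consistently with $\SIGBP=\SIGUC$.
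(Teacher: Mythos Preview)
Your coupling and the inductive comparison with unit propagation are exactly the paper's strategy, and your ``dictionary'' between WP/BP and unit clause propagation is the right mechanism (it is essentially the content of Fact~\ref{fact_wp} applied to a formula with unit clauses). Up to the first conflict, your invariant is correct and matches the paper's induction on the two statements {\bf UCP1}--{\bf UCP2}.

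The gap is the overclaim $\SIGBP=\SIGUC$ \emph{always}. Once \UCP\ hits a conflict, the specific values it assigns thereafter depend on the order in which unit clauses are processed (the paper's remark after Algorithm~\ref{alg_ucp} concerns only the success/failure of the output, not the particular assignment). On the BP side, the convention in~\eqref{eqBPupdate2} for a failed normalisation is to \emph{retain} the previous message if it was already deterministic; thus the variable-to-clause message $\mu_{F,x\to a}$ at a conflicted variable $x$ need not equal $\delta_0$ --- it can be $\delta_1$ if one forcing chain reached $x$ before the other --- and hence what BP subsequently propagates through $a$ need not agree with what \UCP\ propagates after setting $x=0$. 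In short, your dictionary pins down the \emph{marginal} $\mu_{F,x,\ell}$ at a conflicted $x$, but not the outgoing messages $\mu_{F,x\to a,\ell}$, and it is the latter that govern downstream assignments. So the post-conflict branch of your induction does not go through as stated.

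The fix is exactly what the paper does: keep your induction only up to the first conflict (your argument already establishes this), and then observe that a conflict at time $t$ means $\SIGBP|_{\{x_1,\ldots,x_t\}}=\SIGUC|_{\{x_1,\ldots,x_t\}}$ cannot be extended to a satisfying assignment of $\PHI$, so \BPGD\ also fails. This yields the equality of success probabilities without any need to track the two algorithms past the conflict.
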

\begin{proof}
	We  employ the following coupling.
	Let $\TAU\in\{0,1\}^n$ be a uniformly random vector.
	The \BPGD\ algorithm sets $\SIGBP(x_{t+1})=\TAU_{t+1}$ if $\mu_{\FBP{t}}=1/2$.
	Analogously, \UCP\ sets $\SIGUC(x_{t+1})=\TAU_{t+1}$ in Step~5 (if $x_{t+1}\not\in U$).
	Hence, because \eqref{eqHalfInt} guarantees that the BP marginals $\mu_{\FBP{t}}$ are half-integral, the coupling ensures that the ``free steps'' of the two algorithms pick the same truth values.

	We now proceed by induction on $0\leq t\leq n$ to prove the following two statements.
	\begin{description}
		\item[UCP1] unless \UCP\ encountered a conflict before time $t$ we have $\SIGBP(x_i)=\SIGUC(x_i)$ for $i=1,\ldots,t$.
		\item[UCP2] if $t<n$ and there has been no conflict before time $t$ we have we have $\mu_{\FBP{t+1}}=1/2$ iff $x_{t+1}\not\in U$.
	\end{description}

	For $t=0$ both of these statements are clearly correct because $\mu_{\FBP{0}}=1/2$ and $x_{1}\not\in U$.

	Now assume that {\bf UCP1}--{\bf UCP2} hold at time $t-1$ and that no conflict has occurred yet.
	Then we already know that $\SIGBP(x_i)=\SIGUC(x_i)$ for $i=1,\ldots,t-1$.
	Furthermore, since {\bf UCP2} is correct at time $t-1$ we have $\mu_{\FBP{t}}=1/2$ iff $x_{t}\not\in U$.
	Consequently, if $x_t\not\in U$ then $\SIGBP(x_t)=\SIGUC(x_t)$.
	Hence, suppose that $x_t\in U$ and thus $\mu_{\FBP{t}}\in\{0,1\}$.
	Then given $\SIGBP(x_1)=\SIGUC(x_1),\ldots,\SIGBP(x_{t-1})=\SIGUC(x_{t-1})$ the value $\SIGUC(x_t)$ is implied by unit clause propagation.
	But a glimpse at the BP update rules~\eqref{eqBPupdate1}--\eqref{eqBPupdate2} shows that these encompass the unit clause rule.
	Specifically, if $x$ is the only remaining variable in clause $a$, then \eqref{eqBPupdate1} ensures that the message from $a$ to $x$ gives probability one to the value that satisfies clause $a$.
	Therefore, the definition~\eqref{eqBPmarg} of the BP marginal demonstrates that $\mu_{\FBP{t}}=\SIGUC(x_1)$ and thus $\SIGBP(x_t)=\SIGUC(x_t)$.
	Thus, {\bf UCP1} continues to hold for $t$.

	Similar reasoning yields {\bf UCP2}.
	Indeed, revisiting \eqref{eqBPupdate1}, we see that the BP message that clause $a$ sends to variable $x$ equals $1/2$ unless $a$ is a unit clause.
	In effect, \eqref{eqBPmarg} shows that the BP marginal $\mu_{\FBP{t+1}}$ is equal to $1/2$ unless the value of $x_{t+1}$ is implied by the unit clause rule.
	This completes the induction.

	To complete the proof assume that \UCP\ manages to find a satisfying assignment.
	Then {\bf UCP1} applied to $t=n$ demonstrates that \BPGD\ outputs the very same satisfying assignment.
	Conversely, if \UCP\ encounters a conflict at some time $t$, then {\bf UCP1} shows that \BPGD\ chose the same assignment up to time $t$.
	Therefore, it is not possible to extend the partial assignment $\SIGBP(x_1),\ldots,\SIGBP(x_t)$ to a satisfying assignment of $\PHI$ and thus \BPGD\ will ultimately fail to output a satisfying assignment.
\end{proof}

In light of \Prop~\ref{prop_UCP} we are left to study the success probability of \UCP.
The following two subsections deal with this task for $d<\dmin$ and $d>\dmin$, respectively.

\subsection{The success probability of \UCP\ for $d<\dmin$}\label{sec_alg_pos}
We continue to denote by $\FUC{t}$ the sub-formula obtained after the first $t$ iterations of \UCP.
Let $V_n=\{x_1,\ldots,x_n\}$ be the set of variables of the \emph{XORSAT} instance \emph{F}. 
Also, let $\vV(t)\subset\{x_{t+1},\ldots,x_n\}$ be the set of variables of $\FUC{t}$.
Thus, $\vV(t)$ contains those variables among $x_{t+1},\ldots,x_n$ whose values are not implied by the assignment of $x_1,\ldots,x_t$ via unit clauses.
Also let $\vC(t)$ be the set of clauses of $\FUC{t}$; these clauses contain variables from $\vV(t)$ only, and each clause contains at least two variables.
Let $\bar\vV(t)=V_n\setminus\vV(t)$ be the set of assigned variables.
Thus, after its first $t$ iterations \UCP\ has constructed an assignment $\SIGUC:\bar\vV(t)\to\{0,1\}$.
Moreover, let $\vV'(t+1)=\vV(t)\setminus\vV(t+1)$ be the set of variables that receive values in the course of the iteration $t+1$ for $0\leq t<n$.
Additionally, let $\vC'(t+1)$ be the set of clauses of $\FUC t$ that consists of variables from $\vV'(t+1)$ only.
Finally, let $\FUC{t+1}'$ be the formula comprising the variables $\vV'(t+1)$ and the clauses $\vC'(t+1)$.

To characterise the distribution of $\FUC t$ let $\vn(t)=|\vV(t)|$ and let $\vm_\ell(t)$ be the number of clauses of length $\ell$, i.e., clauses that contain precisely $\ell$ variables from $\vV(t)$.
Observe that $\vm_1(t)=0$ because unit clauses get eliminated.
Let $\fF_t$ be the $\sigma$-algebra generated by $\vn(t)$ and $(\vm_\ell(t))_{2\leq\ell\leq k}$.

\begin{fact}\label{fact_deferred}
	The XORSAT formula $\FUC t$ is uniformly random given $\fF_t$.
	In other words, the variables that appear in each clause are uniformly random and independent, as are their signs.
\end{fact}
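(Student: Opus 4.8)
The plan is to prove Fact~\ref{fact_deferred} by induction on $t$, using the principle of deferred decisions: rather than sampling the variable-sets and signs of all $\vm$ clauses of $\PHI$ at once, we reveal whether a given variable lies in a given clause (together with the relevant signs and constants) only at the moment \UCP\ actually queries it. The invariant to maintain along the whole execution is that, conditionally on the \emph{entire transcript} of \UCP's queries and coin flips up to the current point, the clauses that are still alive are mutually independent, each one being a uniformly random subset of the currently unassigned variables of its current length, carrying independent uniform signs (and hence a uniform constant term). Granting this invariant at the end of iteration $t$, the alive clauses are exactly those of $\FUC t$, the numbers of them of each length are by definition $(\vm_\ell(t))_{2\le\ell\le k}$ (since unit clauses have been removed), and the number of unassigned variables is $\vn(t)$; averaging the transcript-conditional law over all transcripts with fixed values of $\vn(t)$ and $(\vm_\ell(t))_\ell$ then yields precisely the asserted conditional law given $\fF_t$.

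For the base case $t=0$ we have $\FUC0=\PHI$, whose $\vm\disteq\Po(dn/k)$ clauses are by construction independent uniform $k$-subsets of $V_n$ with independent uniform signs, matching the invariant. For the inductive step, iteration $t+1$ does nothing if $x_{t+1}\notin\vV(t)$, so assume $x_{t+1}\in\vV(t)$. Then \UCP\ sets $\SIGUC(x_{t+1})$ by an independent fair coin (the coupling vector $\TAU$ from \Prop~\ref{prop_UCP} is independent of $\PHI$) and enters its unit-propagation loop; we argue by a nested induction on the number of passes through that loop. Each pass assigns some variable $y$ and then, for every alive clause $a$, reveals whether $y\in a$: by the invariant this is an independent Bernoulli bit for each $a$; conditionally on the answer, $a$ is again a uniform subset of the remaining unassigned variables of the appropriate (possibly decremented) length, with its constant updated by $\SIGUC(y)$ and the uniform sign of $y$ in $a$. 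Clauses that drop to length one are queued and \UCP\ additionally reveals the identity of their single variable (uniform among the unassigned variables) and the sign/constant data that pins down the forced value; clauses that drop to length zero are discarded; a conflict is resolved by revealing the signs/constants of two queued unit clauses on a common variable $y$, declaring failure, setting $\SIGUC(y)=0$ and deleting $y$. In every case the only information exposed about the alive clauses is that they avoid the variables assigned so far during iteration $t+1$, so the invariant persists; when the loop halts the alive clauses are exactly those of $\FUC{t+1}$ over the variable set $\vV(t+1)=\vV(t)\setminus\vV'(t+1)$, completing the induction.

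The step that needs the most care is the passage from the transcript-conditional statement to the $\fF_{t+1}$-conditional one. The set $\vV'(t+1)$ of variables assigned during iteration $t+1$, the amount by which each surviving clause is shortened, and the identities of the unit-clause variables are all random and measurable with respect to the transcript, and one must check that conditioning on the strictly coarser $\sigma$-algebra $\fF_{t+1}$ does not destroy the uniformity and independence of the surviving clauses. The clean way to do this is exactly the order above: prove the stronger statement conditionally on the full transcript, where the lazy-revelation argument applies verbatim, and only then average over all transcripts consistent with given values of $\vn(t+1)$ and $(\vm_\ell(t+1))_{2\le\ell\le k}$. Since for each such transcript the surviving formula has the \emph{same} conditional law, namely independent uniform subsets of the fixed set $\vV(t+1)$ of the prescribed lengths with independent uniform signs, the mixture is again of that form, i.e.\ uniform given $\fF_{t+1}$; and a length-$\ell'$ clause of $\FUC{t+1}$, whether it descends from a length-$\ell'$ clause of $\FUC t$ that avoided $\vV'(t+1)$ or from a longer clause that lost some variables to $\vV'(t+1)$, is in either case a uniform $\ell'$-subset of $\vV(t+1)$, so no inconsistency arises when these provenances are merged.
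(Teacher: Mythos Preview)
Your proof is correct and follows the same approach as the paper, which simply invokes ``the principle of deferred decisions'' in one line; you have spelled out this principle carefully via induction on $t$ with a nested induction through the unit-propagation loop. One small wording issue in your final paragraph: when you say the surviving formula has ``the \emph{same} conditional law'' across transcripts, note that different transcripts with the same $\vn(t+1),(\vm_\ell(t+1))_\ell$ may have different sets $\vV(t+1)$, so the laws agree only up to relabelling of the variable set; the cleanest statement is that given the transcript (hence in particular given $\vV(t+1)$), the clauses are independent uniform subsets of $\vV(t+1)$, and this is exactly what the paper uses downstream.
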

\begin{proof}
	This follows from the principle of deferred decisions.
\end{proof}

We proceed to estimate the random variables $\vn(t),\vm_\ell(t)$.
Let $\vec\alpha(t)=|\bar\vV(t)|/n$ so that $\vn(t)=n(1-\vec\alpha(t))$.
Recall, that $\bar\vV(t) =V_n\setminus\vV(t)$.
Let $\lambda=\lambda(\theta)=-\log(1-\theta)$ \lk{with $\theta \sim t/n$} and recall that $\alpha_*=\alpha_*(d,k,\lambda)$ denotes the smallest fixed point of $\phi_{d,k,\lambda}$.
The proof of the following proposition proof can be found in \Sec~\ref{sec_prop_uc_alpha}.

\begin{proposition}\label{prop_uc_alpha}
	Suppose that $d<\dmin(k)$.
	There exists a function $\delta=\delta(n)=o(1)$ such that for all $0\leq t<n$ and all $2\leq\ell\leq k$ we have
	\begin{align}\label{eq_prop_uc_alpha}
		\pr\brk{|\vec\alpha(t)-\alpha_*|>\delta}&=O(n^{-2}),&
		\pr\brk{\abs{\vm_\ell(t)-\frac{dn}k\binom k\ell(1-\alpha_*)^\ell\alpha_*^{k-\ell}}>\delta n}&=O(n^{-2}).
	\end{align}
\end{proposition}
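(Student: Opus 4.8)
The plan is to track the random vector $\Xi_t=(\vn(t),\vm_2(t),\ldots,\vm_k(t))$ by the differential equation method and to identify its scaling limit with the claimed expressions. By Fact~\ref{fact_deferred}, conditioned on $\fF_t$ the formula $\FUC t$ is a uniformly random XORSAT instance with $\vn(t)$ variables and $\vm_\ell(t)$ clauses of each length $\ell\in\{2,\ldots,k\}$; moreover, by the principle of deferred decisions and the exchangeability of $x_{t+1},\ldots,x_n$ (these indices are never picked actively during the first $t$ rounds, only touched passively by unit-clause cascades, which are label-blind), the surviving set $\vV(t)$ is a uniformly random $\vn(t)$-subset of $\{x_{t+1},\ldots,x_n\}$. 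Hence $(\Xi_t)_{t\ge0}$ is a Markov chain, and it suffices to pin down its one-step drift, solve the limiting ODE, and apply a concentration theorem.

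For the drift I would condition on $\Xi_t$. With probability $\vn(t)/(n-t)$ the variable $x_{t+1}$ is still unassigned, in which case round $t+1$ assigns $x_{t+1}$ and then runs a unit-clause cascade: assigning a variable turns each length-$2$ clause through it into a unit clause, forcing a new variable, and so on. Because $\FUC t$ is uniform given $\fF_t$, this cascade is, up to $O(1/n)$ corrections over a round of bounded size, a Galton--Watson process with offspring mean $\beta(t):=2\vm_2(t)/\vn(t)$; the key structural input is that $d<\dmin(k)$ keeps $\beta(t)<1$ throughout (established by the bootstrap below), so the cascade is subcritical with expected total progeny $\bar q(t)=(1-\beta(t)+o(1))^{-1}$ and exponential tails. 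Counting, for a typical assigned variable, how many clauses of each length are removed (a length-$\ell$ clause through it shrinks to length $\ell-1$, length-$2$ clauses disappearing as units) gives
\begin{align*}
	\Erw\brk{\vn(t+1)-\vn(t)\mid\Xi_t}&=-\frac{\vn(t)}{n-t}\,\bar q(t)+o(1),\\
	\Erw\brk{\vm_\ell(t+1)-\vm_\ell(t)\mid\Xi_t}&=\frac{\bar q(t)}{n-t}\brk{(\ell+1)\vm_{\ell+1}(t)-\ell\vm_\ell(t)}+o(1)\qquad(2\le\ell\le k),
\end{align*}
with $\vm_{k+1}\equiv0$; conflicts and self-intersections of a cascade occur with probability $O(1/n)$ per round and feed only the $o(1)$ error.

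Writing $\tau=t/n$, $a=\vec\alpha(\tau)$, $\mu_\ell=\vm_\ell/n$ and $\lambda=\lambda(\tau)=-\log(1-\tau)$, the drift equations become the autonomous system $a'=\frac{(1-a)\bar q}{1-\tau}$, $\mu_\ell'=\frac{\bar q}{1-\tau}\brk{(\ell+1)\mu_{\ell+1}-\ell\mu_\ell}$ with $\bar q=(1-2\mu_2/(1-a))^{-1}$ and initial condition $a(0)=0$, $\mu_k(0)=d/k$, $\mu_\ell(0)=0$ for $\ell<k$ (matching $\PHI=\FUC0$, using $\alpha_*(0)=0$ since $d<\dmin<\dcore$). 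I would then verify directly that $a(\tau)=\alpha_*(d,k,\lambda(\tau))$ together with $\mu_\ell(\tau)=\frac dk\binom k\ell(1-a)^\ell a^{k-\ell}$ solves it: the fixed-point identity $1-a=\exp(-\lambda-da^{k-1})$ yields $2\mu_2/(1-a)=d(k-1)a^{k-2}(1-a)=\ph'(a)$, hence $\bar q=(1-\ph'(a))^{-1}$, after which $a'=\partial_\lambda\alpha_*\cdot(1-\tau)^{-1}$ matches the right-hand side by the derivative formula of \Lem~\ref{lem_alphas}, and the $\mu_\ell$-equations drop out by differentiating the binomial expression (e.g.\ $\mu_k'=-d(1-a)^{k-1}a'=-\frac{\bar q d(1-a)^k}{1-\tau}$, as required). \Lem~\ref{lem_belowdmin} guarantees $\ph'(\alpha_*)<1$, so this solution stays in the subcritical regime $\beta<1$ and is well-defined on $[0,1)$.

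Finally, I would invoke the differential equation method: the increments are bounded by the round size, which exceeds $\log^2 n$ with probability $n^{-\omega(1)}$ by the subcritical branching tail; the trend hypothesis is the computation above; and the right-hand sides are Lipschitz on $\{\beta\le\ph'(\alpha_*)+\eps_0\}$. A stopping-time/bootstrapping argument closes the a priori circularity: run the method up to the first time $\beta(t)$ leaves this set and deduce that \whp\ this does not occur before $\tau=1-\eps$. This gives $|\vec\alpha(t)-\alpha_*(d,k,\lambda(t))|=o(1)$ and $|\vm_\ell(t)/n-\frac dk\binom k\ell(1-\alpha_*)^\ell\alpha_*^{k-\ell}|=o(1)$ uniformly for $0\le t\le(1-\eps)n$ with failure probability $n\cdot n^{-\omega(1)}=O(n^{-2})$ after a union bound over $t$. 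For $(1-\eps)n<t<n$ I would use monotonicity of $\vec\alpha(t)$ and the crude bound $\sum_\ell\vm_\ell(t)\le\vn(t)\cdot\Delta(\PHI)$ with $\Delta(\PHI)=O(\log n)$ \whp\ to see that all quantities remain $o(1)$-close to their targets there, letting $\eps=\eps(n)\to0$ slowly. The main obstacle is the cascade analysis: verifying the subcritical branching approximation for a round's size with the $O(1/n)$ precision the drift requires, and closing the bootstrap that keeps $\beta(t)<1$ along the entire trajectory, together with the degenerate endpoint $\tau\to1$; the rest is a routine if lengthy application of the differential equation method.
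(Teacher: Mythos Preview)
Your proposal is correct and follows essentially the same route as the paper: compute the one-step drift of $(\vn,\vm_2,\ldots,\vm_k)$ via the subcritical branching approximation for the unit-clause cascade (the paper's \Lem~\ref{lem_condex}), verify that the explicit functions $\fn^*(\theta)=1-\alpha_*(\lambda(\theta))$ and $\fm_\ell^*(\theta)=\frac dk\binom k\ell(1-\alpha_*)^\ell\alpha_*^{k-\ell}$ solve the resulting ODE system with the subcriticality condition $2\fm_2/\fn=\ph'(\alpha_*)<1$ (the paper's \Lem~\ref{lem_ODE}), and then apply Wormald's differential equation method (the paper's \Cor~\ref{cor_ODE}). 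The paper checks the subcriticality hypothesis directly on the explicit solution rather than via a stopping-time bootstrap, and it does not single out the endpoint $\tau\to1$ for separate treatment, but these are presentational differences only.
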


\Prop~\ref{prop_uc_alpha} paves the way for the actual computation of the success probability of \UCP.
Let $\cR_t$ be the event that a conflict occurs in iteration $t$.
The following proposition gives us the correct value of $\pr\brk{\cR_t \mid \fF_t} $ \whp\, 
Since $\fF_t$ is a random variable the value for the probability $\pr\brk{\cR_t \mid \fF_t} $ is random as well.

\begin{proposition}\label{prop_uc_error}
	Fix $\eps>0$, let $0\leq t<(1-\eps)n$ and define
	\begin{align}\label{eq_prop_uc_error}
		f_n(t)= d(k-1)(1-\alpha_*)  \alpha_*^{k-2}.
	\end{align}
	Then with probability $1-o(1/n)$ we have 
	\begin{align*}
		\pr\brk{\cR_t \mid \fF_t} = \frac{f_n(t)^2}{4(n-t)(1-f_n(t))^2}+o(1/n).
	\end{align*}
\end{proposition}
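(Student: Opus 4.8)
The plan is to reduce a conflict in iteration $t$ to a purely structural event about $\FUC t$, then to an exploration of a sparse random graph, and finally to plug in \Prop~\ref{prop_uc_alpha}. A conflict in iteration $t$ occurs precisely when $x_{t+1}\in\vV(t)$ and the unit-clause cascade triggered by the free choice $\SIGUC(x_{t+1})$ forces two opposite values on a common variable; as noted after Algorithm~\ref{alg_ucp}, the cascade is a deterministic function of $\FUC t$ and of the value $\SIGUC(x_{t+1})$, and over $\FF_2$ the occurrence of a conflict does not even depend on that value. Hence ``a conflict in iteration $t$'' is the event $\{x_{t+1}\in\vV(t)\}$ intersected with an event depending only on $\FUC t$. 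By the principle of deferred decisions (cf.\ Fact~\ref{fact_deferred}), conditional on $\fF_t$ the set $\vV(t)$ is a uniformly random $\vn(t)$-element subset of $\{x_{t+1},\dots,x_n\}$ and, given $\vV(t)$, the formula $\FUC t$ is uniformly random; in particular the pair $(\vV(t),\FUC t)$ is exchangeable under permutations of $\{x_{t+1},\dots,x_n\}$. Therefore
\[
	\pr\brk{\cR_t\mid\fF_t}=\frac1{n-t}\,\ex\brk{\bigl|\{x\in\vV(t):\text{the cascade from }x\text{ in }\FUC t\text{ conflicts}\}\bigr|\ \Big|\ \fF_t}.
\]

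Next I would identify the cascade with a component exploration. Let $H=H(\FUC t)$ be the multigraph on vertex set $\vV(t)$ with one edge per length-$2$ clause of $\FUC t$. Substituting a value into a length-$2$ clause $x\oplus y=b$ turns it into a unit clause on the remaining variable, so the cascade started at $x$ assigns exactly the vertices of the connected component of $x$ in $H$ — unless some clause of length $\ge3$ has at least two of its variables inside that component, which lets it fire as well. For a fixed $x$ the component has $O(1)$ expected size, so the expected number of such ``long-clause coincidences'' is $O(1/n)$, and a coincidence produces a \emph{new} conflict only after a further coincidence; summed over the at most $n-t$ choices of $x$ this contributes only $o(1)$ to the expectation above. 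Discarding these events, the cascade from $x$ conflicts iff the component of $x$ in $H$ contains a cycle around which the accumulated constants XOR to $1$; by Fact~\ref{fact_deferred} each cycle has this property with probability exactly $1/2$, independently over distinct cycles, and components with cyclomatic number $\ge2$ carry $o(1)$ expected vertices and may be ignored. Thus, up to $o(1)$,
\[
	\ex\brk{\bigl|\{x\in\vV(t):\text{cascade conflicts}\}\bigr|\mid\fF_t}=\frac12\,\ex\brk{\bigl|\{\text{vertices in non-tree components of }H\}\bigr|\mid\fF_t}.
\]

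Given $\fF_t$, $H$ is a uniformly random multigraph on $\vn(t)$ vertices with $\vm_2(t)$ edges, each a uniform pair of distinct vertices, hence locally a Galton--Watson tree with $\Po(c)$ offspring, $c=2\vm_2(t)/\vn(t)<1$ for $d<\dmin$. A first-moment computation — expected number of $r$-cycles $\sim c^r/(2r)$ for every $r\ge2$ (including the double edges $r=2$), and expected size of the component of a fixed $r$-cycle $\sim r/(1-c)$ — gives $\ex\brk{\bigl|\{\text{vertices in non-tree components of }H\}\bigr|\mid\fF_t}=c^2/(2(1-c)^2)+o(1)$, so the right-hand side above equals $c^2/(4(1-c)^2)+o(1)$. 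Finally, \Prop~\ref{prop_uc_alpha} yields that with probability $1-O(n^{-2})=1-o(1/n)$ we have $\vn(t)=(1-\alpha_*)n+o(n)$ and $\vm_2(t)=\frac{dn}{k}\binom k2(1-\alpha_*)^2\alpha_*^{k-2}+o(n)$, whence $c=d(k-1)(1-\alpha_*)\alpha_*^{k-2}+o(1)=f_n(t)+o(1)$; since also $n-t>\eps n$, collecting the estimates gives $\pr\brk{\cR_t\mid\fF_t}=\frac{f_n(t)^2}{4(n-t)(1-f_n(t))^2}+o(1/n)$ on this event.

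The main obstacle is making the third step rigorous and uniform in $t$: one must replace the branching-process heuristic by a genuine (truncated) first/second-moment argument so that $c^2/(2(1-c)^2)$ is provably the conditional expectation, handle the multigraph structure correctly (parallel edges, i.e.\ length-$2$ cycles, do contribute to the constant), control the tail of the cascade/component size so that all the $o(1)$ errors are uniform over $t<(1-\eps)n$ and over the values of $\vn(t),\vm_2(t)$ allowed by \Prop~\ref{prop_uc_alpha}, and verify that \UCP's dynamic conflict detection — it halts at the first opposing pair of unit clauses — matches the static criterion ``the cycle space of the component has a non-zero syndrome'', which relies on the order-independence of unit-clause processing.
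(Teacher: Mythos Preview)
Your proposal is correct and takes essentially the same approach as the paper: both reduce a conflict to the presence of an odd-parity cycle (what the paper calls a \emph{toxic cycle}) in the at-most-unicyclic component explored by the unit-clause cascade, estimate this via a first-moment/branching-process computation, and plug in \Prop~\ref{prop_uc_alpha}. The paper packages your long-clause discussion into a single statement (\Lem~\ref{lemma_arnab}) that the bipartite graph $G(\FUC{t+1}')$ satisfies $|E|\le|V|$ with probability $1-o(1/n)$, and your ``static criterion matches the dynamic conflict detection'' is precisely \Lem~\ref{lem_tox}.
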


The proof of \Prop~\ref{prop_uc_error} can be found in \Sec~\ref{sec_prop_uc_error}.
Moreover, in \Sec~\ref{sec_prop_uc_pois} we prove the following.

\begin{proposition}\label{prop_uc_pois}
	Fix $\eps>0$ and $\ell\geq1$.
	For any $0\leq t_1<\cdots<t_\ell<(1-\eps)n$ we have
	\begin{align}\label{eq_prop_uc_pois}
		\pr\brk{\bigcap_{i=1}^\ell\cR_{t_i}}&\sim\prod_{i=1}^\ell \frac{f_n(t_i)^2}{4(n-t_i)(1-f_n(t_i))^2}.
	\end{align}
\end{proposition}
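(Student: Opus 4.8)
The plan is to establish \eqref{eq_prop_uc_pois} by induction on $\ell$, removing one conflict at a time; this is exactly the Poissonisation input that later feeds the computation of the success probability of \UCP. For $\ell=1$ the assertion follows from \Prop~\ref{prop_uc_error}: taking the unconditional expectation of $\pr\brk{\cR_{t_1}\mid\fF_{t_1}}$ and using that the identity there holds on an event of probability $1-o(1/n)$ while $\pr\brk{\cR_{t_1}\mid\fF_{t_1}}\leq1$ always, one gets $\pr\brk{\cR_{t_1}}=\frac{f_n(t_1)^2}{4(n-t_1)(1-f_n(t_1))^2}+o(1/n)$. Since $t_1<(1-\eps)n$ forces $n-t_1\geq\eps n$ and, because $d<\dmin(k)$, $f_n(t_1)=d(k-1)(1-\alpha_*)\alpha_*^{k-2}\leq d\bc{\tfrac{k-2}{k-1}}^{k-2}<1$ uniformly, this is the claim. (When some $t_i$ is $o(n)$ the corresponding factor is $o(1/n)$; both sides of \eqref{eq_prop_uc_pois} then tend to zero and a cruder union bound suffices, so we may focus on the regime $t_i\asymp n$, where every factor is $\Theta(1/n)$.)

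For the inductive step I would fix $0\leq t_1<\cdots<t_\ell<(1-\eps)n$, let $\fH$ be the $\sigma$-algebra generated by the first $t_\ell$ iterations of \UCP\ (so $\cR_{t_1},\dots,\cR_{t_{\ell-1}}$ and $\fF_{t_\ell}$ are $\fH$-measurable), and use the tower property to write
\begin{align*}
	\pr\brk{\bigcap_{i=1}^\ell\cR_{t_i}}&=\Erw\brk{\bc{\prod_{i=1}^{\ell-1}\ind{\cR_{t_i}}}\pr\brk{\cR_{t_\ell}\mid\fH}}.
\end{align*}
The crux is to show that $\pr\brk{\cR_{t_\ell}\mid\fH}=\frac{f_n(t_\ell)^2}{4(n-t_\ell)(1-f_n(t_\ell))^2}+o(1/n)$ on a high-probability event. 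I would take this event to be $\cS$, the intersection over $i\in\{1,\dots,\ell\}$ of the concentration events of \Prop~\ref{prop_uc_alpha}, so that on $\cS$ the length profile of $\FUC{t_\ell}$ is pinned down up to $o(n)$. By Fact~\ref{fact_deferred}, conditionally on $\fF_{t_\ell}$ the formula $\FUC{t_\ell}$ is uniformly random with that profile, so the counting argument behind \Prop~\ref{prop_uc_error} applies verbatim and produces the displayed value. Conditioning additionally on the past-conflict indicators causes no harm, and this is the point I would argue most carefully: a conflict at a time $t_i<t_\ell$ is produced by a bounded ``toxic'' substructure of $\FUC{t_i}$ whose clauses are satisfied and removed during iteration $t_i$, hence do not lie in $\FUC{t_\ell}$; so conditioning on $\cR_{t_i}$ constrains only already-deleted clauses, and beyond perturbing the counts in $\fF_{t_\ell}$ by $O(1)$ (absorbed in the $o(n)$ slack of $\cS$) it leaves the conditional law of $\FUC{t_\ell}$ — in particular the local geometry around $x_{t_\ell+1}$ that governs $\cR_{t_\ell}$ — untouched.

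Given this, I would split the expectation above according to $\cS$, bound the contribution of $\cS^c$ by $\pr\brk{\cS^c}$, and combine with the inductive estimate $\pr\brk{\bigcap_{i=1}^{\ell-1}\cR_{t_i}}\sim\prod_{i=1}^{\ell-1}\frac{f_n(t_i)^2}{4(n-t_i)(1-f_n(t_i))^2}$ (of order $n^{-(\ell-1)}$) to read off \eqref{eq_prop_uc_pois}. The one quantitative subtlety is that this probability is of order $n^{-(\ell-1)}$, so the bound $\pr\brk{\cS^c}=O(n^{-2})$ quoted in \Prop~\ref{prop_uc_alpha} is, for $\ell\geq3$, too weak to make $\cS^c$ conditionally negligible. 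I would resolve this by recording, in the proof of \Prop~\ref{prop_uc_alpha}, that the concentration is in fact super-polynomial, $\pr\brk{\cS^c}=n^{-\omega(1)}$: truncating each unit-clause cascade at length $\log^2 n$ (which changes nothing with probability $n^{-\omega(1)}$, as the cascades are subcritical branching processes when $d<\dmin(k)$ — offspring mean $f_n(t)<1$ — and hence have exponential tails) makes the increments of $t\mapsto(\vec\alpha(t),\vm_j(t))$ bounded by $\log^2 n$, so the Azuma-Hoeffding estimate already underlying \Prop~\ref{prop_uc_alpha} gives deviation probability $\exp(-\Omega(n/\log^4 n))$ for any fixed $\delta$. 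Then $\pr\brk{\cS^c\mid\bigcap_{i=1}^{\ell-1}\cR_{t_i}}=n^{-\omega(1)}$, and the accumulated error terms are of order $o(1/n)\cdot n^{-(\ell-1)}+n^{-\omega(1)}=o(n^{-\ell})$, negligible against the main term of order $n^{-\ell}$.

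I expect the main obstacle to be exactly the step flagged in the second paragraph: making rigorous that conditioning on the low-probability cocktail of earlier conflicts does not disturb the conditional distribution of $\FUC{t_\ell}$ relevant to $\cR_{t_\ell}$. Concretely this means (i) pinning down precisely which clauses a conflict at time $t_i$ ``uses up'' and checking that they are all removed from the formula by iteration $t_i+1$, and (ii) verifying that Fact~\ref{fact_deferred} continues to hold along the conditioned trajectory — i.e.\ that $\FUC{t_\ell}$ is conditionally independent of the extra information in $\fH$ given $\fF_{t_\ell}$, even after conditioning on $\bigcap_{i<\ell}\cR_{t_i}$. Everything else — the tower-property bookkeeping and the propagation of the $o(\cdot)$ error terms — is routine.
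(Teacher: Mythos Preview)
Your approach is the paper's approach: iterate the tower rule to peel off the last conflict, use Fact~\ref{fact_deferred} to reduce $\pr\brk{\cR_{t_\ell}\mid\fH}$ to $\pr\brk{\cR_{t_\ell}\mid\fF_{t_\ell}}$, and plug in \Prop~\ref{prop_uc_error}. The paper writes this as the single chain~\eqref{eq_prop_uc_pois_1}--\eqref{eq_prop_uc_pois_2}; you unpack it as an induction and are explicit about working with the full-history $\sigma$-algebra (which the paper's appeal to Fact~\ref{fact_deferred} leaves implicit, and which the display~\eqref{eq_prop_uc_pois_1} handles a bit loosely --- the indicators $\vecone\{\cR_{t_i}\}$ are not literally $\fF_{t_h-1}$-measurable).

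Where your write-up goes beyond the paper is the error bookkeeping. The paper's ``combining~\eqref{eq_prop_uc_pois_1}--\eqref{eq_prop_uc_pois_2} completes the proof'' elides exactly the point you raise: with only $\pr\brk{\cS^c}=O(n^{-2})$ from \Prop~\ref{prop_uc_alpha} and the trivial bound $\pr\brk{\cR_{t_\ell}\mid\fF_{t_\ell}}\leq1$ on $\cS^c$, the additive error is not $o(n^{-\ell})$ and swamps the target $\Theta(n^{-\ell})$ once $\ell\geq2$. Your fix --- upgrading \Prop~\ref{prop_uc_alpha} to $\pr\brk{\cS^c}=n^{-\omega(1)}$ by truncating the subcritical cascades at polylogarithmic size and applying Azuma--Hoeffding --- is correct and standard (and is in fact what Wormald's theorem delivers). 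Your concern~(ii) about Fact~\ref{fact_deferred} surviving the conditioning on earlier conflicts is well taken but resolves cleanly: the deferred-decisions argument gives uniformity of $\FUC{t_\ell}$ given the counts \emph{regardless of the entire history}, of which the events $\cR_{t_i}$ are part, so no separate ``the used-up clauses are gone'' decoupling is needed.
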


Finally, the following statement deals with the $\eps n$ final steps of the algorithm.

\begin{proposition}\label{prop_uc_endgame}
	For any $\delta>0$ there exists $\eps>0$ such that $\pr\brk{\bigcup_{(1-\eps)n<t<n}\cR_{t}}<\delta.$ 
\end{proposition}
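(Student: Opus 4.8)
The plan is to run \UCP\ for its first $t_0:=\lceil(1-\eps)n\rceil$ iterations and then exploit that the residual formula $\FUC{t_0}$ is so sparse that a conflict during the remaining at most $\eps n$ steps is very unlikely. The crucial structural observation is that whenever $\cR_t$ occurs for some $t>t_0$, the factor graph $G(\FUC{t_0})$ necessarily contains a cycle; equivalently, if $G(\FUC{t_0})$ is a forest then \UCP\ flags no conflict after time $t_0$. Since $G(\FUC t)$ is a subgraph of $G(\FUC{t_0})$ for all $t\ge t_0$ (as $t$ grows, variables are deleted and clauses only shrink), this reduces the proposition to the bound
\begin{align*}
\pr\brk{\bigcup_{t_0<t<n}\cR_t}\le\pr\brk{G(\FUC{t_0})\text{ contains a cycle}},
\end{align*}
followed by a first moment estimate for the number of cycles in the sparse random instance $\FUC{t_0}$.

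To establish the structural claim I would analyse the iteration $t+1>t_0$ in which a conflict is flagged. A conflict requires this iteration to act, so $x_{t+1}\notin U$ and is freely assigned; moreover at the instant the conflict test fires there are two \emph{distinct} unit clauses $a\ne a'$, each having a common variable $x$ as its last remaining variable and demanding opposite values, and $x$ is as yet unassigned, so $x\ne x_{t+1}$. For $a$, viewed as a clause of $\FUC t$, to turn unit on $x$ during this iteration's cascade, all its other remaining variables must receive values, and the last of them, call it $y_a$, must do so during the \emph{current} iteration — otherwise $a$ would have become unit, and $x$ assigned, already before iteration $t+1$. Within one iteration the sole freely assigned variable is $x_{t+1}$, and every other assigned variable is forced by a unique ``trigger'' clause; tracing triggers backwards from $y_a$ yields a simple path to $x_{t+1}$ in $G(\FUC t)$, and appending the edges $y_a$--$a$--$x$ gives a simple path $P_a$ from $x_{t+1}$ to $x$. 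Likewise one obtains $P_{a'}$, and since $a\ne a'$ the paths $P_a,P_{a'}$ are distinct, whence their union contains a cycle of $G(\FUC t)\subseteq G(\FUC{t_0})$. Making this ``trigger'' bookkeeping airtight — especially the claim that both triggers lie in the current iteration — is, I expect, the main obstacle.

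Next I would quantify the sparsity of $\FUC{t_0}$ via \Prop~\ref{prop_uc_alpha} at $t=t_0$. Here $\lambda=-\log(1-t_0/n)=-\log\eps+o(1)$, and the fixed-point identity $1-\alpha_*=\exp(-\lambda-d\alpha_*^{k-1})$ gives $\eps\eul^{-d}\le 1-\alpha_*(d,k,\lambda)\le\eps$. Hence \Prop~\ref{prop_uc_alpha} implies that with probability $1-O(n^{-2})$ the instance $\FUC{t_0}$ is \emph{typical}, meaning $\vn(t_0)=\Theta(\eps n)$ and $m:=\sum_{\ell\ge2}\vm_\ell(t_0)=O(\eps^2 n)$; in particular $\FUC{t_0}$ has clause-to-variable ratio $O(\eps)$.

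Finally, conditioning on a typical value of $\fF_{t_0}$, Fact~\ref{fact_deferred} shows that $\FUC{t_0}$ is then a uniformly random XORSAT instance with the prescribed degree sequence, so a routine first moment computation bounds the expected number of cycles of length $2j$ in $G(\FUC{t_0})$ by $(Ck^2m/\vn(t_0))^j\le(C'\eps)^j$ for a constant $C'=C'(d,k)$, using that the $j$ incidence events for the $j$ clauses of a prospective cycle are independent. Writing $N$ for the total number of cycles of $G(\FUC{t_0})$ and summing over $j\ge2$ (the shortest cycle of a bipartite factor graph has length four), we get $\ex\brk{N\mid\fF_{t_0}}=O(\eps^2)$ as soon as $C'\eps<1/2$. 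Therefore
\begin{align*}
\pr\brk{\bigcup_{t_0<t<n}\cR_t}\le\pr\brk{\FUC{t_0}\text{ atypical}}+\max_{\text{typical }\fF_{t_0}}\ex\brk{N\mid\fF_{t_0}}=O(\eps^2)+O(n^{-2}),
\end{align*}
which drops below any prescribed $\delta>0$ once $\eps$ is chosen small in terms of $d,k,\delta$ and $n$ is large. The conditioning on $\PHI$ being satisfiable only inflates all these probabilities by the factor $1/\pr\brk{\PHI\text{ satisfiable}}=1+o(1)$, since $d<\dmin<\dsat$.
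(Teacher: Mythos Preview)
Your approach is essentially the same as the paper's: run \UCP\ for $t_0=\lceil(1-\eps)n\rceil$ steps, then argue that $G(\FUC{t_0})$ is acyclic with probability at least $1-\delta$, which precludes any conflict thereafter. The paper's write-up is terser in two places. First, instead of your trigger-path argument it simply notes that if $G(\FUC{t_0})$ is acyclic and all clauses have length $\geq2$ then \UCP\ necessarily finds a satisfying assignment; this sidesteps your ``main obstacle'' entirely, since on a forest each variable can be forced by at most one unit clause during a cascade. Second, instead of your explicit first-moment cycle count, the paper observes that the expected variable degree in $G(\FUC{t_0})$ equals $d(1-\alpha_*^{k-1})\to0$ as $\eps\to0$, so the random bipartite graph is subcritical and hence acyclic with probability exceeding $1-\delta$ for small $\eps$. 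Your first-moment bound is of course equivalent and perfectly fine.
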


Before we proceed we notice that \Prop s~\ref{prop_uc_error}--\ref{prop_uc_endgame} imply the first part of \Thm~\ref{thm_bpgd}.

\begin{proof}[Proof of \Thm~\ref{thm_bpgd} (i)]
	Pick $\delta>0$, fix a small enough $\eps=\eps(\delta)>0$ and let $\vR=\sum_{t=0}^{n-1}\vecone\{\cR_t\}$ be the total number of times at which conflicts occur.
	\Prop~\ref{prop_UCP} shows that the probability that \BPGD\ succeeds equals $\pr\brk{\vR=0}$.
	In order to calculate $\pr\brk{\vR=0}$, let $\vR_\eps=\sum_{0\leq t\leq(1-\eps)n}\vecone\{\cR_t\}$ be the number of failures before time $(1-\eps)n$.
	\Prop~\ref{prop_uc_pois} shows that for any fixed $\ell\geq1$ we have
	\begin{align}\nonumber
		\ex\brk{\prod_{i=1}^\ell(\vR_\eps-i+1)}&=\ell!\sum_{0\leq t_1<\cdots<t_\ell\leq (1-\eps)n}\pr\brk{\bigcap_{i=1}^\ell\cR_{t_i}}\sim\ell!\sum_{0\leq t_1<\cdots<t_\ell\leq (1-\eps)n} \hspace{1mm}
		\prod_{i=1}^\ell \frac{f_n(t_i)^2}{4(n-t_i)(1-f_n(t_i))^2}\\
										  &=(1+o(1))\sum_{0\leq t_1,\ldots,t_\ell\leq (1-\eps)n}
										  \hspace{1mm}\prod_{i=1}^\ell \frac{f_n(t_i)^2}{4(n-t_i)(1-f_n(t_i))^2}\sim\ex[\vR_\eps]^\ell.\label{eq_thm_bpgd_i_1}
	\end{align}
	Hence, the inclusion/exclusion principle (e.g., \cite[\Thm~1.21]{BB}) implies that 
	\begin{align}\label{eq_thm_bpgd_i_2}
		\pr\brk{\vR_\eps=0}&\sim\exp(-\ex[\vR_\eps]).
	\end{align}
	Further, using \Prop~\ref{prop_uc_error} and the linearity of expectation, we obtain with $\lambda(\theta) = -\log(1-\theta)$
\begin{align}\nonumber
	\ex[\vR_\eps]&=\sum_{0\leq t\leq(1-\eps)n }\pr\brk{\cR_t }\sim\sum_{0\leq t\leq(1-\eps)n}\frac{f_n(t)^2}{4(n-t)(1-f_n(t))^2}\sim\frac{1}{4n} \int_0^{1-\eps} \frac{f_n(\theta n)^2}{ (1-\theta)(1-f_n(\theta n))^2}  \mathrm d \theta \\
				 &= \frac{1}{4n} \int_0^{1-\eps} \frac{f_n(\theta n)^2}{  (1-\alpha_*)(1-f_n(\theta n)) } \frac{\partial \alpha_*}{\partial \lambda}\frac{\partial\lambda(\theta)}{\partial \theta} \mathrm d \theta   &&\mbox{[by \eqref{eq_alphas}]}\nonumber\\
				 &= \frac{d^2(k-1)^2}4\int_0^{1-\eps}\frac{z^{2k-4}(1-z)}{1-d(k-1)z^{k-2}(1-z)}\,\mathrm d z&&\mbox{[by \eqref{eq_prop_uc_error}]}.\label{eq_thm_bpgd_i_3}
\end{align}
Finally, \Prop~\ref{prop_uc_endgame} implies that
\begin{align}\label{eq_thm_bpgd_i_4}
	\pr\brk{\vR>\vR_\eps}<\delta.
\end{align}
Thus, the assertion follows from \eqref{eq_thm_bpgd_i_2}--\eqref{eq_thm_bpgd_i_4} upon taking the limit $\delta\to0$.
\end{proof}

\subsubsection{Proof of \Prop~\ref{prop_uc_alpha}}\label{sec_prop_uc_alpha}

The proof of \Prop~\ref{prop_uc_alpha} is based on the method of differential equations.
Specifically, based on Fact~\ref{fact_deferred} we derive a system of ODEs that track the random variables $\vec\alpha(t),\vm_2(t),\ldots,\vm_k(t)$.
We will then identify the unique solution to this system.
As a first step we work out the conditional expectations of $\vec\alpha(t+1),\vm_2(t+1),\ldots,\vm_k(t+1)$ given $\fF_t$.

\begin{lemma}\label{lem_condex}
	If $2\vm_2(t)/\vn(t)<1-\Omega(1)$ and $\vn(t)=\Omega(n)$, then 
	\begin{align}\label{eq_lem_condex_1}
		\ex\brk{\vn(t)-\vn(t+1)\mid\fF_t}&=\frac{\vn(t)^2}{(n-t)(\vn(t)-2\vm_2(t))}+o(1),\\
		\ex\brk{\vec m_\ell(t+1)-\vec m_\ell(t)\mid\fF_t}&=\frac{\vn(t)^2}{(n-t)(\vn(t)-2\vm_2(t))}\cdot\frac{(\ell+1)\vm_{\ell+1}(t)-\ell\vm_\ell(t)}{\vn(t)}+o(1)&&(2\leq\ell< k),\label{eq_lem_condex_2}\\
		\ex\brk{\vec m_k(t+1)\mid\fF_t}&=-\frac{\vn(t)^2}{(n-t)(\vn(t)-2\vm_2(t))}\cdot\frac{k\vm_k(t)}{\vn(t)}+o(1).\label{eq_lem_condex_3}
	\end{align}
\end{lemma}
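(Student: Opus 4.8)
The plan is to condition on the $\sigma$-algebra $\fF_t$, which by Fact~\ref{fact_deferred} leaves $\FUC t$ uniformly random with $\vn(t)$ variables and $\vm_\ell(t)$ clauses of each length $\ell$, and then analyse a single iteration of \UCP, i.e. the assignment of $x_{t+1}$ (if unassigned) followed by the ensuing unit-clause cascade. First I would observe that the cascade is exactly a branching-type exploration: when a variable is set, each clause containing it loses one variable; a length-$2$ clause thereby becomes a unit clause that forces a new variable, a length-$\ell$ clause for $\ell\geq3$ merely shrinks. Since $\vn(t)=\Omega(n)$ and $2\vm_2(t)/\vn(t)<1-\Omega(1)$, the expected number of newly created unit clauses per forced variable is $2\vm_2(t)/\vn(t)<1-\Omega(1)$, so the cascade is subcritical and terminates after $O(1)$ steps in expectation, touching $O(1)$ variables and clauses \whp. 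This sub-criticality is what makes all the error terms $o(1)$.

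Next I would set up the branching-process bookkeeping precisely. Starting from the single seed variable $x_{t+1}$, let $Z$ be the total number of variables assigned during iteration $t+1$, so that $\vn(t)-\vn(t+1)=Z$. Each assigned variable, being a uniformly random element of $\vV(t)$, lies in a given length-$2$ clause with probability $\approx 2\vm_2(t)/\binom{\vn(t)}{2}\cdot\vn(t)/1$; more usefully, the expected number of length-$2$ clauses containing a freshly assigned variable is $2\vm_2(t)/\vn(t)+o(1)$, and each such clause produces one further forced variable (absent the rare coincidences handled below). Hence $\ex[Z\mid\fF_t]$ is the expected total progeny of a Galton--Watson process with offspring mean $q:=2\vm_2(t)/\vn(t)$, namely $1/(1-q)+o(1)=\vn(t)/(\vn(t)-2\vm_2(t))+o(1)$. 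There is one subtlety: the very first step is the free assignment of $x_{t+1}$, which happens only if $x_{t+1}\notin U$; but the probability that $x_{t+1}$ has already been assigned in an earlier iteration is $1-\vn(t)/(n-t)+o(1)$ (since exactly $\vn(t)$ of the $n-t$ variables $x_{t+1},\dots,x_n$ survive, and by symmetry each is equally likely), so multiplying the progeny expectation $\vn(t)/(\vn(t)-2\vm_2(t))$ by the factor $\vn(t)/(n-t)$ yields precisely $\vn(t)^2/((n-t)(\vn(t)-2\vm_2(t)))+o(1)$, which is \eqref{eq_lem_condex_1}. For \eqref{eq_lem_condex_2}--\eqref{eq_lem_condex_3} I would track, for each assigned variable, the expected change it causes to $\vm_\ell$: it deletes on average $\ell\vm_\ell(t)/\vn(t)$ clauses of length $\ell$ and converts on average $(\ell+1)\vm_{\ell+1}(t)/\vn(t)$ clauses of length $\ell+1$ into length-$\ell$ clauses (for $\ell=k$ only the deletion term survives). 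Multiplying the per-variable expected increment $((\ell+1)\vm_{\ell+1}(t)-\ell\vm_\ell(t))/\vn(t)$ by $\ex[Z\mid\fF_t]\cdot(\text{the }\vn(t)/(n-t)\text{ factor})$, i.e. by $\vn(t)^2/((n-t)(\vn(t)-2\vm_2(t)))$, gives exactly the claimed formulas. Here one uses Wald-type identity for the branching process: the expected total increment equals the per-step expected increment times the expected total progeny, valid because the per-step increments depend only on the degree of the variable being processed, which is independent (under deferred decisions) of the cascade structure up to that point.

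The main obstacle — and the source of all the $o(1)$ terms — is controlling the "collisions" that break the clean branching picture: a freshly forced variable might already have been assigned earlier in the same cascade (creating a possible conflict, but in any case not a new variable), or two length-$2$ clauses might share both variables, or a length-$2$ clause might, upon shrinking, coincide with an already-eliminated clause. Each such event requires two of the $O(1)$ touched clauses to select overlapping variables from a pool of size $\vn(t)=\Omega(n)$, hence has probability $O(1/n)$; summing over the $O(1)$ pairs and using that the cascade has $O(1)$ expected size (dominated by a geometric variable with parameter bounded away from $1$, by subcriticality), the total contribution to each conditional expectation is $O(1/n)=o(1)$. I would make this rigorous by a first-moment bound: $\ex[(\text{number of collisions})\cdot\vecone\{\text{cascade size}\le\log n\}\mid\fF_t]=o(1)$, together with a crude tail bound $\pr[\text{cascade size}>\log n\mid\fF_t]=o(1/n)$ from domination by the subcritical Galton--Watson process. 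With collisions excluded the exploration is literally the branching process described above and the expectations are exact up to these error terms, completing the proof.
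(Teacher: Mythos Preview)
Your proposal is correct and follows essentially the same approach as the paper: both invoke Fact~\ref{fact_deferred}, compute the probability $\vn(t)/(n-t)$ that $x_{t+1}$ is still unassigned, model the unit-clause cascade as a subcritical branching process with offspring mean $2\vm_2(t)/\vn(t)$ and expected progeny $(1-2\vm_2(t)/\vn(t))^{-1}$, and then obtain the $\vm_\ell$-increments by multiplying the per-variable shortening rates $((\ell+1)\vm_{\ell+1}(t)-\ell\vm_\ell(t))/\vn(t)$ by the expected number of variables assigned. Your treatment of the error terms (truncation at cascade size $\log n$, explicit Wald-type justification) is somewhat more detailed than the paper's, which simply notes that longer clauses becoming unit clauses requires two cascade variables in the same clause and hence has probability $O(1/n)$, but the substance is identical.
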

\begin{proof}
	Going from time $t$ to time $t+1$ involves the express assignment of variable $x_{t+1}$, unless it had already been assigned a value due to previous decisions, and the subsequent pursuit of unit clause implications.
	The probability given $\fF_t$ that $x_{t+1}$ was set in a previous iteration equals
	\begin{align}\label{eqqt+1}
		q_{t+1}&=1-\frac{\vn(t)}{n-t}.
	\end{align}
	Indeed, the first $t$ iterations assigned values to a total of $n-\vn(t)$ variables,   including $x_1,\ldots,x_{t}$, and Fact~\ref{fact_deferred} shows that the identities of the assigned variables among $x_{t+1},\ldots,x_{n}$ are random.

	Let $\cQ_{t+1}$ be the event that $x_{t+1}$ was not assigned previously.
	Given $\cQ_{t+1}$ we need to pursue unit clause implications.
	To this end, recall the bipartite graph representation $G(\FUC t)$ of the formula $\FUC t$.
	Let $G_2(\FUC t)$ be the subgraph of $G(\FUC t)$ obtained by removing all clauses of length greater than two.
	Then Fact~\ref{fact_deferred} shows that $G_2(\FUC t)$ is a uniformly random bipartite graph with $\vn(t)$ nodes on one side and $\vm_2(t)$ nodes of degree two on the other side.
	Furthermore, the number of variables whose values are implied by unit clause propagation is lower bounded by the number of variable nodes in the component of $x_{t+1}$ in $G_2(\FUC t)$.
	The expected size of this component can be computed as the expected progeny of a branching process with offspring $\Po(2\vm_2(\ell)/\vn(t))$.
	As is well known, under the assumption $2\vm_2(t)/\vn(t)<1-\Omega(1)$ that the branching process is sub-critical, the expected progeny comes to $(1-2\vm_2(t)/\vn(t))^{-1}$.
	Hence, we obtain
	\begin{align}\label{eq_lem_condex_10}
		\ex\brk{n(\vec\alpha(t+1)-\vec\alpha(t))\mid\fF_t}&\geq\frac{1-q_{t+1}}{1-2\vm_2(t)/\vn(t)}.
	\end{align}

	Strictly speaking, \eqref{eq_lem_condex_10} only gives a lower bound on $\ex\brk{n(\vec\alpha(t+1)-\vec\alpha(t))\mid\fF_t}$ because additional unit clause implications could arise from clauses of length greater than two.
	However, for this to happen a clause would have to contain at least two variables that are set in iteration $t+1$ (i.e., either $x_{t+1}$ itself or a variable whose value is implied due to unit clause propagation).
	But since $2\vm_2(t)/\vn(t)<1-\Omega(1)$, the expected number of such implications is bounded, and thus the expected number of longer clauses that turn into unit clauses is of order $O(1/n)$.
	Consequently, the lower bound~\eqref{eq_lem_condex_10} is tight up to an $O(1/n)$ error term, whence we obtain~\eqref{eq_lem_condex_1}.

	Moving on to \eqref{eq_lem_condex_2}--\eqref{eq_lem_condex_3} we notice that for $2\leq\ell<k$ there are two ways in which the number of clauses of length $\ell$ can change from iteration $t$ to iteration $t+1$.
	First, it could be that clauses of length $\ell$ contain one variable that gets a value assigned.
	Any such clauses shorten to length $\ell-1$ (if $\ell>2$) or become unit clauses and subsequently disappear ($\ell=2$).
	In light of Fact~\ref{fact_deferred}, the probability that a given clause of length $\ell$ suffers this fate comes to $\ell(\vn(t)-\vn(t+1))/\vn(t)+o(1)$.
	Conversely, if $\ell<k$ additional clauses of length $\ell$ may result from the shortening of clauses of length $\ell+1$.
	Analogously to the previous computation, the probability that a given clause of length $\ell+1$ shortens to length $\ell$ comes to $(\ell+1)(\vn(t)-\vn(t+1))/\vn(t)+o(1)$.
	Of course, there could also be clauses that contain more than one variable that receives a value during iteration $t+1$.
	However, the probability of this event is of order $O(1/n^2)$.
	Hence, \eqref{eq_lem_condex_1} implies \eqref{eq_lem_condex_2} and~\eqref{eq_lem_condex_3}.
\end{proof}

\Lem~\ref{lem_condex} puts us in a position to derive a system of ODEs to track the random variables $\vn(t),\vm_2(t),\ldots,\vm_k(t)$.
Specifically, we obtain the following.

\begin{corollary}\label{cor_ODE}
	Let $\fn,\fm_2,\ldots,\fm_k:[0,1]\to\RR$ be continuously differentiable functions such that
	\begin{align}\label{eqODE1}
		\fn(0)&=1,&\fm_k(0)&=\frac dk,\\
		\frac{\partial\fn}{\partial\theta}&=-\frac{\fn^2}{(1-\theta)(\fn-2\fm_2)},\label{eqODE2}\\
		\frac{\partial\fm_\ell}{\partial\theta}&=\frac{\fn((\ell+1)\fm_{\ell+1}-\ell\fm_\ell)}{(1-\theta)(\fn-2\fm_2)}\quad(2\leq\ell<k),&
		\frac{\partial\fm_k}{\partial\theta}&=-\frac{k\fn\fm_k}{(1-\theta)(\fn-2\fm_2)}.\label{eqODE3}
	\end{align}
	Assume, furthermore, that
	\begin{align}\label{eqODE4}
		\sup_{\theta\in[0,1]}2\fm_2(\theta)/\fn(\theta)&<1.
	\end{align}
	Then with probability $1-o(n^{-2})$ for all $0\leq t\leq n$ we have
	\begin{align*}
		\vn(t)/n&=\fn(t/n)+o(1),& \vm_\ell(t)/n&=\fm_\ell(t/n)+o(1)\quad(2\leq\ell\leq k).
	\end{align*}
\end{corollary}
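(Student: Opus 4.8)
The plan is to prove Corollary~\ref{cor_ODE} by the differential equation method, with Lemma~\ref{lem_condex} supplying the trend hypothesis. Consider the rescaled process $\vX(t)=(\vn(t),\vm_2(t),\dots,\vm_k(t))/n$, adapted to the filtration $(\fF_t)_{t\ge0}$. By Fact~\ref{fact_deferred} the conditional distribution of $\FUC{t+1}$ given $\fF_t$ is that of a uniformly random XORSAT instance with the prescribed length profile, so whenever $2\vm_2(t)/\vn(t)\le1-\Omega(1)$ and $\vn(t)=\Omega(n)$, Lemma~\ref{lem_condex} gives $\ex[\vX(t+1)-\vX(t)\mid\fF_t]=n^{-1}F(t/n,\vX(t))+o(1/n)$, where $F=(F_\fn,F_{\fm_2},\dots,F_{\fm_k})$ is the vector field read off from the right-hand sides of~\eqref{eqODE2}--\eqref{eqODE3}. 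This is exactly the drift that drives the ODE towards $(\fn,\fm_2,\dots,\fm_k)$.

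Fix $\eps>0$ and run the method on $0\le t\le(1-\eps)n$ first. On this range the solution trajectory stays in a compact region on which $F$ is Lipschitz: by~\eqref{eqODE4} we have $\fn-2\fm_2\ge\fn(1-\sup_\theta 2\fm_2(\theta)/\fn(\theta))$, which together with $\fn(\theta)\ge\fn(1-\eps)>0$ (recall $\fn$ is non-increasing, since~\eqref{eqODE2} and~\eqref{eqODE4} give $\partial_\theta\fn<0$) and $1-\theta\ge\eps$ keeps all denominators bounded away from $0$; moreover the ODE solution stays inside this region up to time $(1-\eps)n$. For the boundedness input, note that one iteration of \UCP\ assigns $x_{t+1}$ together with the variables reachable from it in the length-two subgraph $G_2(\FUC t)$; by Fact~\ref{fact_deferred} this subgraph is uniformly random with subcritical branching parameter $2\vm_2(t)/\vn(t)\le1-\Omega(1)$, so the number of variables set in one step, and likewise every $|\vm_\ell(t+1)-\vm_\ell(t)|$, exceeds $C\log n$ with probability $n^{-\Omega(1)}$ for $C$ large. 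Combining the drift estimate, the Lipschitz bound, and Azuma--Hoeffding on blocks of $\delta n$ consecutive steps (with a union bound over the $\le n$ steps to rule out the rare large increments), the standard Wormald-type argument yields $\sup_{t\le(1-\eps)n}\norm{\vX(t)-(\fn,\fm_2,\dots,\fm_k)(t/n)}=o(1)$ with probability $1-o(n^{-2})$; the error probability is of the required order since $n\exp(-\Omega(\delta^2n/\log^2n))=o(n^{-2})$.

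The final $\eps n$ steps are handled by monotonicity. A single iteration of \UCP\ only assigns variables and only removes clauses, so $t\mapsto\vn(t)$ and $t\mapsto\sum_{\ell\ge2}\vm_\ell(t)$ are non-increasing. On the ODE side~\eqref{eqODE2} gives $\partial_\theta\fn\le-\fn/(1-\theta)$, hence $\fn(\theta)\le1-\theta$, and summing~\eqref{eqODE3} (a short telescoping computation) gives $\partial_\theta\sum_\ell\ell\fm_\ell\le-(\sum_\ell\ell\fm_\ell)/(1-\theta)$, hence $\fm_\ell(\theta)\le\ell^{-1}\sum_{\ell'}\ell'\fm_{\ell'}(\theta)=O(1-\theta)$. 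Therefore, on the event of the previous paragraph, for every $t\ge(1-\eps)n$ we have $\vn(t)/n\le\fn(1-\eps)+o(1)=O(\eps)+o(1)$ and $\vm_\ell(t)/n\le\sum_{\ell'\ge2}\fm_{\ell'}(1-\eps)+o(1)=O(\eps)+o(1)$, while $\fn(t/n),\fm_\ell(t/n)=O(\eps)$ as well; thus $|\vn(t)/n-\fn(t/n)|$ and $|\vm_\ell(t)/n-\fm_\ell(t/n)|$ are $O(\eps)+o(1)$ throughout $[(1-\eps)n,n]$. Letting $\eps=\eps(n)\to0$ slowly (a diagonal argument) converts this into the claimed uniform $o(1)$ estimate, still with probability $1-o(n^{-2})$.

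I expect the main obstacle to be the region near $\theta=1$, where $\vn(t)$ shrinks to $o(n)$ and the vector field in~\eqref{eqODE2}--\eqref{eqODE3} becomes singular through both the $1/(1-\theta)$ and the $1/(\fn-2\fm_2)$ factors, so the differential equation method cannot be pushed all the way to $t=n$; the monotonicity argument above is exactly what closes this gap. A secondary technical nuisance is that the one-step increments are not deterministically bounded but only have exponential tails, which is why the large-deviation input is the slightly weaker $n^{-\Omega(1)}$ bound and why some bookkeeping is needed to keep the overall failure probability at $o(n^{-2})$.
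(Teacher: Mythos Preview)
Your proposal is correct and follows essentially the same approach as the paper: the paper's proof is the one-liner ``This follows from \Lem~\ref{lem_condex} in combination with~\cite[\Thm~2]{Wormald},'' i.e., it invokes Wormald's differential-equation theorem with the trend supplied by \Lem~\ref{lem_condex}, which is exactly what you do. Your additional care about the endgame near $\theta=1$ (monotonicity plus the $O(\eps)$ bound on both sides) makes explicit what the paper leaves implicit in the black-box citation, but the method is the same.
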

\begin{proof}
	This follows from \Lem~\ref{lem_condex} in combination with~\cite[\Thm~2]{Wormald}.
\end{proof}

As a next step we construct an explicit solution to the system~\eqref{eqODE1}--\eqref{eqODE3}.

\begin{lemma}\label{lem_ODE}
	If $d<\dmin$, then the functions
	\begin{align}\label{eq_lem_ODE}
		\fn^*(\theta)&=1-\alpha_*(\lambda(\theta)),&
		\fm^*_\ell(\theta)&=\frac dk\binom{k}\ell(1-\alpha_*(\lambda(\theta)))^\ell\alpha_*(\lambda(\theta))^{k-\ell}
	\end{align}
	satisfy \eqref{eqODE1}--\eqref{eqODE4}.
\end{lemma}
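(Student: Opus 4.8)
The plan is to substitute the explicit candidates from~\eqref{eq_lem_ODE} into each of~\eqref{eqODE1}--\eqref{eqODE4} and verify them by direct differentiation; the only substantial input is the derivative formula~\eqref{eq_alphas} for $\partial\alpha_*/\partial\lambda$ from Lemma~\ref{lem_alphas}, together with the uniform bound extracted inside the proof of Lemma~\ref{lem_belowdmin}. Throughout I abbreviate $\alpha=\alpha_*(\lambda(\theta))$, where $\lambda(\theta)=-\log(1-\theta)$ and hence $\partial\lambda/\partial\theta=(1-\theta)^{-1}$. Since $z=0$ is a fixed point of $\phi_{d,k,0}$ with $\phi'_{d,k,0}(0)=0\neq1$, the implicit function theorem together with Lemma~\ref{lem_alphas}(i) shows that $\alpha$, and therefore $\fn^*$ and $\fm_\ell^*$, are continuously differentiable in $\theta$ on all of $[0,1]$, with $\alpha=0$ at $\theta=0$.

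First I would dispose of the initial conditions~\eqref{eqODE1} and the non-degeneracy bound~\eqref{eqODE4}. At $\theta=0$ we have $\alpha=0$, so, with the convention $0^0=1$, $\fn^*(0)=1$ and $\fm_k^*(0)=d/k$ as required, and moreover $\fm_\ell^*(0)=0$ for $2\le\ell<k$, consistent with all clauses of $\PHI$ having length $k$. For~\eqref{eqODE4} one computes $2\fm_2^*(\theta)/\fn^*(\theta)=d(k-1)(1-\alpha)\alpha^{k-2}=\phi'_{d,k,\lambda}(\alpha)$, and the argument in the proof of Lemma~\ref{lem_belowdmin} bounds $d(k-1)z^{k-2}(1-z)$ by $d\bigl((k-2)/(k-1)\bigr)^{k-2}$ uniformly over $z\in[0,1]$, which is strictly below $1$ precisely because $d<\dmin(k)$. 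In particular $\fn^*-2\fm_2^*=(1-\alpha)\bigl(1-d(k-1)\alpha^{k-2}(1-\alpha)\bigr)$ stays bounded away from $0$, so the right-hand sides of~\eqref{eqODE2}--\eqref{eqODE3} are well defined along the candidate solution.

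Next I would verify~\eqref{eqODE2}. By the chain rule and~\eqref{eq_alphas}, $\partial\fn^*/\partial\theta=-(\partial\alpha_*/\partial\lambda)(\partial\lambda/\partial\theta)=-\frac{1-\alpha}{(1-\theta)(1-d(k-1)\alpha^{k-2}(1-\alpha))}$, whereas the factorisation of $\fn^*-2\fm_2^*$ recorded above yields $(\fn^*)^2/(\fn^*-2\fm_2^*)=\frac{1-\alpha}{1-d(k-1)\alpha^{k-2}(1-\alpha)}$, so the two sides of~\eqref{eqODE2} coincide. A by-product worth recording for the last step is the identity $\partial\alpha/\partial\theta=(\fn^*)^2\big/\bigl[(1-\theta)(\fn^*-2\fm_2^*)\bigr]$.

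Finally, for~\eqref{eqODE3} I would differentiate $\fm_\ell^*=\tfrac dk\binom k\ell(1-\alpha)^\ell\alpha^{k-\ell}$ by the chain rule, factor out $(1-\alpha)^{\ell-1}\alpha^{k-\ell-1}$ and simplify the bracket $-\ell\alpha+(k-\ell)(1-\alpha)=(k-\ell)-k\alpha$, obtaining $\partial\fm_\ell^*/\partial\theta=\tfrac dk\binom k\ell(1-\alpha)^{\ell-1}\alpha^{k-\ell-1}\bigl((k-\ell)-k\alpha\bigr)\,\partial\alpha/\partial\theta$ for $2\le\ell<k$, and the analogous $\partial\fm_k^*/\partial\theta=-d(1-\alpha)^{k-1}\,\partial\alpha/\partial\theta$ for $\ell=k$. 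On the right-hand side of~\eqref{eqODE3} I would use the by-product from the previous paragraph to rewrite $\fn^*\big/\bigl[(1-\theta)(\fn^*-2\fm_2^*)\bigr]=(\partial\alpha/\partial\theta)/(1-\alpha)$, and the identity $(\ell+1)\binom k{\ell+1}=(k-\ell)\binom k\ell$ to evaluate $(\ell+1)\fm_{\ell+1}^*-\ell\fm_\ell^*=\tfrac dk\binom k\ell(1-\alpha)^\ell\alpha^{k-\ell-1}\bigl((k-\ell)-k\alpha\bigr)$; dividing by $1-\alpha$ matches the left-hand side exactly, and the $\ell=k$ case is the same computation with the $(\ell+1)$-term absent. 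There is no serious obstacle here: every manipulation is elementary calculus, and the only points requiring a little care are the behaviour at the endpoint $\theta=0$ (where $\lambda=0$, $\alpha=0$, so one needs one-sided differentiability and the convention $0^0=1$) and the uniform positivity of $\fn^*-2\fm_2^*$, which is exactly where the hypothesis $d<\dmin(k)$ enters.
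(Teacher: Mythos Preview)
Your proposal is correct and follows essentially the same route as the paper: verify \eqref{eqODE1}--\eqref{eqODE4} by direct substitution, using the chain rule together with the derivative formula~\eqref{eq_alphas} from Lemma~\ref{lem_alphas}. Your treatment of~\eqref{eqODE4} via the uniform bound $d(k-1)z^{k-2}(1-z)\le d((k-2)/(k-1))^{k-2}<1$ from the proof of Lemma~\ref{lem_belowdmin} is slightly more explicit than the paper's one-line reference to~\eqref{eq_alphas}, and in fact cleaner, since~\eqref{eqODE4} asks for a \emph{uniform} bound over $\theta\in[0,1]$ rather than merely $\phi'_{d,k,\lambda}(\alpha_*)<1$ pointwise.
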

\begin{proof}
	The initial condition \eqref{eqODE1} is satisfied because $\alpha_*(\lambda(0))=0$.
	Furthermore, \eqref{eq_alphas} shows that
	\begin{align}\label{eq_lem_ODE_1}
		\frac{\partial\fn^*}{\partial\theta}&=
		-\frac{\partial\alpha_*}{\partial\lambda}\cdot\frac{\partial\lambda}{\partial\theta}=-\frac{1- \alpha_*}{1-d(k-1)\alpha_*^{k-2}(1-\alpha_*)}\cdot\frac1{1-\theta}=-\frac{\fn^*}{(1-\theta)(1-2\fm_2^*/\fn^*)}.
	\end{align}
	Hence, \eqref{eqODE2} is satisfied.
	Furthermore, \eqref{eq_lem_ODE_1} implies that for $2\leq\ell<k$ we have
	\begin{align*}
		\frac{\partial\fm_\ell^*}{\partial\theta}&=\frac dk\cdot\frac{\partial\lambda}{\partial\theta}\cdot\frac{\partial\alpha_*}{\partial\lambda}\cdot\binom{k}{\ell}\brk{(k-\ell)\alpha_*^{k-\ell-1}(1-\alpha_*)^{\ell}-\ell\alpha_*^{k-\ell}(1-\alpha_*)^{\ell-1}}\\
												 &=\frac{\fn^*}{(1-\theta)(1-2\fm_2^*/\fn^*)}\cdot\frac d{k(1-\alpha_*)}\cdot\binom k\ell\brk{(\ell+1)(1-\alpha_*)^{\ell+1}\alpha_*^{k-\ell-1}-\ell \alpha_*^{k-\ell}(1-\alpha_*)^\ell}\\
												 &=\frac{\fn^*}{(1-\theta)(\fn^*-2\fm_2^*)}\cdot\brk{(\ell+1)\fm_{\ell+1}^*-\ell\fm_\ell^*},
	\end{align*}
	which is the first part of~\eqref{eqODE3}.
	An analogous computation yields the second part of~\eqref{eqODE3}.
	Finally, \eqref{eqODE4} follows from~\eqref{eq_alphas}.
\end{proof}

\begin{proof}[Proof of \Prop~\ref{prop_uc_alpha}]
	The proposition	is an immediate consequence of \Cor~\ref{cor_ODE} and \Lem~\ref{lem_ODE}.
\end{proof}

\subsubsection{Proof of \Prop~\ref{prop_uc_error}}\label{sec_prop_uc_error}
$\FUC{t+1}'$ is the XORSAT formula that contains the variables $\vV'(t+1)$ that get assigned during iteration $t+1$ and the clauses $\vC'(t+1)$ of $\FUC{t}$ that contain variables from $\vV'(t+1)$ only.
Also recall that $G(\FUC{t+1}')$ signifies the graph representation of this XORSAT formula.
Unless $\vV'(t+1)=\emptyset$, the graph $G(\FUC{t+1}')$ is connected.

\begin{lemma}\label{lemma_arnab}
	Fix $\eps>0$ and let $0\leq t\leq(1-\eps)n$.
	With probability $1-o(1/n)$ the graph $G(\FUC{t+1}')$ satisfies 
			$$|E(G(\FUC{t+1}'))|\leq|V(G(\FUC{t+1}'))|.$$
\end{lemma}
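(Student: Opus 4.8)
The statement to prove is \Lem~\ref{lemma_arnab}: the subformula $\FUC{t+1}'$ generated during a single iteration is, with probability $1-o(1/n)$, at most unicyclic, i.e.\ $|E(G(\FUC{t+1}'))|\le|V(G(\FUC{t+1}'))|$. The key structural fact is that $G(\FUC{t+1}')$ is precisely the connected component of $x_{t+1}$ in the graph $G(\FUC t)$ after one performs unit-clause propagation. By Fact~\ref{fact_deferred} the formula $\FUC t$ is uniformly random given $\fF_t$, so we may expose $G(\FUC t)$ by a branching-process (breadth-first) exploration started at $x_{t+1}$. The plan is to couple this exploration with a subcritical Galton--Watson process and then argue that a subcritical branching tree, explored in a graph with $\Theta(n)$ vertices, produces a component that (a) has bounded expected size, and (b) fails to be a tree only with probability $O(1/n)$ — in fact the probability that it has $\ge 2$ independent cycles is $O(1/n^2)$, which is what we actually need to beat $o(1/n)$.

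**Main steps.** First I would set up the exploration: starting from $x_{t+1}$ (on the event $\cQ_{t+1}$ that $x_{t+1}$ is unassigned — otherwise $\vV'(t+1)=\emptyset$ and the claim is trivial), reveal at each step the clauses containing the current variable, then the other variables in those clauses, etc. By Fact~\ref{fact_deferred}, conditioning on $\fF_t$, each newly revealed length-$2$ clause contributes one new variable and each length-$\ell$ clause with $\ell\ge 3$ contributes $\ell-1$, but only length-$2$ clauses propagate (longer clauses only shorten). Thus the component relevant to unit-clause propagation is governed by the $2$-core-type subgraph $G_2(\FUC t)$, a uniformly random bipartite (multi)graph on $\vn(t)$ variable nodes and $\vm_2(t)$ degree-$2$ clause nodes; by \Prop~\ref{prop_uc_alpha} we have $\vn(t)=\Theta(n)$ and $2\vm_2(t)/\vn(t)\to\ph'(\alpha_*)+o(1)<1$ uniformly for $t\le(1-\eps)n$ (here I invoke \eqref{eqODE4} / \Lem~\ref{lem_ODE} and the fact $d<\dmin$). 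Second, I would dominate the exploration of $x_{t+1}$'s component in $G_2$ by a Galton--Watson tree with offspring $\mathrm{Po}(2\vm_2(t)/\vn(t))$ plus a bounded ($O(1)$ in expectation) number of `decorations' coming from longer clauses that happen to attach to the explored variables; since the offspring mean is bounded below $1$, the total size $|V(G(\FUC{t+1}'))|$ is stochastically dominated by a random variable with exponential tails, so $\Pr[|V|\ge \log^2 n]=o(1/n)$. Third — the heart of the argument — I would bound the excess $|E|-|V|$. At each step of the exploration, an edge that closes a cycle arises only when a newly revealed clause attaches to an already-explored variable; conditioned on having explored $s=O(\log^2 n)$ variables so far and on $\fF_t$, the chance that a given new clause-slot lands inside the explored set is $O(s/n)$. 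Hence the probability of forming even one cycle during the whole exploration is $O(\log^4 n/n)=o(1)$ — not yet good enough — but the probability of forming \emph{two} independent cycles is $O(\log^8 n/n^2)=o(1/n)$. So with probability $1-o(1/n)$ the component is either a tree ($|E|=|V|-1$) or unicyclic ($|E|=|V|$), which is exactly $|E(G(\FUC{t+1}'))|\le|V(G(\FUC{t+1}'))|$. I would combine this with the $o(1/n)$ bound on $|V|\ge\log^2 n$ via a union bound.

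**Expected main obstacle.** The delicate point is handling the interplay between the length-$2$ clauses (which drive propagation and hence the component growth) and the longer clauses (which can also contribute edges that close cycles even though they do not immediately propagate). One must be careful that a long clause attaching to the explored set can \emph{simultaneously} create a cycle and, upon losing enough variables in this or later iterations, become relevant — but for the purpose of this lemma we only need to count edges and vertices of $G(\FUC{t+1}')$ as it stands at the end of iteration $t+1$, so longer clauses that retain $\ge 2$ variables outside $\vV'(t+1)$ simply do not belong to $\vC'(t+1)$. I would make this precise by first describing $\vV'(t+1)$ purely in terms of the component of $x_{t+1}$ in $G_2(\FUC t)$, then observing that $\vC'(t+1)$ consists of exactly those clauses all of whose variables fell into $\vV'(t+1)$; since $|\vV'(t+1)|=O(\log^2 n)$ whp, the number of long clauses entirely inside it is $O(1)$ in expectation and, more importantly, the probability that two or more independent cycles are formed among all of $\vC'(t+1)$ is $O(\log^8 n/n^2)=o(1/n)$ by the same second-moment-style slot-counting estimate. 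The bookkeeping to make the `independent cycles' counting rigorous (e.g.\ via a union bound over pairs of potential cycle-closing edges, each of probability $O(\log^4 n/n)$) is the part that requires the most care, but it is standard sparse-random-graph technology.
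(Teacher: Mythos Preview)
Your proposal is correct and follows essentially the same approach as the paper: bound the component size by $\log^2 n$ via the subcritical branching process with mean offspring $2\vm_2(t)/\vn(t)<1$ (using \Prop~\ref{prop_uc_alpha}), then argue that closing two independent cycles requires two separate ``collisions'' of probability $O(\mathrm{polylog}\,n/n)$ each, giving $O(\mathrm{polylog}\,n/n^2)=o(1/n)$. The paper's own proof is slightly terser on the treatment of longer clauses (it simply folds them into the cycle-closing estimate rather than separating the $G_2$ exploration from the decoration step as you do), and it obtains $O(\log^6 n/n^2)$ where you get $O(\log^8 n/n^2)$, but this is cosmetic.
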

\begin{proof}
	We recall from the proof of \Lem~\ref{lem_condex} that iteration $t+1$ of \UCP\ can be described by a branching process on the random graph $G(\FUC t)$.
	Given that $x_{t+1}$ is still unassigned, the offspring distribution of the branching process has mean $2\vm_2(t)/\vn(t)$.
	Moreover, \Prop~\ref{prop_uc_alpha} shows that with probability $1-O(n^{-2})$ we have $2\vm_2(t)/\vn(t)\sim d(k-1)(1-\alpha_*)  \alpha_*^{k-2}<1$ (as $d<\dmin$).
	Hence, the branching process is sub-critical.
	As a consequence, with probability $1-O(n^{-2})$ we have
	\begin{align}\label{eq_lemma_arnab_1}
		\pr\brk{|V(G(\FUC{t+1}'))|\geq\log^2n}=O(n^{-2}).
	\end{align}

	Each step of the branching process corresponds to pursuing the unit clause implications of assigning a truth value to a single variable $x$.
	A cycle in $G(\FUC{t+1}')$ can only ensue if a clause that contains $x$ also contains a variable that has already been set previously during iteration $t+1$.
	In light of \eqref{eq_lemma_arnab_1}, with probability $1-O(n^{-2})$ there are no more than $\log^2n$ such variables.
	Hence, the probability that the assignment of $x$ closes a cycle is of order $O(\log^2n/n)$.
	Additionally, by the principle of deferred decisions the events that two different clauses processed by unit clause propagation close cycles is of order $O(\log^4n/n^2)$.
	Finally, since by~\eqref{eq_lemma_arnab_1} we may assume that the total number of clauses does not exceed $O(\log^2n)$, we conclude that $$\pr\brk{|E(G(\FUC{t+1}'))|>|V(G(\FUC{t+1}'))|}=O(\log^6n/n^2)=o(1/n),$$ as desired.
\end{proof}

Thus, with probability $1-o(1/n)$ the graph $G(\FUC{t+1}')$ contains at most one cycle.
While it is easy to check that no conflict occurs in iteration $t+1$ if $G(\FUC{t+1}')$ is acyclic, in the case that $G(\FUC{t+1}')$ contains a single cycle there is a chance of a conflict.
The following definition describes the type of cycle that poses an obstacle.

\begin{definition}
	For a XORSAT formula $F$ we call a sequence of variables and clauses $\toxl=(v_1, c_1, \dots, v_\ell, c_\ell, v_\ell+1=v_1)$ a \emph{toxic cycle} of length $\ell$ if 
	\begin{description}
		\item[TOX1] $c_i$ contains the variables $x_i, x_{i+1}$ only, and
		\item[TOX2] the total number of negations in $c_1, \dots c_\ell$ is odd iff $\ell$ is even.
	\end{description}
\end{definition}

\begin{lemma}\label{lem_tox}
	\begin{enumerate}[(i)]
		\item If $\FUC{t+1}'$ contains a toxic cycle, then a conflict occurs in iteration $t+1$.
		\item If $\FUC{t+1}'$ contains no toxic cycle and $|E(G(\FUC{t+1}'))|\leq|V(G(\FUC{t+1}'))|$, then no conflict occurs in iteration $t+1$.
	\end{enumerate}
\end{lemma}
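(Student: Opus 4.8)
The plan is to exploit the fact that, by construction, executing \UCP\ through iteration $t+1$ amounts to running unit clause propagation on the formula $\FUC{t+1}'$ alone, started from the (free) choice of the value of $x_{t+1}$: every clause that turns unary during iteration $t+1$ has all of its variables assigned during that iteration, hence belongs to $\vC'(t+1)$. Given this, part~(i) is quick. By \textbf{TOX1} a toxic cycle $\toxl=(v_1,c_1,\dots,v_\ell,c_\ell)$ consists of the binary clauses $c_i$ on $\{v_i,v_{i+1}\}$; reading these as $\FF_2$-equations and summing around the cycle the variables cancel, so the subsystem $\{c_1,\dots,c_\ell\}$ is inconsistent \emph{precisely} when the parity condition \textbf{TOX2} holds. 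Now all of $v_1,\dots,v_\ell$ lie in $\vV'(t+1)$, so each of them is assigned during iteration $t+1$. First I would observe that if \UCP\ reported \emph{no} conflict in iteration $t+1$, then every $c_i$ would be satisfied by the resulting partial assignment: the instant the first of $v_i,v_{i+1}$ is set, $c_i$ becomes a unary clause, so it is eventually processed, either by \UCP\ setting its last variable so as to satisfy it, or by that variable being set via another unary clause which, absent a conflict, must agree with $c_i$. This contradicts the inconsistency of $\{c_1,\dots,c_\ell\}$; hence a conflict does occur in iteration $t+1$.

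For part~(ii) I would run the converse argument. The first step is a structural claim: if $|E(G(\FUC{t+1}'))|\le|V(G(\FUC{t+1}'))|$, i.e.\ $G(\FUC{t+1}')$ is connected with at most one cycle, then every clause of $\FUC{t+1}'$ is binary. Indeed a clause $c$ of degree at least three lies on at most one cycle, so some edge at $c$ is a bridge; the variable across such a bridge can be forced during iteration $t+1$ only if the propagation originates inside the tree component it separates off, and since the propagation emanates from the \emph{single} vertex $x_{t+1}$ it can activate at most one of the (at least two) such components hanging off $c$ --- and if $c$ sits on the cycle, a degree-$\ge3$ clause cannot relay the forcing around the cycle at all, so no cycle variable is ever forced. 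Either way some variable of $c$ stays unassigned, contradicting $c\in\vC'(t+1)$. Granting the claim, two things follow: (a)~the linear system $\FUC{t+1}'$ is solvable --- if $G(\FUC{t+1}')$ is a tree a leaf-peeling argument shows the clause rows are independent, and if it has one (binary) cycle the only row relation is the sum of the cycle rows, which the right-hand side respects exactly because the cycle is \emph{not} toxic; and (b)~the coordinate of $x_{t+1}$ is not frozen in $\ker A_{\FUC{t+1}'}$, since a relation $\sum_{c\in S}\mathrm{row}(c)=e_{x_{t+1}}$ would, after suppressing the degree-two clause nodes, describe a graph on the variables with exactly one odd-degree vertex, which is impossible.

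From (a) and (b) the value $\tau$ that \UCP\ assigns to $x_{t+1}$ extends to some solution $\sigma^\ast$ of $\FUC{t+1}'$, and an induction on the sequence of forced assignments shows the partial assignment maintained by \UCP\ always extends to $\sigma^\ast$: whenever a unary clause forces a variable, $\sigma^\ast$ already assigns it the forced value, so two unary clauses can never disagree and no conflict is declared in iteration $t+1$. The hard part will be the structural claim that every clause of $G(\FUC{t+1}')$ is binary: it needs a careful, somewhat finicky analysis of how the unit-clause process interacts with the bridge-and-cycle structure of $G(\FUC{t+1}')$, in particular tracking which tree pieces the source vertex $x_{t+1}$ can occupy. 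Once that is settled, the rank bookkeeping in (a), (b) and the extendability induction are routine.
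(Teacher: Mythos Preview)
Your argument for (i) is correct and matches the paper's: summing the $\FF_2$-equations around the cycle shows the subsystem is inconsistent under \textbf{TOX2}, and since every variable on the cycle gets assigned in iteration $t+1$ a conflict must occur.

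Your argument for (ii), however, rests on the structural claim that every clause of $\FUC{t+1}'$ is binary, and that claim is false. Take $x_{t+1}=v_0$ together with the binary clauses $c_1=(v_0,v_1)$, $c_2=(v_0,v_2)$ and the ternary clause $c_3=(v_1,v_2,v_3)$ in $\FUC t$. Assigning $v_0$ makes $c_1,c_2$ unary, which forces $v_1,v_2$; then $c_3$ becomes unary and forces $v_3$. Hence $\vV'(t+1)=\{v_0,v_1,v_2,v_3\}$, $\vC'(t+1)=\{c_1,c_2,c_3\}$, and $G(\FUC{t+1}')$ has $7$ vertices and $7$ edges with a single cycle $v_0\,c_1\,v_1\,c_3\,v_2\,c_2\,v_0$ --- yet $c_3$ has degree three. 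The flaw in your case analysis is the clause ``if $c$ sits on the cycle, a degree-$\ge3$ clause cannot relay the forcing around the cycle at all, so no cycle variable is ever forced'': it is true that $c_3$ cannot relay the propagation \emph{through itself}, but the cycle variables $v_1,v_2$ are reached via $c_1,c_2$ along the other arc of the cycle, not through $c_3$. Once both cycle neighbours of $c_3$ are set, $c_3$ does become unary and pushes the propagation out across its bridge to $v_3$. Since your subsequent rank/freeness arguments (a), (b) both lean on the all-binary structure, the gap propagates through the rest of the proof.

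The paper takes a different route for (ii): a minimal-counterexample argument. One assumes a smallest $F$ (no toxic cycle, $|E|\le|V|$) on which \UCP\ can fail, observes $F$ has exactly one cycle, prunes away the trees attached to the cycle \emph{variable} nodes by minimality, and then shows that any non-binary cycle clause $c_i$ provides a ``release valve'': propagation must start on the cycle, winds around it via the other clauses, and is finally absorbed into the tree hanging off $c_i$ without conflict. Only when the cycle is purely binary must one invoke the non-toxicity. This approach embraces the non-binary clauses rather than trying to rule them out; you could repair your argument by replacing the false structural claim with this release-valve observation.
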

\begin{proof}
	Towards (i) we show that $\FUC{t+1}'$ is not satisfiable if there is a toxic cycle $\cC=(v_1,c_1,\ldots,c_\ell,v_{\ell+1}=v_1)$; then \UCP\ will, of course, run into a contradiction.
	To see that $\FUC{t+1}'$ is unsatisfiable, we transform each of the clauses $c_1,\ldots,c_\ell$ into a linear equation $c_i\equiv(v_i+v_{i+1}=y_i)$ over $\FF_2$.
	Here $y_i\in\FF_2$ equals $1$ iff $c_i$ contains an even number of negations.
	Adding these equations up yields $\sum_{i=1}^\ell y_i=0$ in $\FF_2$.
	This condition is violated if $\cC$ is toxic. 

	Let us move on to (ii).
	Assume for contradiction that there exists a formula $F$ without a toxic cycle such that $|V(G(F))|\leq|E(G(F))|$ and such that given $\FUC{t+1}'=F$, \UCP\ may run into a conflict.
	Consider such a formula $F$ that minimises $|V(F)|+|C(F)|$.
	Since \UCP\ succeeds on acyclic $F$, we have $|V(G(F))|=|E(G(F))|$. 
	Thus, $G(F)$ contains a single cycle $\cC=(v_1,c_1,\ldots,v_\ell,c_\ell,v_{\ell+1}=v_1)$.
	Apart from the cycle, $F$ contains (possibly empty) acyclic formulas $F_1',\ldots,F_\ell'$ attached to $v_1,\ldots,v_\ell$ and $F_1'',\ldots,F_\ell''$ attached to $c_1,\ldots,c_\ell$.
	The formulas $F_1',F_1'',\ldots,F_\ell',F_\ell''$ are mutually disjoint and do not contain unit clauses. 	
	
	We claim that $F_1',\ldots,F_\ell'$ are empty because $|V(F)|+|C(F)|$ is minimum.
	This is because given any truth assignment of $v_1,\ldots,v_\ell$, \UCP\ will find a satisfying assignment of the acyclic formulas $F_1',\ldots,F_\ell'$.	

	Further, assume that one of the formulas $F_1'',\ldots,F_\ell''$ is non-empty; say, $F_1''$ is non-empty.
	If the start variable that \UCP\ assigns were to belong to $F_1''$, then $c_1$, containing $x_1$ and $x_2$, would not shrink to a unit clause, and thus \UCP\ would not assign values to these variables.
	Hence, \UCP\ starts by assigning a truth value to one of the variables $v_1,\ldots,v_\ell$; say, \UCP\ starts with $v_1$.
	We claim that then \UCP\ does not run into a conflict.
	Indeed, the clauses $c_2,\ldots,c_\ell$ may force \UCP\ to assign truth values to $x_2,\ldots,x_\ell$, but no conflict can ensue because \UCP\ will ultimately satisfy $c_1$ by assigning appropriate truth values to the variables of $F_1''$.

	Thus, we may finally assume that all of $F_1',F_1'',\ldots,F_\ell',F_\ell''$ are empty.
	In other words, $F$ consists of the cycle $\cC$ only.
	Since $\cC$ is not toxic, {\bf TOX2} does not occur.
	Consequently, \UCP\ will construct an assignment that satisfies all clauses $c_1,\ldots,c_\ell$.
	This final contradiction implies (ii).
\end{proof}
	
\begin{corollary}\label{lem_tox_error_t}
	Fix $\eps>0$ and let $0\leq t\leq(1-\eps)n$.
	Then
	\begin{align*}
		\pr\brk{\cR_{t+1}}&=\pr\brk{\mbox{$\FUC{t+1}'$ contains a toxic cycle}}+o(1/n).
	\end{align*}
\end{corollary}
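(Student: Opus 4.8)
The plan is to obtain \Cor~\ref{lem_tox_error_t} as a short deduction from \Lem~\ref{lemma_arnab} and \Lem~\ref{lem_tox}; the substantive work has already been done in those two statements, so what remains is essentially a bookkeeping computation. I would introduce the two events
$$\cT_{t+1}=\{\FUC{t+1}'\text{ contains a toxic cycle}\},\qquad\cE_{t+1}=\{|E(G(\FUC{t+1}'))|\le|V(G(\FUC{t+1}'))|\}.$$
By \Lem~\ref{lemma_arnab} we have $\pr\brk{\cE_{t+1}^{c}}=o(1/n)$, and this holds uniformly for $0\le t\le(1-\eps)n$, since \Lem~\ref{lemma_arnab} (and its proof, which produces the bound $O(\log^6 n/n^2)$) is uniform over that range of $t$.

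Next I would use \Lem~\ref{lem_tox} to identify the conflict event $\cR_{t+1}$ with $\cT_{t+1}$ on the likely event $\cE_{t+1}$. Part~(i) gives the unconditional inclusion $\cT_{t+1}\subseteq\cR_{t+1}$. Part~(ii), read contrapositively, gives $\cR_{t+1}\cap\cE_{t+1}\subseteq\cT_{t+1}$, hence $\cR_{t+1}\cap\cE_{t+1}\subseteq\cT_{t+1}\cap\cE_{t+1}$; combined with part~(i) this yields the exact identity $\cR_{t+1}\cap\cE_{t+1}=\cT_{t+1}\cap\cE_{t+1}$. Consequently
\begin{align*}
\abs{\pr\brk{\cR_{t+1}}-\pr\brk{\cT_{t+1}}}&=\abs{\pr\brk{\cR_{t+1}\cap\cE_{t+1}^{c}}-\pr\brk{\cT_{t+1}\cap\cE_{t+1}^{c}}}\\&\le\pr\brk{\cE_{t+1}^{c}}=o(1/n),
\end{align*}
which is exactly the assertion of the corollary.

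There is essentially no obstacle at this stage: the delicate points — that $G(\FUC{t+1}')$ is unicyclic except with a quantitatively controlled probability, and the purely combinatorial fact that in a unicyclic component a conflict is equivalent to the cycle being toxic — are precisely \Lem~\ref{lemma_arnab} and \Lem~\ref{lem_tox}, which are at our disposal. The only point worth flagging is the uniformity of the $o(1/n)$ error term in $t$: this matters because in the remainder of \Sec~\ref{sec_prop_uc_error}, and later in the proof of \Thm~\ref{thm_bpgd}~(i), the quantities $\pr\brk{\cR_t}$ are summed over $\Theta(n)$ values of $t$, so a non-uniform error would not be negligible. This uniformity is inherited directly from the uniform formulations of \Lem~\ref{lemma_arnab} and \Prop~\ref{prop_uc_alpha}.
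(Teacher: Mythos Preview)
Your proposal is correct and follows exactly the same approach as the paper, which simply states that the corollary is an immediate consequence of \Lem~\ref{lemma_arnab} and \Lem~\ref{lem_tox}. Your version is more explicit, spelling out the set-theoretic bookkeeping and noting the uniformity in $t$, but the underlying argument is identical.
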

\begin{proof}
	This is an immediate consequence of \Lem~\ref{lemma_arnab} and \Lem~\ref{lem_tox}.
\end{proof}

	Thus, we are left to calculate the probability that $\FUC{t+1}'$ contains a toxic cycle.
	To this end, we estimate the number of toxic cycles in the `big' formula $\FUC t$.
	Let $\vT_{t}(\ell)$ be the number of toxic cycles of length $\ell$ in $\FUC t$.

	\begin{lemma}\label{lem_toxlt}
	Fix $\eps>0$ and let $1\leq t\leq(1-\eps)n$.
	\begin{enumerate}[(i)]
		\item For any fixed $\ell$, with probability $1-O(n^{-2})$ we have
		\begin{align*}
			\ex\brk{\vT_t\bc\ell\mid\fF_t}&=\beta_\ell+o(1),&&\mbox{where }\beta_\ell=\frac{1}{4\ell} \bc{ d(k-1) (1-\alpha_*)\alpha_*^{k-2} }^\ell = \frac 1 {4\ell} \bc{f_n(t)}^\ell.
		\end{align*}
	\item For any $1\leq\ell\leq n$, with probability $1-O(n^{-2})$ we have $\ex\brk{\vT_t\bc{\ell}\mid\fF_t}\leq\beta_\ell\exp(\eps\ell).$ \end{enumerate}
	\end{lemma}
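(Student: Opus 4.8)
The plan is to compute the expected number of toxic cycles of length $\ell$ in $\FUC t$ directly, exploiting Fact~\ref{fact_deferred}, which tells us that conditionally on $\fF_t$ the formula $\FUC t$ is a uniformly random XORSAT formula with $\vn(t)$ variables and $\vm_\ell(t)$ clauses of each length $2\le\ell\le k$, with uniformly random signs. A toxic cycle of length $\ell$ is built exclusively from length-$2$ clauses, so only the random variables $\vn(t)$ and $\vm_2(t)$ enter the count. First I would set up the combinatorial enumeration: a toxic cycle of length $\ell$ is determined by a cyclic sequence of $\ell$ distinct variables $v_1,\dots,v_\ell$ and $\ell$ length-$2$ clauses $c_1,\dots,c_\ell$ with $c_i$ containing exactly $\{v_i,v_{i+1}\}$ (condition \textbf{TOX1}), with the parity condition \textbf{TOX2} on the signs. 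Choosing the underlying cyclic arrangement of variables contributes $\frac{(\vn(t))_\ell}{2\ell}$ ordered-then-symmetrised factor (the $2\ell$ accounting for rotations and the reflection of the cycle), choosing which of the $\vm_2(t)$ length-$2$ clauses sits on each edge and that each lands on the prescribed variable pair contributes, by Fact~\ref{fact_deferred}, a factor behaving like $\bigl(\vm_2(t)/\binom{\vn(t)}{2}\bigr)^\ell$ up to lower-order corrections for clause reuse, and the sign/parity condition \textbf{TOX2} halves the count (exactly one of the two parity classes of sign patterns on a fixed set of $\ell$ clauses is toxic, and signs are independent uniform). Multiplying these gives, after simplification,
\begin{align*}
\ex\brk{\vT_t(\ell)\mid\fF_t}&=\frac{1}{2}\cdot\frac{(\vn(t))_\ell}{2\ell}\cdot\bc{\frac{\vm_2(t)}{\binom{\vn(t)}{2}}}^\ell(1+o(1))=\frac{1}{4\ell}\bc{\frac{2\vm_2(t)}{\vn(t)}}^\ell(1+o(1)).
\end{align*}

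Second, I would invoke \Prop~\ref{prop_uc_alpha}: with probability $1-O(n^{-2})$ we have $\vec\alpha(t)=\alpha_*+o(1)$ and $\vm_2(t)=\frac{dn}{k}\binom{k}{2}(1-\alpha_*)^2\alpha_*^{k-2}+o(n)$, hence on that event $\frac{2\vm_2(t)}{\vn(t)}=d(k-1)(1-\alpha_*)\alpha_*^{k-2}+o(1)=f_n(t)+o(1)$, using the definition~\eqref{eq_prop_uc_error} of $f_n(t)$. Substituting into the display above yields $\ex\brk{\vT_t(\ell)\mid\fF_t}=\beta_\ell+o(1)$ with $\beta_\ell=\frac{1}{4\ell}f_n(t)^\ell$ for any fixed $\ell$, which is part~(i). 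For part~(ii), where $\ell$ may grow with $n$, I would redo the enumeration keeping crude upper bounds: $(\vn(t))_\ell\le \vn(t)^\ell$, $\binom{\vn(t)}{2}\ge \vn(t)^2/4$ say, so that the product telescopes to at most $\frac{1}{4\ell}\bigl(\tfrac{2\vm_2(t)}{\vn(t)}\bigr)^\ell\cdot(1+o(1))^\ell$; since $t\le(1-\eps)n$ forces $\tfrac{2\vm_2(t)}{\vn(t)}\le f_n(t)e^{o(1)}$ with the $o(1)$ absorbed into a factor $e^{\eps}$ per edge on the high-probability event, one gets $\ex\brk{\vT_t(\ell)\mid\fF_t}\le\beta_\ell\exp(\eps\ell)$ for $n$ large, uniformly in $1\le\ell\le n$. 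Here one should be slightly careful that for $\ell$ of order $n$ the falling-factorial correction $(\vn(t))_\ell/\vn(t)^\ell$ is not close to $1$, but it is $\le 1$, which only helps the upper bound; similarly the corrections from clauses sharing endpoints or being reused only decrease the count once one conditions on \textbf{TOX1}, so the crude bound survives.

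The main obstacle I anticipate is the careful bookkeeping of the lower-order error terms in the first-moment computation: one must argue that the events ``two of the $\ell$ chosen clauses coincide'' or ``a chosen clause has an endpoint outside the prescribed pair'' contribute negligibly for fixed $\ell$, and that, when $\ell$ is allowed to grow, these corrections can only shrink the count so that the clean exponential bound $\beta_\ell\exp(\eps\ell)$ still holds. This is a standard inclusion–exclusion / union-bound argument of the kind routinely used to count short cycles in sparse random graphs, but it requires stating precisely which events one conditions on (namely that the degree sequence $\fF_t$ is typical, via \Prop~\ref{prop_uc_alpha}) before applying Fact~\ref{fact_deferred}. The parity bookkeeping in \textbf{TOX2} is the only genuinely new ingredient compared to the classical short-cycle count, and it is easy: conditionally on the set of $\ell$ clauses forming the cycle, the signs are $2k\ell$ independent fair coins, the toxicity condition fixes the parity of a prescribed linear functional of $\ell$ of them, and hence is satisfied with probability exactly $1/2$, independently of everything else.
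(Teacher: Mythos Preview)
Your proposal is correct and follows essentially the same first-moment strategy as the paper: enumerate potential cycles, use the uniform randomness from Fact~\ref{fact_deferred} to compute the probability that each candidate is realised, halve for the parity condition {\bf TOX2}, and plug in \Prop~\ref{prop_uc_alpha}. The only real difference is bookkeeping: you work directly in $\FUC t$ with the $\vm_2(t)$ length-two clauses, arriving at $\tfrac{1}{4\ell}\bigl(2\vm_2(t)/\vn(t)\bigr)^\ell(1+o(1))$, whereas the paper parametrises via the original $n$ variables and $\vm$ clauses and the probability $\vec\alpha(t)^{k-2}$ that a chosen $k$-clause has shrunk to length two, yielding the equivalent expression $\tfrac{1}{4\ell}\bigl(d(k-1)(1-\vec\alpha(t))\vec\alpha(t)^{k-2}\bigr)^\ell$. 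Your route is arguably the cleaner one, since it uses only $\fF_t$-measurable quantities.

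One small slip: in part~(ii) your crude bound $\binom{\vn(t)}{2}\ge\vn(t)^2/4$ loses a factor $2^\ell$ and would give $(4\vm_2(t)/\vn(t))^\ell$ rather than $(2\vm_2(t)/\vn(t))^\ell$. Use instead $\binom{\vn(t)}{2}=\vn(t)(\vn(t)-1)/2$, so that $(\vn(t))_\ell/\binom{\vn(t)}{2}^\ell\le(2/( \vn(t)-1))^\ell=(2/\vn(t))^\ell(1+1/(\vn(t)-1))^\ell$; since $\vn(t)=\Omega(n)$ on the high-probability event (as $\alpha_*<1$ for $\theta\le1-\eps$) the extra factor is $e^{O(\ell/n)}$, which is absorbed into $e^{\eps\ell}$ for large $n$.
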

	\begin{proof}
		In light of Fact~\ref{fact_deferred}, the calculation of the expected number of toxic cycles is straightforward.
		Indeed, we just need to pick sequences of $\ell$ distinct variables and clauses, place the variables into the clauses in a cyclic fashion, and multiply by the probability that the clauses contain no other variables and that the parity of the signs of the clauses works out as per {\bf TOX2}.
		Of course, in this way we over count toxic cycles $2\ell$ times (due to the choice of the starting point and the orientation).
		Hence, we obtain
		\begin{align}\label{eq_lem_toxlt_1}
			\Erw\brk{\vT_t(\ell) \mid \fF_t}  &= \frac{(n)_\ell (m)_{\ell}}{4\ell n^{2\ell}}  \bc{k(k-1)}^\ell \bc{1-\vec\alpha(t)}^{\ell} \vec\alpha(t)^{\ell(k-2)}.
		\end{align}
		Thus, (i) follows from \eqref{eq_lem_toxlt_1} and \Prop~\ref{prop_uc_alpha}.
		Further, \eqref{eq_lem_toxlt_1} demonstrates that
		\begin{align}\label{eq_lem_toxlt_2}
			\Erw\brk{\vT_t(\ell) \mid \fF_t}  &\leq \frac{1}{4\ell} \bc{ d(k-1) (1-\vec\alpha(t))\vec\alpha(t)^{k-2} }^\ell.
		\end{align}
		Finally, combining~\eqref{eq_lem_toxlt_2} with  \Prop~\ref{prop_uc_alpha}, we obtain (ii).
	\end{proof}
	
\begin{proof}[Proof of \Prop~\ref{prop_uc_error}]
	In light of \Cor~\ref{lem_tox_error_t} we just need to calculate the probability that $\FUC{t+1}'$ contains a toxic cycle.
	Clearly, if during iteration $t+1$ \UCP\ encounters a variable of $\FUC t$ that lies on a toxic cycle, \UCP\ will proceed to add the entire toxic cycle to $\FUC{t+1}'$ (and run into a contradiction).
	Furthermore, \Lem~\ref{lem_toxlt} shows that with probability $1-O(n^{-2})$ given $\fF_t$ the probability that a random variable of $\FUC t$ belongs to a toxic cycle comes to
	\begin{align}\label{eq_prop_uc_error_100}
		\bar\beta&=\sum_{\ell\geq2}\ell\beta_\ell+o(1)\mk{= \sum_{\ell \geq 2}  \frac 1 {4} \bc{f_n(t)}^\ell }= \frac{f_n(t)^2}{4(1-f_n(t))} +o(1)=O(1).
	\end{align}

	We now use \eqref{eq_prop_uc_error_100} to calculate the desired probability of encountering a toxic cycle.
	To this end we recall from the proof of \Lem~\ref{lem_condex} that the $(t+1)$-st iteration of \UCP\ corresponds to a branching process with expected offspring $f_n(t)$, unless the root variable $x_{t+1}$ has already been assigned.
	Due to~\eqref{eqqt+1} and \Prop~\ref{prop_uc_alpha}, with probability $1-O(n^{-2})$ the conditional probability of this latter event equals $(n\alpha_*-t)/(n-t)+o(1)$.
	Further, given that the root variable has not been assigned previously, the expected progeny of the branching process, i.e., the expected number of variables in $\FUC{t+1}'$, equals $1/(1-f_n(t))+o(1)$.
	Since with probability $1-O(n^{-2})$ given $\fF_t$ there remain $\vn(t)=(1-\alpha_*+o(1))n$ unassigned variables in total, \eqref{eq_prop_uc_error_100} implies that with probability $1-o(1/n)$,
	\begin{align*}
		\pr\brk{\cR_{t+1}\mid\fF_t}&\sim\frac{\bar\beta}{(1-\alpha_*)n}\cdot\frac{1-\alpha_*}{1-t/n}\cdot\frac1{1-f_n(t)}=\frac{f_n(t)^2}{4(1-f_n(t))^2(n-t)}+o(1/n),
	\end{align*}
	as claimed.
\end{proof}

\subsubsection{Proof of \Prop~\ref{prop_uc_pois}}\label{sec_prop_uc_pois}
	We combine Fact~\ref{fact_deferred} with the tower rule.
	Specifically, let $0\leq t_1<\cdots<t_h<(1-\eps)n$ be distinct time indices.
	Then repeated application of the tower rule gives
	\begin{align}\nonumber
		\Pr\brk{\bigcap_{i=1}^h \everrti}&= \Erw\brk{\prod_{i=1}^h \vecone\cbc{\everrti}} =  \Erw\brk{ \Erw\brk{\prod_{i=1}^h \vecone \cbc{\everrti} \mid \pastim } }\\
										 &= \Erw\brk{ \bc{\prod_{i=1}^{h-1} \vecone\cbc{\everrti}}  \pr\brk{\everrth \mid \pasthm } } = \cdots=\Erw\brk{\prod_{i=1}^{h} \pr\brk{\everrti \mid \pastim }  }.\label{eq_prop_uc_pois_1}
	\end{align}
	Furthermore, \Prop~\ref{prop_uc_error} shows that with probability $1-o(1/n)$,
	\begin{align}\label{eq_prop_uc_pois_2}
		\pr\brk{\everrti \mid \pastim }&= \frac{f_n(t_i)^2}{4(n-t_i)(1-f_n(t_i))^2}+o(1/n)&&\mbox{for all }1\leq i\leq h.
	\end{align}
Combining~\eqref{eq_prop_uc_pois_1}--\eqref{eq_prop_uc_pois_2} completes the proof.

\subsubsection{Proof of \Prop~\ref{prop_uc_endgame}}\label{sec_prop_uc_endgame}
Given $\delta>0$ pick $\eps>0$ small enough and let $t=\lceil(1-\eps)n\rceil$.
We are going to show that the graph $G(\FUC t)$ is acyclic with probability at least $1-\delta$.
Since all clauses of $\FUC t$ contain at least two variables, \UCP\ will find a satisfying assignment if $G(\FUC t)$ is acyclic.

To show that $G(\FUC t)$ is acyclic, we observe that $\alpha_*\geq t/n$.
Hence, $\alpha_*$ approaches one as $t/n\to1$.
Further, Fact~\ref{fact_deferred} shows that $G(\FUC t)$ is uniformly random given the degree distribution \eqref{eq_prop_uc_alpha} of the clause nodes.
Indeed, the expression \eqref{eq_prop_uc_alpha} shows that with probability $1-O(n^{-2})$ the expected size of the second neighbourhood of a given variable node is asymptotically equal to
	\begin{align*}
		\gamma=\gamma(\eps)&=\frac1{(1-\alpha_*)n}\cdot\frac{dn}{k}\sum_{\ell=2}^k\ell\binom k\ell(1-\alpha_*)^\ell\alpha_*^{k-\ell}=d(1-\alpha_*^{k-1}).
	\end{align*}
Hence, as $\lim_{\eps\to 0}\gamma=0$, the average degree of the random graph $G(\FUC t)$ tends to zero as $\eps\to0$.
Therefore, for small enough $\eps>0$ the random graph $\G(\FUC t)$ is acyclic with probability greater than $1-\delta$.

\subsection{Failure of \UCP\ for $\dmin<d<\dsat$}\label{sec_failure}
In this section we assume that $\dmin<d<\dsat$.
As in \Sec~\ref{sec_alg_pos} we are going to trace \UCP\ via the method of differential equations.
In particular, we keep the notation from \Sec~\ref{sec_alg_pos}.
Thus, $\vn(t)$ signifies the number of unassigned variables after $t$ iterations, and $\vm_\ell(t)$ denotes the number of clauses that contain precisely $2\leq\ell\leq k$ unassigned variables.
Moreover, $\FUC t$ is the formula comprising these variables and clauses.
The following statement is the analogue of \Prop~\ref{prop_uc_alpha} for $\dmin<d<\dsat$.
Its proof relies on similar arguments as the proof of \Prop~\ref{prop_uc_alpha}.

\begin{proposition}\label{prop_bpgd_cond}
	Suppose that $\dmin(k)<d<\dsat(k)$, fix $\eps,\delta>0$ and let $0<t<(1-\eps)\theta_*n$.
	Then \eqref{eq_prop_uc_alpha} holds with probability $1-O(n^{-2})$.
\end{proposition}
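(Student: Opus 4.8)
The plan is to mirror the proof of Proposition~\ref{prop_uc_alpha} essentially verbatim, re-using the Wormald differential-equation machinery, but now running it only up to time $(1-\eps)\theta_*n$ rather than all the way to $n$. First I would recall that Lemma~\ref{lem_condex} derives the one-step conditional expectations of $\vn(t),\vm_2(t),\ldots,\vm_k(t)$ \emph{without} any assumption that $d<\dmin$; the only structural hypothesis it uses is the subcriticality condition $2\vm_2(t)/\vn(t)<1-\Omega(1)$ together with $\vn(t)=\Omega(n)$. So the ODE system \eqref{eqODE1}--\eqref{eqODE3} of Corollary~\ref{cor_ODE} is unchanged, and Lemma~\ref{lem_ODE} still shows that $\fn^*(\theta)=1-\alpha_*(\lambda(\theta))$, $\fm_\ell^*(\theta)=\tfrac dk\binom k\ell(1-\alpha_*)^\ell\alpha_*^{k-\ell}$ solves it, because that computation only used the derivative formula \eqref{eq_alphas}, valid on all of $(0,\lambda_*)$ by Lemma~\ref{lem_alphas}(ii). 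The point of restricting to $t<(1-\eps)\theta_*n$, i.e.\ to $\lambda<(1-\eps')\lambda_*$ for a suitable $\eps'$, is precisely that on this range $\alpha_*(\lambda)$ is the small analytic branch and, crucially, stays bounded away from the bad value $z_0$ where $\phi'_{d,k,\lambda}=1$; this is what will give us the uniform subcriticality needed to invoke Wormald.

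The key steps, in order, are as follows. (1) Show $\sup_{\theta\in[0,\theta_*(1-\eps)]}2\fm_2^*(\theta)/\fn^*(\theta)=\sup\phi_{d,k,\lambda}'(\alpha_*)<1$; this follows from Lemma~\ref{Claim13}(iv) and Lemma~\ref{lem_zero}, which tell us $\phi'(\alpha_*)<1$ for $\lambda\in(0,\lambda_*)$, combined with a compactness argument on the closed sub-interval $[0,(1-\eps)\theta_*]$ and the fact that $\lambda\mapsto\alpha_*$ is continuous there. (2) Feed the ODE system, its explicit solution, and the just-established uniform bound \eqref{eqODE4}-on-a-sub-interval into \cite[\Thm~2]{Wormald} exactly as in Corollary~\ref{cor_ODE}, obtaining $\vn(t)/n=\fn^*(t/n)+o(1)$ and $\vm_\ell(t)/n=\fm_\ell^*(t/n)+o(1)$ with probability $1-o(n^{-2})$, uniformly for $0\le t\le(1-\eps)\theta_*n$. (3) Translate these back into \eqref{eq_prop_uc_alpha} for $\vec\alpha(t)=1-\vn(t)/n$ and for $\vm_\ell(t)$, which is immediate. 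One should double-check that the Lipschitz/boundedness hypotheses of Wormald's theorem hold with the trend functions restricted to the compact domain $[0,(1-\eps)\theta_*]$, where the denominators $1-\theta$ and $\fn^*-2\fm_2^*$ are both bounded away from zero by step~(1); that is where the parameter $\eps$ earns its keep.

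The main obstacle, and the only genuinely new point compared with Proposition~\ref{prop_uc_alpha}, is verifying that the relevant branching process stays subcritical \emph{uniformly} on $[0,(1-\eps)\theta_*n]$ — equivalently, that $\phi'_{d,k,\lambda}(\alpha_*(\lambda))$ is bounded away from $1$ on $[0,(1-\eps')\lambda_*]$. When $d<\dmin$ this was automatic because $\phi'$ never reaches $1$ at all (Lemma~\ref{lem_belowdmin}); here $\phi'(\alpha_*)\to1$ as $\lambda\uparrow\lambda_*$, so the bound genuinely degenerates at $\theta_*$, and we must stay a fixed distance $\eps$ away. Concretely, $\alpha_*(\lambda)<z_*<z_0$ for $\lambda<\lambda_*$ (from the proof of Lemma~\ref{lem_zero}), the map $\lambda\mapsto\alpha_*(\lambda)$ is increasing and continuous, and $\phi'_{d,k,\lambda}(z)=d(k-1)z^{k-2}\expld$ evaluated at a zero of $\zeta_\lambda$ equals $d(k-1)z^{k-2}(1-z)$, which is strictly increasing in $z$ on $[0,z^\dagger]$ and takes value $1$ only at $z=z_*$; hence on the closed sub-interval $\lambda\le(1-\eps')\lambda_*$ we get $\phi'(\alpha_*)\le d(k-1)\alpha_*((1-\eps')\lambda_*)^{k-2}(1-\alpha_*((1-\eps')\lambda_*))=:1-c(\eps)<1$ for some $c(\eps)>0$. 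With this uniform slack in hand the rest is a routine repetition of Section~\ref{sec_prop_uc_alpha}, so I would state Proposition~\ref{prop_bpgd_cond} and prove it by explicitly indicating these three modifications rather than rewriting the whole argument.
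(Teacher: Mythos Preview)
Your proposal is correct and follows essentially the same route as the paper: both observe that \Lem~\ref{lem_condex} requires only the subcriticality hypothesis $2\vm_2(t)/\vn(t)<1-\Omega(1)$ (not $d<\dmin$), reuse the explicit ODE solution \eqref{eq_lem_ODE} via \Lem~\ref{lem_alphas}(ii), and verify the uniform subcriticality bound on $[0,(1-\eps)\theta_*]$ through the stability of $\alpha_*$ (\Prop~\ref{prop_greg}(ii)) before invoking Wormald. Your write-up is somewhat more explicit than the paper's about why the bound degenerates exactly at $\theta_*$ and how the compactness argument yields a uniform gap, but this is elaboration rather than a different strategy.
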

\begin{proof}
The formulas \eqref{eq_lem_condex_1}--\eqref{eq_lem_condex_3} for the conditional expected changes $\vn(t+1)-\vn(t),\vm_\ell(t+1)-\vm(t)$ continue to hold for $\dmin<d<\dsat$, so long as we assume that $2\vm_2(t)/\vn(t)<1-\Omega(1)$ and $\vn(t)=\Omega(n)$.
Indeed, the proof of \Lem~\ref{lem_condex} only hinges on these assumptions on $\vn(t),\vm_2(t)$, irrespective of $d$.
Hence, if $\fn,\fm_2,\ldots,\fm_k:[0,\theta_*-\delta] \mk{\to \mathbb{R}}$ are functions that satisfy the conditions \eqref{eqODE1}--\eqref{eqODE3} and that satisfy
\begin{align}\label{eqODE4'}
	\sup_{\theta\in[0,\theta_*-\delta]}2\fm_2(\theta)/\fn(\theta)&<1,
\end{align}
then~\cite[\Thm~2]{Wormald} implies that for all $0\leq t<(1-\delta)\theta_*n$ we have
\begin{align*}
	\vn(t)/n&=\fn(t/n)+o(1),& \vm_\ell(t)/n&=\fm_\ell(t/n)+o(1)\quad(2\leq\ell\leq k).
\end{align*}

Finally, we claim that the functions $\fn^*:[0,\theta_*-\delta]\to\RR$, $\fm_\ell^*:[0,\theta_*-\delta]\to\RR$ defined by \eqref{eq_lem_ODE} satisfy \eqref{eqODE1}--\eqref{eqODE3}  and \eqref{eqODE4'}.
In fact, the same manipulations as in the proof of \Lem~\ref{lem_ODE} yield \eqref{eqODE1}--\eqref{eqODE3}.
Additionally, \eqref{eqODE4'} follows from \Lem~\ref{lem_alphas}~(ii) and \Prop~\ref{prop_greg}~(ii), which shows that $\alpha_*$ is a stable fixed point and therefore 
\begin{align*}
	2\fm_2(\theta)/\fn(\theta)&=d(k-1)(1-\alpha_*)  \alpha_*^{k-2}<1 &&\mbox{for }0\leq\theta\leq\theta_*-\delta.
\end{align*}
Thus, we obtain \eqref{eq_prop_uc_alpha} for $0\leq \theta<\theta_*$.
\end{proof}

\begin{proof}[Proof of \Thm~\ref{thm_bpgd} (ii)]
	Let $\vu_1,\ldots,\vu_n\in\{0,1\}$ be uniformly distributed, mutually independent and independent of all other randomness.
	We couple the execution of the decimation process and of the \UCP\ algorithm on a random formula $\PHI$ as follows.
	At every time $t$ where $\pi_{\FDC t}=1/2$, the decimation process sets $\SIGDC(x_{t+1})=\vu_{t+1}$. 
	Similarly, whenever \UCP\ executes Step~5 we set $\SIGUC(x_{t+1})=\vu_{t+1}$.
	Let $\vec\Delta$ be the first time $0\leq t<n$ such that $\SIGDC(x_{t+1})\neq\SIGUC(x_{t+1})$; if $\SIGDC(x_{t+1})=\SIGUC(x_{t+1})$ for all $t$, we set $\vec\Delta=n$.

	We claim that $\UCP$ encounters a conflict if $\vec\Delta<n$.
	To see this, assume that $0\leq t<n$ satisfies $\SIGDC(x_{t+1})\neq\SIGUC(x_{t+1})$ but $\SIGDC(x_{s+1})\neq\SIGUC(x_{s+1})$ for all $0\leq s<t$ and that \UCP\ did not encounter a conflict at any time $s\leq t$.
	Then $\pi_{\FDC t}\in\{0,1\}$ but Step~5 of \UCP\ sets $\SIGUC(x_{t+1})=\vu_{t+1}\neq\SIGDC(x_{t+1})$.
	Consequently, $\PHI$ possesses no satisfying assignment $\sigma$ such that $\SIGUC(x_{i})=\sigma(x_i)$ for $1\leq i\leq t+1$, and thus \UCP\ will ultimately encounter a conflict.

	To complete the proof we claim that $\pr\brk{\vec\Delta<n}=1-o(1)$.
	To verify this consider a time $(1+\eps)\tcond<t/n<(1-\eps)\theta_*n$.
	Then \Prop~\ref{prop_greg} and \Prop~\ref{cor_frz} show that $|\frz(\FDC{t})|=\alpha^* n+o(n)$ \whp, while \Prop~\ref{prop_bpgd_cond} shows that $\vec\alpha(t)=\alpha_*+o(1)$ \whp\
	In particular, even if $\vec\Delta\geq(1+\eps)\tcond$, the probability that $\pi_{\FDC t}\in\{0,1\}$ while $\UCP$ assigns $x_{t+1}$ randomly is $\Omega(1)$.
	Therefore, $\vec\Delta<\theta_*n$ \whp
\end{proof}

\section*{Acknowledgement}

\thanks{Amin Coja-Oghlan is supported by DFG CO 646/3, DFG CO 646/5 and DFG CO 646/6.}
\thanks{Lena Krieg is supported by DFG CO 646/3.}
\thanks{Maurice Rolvien is supported by DFG Research Group ADYN (FOR 2975) under grant DFG 411362735.}
\thanks{This research was funded in part by the Austrian Science Fund (FWF) [10.55776/I6502]. For open access purposes, the authors have applied a CC BY public copyright license to any author accepted manuscript version arising from this submission.}

\bibliography{jdec}

\end{document}